\newtheorem{theorem}{Theorem}[section]
\newtheorem{lemma}[theorem]{Lemma}
\newtheorem{proposition}[theorem]{Proposition}
\theoremstyle{definition}
\newtheorem{definition}[theorem]{Definition}
\newtheorem{remark}[theorem]{Remark}
\newtheorem{example}[theorem]{Example}
\newtheorem{question}[theorem]{Question}
\newtheorem{problem}[theorem]{Problem}
\newtheorem{conjecture}[theorem]{Conjecture}
\numberwithin{equation}{section}
\newcommand{\N}{\mathbb{N}}
\newcommand{\Z}{\mathbb{Z}}
\newcommand{\Q}{\mathbb{Q}}
\newcommand{\R}{\mathbb{R}}
\newcommand{\C}{\mathbb{C}}
\newcommand{\T}{\mathbb{T}}
\newcommand{\F}{\mathbb{F}}
\newcommand{\p}{\mathfrak{p}}
\renewcommand{\P}{\mathbb{P}}
\newcommand{\bfP}{\mathbf{P}}
\newcommand{\mO}{\mathcal{O}}
\newcommand{\es}{\emptyset}
\newcommand{\eps}{\varepsilon}
\newcommand{\lpf}{\normalfont\text{lpf}}
\renewcommand{\tilde}{\widetilde}
\renewcommand{\hat}{\widehat}
\newcommand{\floor}[1]{\left\lfloor #1 \right\rfloor}
\DeclareMathOperator*{\E}{\text{\Large $\mathbb{E}$}}
\newcommand{\UClim}{\textup{UC-}\lim}
\newcommand{\ind}{\mathbbm{1}}
\newcommand{\ddeg}{\text{d-}\deg}
\newcommand{\innprod}[2]{\left\langle #1, #2 \right\rangle}
\newcommand{\norm}[2]{\left\| #2 \right\|_{#1}}
\newcommand{\Hil}{\mathcal{H}}
\renewcommand{\char}{\textup{char}}
\title[Polynomial actions of rings of integers and quasirandomness of Paley-type graphs]{Polynomial actions of rings of integers of global fields and quasirandomness of Paley-type graphs}
\date{\today}
\author{Ethan Ackelsberg}
\address{\'{E}cole Polytechnique F\'{e}d\'{e}rale de Lausanne, 1015-Lausanne, Switzerland}
\email{ethan.ackelsberg@epfl.ch}
\author{Vitaly Bergelson}
\address{Ohio State University, Columbus, OH 43210 USA}
\email{vitaly@math.ohio-state.edu}
\keywords{Quasirandom graphs, global fields, principal ideal rings, total ergodicity, Furstenberg--S\'{a}rk\"{o}zy theorem, equidistribution}
\subjclass[2020]{Primary: 11B30; Secondary: 37A05, 05D10, 11T06}
\begin{document}

\maketitle

\begin{abstract}
	The goal of this paper is to undertake an in-depth study of the general phenomenon behind the Furstenberg--S\'{a}rk\"{o}zy theorem, which, in its modern form due to Kamae and Mend\`{e}s-France \cite{km}, states that if $E$ is a subset of the integers with positive density and $P(x) \in \Z[x]$ is an intersective polynomial (i.e., has a root mod $N$ for every $N \in \N$), then there are distinct elements $x, y \in E$ such that $x - y = P(n)$ for some some $n \in \Z$.
    
    In this paper, we identify a natural algebraic framework (rings of integers of global fields) for Furstenberg--S\'{a}rk\"{o}zy-type theorems.
    One of our main results establishes necessary and sufficient conditions for a polynomial to satisfy the Furstenberg--S\'{a}rk\"{o}zy theorem over the ring of integers of a global field, providing an extension of the result of Kamae and Mend\`{e}s-France and including (in an amplified form) results from \cite{br, bl} as rather special cases.

    The Furstenberg--S\'{a}rk\"{o}zy phenomenon goes beyond infinite rings and has interesting additional aspects when considered from an asymptotic point of view for families of finite rings.
    As an example, classical exponential sum estimates can be used to show that large subsets of finite fields contain the asymptotically (as the size of the field grows) ``correct'' number of pairs $(x,y)$ whose difference $y - x$ is a square.
    In \cite{ab_Sarkozy}, the class of polynomials satisfying this strong form of the Furstenberg--S\'{a}rk\"{o}zy theorem over finite fields was fully classified.
    In the present paper, we establish asymptotic results characterizing sequences of finite principal ideal rings that produce ``correct'' statistics in the Furstenberg--S\'{a}k\"{o}zy theorem and show that these families are much more general than the family of finite fields.
    
    As an application of our enhanced forms of the Furstenberg--S\'{a}rk\"{o}zy theorem over finite rings, we produce new families of examples of quasirandom graphs of algebraic origin, which can be viewed as a far-reaching generalization of the classical family of Paley graphs.
    The production of these new examples hinges on a two-way connection between \emph{asymptotic total ergodicity}---the phenomenon responsible for enhanced versions of the Furstenberg--S\'{a}rk\"{o}zy theorem over finite fields and rings---and quasirandomness.
\end{abstract}

%%%%%%%%%%%%%%%%%%%%%%%%%%%%%%%%%%%%%%%%%%%%%%%%%%%%%%%%
% ---- INTRO ---- %
%%%%%%%%%%%%%%%%%%%%%%%%%%%%%%%%%%%%%%%%%%%%%%%%%%%%%%%%

\section{Introduction}

The main results obtained in this paper belong to the junction of three areas: number theory in global fields, ergodic theory, and the theory of quasirandomness of graphs.
The motivation for our work comes from the desire to better understand the general phenomenon which is epitomized by the Furstenberg--S\'{a}rk\"{o}zy theorem, stating that every set of positive density in the integers contains a square difference.
(More generally, one can obtain values of any so-called \emph{intersective} polynomial as a difference of elements of an arbtrary set of positive density in $\Z$; see Section \ref{sec:FS and TE} for a precise formulation.)
One of our results extends and unifies variants of the Furstenberg--S\'{a}rk\"{o}zy theorem previously obtained in \cite{bl} and \cite{br} and which deal with large subsets of the polynomial ring over a finite field and of rings of integers of number fields respectively.
The unifying framework is that of rings of integers of global fields, which we discuss below in Section \ref{sec:setting}.

The phenomenon behind the Furstenberg--S\'{a}rk\"{o}zy theorem also has a manifestation in large classes of finite rings, including finite fields as well as modular rings $\Z/N\Z$, Galois rings, pseudo-Galois rings, and chain rings, among others.
Furstenberg--S\'{a}rk\"{o}zy-type results in finite rings have an asymptotic nature and can be separated into two differently-behaving regimes of high characteristic (characteristic growing to infinity in a sequence of rings) and low characteristic (characteristic bounded while the size of the ring grows to infinity).

The high characteristic regime was treated in \cite{bb1,bb2}, where it was demonstrated that Furstenberg--S\'{a}rk\"{o}zy-type theorems in finite rings are tightly connected with the idea of \emph{asymptotic total ergodicity}.
Moreover, asymptotic total ergodicity allows one to circumvent ``local obstructions'' that are unavoidable in the setting of the integers and more general families of infinite rings.
As a consequence, it was shown in \cite{bb1,bb2} that one obtains the asymptotically ``correct statistics'' for polynomial configurations (involving, unlike the case of $\Z$, not necessarily intersective polynomials) in large subsets of finite rings (of high characteristic) when the sequence of rings under consideration is asymptotically totally ergodic.

What follows is a description of the main achievements of this paper.

First, we identify an algebraic framework (rings of integers of global fields) that unifies previous Furstenberg--S\'{a}rk\"{o}zy-type theorems into a single theorem, which has amplified versions of results from \cite{br, bl} as rather special cases and answers a question of L\^{e}, Liu, and Wooley \cite{llw}.

Second, by considering quotients of rings of integers or global fields, we obtain a rich class of finite rings that includes all of the examples handled in \cite{bb1,bb2} as well as several additional interesting sequences of rings in the low characteristic regime.
The algebraic structure of these rings is such that we are able to extend the notion of asymptotic total ergodicity to be applicable also in the low characteristic regime, and this expanded notion of asymptotic total ergodicity appropriately captures the statistical behavior of polynomial configurations in subsets of finite rings in both high and low characteristic.
This can be seen as a completion of the program set out in \cite{bb1}.

Finally, we establish in this paper a two-way connection between asymptotic total ergodicity and quasirandomness of certain graphs associated to polynomials over finite rings.
The original Furstenberg--S\'{a}rk\"{o}zy theorem came from Lov\'{a}sz's conjecture motivated by the properties of the classical Paley graphs (to be defined in Section \ref{sec:finitary applications}), which are well known to be quasirandom.
We will show that the key property of finite fields (over which Paley graphs are defined) responsible for generating quasirandomness is their asymptotic total ergodicity.
By connecting quasirandomness with asymptotic total ergodicity, we identify the algebraic and dynamical underpinnings for the quasirandomness of Paley graphs and are thus able to produce new families of examples of quasirandom graphs associated to polynomials over more general finite rings. \\

The structure of the paper is as follows.
In Section \ref{sec:setting}, we explain the algebraic setting for our work, collecting important facts about rings of integers of global fields and their quotients.
Section \ref{sec:FS and TE} discusses the main ergodic-theoretic content of the paper, including a discussion of the dynamical approach to the classical Furstenberg--S\'{a}rk\"{o}zy theorem and statements of our main results dealing with extensions of the Furstenberg--S\'{a}rk\"{o}zy theorem to the setting developed in Section \ref{sec:setting}.
After illuminating in Section \ref{sec:FS and TE} the central role played by the dynamical notion of \emph{total ergodicity} when dealing with polynomial configurations, we develop a finitary asymptotic version of total ergodicity in Section \ref{sec:ATE}.
The remainder of the main results appear in Section \ref{sec:finitary applications}.
Included therein are new Furstenberg--S\'{a}rk\"{o}zy-type theorem for finite principal ideal rings and a two-way connection between asymptotic total ergodicity and quasirandomness of Paley-type graphs.
The proofs of the main results appear in the remaining sections: ergodic theoretic results are proved in Sections \ref{sec:TE polynomial} and \ref{sec:nec/suff}; the main estimate connecting asymptotic total ergodicity with polynomial averages is proved in \ref{sec:ATE proof}; applications to finitary combinatorial results in the spirit of the Furstenberg--S\'{a}rk\"{o}zy theorem are proved in Section \ref{sec:FS good rings}; and results relating quasirandomness of Paley-type graphs with asymptotic total ergodicity are proved in Section \ref{sec:quasirandomness}.

%%%%%%%%%%%%%%%%%%%%%%%%%%%%%%%%%%%%%%%%%%%%%%%%%%%%%%%%
% ---- ACKNOWLEDGEMENTS ---- %
%%%%%%%%%%%%%%%%%%%%%%%%%%%%%%%%%%%%%%%%%%%%%%%%%%%%%%%%

\section*{Acknowledgments}

The first author acknowledges support from the Swiss National Science Foundation under Grant TMSGI2-211214.

%%%%%%%%%%%%%%%%%%%%%%%%%%%%%%%%%%%%%%%%%%%%%%%%%%%%%%%%

\section{A disclaimer on our use of the term ``characteristic''}

One of the goals of this paper is to produce strong combinatorial results about polynomial patterns in large subsets of rings of integers of global fields and, as a finitary counterpart, in large subsets of finite principal ideal rings.
When working with polynomial expressions, the characteristic of the underlying ring is an important parameter to track.
In particular, whether or not the degree of a polynomial is smaller than the characteristic of the ring plays a crucial role.
In order to sensibly talk about the ``high'' and ``low'' characteristic settings for infinite rings, we use the slightly unusual convention that the characteristic of a ring is always equal to the additive order of the multiplicative identity element $1$, meaning, for example, that the characteristic of the integers is infinite (rather than the more typical convention that $\char(\Z) = 0$).

%%%%%%%%%%%%%%%%%%%%%%%%%%%%%%%%%%%%%%%%%%%%%%%%%%%%%%%%

\section{Rings of integers of global fields and their quotients} \label{sec:setting}

As we have already stated, a natural setup for treating Furstenberg--S\'{a}rk\"{o}zy-type results is the context of rings of integers of global fields.
In this section, we discuss global fields and their properties that will be utilized in our later combinatorial results.

A \emph{global field} is a field of one of two types:

\begin{itemize}
	\item	a finite extension of $\Q$ (\emph{algebraic number field})
	\item	a finite extension of the field of rational functions $\F_q(t)$ over a finite field $\F_q$ (\emph{global function field}).
\end{itemize}

Global fields are important objects in algebraic number theory and are often studied in conjunction with their completions, known as \emph{local fields}.
We do not deal with local fields directly in this paper, as the family of global fields are sufficiently rich on their own for the combinatorial applications we have in mind; see Lemma \ref{lem: local field quotients} below.

Given a global field $K$, the \emph{ring of integers} $\mO_K$ is the integral closure of $\Z$ (in the case that $K$ is an algebraic number field) or $\F_q[t]$ (if $K$ is a global function field) inside of $K$.
(Recall that the \emph{integral closure} of a ring $R$ inside of another ring $S$ is the set of elements $s \in S$ that are obtained as roots of monic polynomials with coefficients coming from $R$.)
Our main object of study is the Furstenberg--S\'{a}rk\"{o}zy phenomenon in rings of integers of global fields and their (finite) quotients.
In the following proposition, we collect several basic properties of rings of integers of global fields.

\begin{proposition} \label{prop: rings of integers}
	Let $K$ be a global field and $\mO_K$ its ring of integers.
	\begin{enumerate}
		\item	$\mO_K$ is a Dedekind domain: every nontrivial ideal $\{0\} \lneq I \lneq \mO_K$ factors (uniquely, up to permutation) as a product of prime ideals;
		\item	Every nonzero ideal $\{0\} \ne I \le \mO_K$ has finite index in $\mO_K$;
		\item	The characteristic of $\mO_K$ is either infinite (if $K$ is an algebraic number field) or a prime number (if $K$ is a global function field).
	\end{enumerate}
\end{proposition}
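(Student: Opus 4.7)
The plan is to verify each of the three items separately, reducing each claim to standard facts about the principal ideal subring $R$ (where $R = \Z$ when $K$ is a number field and $R = \F_q[t]$ when $K$ is a global function field). Throughout, let $F$ denote the fraction field of $R$, so that $K/F$ is a finite extension and $\mO_K$ is, by definition, the integral closure of $R$ in $K$.

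For (1), I would verify the three defining axioms of a Dedekind domain: that $\mO_K$ is a Noetherian integral domain which is integrally closed in its fraction field $K$ and of Krull dimension at most one. Integral closedness follows immediately from transitivity of integrality over $R$: anything in $K$ integral over $\mO_K$ is integral over $R$ and hence already lies in $\mO_K$. Dimension at most one follows because if $\mathfrak{p}$ is a nonzero prime of $\mO_K$, then $\mathfrak{p} \cap R$ is a nonzero prime of $R$ (since $\mathfrak{p}$ contains the nonzero constant term of a suitable monic integral relation), hence maximal as $R$ is a PID; lying-over and going-up then force $\mathfrak{p}$ to be maximal in $\mO_K$. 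The Noetherian property, the most delicate ingredient, is established by exhibiting $\mO_K$ as a finitely generated $R$-module. In the number field case this is standard via nondegeneracy of the trace form on the separable extension $K/\Q$. In the function field case, $K/F$ may fail to be separable, and here I would either select a separating variable $t' \in K$ so that $K/\F_q(t')$ is separable and run the same trace argument, or invoke the Krull--Akizuki theorem, which yields Noetherianity of the integral closure in any finite extension without separability hypotheses.

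For (2), the key observation is that any nonzero ideal $I \le \mO_K$ contains a nonzero element of $R$: picking $0 \ne \alpha \in I$ with an integral relation $\alpha^n + a_{n-1}\alpha^{n-1} + \cdots + a_0 = 0$, one may after cancelling powers of $\alpha$ (using that $\mO_K$ is a domain) assume $a_0 \ne 0$, whence $a_0 \in I \cap R$. Then $\mO_K/I$ is a quotient of $\mO_K/a_0 \mO_K$, and the latter is finite because $\mO_K$ is finitely generated as an $R$-module (from the proof of (1)) and $R/a_0 R$ is finite (of size $|a_0|$ in the number field case and of size $q^{\deg a_0}$ in the function field case).

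Finally, (3) is immediate: the characteristic of $\mO_K$ agrees with that of $K$; in the number field case $K \supseteq \Q$ has infinite characteristic, while in the function field case $K \supseteq \F_q$ has characteristic $p$ with $q = p^k$. The main obstacle in the argument is thus the finite generation / Noetherianity step in (1), which is elementary once $K/F$ is known to be separable but requires either a change of variable or an appeal to Krull--Akizuki in the (a priori possible) inseparable global function field case.
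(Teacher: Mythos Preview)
Your argument is correct and essentially complete. The paper, by contrast, does not attempt a self-contained proof at all: it simply cites Milne's notes for (1), cites Janusz for (2), and gives the same one-line observation you give for (3). So the two ``proofs'' differ not in strategy but in ambition: you actually sketch why the standard facts hold, while the paper is content to point to the literature.

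Two small remarks on your write-up. First, in the dimension argument, the precise fact you are using is incomparability of primes in an integral extension (equivalently: an integral domain integral over a field is itself a field), not lying-over or going-up; you might phrase it that way. Second, be aware that your two alternatives for the Noetherian step in the inseparable function-field case are not interchangeable for the purposes of (2): Krull--Akizuki gives Noetherianity of $\mO_K$ but not, on its face, finite generation as an $R$-module, which is what you invoke in (2). This is not a real gap, since the Krull--Akizuki theorem also yields that $\mO_K/a_0\mO_K$ has finite length over $R$ for any nonzero $a_0 \in R$, and finite length over a PID with finite residue fields implies finite cardinality; alternatively, the separating-transcendence-basis route (handled with a little care about which integral closure one is computing) does give module-finiteness. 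Either way your outline goes through.
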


\begin{proof}
    Property (1) follows from \cite[Theorem 3.29]{milne}.

    Property (2) is classical and can be found, e.g., in \cite[Section I.8]{janusz}.

    (3) If $K$ is an algebraic number field, then $\Z \subseteq \mO_K$, so $\mO_K$ has characteristic infinity.
    Similarly, if $K$ is a global function field, then $\mO_K$ contains $\F_q[t]$ for some finite field $\F_q$, so $\char(\mO_K) = \char(\F_q)$ is prime.
\end{proof}

The Dedekind property of rings of integers of global fields (item (1) in Proposition \ref{prop: rings of integers}) enables us to make the following definition.

\begin{definition} \label{defn: lpf}
    Given a nontrivial ideal $\{0\} \lneq I \lneq \mO_K$ and its factorization $I = \p_1^{e_1} \dots \p_r^{e_r}$ into primes, we define the \emph{least prime factor} of $I$ as $\lpf(I) = \min_{1 \le i \le r} [\mO_K : \p_i]$.
\end{definition}

Equivalently, $\lpf(I)$ is the minimum index of all proper ideals containing $I$.
As the next proposition shows, $\lpf(I)$ can also be computed in terms of ideals of the quotient ring $\mO_K/I$.

\begin{proposition} \label{prop: lpf finite ring ideals}
	Let $K$ be a number field.
	Let $\{0\} \ne I \le \mO_K$ be a nonzero ideal.
	Then
	\begin{align*}
		\lpf(I) & = \min \left\{ |J| : J~\text{is a nonzero ideal of the quotient ring}~\mO_K/I \right\} \\
		 & = \min \left\{ [\mO_K/I : J] : J~\text{is a proper ideal of the quotient ring}~\mO_K/I \right\}.
	\end{align*}
\end{proposition}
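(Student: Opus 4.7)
My plan is to reduce both equalities to a calculation over ideals of $\mO_K$ containing $I$ by invoking the lattice correspondence for quotient rings, then to compute using the Dedekind factorization of $I$ provided by Proposition \ref{prop: rings of integers}(1). Specifically, the correspondence theorem gives a bijection between ideals of $\mO_K/I$ and ideals $J$ of $\mO_K$ with $I \le J \le \mO_K$, under which nonzero ideals correspond to $J$ with $I \lneq J$, proper ideals correspond to $J \lneq \mO_K$, and one has $|J/I| = [J:I]$ and $[\mO_K/I : J/I] = [\mO_K:J]$. So each side of the asserted double equality becomes a minimum over certain ideals of $\mO_K$ containing $I$.

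Next I would write $I = \p_1^{e_1}\cdots \p_r^{e_r}$, and observe that by unique factorization in the Dedekind domain $\mO_K$, every ideal $J$ with $I \le J \le \mO_K$ has the form $J = \p_1^{f_1}\cdots\p_r^{f_r}$ with $0 \le f_j \le e_j$. Since distinct nonzero primes in $\mO_K$ are maximal and pairwise coprime, the Chinese Remainder Theorem together with $[\mO_K : \p^e] = [\mO_K : \p]^e$ (valid in any Dedekind domain) yields the multiplicative formulas
$$[\mO_K : J] = \prod_{j=1}^{r} [\mO_K : \p_j]^{f_j}, \qquad [J : I] = \frac{[\mO_K : I]}{[\mO_K : J]} = \prod_{j=1}^{r} [\mO_K : \p_j]^{e_j - f_j},$$
where finiteness of all indices involved is guaranteed by Proposition \ref{prop: rings of integers}(2).

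From here the two minimizations are essentially symmetric. For the index-form minimum, the condition $J \lneq \mO_K$ requires at least one $f_j \ge 1$, and the product $\prod_j [\mO_K : \p_j]^{f_j}$ is then minimized by picking a single index $j^\ast$ minimizing $[\mO_K : \p_j]$, setting $f_{j^\ast} = 1$ and all other $f_j = 0$; this gives exactly $\lpf(I)$. For the cardinality-form minimum, the condition $I \lneq J$ forces at least one $f_j < e_j$, and $\prod_j [\mO_K : \p_j]^{e_j-f_j}$ is minimized by decreasing a single exponent by one at the index $j^\ast$ minimizing $[\mO_K : \p_j]$, again producing $\lpf(I)$.

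I do not anticipate any serious obstacle; the argument is bookkeeping built on top of Proposition \ref{prop: rings of integers}. The one point demanding a little care is justifying the multiplicativity of the index along products of distinct prime powers, but this is immediate from CRT once one knows distinct nonzero primes in a Dedekind domain are coprime. (The degenerate case $I = \mO_K$ is excluded implicitly, since $\lpf$ is only defined for nontrivial proper ideals in Definition \ref{defn: lpf}.)
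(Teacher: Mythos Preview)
Your proposal is correct and follows essentially the same approach as the paper: parametrize the ideals of $\mO_K/I$ via the correspondence with ideals $\p_1^{f_1}\cdots\p_r^{f_r}$ between $I$ and $\mO_K$, compute the cardinality and index as products of $[\mO_K:\p_j]$-powers, and observe each is minimized at $\lpf(I)$. The only difference is that you spell out the lattice correspondence and the CRT justification for multiplicativity more explicitly than the paper does.
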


\begin{proof}
	The ideals of $\mO_K/I$ are all products of the form $J = \p_1^{s_1} \dots \p_r^{s_r}$ with $0 \le s_i \le e_i$.
	Such an ideal has cardinality
	\begin{equation*}
		|J| = \prod_{i=1}^r [\mO_K : \p_i]^{e_i - s_i}
	\end{equation*}
	and index
	\begin{equation*}
		[\mO_K/I : J] = \prod_{i=1}^r [\mO_K : \p_i]^{s_i}
	\end{equation*}
    Both quantities are clearly minimized by $\min_{1\le i \le r} [\mO_K:\p_i] = \lpf(I)$.
\end{proof}

Based on Proposition \ref{prop: lpf finite ring ideals}, we define $\lpf$ for a finite commutative ring by
\begin{equation*}
	\lpf(R) = \min \left\{  [R : J] : J~\text{is a proper ideal of}~R \right\}.
\end{equation*}

\begin{example} \label{eg: F_4 realization}
	The field with four elements, $\F_4$, can be obtained as a quotient of a ring of integers of a global field in many different ways.
	The important feature of Proposition \ref{prop: lpf finite ring ideals} is that no matter how $\F_4$ is generated as $\mO_K/I$, the value of $\lpf(I)$ with always be the same ($\lpf(\F_4) = 4$) and is computable inside the quotient ring $\F_4$ itself.
	We check this for a few explicit constructions of $\F_4$.
	
	\begin{itemize}
		\item	The most straightforward way of obtaining $\F_4$ is $\F_4 = \F_4[t]/\langle t \rangle$.
			Here, the ideal $I = \langle t \rangle$ is a prime ideal with index $[\F_4[t] : I] = 4$.
		\item	We can also obtain $\F_4$ as the quotient ring $\F_2[t]/\langle t^2 + t + 1 \rangle$.
			Once again, $I = \langle t^2 + t + 1 \rangle$ is a prime ideal of index 4 in $\F_2[t]$.
		\item	Finally, we can obtain $\F_4$ from an algebraic number field as follows.
			Let $\mO_K = \Z[i]$, and let $I = \langle 1 + i \rangle$.
			Then $\mO_K/I$ is isomorphic to $\F_4$, and $I$ is a prime ideal of index 4.
	\end{itemize}
\end{example}

The finite commutative rings that we will study in this paper are the quotient rings $\mO_K/I$, where $\mO_K$ is the ring of integers of a global field $K$ and $I$ is a nontrivial ideal of $\mO_K$.
It turns out that this class of rings has a simple characterization:

\begin{proposition} \label{prop: good iff principal}
    Let $R$ be a finite commutative ring.
    The following are equivalent:
    \begin{enumerate}[(i)]
        \item there exists a global field $K$ and a nontrivial ideal $I \le \mO_K$ such that $R \cong \mO_K/I$;
        \item $R$ is a finite principal ideal ring.
    \end{enumerate}
\end{proposition}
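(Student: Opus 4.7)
My plan is to prove each implication separately. For the forward direction (i) $\Rightarrow$ (ii), I will exploit the Dedekind structure of $\mO_K$ given by Proposition \ref{prop: rings of integers}(1). Factor $I = \p_1^{e_1} \cdots \p_r^{e_r}$ into its unique decomposition into distinct primes; since the $\p_i^{e_i}$ are pairwise coprime, the Chinese remainder theorem yields $\mO_K/I \cong \prod_{i=1}^r \mO_K/\p_i^{e_i}$. Each factor $\mO_K/\p_i^{e_i}$ is canonically isomorphic to the quotient of the localization $(\mO_K)_{\p_i}$---a discrete valuation ring, hence a PID---by the ideal generated by the $e_i$-th power of a uniformizer, and is therefore a principal ideal ring. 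Since a finite product of principal ideal rings is again a principal ideal ring (ideals in a product are products of ideals in the factors, so a tuple of single generators suffices), it follows that $R$ is a PIR.

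For the converse (ii) $\Rightarrow$ (i), the strategy is to invert this CRT factorization on the global side. I first decompose $R$ using the standard splitting of a commutative Artinian ring into its localizations at maximal ideals: $R \cong R_1 \times \cdots \times R_k$ with each $R_i$ a finite local principal ideal ring (a chain ring). Each chain ring $R_i$ is isomorphic to $\mO_{L_i}/\pi_i^{n_i}$ for some complete discrete valuation ring $\mO_{L_i}$ and integer $n_i \geq 1$: one takes $L_i$ to be a suitable finite extension of $\Q_{p_i}$, where $p_i$ is the residue characteristic of $R_i$, with residue degree and ramification index chosen to match the invariants of $R_i$. It then suffices to produce a single global field $K$ with distinct primes $\p_1, \ldots, \p_k$ in $\mO_K$ whose $\p_i$-adic completions are isomorphic to the $L_i$; setting $I = \p_1^{n_1} \cdots \p_k^{n_k}$ and applying CRT gives $\mO_K/I \cong \prod_i \mO_K/\p_i^{n_i} \cong \prod_i \mO_{L_i}/\pi_i^{n_i} \cong \prod_i R_i \cong R$.

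The main obstacle lies in this global assembly step---producing $K$ from the prescribed local data $(L_i)_{i=1}^k$. In the number field case, I would construct $K$ via an approximation argument using Krasner's lemma: choose a monic polynomial $g \in \Q[x]$ that is simultaneously $p_i$-adically close (for each $i$) to a defining polynomial for $L_i$ over $\Q_{p_i}$, using weak approximation on $\Q$; Krasner's lemma then ensures that $K = \Q[x]/(g)$ has primes $\p_i$ above each $p_i$ whose completions are isomorphic to $L_i$. When $R$ has characteristic equal to a single prime $p$, one has the alternative of taking $K$ to be a function field over $\F_p$, possibly an extension of $\F_p(t)$ large enough so that $\mO_K$ contains sufficiently many primes of the required residue degrees, and the analogous function field approximation argument applies. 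The hardest part is verifying that the defining polynomial can be chosen to match all prescribed local fields simultaneously while keeping the primes $\p_i$ distinct.
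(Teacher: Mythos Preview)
Your proposal is correct and follows essentially the same route as the paper: for (i)$\Rightarrow$(ii) you unwind the Dedekind structure via CRT and localization where the paper simply cites that nontrivial quotients of Dedekind domains are principal Artinian rings, and for (ii)$\Rightarrow$(i) both arguments decompose $R$ into local principal factors, realize each as a quotient of a $p$-adic ring of integers, and then assemble a single global field via weak approximation (the paper defers the details to a MathOverflow post of Conrad, while you correctly identify Krasner's lemma as the engine behind that step). The only substantive difference is that the paper packages the passage from local to global through its Lemma~\ref{lem: local field quotients}, whereas you build the global field directly with the prescribed completions; these are two phrasings of the same approximation argument.
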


The proof of Proposition \ref{prop: good iff principal} is made easier with the help of the following lemma, showing that finite quotients of rings of integers of local fields\footnote{As stated above, local fields are completions of global fields. Every local field is (isomorphic to) one of the following:
\begin{itemize}
    \item the real numbers $\R$,
    \item the complex numbers $\C$,
    \item a finite extension of the $p$-adic numbers $\Q_p$ for a prime number $p$, or
    \item the field of formal Laurent series $\F_q((t^{-1}))$ over a finite field $\F_q$.
\end{itemize}} can always be obtained as quotients of rings of integers of global fields.

\begin{lemma} \label{lem: local field quotients}
    Let $L$ be a non-Archimedean local field\footnote{Local fields are equipped with absolute value functions satisfying $|xy| = |x||y|$ and the triangle inequality $|x+y| \le |x|+|y|$.
    Such absolute value functions satisy exactly one of the following properties:
    \begin{itemize}
        \item Archimedean property: for every $x \ne 0$, there exists $n \in \N$ such that $|nx| > 1$
        \item ultrametric triangle inequality: $|x+y| \le \max\{|x|,|y|\}$.
    \end{itemize}
    If the absolute value satisfies the Archimedean property, the local field is said to be Archimedean.
    If not (in which case it satisfies the ultrametric triangle inequality), then the local field is said to be non-Archimedean.
    Out of the list of local fields given in the previous footnote, only $\R$ and $\C$ are Arhcimedean and the rest are non-Archimedean.} with ring of integers $\mO_L$, and let $J$ an ideal of $\mO_L$ of finite index.
    Then there exists a global field $K$ and an ideal $I \le \mO_K$ such that $\mO_L/J \cong \mO_K/I$.
\end{lemma}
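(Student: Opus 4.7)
The plan is to reduce the claim to the case where $J$ is a power of the maximal ideal of $\mO_L$, and then realize $\mO_L/J$ as $\mO_K/\mathfrak{P}^n$ for a prime $\mathfrak{P}$ of a global field $K$ whose completion at $\mathfrak{P}$ recovers $L$.

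First, since $L$ is a non-Archimedean local field, $\mO_L$ is a complete discrete valuation ring whose only nonzero ideals are the powers $\mathfrak{m}_L^n$ of the maximal ideal $\mathfrak{m}_L$. Any finite-index ideal must therefore be of the form $J = \mathfrak{m}_L^n$ for some $n \ge 1$, so it suffices to realize $\mO_L/\mathfrak{m}_L^n$ as a quotient of some $\mO_K$.

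Next, I would produce a global field $K$ of which $L$ is a completion, splitting on the characteristic. If $\char(L) = p > 0$, then by the classification of local fields $L \cong \F_q((t))$ for some prime power $q$, with $\mO_L \cong \F_q[[t]]$ and $\mathfrak{m}_L = (t)$. Taking the global function field $K = \F_q(t)$ with $\mO_K = \F_q[t]$ and the prime $\mathfrak{P} = (t)$, one has $\F_q[t]/(t^n) \cong \F_q[[t]]/(t^n)$ directly, since polynomials of degree less than $n$ already form a complete set of coset representatives. If $\char(L) = 0$, then $L$ is a finite extension of $\Q_p$. Krasner's lemma lets us write $L = \Q_p(\alpha)$ for some $\alpha$ algebraic over $\Q$: one approximates a primitive element of $L/\Q_p$ by an element of $\overline{\Q}$ close enough in the $p$-adic topology so that it generates the same extension of $\Q_p$. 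Setting $K = \Q(\alpha)$, the place of $K$ induced by the embedding $K \hookrightarrow L$ has completion $K_\mathfrak{P} = L$ for a unique prime $\mathfrak{P}$ of $\mO_K$.

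Finally, for either characteristic, I would invoke the standard isomorphism $\mO_K/\mathfrak{P}^n \cong \mO_L/\mathfrak{m}_L^n$. The map is induced by the inclusion $\mO_K \hookrightarrow \mO_L$; injectivity holds because $\mathfrak{P}^n = \mathfrak{m}_L^n \cap \mO_K$, and surjectivity follows from density of $\mO_K$ in $\mO_L$ together with the fact that $\mO_L/\mathfrak{m}_L^n$ is a discrete (finite) set, so any element of $\mO_L$ has an $\mO_K$-representative in its coset.

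The main obstacle is the characteristic-zero step, where one must actually exhibit an algebraic (over $\Q$) primitive element for the $p$-adic extension $L/\Q_p$; this is precisely the content of Krasner's lemma and is the only nontrivial input. Everything else reduces to bookkeeping about discrete valuation rings and the elementary density argument completing the isomorphism.
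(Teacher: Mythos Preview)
Your proposal is correct and follows essentially the same approach as the paper: realize $L$ as the completion of a global field $K$ at a prime $\mathfrak{P}$, then use that $\mO_K/\mathfrak{P}^n \cong \mO_L/\mathfrak{m}_L^n$. The paper's proof is a two-line citation of \cite[Lemma 7.25]{milne} and takes for granted that every non-Archimedean local field arises as such a completion, whereas you spell out this step explicitly (via the classification in positive characteristic and Krasner's lemma in characteristic zero) and give the density argument for the isomorphism directly; your version is more self-contained but not a different route.
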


\begin{proof}
    Suppose $L$ is the completion of a global field $K$.
    Then we also have that $J$ is the completion of an ideal $I \le K$, and $\mO_L/J \cong K/I$; see \cite[Lemma 7.25]{milne}.
\end{proof}

We can now prove Proposition \ref{prop: good iff principal}.

\begin{proof}[Proof of Proposition \ref{prop: good iff principal}]
    (i)$\implies$(ii).
    We learned of this argument from a comment by Pete L. Clark on \emph{MathOverflow} \cite{clark_MO}.
    Suppose $R \cong \mO_K/I$ for some global field $K$ and ideal $I \le \mO_K$.
    The ring $\mO_K$ is a Dedekind domain by Proposition \ref{prop: rings of integers}, so its nontrivial quotients are principal Artinian rings by \cite[Theorem 20.11]{clark_book}.
    Hence, $R \cong \mO_K/I$ is a principal ideal ring. \\

    (ii)$\implies$(i).
    Suppose $R$ is a finite principal ideal ring.
    We may factor (uniquely, up to permutation) $R = R_1 \times \ldots \times R_k$ as a product of local rings by \cite[Theorem 3.1.4]{biniflamini}.
    Since each of the local factors is a quotient of $R$, the local factors are themselves principal ideal rings.
    For each $i$, we can obtain $R_i$ as a quotient of the ring of integers of a $p$-adic field for suitable $p$ (depending on $i$) and then use Lemma \ref{lem: local field quotients}, a form of weak approximation, and the Chinese remainder theorem to realize $R$ as a quotient ring of the form $\mO_K/I$ for some global field $K$ and ideal $I \le \mO_K$.
    We refer the reader to a \emph{MathOverflow} comment of Keith Conrad \cite{conrad_MO} for details.
\end{proof}

All of the explicit families of examples of finite commutative rings appearing in \cite{bb2} are principal ideal rings and can thus be realized as quotients of rings of integers of global fields:

\begin{itemize}
    \item Finite fields.
        A finite field $\F$ can be obtained, for example, as the quotient $\F[t]/\langle t \rangle$.
    \item   Modular rings.
        The modular ring $\Z/N\Z$ is manifestly a quotient of $\Z$.
    \item   Galois rings.
        The Galois ring $GR(p^n,r)$ is usually constructed as a quotient ring $GR(p^n,r) = \Z[x]/\langle p^n, f(x) \rangle$, where $f$ is a monic polynomial of degree $r$ that is irreducible mod $p$.
        However, one can instead construct $GR(p^n,r)$ as a quotient $\Z[\xi]/\langle p^n \rangle$ where $\xi$ is a root of the polynomial $f$.
    \item Pseudo-Galois rings.
        Pseudo-Galois rings are a generalization of Galois rings and are of the form $PGR(N,r) = \Z[x]/\langle N, f(x) \rangle$ for some $N$ and a degree $r$ polynomial $f$ that is irreducible mod $N$.
        Again, these can be obtained as quotients $PGR(N,r) = \Z[\xi]/\langle N \rangle$.
    \item Chain rings.
        A ring $R$ is a chain ring if all of its ideals form a chain under inclusion.
        All finite chain rings are obtained as quotients $GR(p^n,r)[x]/\langle p^{n-1}x^t, g(x)\rangle$, where $g$ is an Eisenstein polynomial of degree $k$.
        The parameters $p, n, r, t, k$ are called the \emph{invariants} of the chain ring.
        Chain rings are also examples of quotient rings obtained from global fields, as can be seen by first taking the quotient $GR(p^n,r)[x]/\langle g(x) \rangle$, which is isomorphic to $\Z[\xi,\zeta]/\langle p^n \rangle$, where $\zeta$ is a root of $g$ and $GR(p^n,r) \cong \Z[\xi]/\langle p^n \rangle$.
\end{itemize}

However, not every finite commutative ring is principal.
For example, the ideal $\langle x,y \rangle$ in $\F_p[x,y]/\langle x,y\rangle^2$ is nonprincipal; see \cite{conrad_MO, clark_MO}.

%%%%%%%%%%%%%%%%%%%%%%%%%%%%%%%%%%%%%%%%%%%%%%%%%%%%%%%%

\section{Totally ergodic actions of rings of integers of global fields and polynomial ergodic averages} \label{sec:FS and TE}

We begin this section by reviewing Furstenberg's dynamical approach to the Furstenberg--S\'{a}rk\"{o}zy theorem over $\Z$.
This will enable our discussion in the rest of the section, which deals with extensions of the Furstenberg--S\'{a}rk\"{o}zy theorem to rings of integers of global fields.

The first step is to translate a combinatorial statement about subsets of $\Z$ into a dynamical statement about actions of $\Z$ on probability spaces via measure-preserving transformations.
This translation is provided by the Furstenberg correspondence principle (Theorem \ref{thm: correspondence} below).
Recall that an \emph{(invertible) measure-preserving system} is a triple $(X, \mu, T)$, where $(X, \mu)$ is a standard probability space (for which we omit explicit reference to the $\sigma$-algebra on $X$) and $T : X \to X$ is measurable and invertible with measurable inverse such that the measure $\mu$ is preserved by $T$, i.e. for any measurable subset $A \subseteq X$, one has $\mu(T^{-1}A) = \mu(A)$.
The system $(X, \mu, T)$ is \emph{ergodic} if every $T$-invariant measurable subset $A \subseteq X$ has measure 0 or 1.

\begin{theorem} \label{thm: correspondence}
    Let $E \subseteq \Z$ be a set with positive upper Banach density
    \begin{equation*}
        d^*(E) = \limsup_{N - M\to \infty} \frac{\left| E \cap \{M+1, M+2, \dots, N\}\right|}{N-M} > 0.
    \end{equation*}
    Then there exists an ergodic invertible measure-preserving system $(X, \mu, T)$ and a measurable set $A \subseteq X$ with $\mu(A) = d^*(E)$ such that
    \begin{equation*}
        d^* \left( \bigcap_{j=1}^k (E - n_j) \right) \ge \mu \left( \bigcap_{j=1}^k T^{-n_j}A \right)
    \end{equation*}
    for every $k \in \N$ and $n_1, \dots, n_k \in \Z$.
\end{theorem}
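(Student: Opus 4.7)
The plan is to invoke the classical symbolic-dynamics construction of an invariant measure encoding $E$. I take $X = \{0,1\}^{\Z}$ with the product topology, $T : X \to X$ the left shift, $A = \{x \in X : x_0 = 1\}$, and $\omega = \mathbf{1}_E \in X$. Using the definition of upper Banach density, choose intervals $I_i = [M_i+1, N_i]$ with $|I_i| \to \infty$ and $|E \cap I_i|/|I_i| \to d^*(E)$, form the empirical measures $\mu_i = \frac{1}{|I_i|} \sum_{n \in I_i} \delta_{T^n \omega}$, and extract a weak-$*$ subsequential limit $\mu$ by compactness of the space of probability measures on the compact space $X$.

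A standard telescoping argument shows $\mu$ is $T$-invariant. Since every set $\bigcap_{j=1}^k T^{-n_j} A$ is clopen (a finite intersection of cylinder sets), weak-$*$ convergence produces
\begin{equation*}
\mu\!\left(\bigcap_{j=1}^k T^{-n_j} A\right) = \lim_i \frac{\left|\bigcap_{j=1}^k (E - n_j) \cap I_i\right|}{|I_i|} \le d^*\!\left(\bigcap_{j=1}^k (E - n_j)\right),
\end{equation*}
and in particular $\mu(A) = d^*(E)$. This already delivers the theorem absent the ergodicity requirement.

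The main obstacle is upgrading $\mu$ to an ergodic measure while simultaneously preserving $\mu(A) = d^*(E)$ and the configuration inequalities for every finite tuple. I apply the ergodic decomposition $\mu = \int \mu_y\, d\lambda(y)$. The crucial input is that $\mathrm{supp}(\mu)$ lies in the orbit closure $\overline{\{T^n \omega : n \in \Z\}}$, so every such $x$ has the property that each finite window of $x$ coincides with a shifted finite window of $\omega$. Writing $F_x = \{m : x_m = 1\}$, a direct window-by-window count gives $d^*\!\left(\bigcap_{j=1}^k (F_x - n_j)\right) \le d^*\!\left(\bigcap_{j=1}^k (E - n_j)\right)$. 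Applying the pointwise ergodic theorem to each ergodic component $\mu_y$ and the clopen indicator of $\bigcap T^{-n_j} A$ identifies $\mu_y\!\left(\bigcap T^{-n_j} A\right)$ with the density of $\bigcap (F_x - n_j)$ for $\mu_y$-a.e.\ $x$, which is bounded by $d^*\!\left(\bigcap (E - n_j)\right)$. Because $\int \mu_y(A)\, d\lambda(y) = d^*(E)$, some ergodic component $\mu_{y^*}$ satisfies $\mu_{y^*}(A) \ge d^*(E)$ together with all the configuration bounds.

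To upgrade $\mu_{y^*}(A) \ge d^*(E)$ to equality, I split on whether $\mu_{y^*}$ has atoms. If it does, ergodicity forces $\mu_{y^*}$ to be uniform on a single periodic orbit, and the orbit-closure property exhibits arbitrarily long blocks in $E$ of density $\mu_{y^*}(A)$, forcing $\mu_{y^*}(A) \le d^*(E)$ and hence equality with $A$ itself. Otherwise $\mu_{y^*}$ is non-atomic, and I replace $A$ by a measurable subset $\tilde A \subseteq A$ with $\mu_{y^*}(\tilde A) = d^*(E)$; since $\bigcap T^{-n_j} \tilde A \subseteq \bigcap T^{-n_j} A$, the configuration inequalities remain valid. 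The subtlest step in the plan is the density comparison $d^*\!\left(\bigcap (F_x - n_j)\right) \le d^*\!\left(\bigcap (E - n_j)\right)$ for $x$ in the orbit closure, which reduces to observing that any finite configuration inside $F_x$ appears, up to a shift, as a configuration inside $E$.
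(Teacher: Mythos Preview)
The paper does not supply its own proof of this theorem; it cites \cite[Proposition 3.1]{bhk} and remarks that the argument there is Furstenberg's symbolic-dynamics construction (\cite[Lemma 3.7]{furstenberg_book}) together with small modifications to secure ergodicity. Your proposal follows precisely this route: your first three steps reproduce Furstenberg's construction, and the ergodic-decomposition argument that follows is the standard modification for obtaining an ergodic component.

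Your argument is correct, but the final step is an unnecessary detour. You have already shown that for $\lambda$-a.e.\ $y$ the ergodic component $\mu_y$ satisfies $\mu_y\bigl(\bigcap_j T^{-n_j} A\bigr) \le d^*\bigl(\bigcap_j (E - n_j)\bigr)$ for every tuple $(n_1, \dots, n_k)$. Specializing to the singleton tuple $k=1$, $n_1 = 0$ yields $\mu_y(A) \le d^*(E)$ for $\lambda$-a.e.\ $y$. Since $\int \mu_y(A)\, d\lambda(y) = \mu(A) = d^*(E)$, this forces $\mu_y(A) = d^*(E)$ for $\lambda$-a.e.\ $y$. Thus almost every ergodic component already satisfies the exact equality with the original clopen set $A$, and the atomic/non-atomic case split together with the replacement $A \rightsquigarrow \tilde A$ can be dropped entirely.
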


\begin{remark}
    The formulation of the Furstenberg correspondence principle given in Theorem \ref{thm: correspondence} (with the ability to assume that the measure-preserving system $(X, \mu, T)$ is ergodic) first appeared in \cite[Proposition 3.1]{bhk}.
    The proof given in \cite{bhk} is based on an earlier argument of Furstenberg \cite[Lemma 3.7]{furstenberg_book} with only small modifications to guarantee ergodicity of the resulting system.
    For additional discussion on the Furstenberg correspondence principle and its different variants, see \cite[p. 7]{bf}.
\end{remark}

Now we wish to prove a statement about recurrence in an ergodic measure-preserving system $(X, \mu, T)$.
We will make use of the following notions from ergodic theory.

\begin{itemize}
    \item The transformation $T$ induces a unitary operator (the \emph{Koopman operator}) on $L^2(\mu)$ by $U_Tf = f \circ T$.
    \item We say that a number $\lambda \in \C$ is an \emph{eigenvalue} of the system $(X, \mu, T)$ if it is an eigenvalue of the Koopman operator $U_T$.
    That is, there exists a nonzero $f \in L^2(\mu)$ such that $U_Tf = \lambda f$.
    (We work here with $L^2(\mu)$ as a Hilbert space over the complex numbers.)
    All eigenvalues are of modulus 1, since $U_T$ is unitary.
    The number $1$ is always an eigenvalue (consider the constant function $f = 1$), and for ergodic systems, all of the eigenspaces are one-dimensional.
    If an eigenvalue $\lambda$ is a root of unity, we call $\lambda$ a \emph{rational eigenvalue}.
    \item The system $(X, \mu, T)$ is \emph{periodic} if there exists $k \in \N$ such that $T^kx = x$ for almost every $x \in X$.
    \item A system $(Y, \nu, S)$ is a \emph{factor} of $(X, \mu, T)$ if there exist full measure subsets $X_0 \subseteq X$ and $Y_0 \subseteq Y$ and a measurable surjection $\pi : X_0 \to Y_0$ such that $\pi \circ T = S \circ \pi$ and $\pi_*\mu = \nu$.
\end{itemize}

Consider now an ergodic system $(X, \mu, T)$ and a measurable set $A \subseteq X$ with $\mu(A) > 0$.
The function $\varphi(n) = \mu(A \cap T^{-n}A)$ is positive definite so can be represented as the Fourier transform of a positive measure $\nu$ on $\T$ by Herglotz's theorem; see, e.g., \cite[Chapter 4]{nadkarni}.
That is,
\begin{equation} \label{eq:spectral form of correlation sequence}
    \mu(A \cap T^{-n}A) = \int_0^1 e^{-2\pi inx}~d\nu(x)
\end{equation}
for every $n \in \Z$.
For the purposes of studying polynomial configurations in sets of positive density in $\Z$, we want to analyze the behavior of $\mu(A \cap T^{-P(n)}A)$ for a polynomial $P(x) \in \Z[x]$.
Due to the identity \eqref{eq:spectral form of correlation sequence}, the sequences $e^{2\pi i P(n)x}$ for $x \in [0,1)$ play a pivotal role in this analysis.
There is a simple dichotomy in the behavior of polynomial phase functions: if $x$ is irrational and $P$ is nonconstant, then
\begin{equation} \label{eq:uniform Weyl}
    \lim_{N-M \to \infty} \frac{1}{N-M} \sum_{n=M+1}^N e^{2\pi i P(n)x} = 0
\end{equation}
by Weyl's equidistribution theorem\footnote{Weyl proved his eponymous equidistribution theorem in a weaker form than we have stated here.
Namely, he showed
\begin{equation*}
    \lim_{N \to \infty} \frac{1}{N} \sum_{n=1}^N e^{2\pi i P(n)x} = 0,
\end{equation*}
corresponding to \emph{uniform distribution} of the sequence $P(n)x$ mod 1; see \cite[Theorem 9]{weyl}.
(The limit \eqref{eq:uniform Weyl} corresponds to \emph{well-distribution} of the sequence $P(n)x$ mod 1.)
However, the note after Theorem 9 in Weyl's paper is sufficient to deduce well-distribution.
The first source with an explicit statement of the well-distribution of polynomial sequences mod 1 appears to be the work of Lawton \cite{lawton}.}; if $x$ is rational or $P$ is constant, then $e^{2\pi i P(n)x}$ is a periodic sequence.
To distinguish these behaviors, we may decompose the measure $\nu$ as $\nu = \nu_{rat} + \nu_{te}$, where $\nu_{rat}(B) = \nu(B \cap \Q)$ and $\nu_{te}(B) = \nu(B \setminus \Q)$ for Borel subsets $B \subseteq [0,1)$.
The measure $\nu_{rat}$ is discrete, and each of its atoms corresponds to a rational eigenvalue of $T$.
(Namely, if $q$ is an atom of $\nu_{rat}$, then $e^{2\pi i q}$ is an eigenvalue of $T$.)
As an immediate consequence, if $T$ has no nontrivial rational eigenvalues and $P$ is nonconstant,
\begin{equation*}
    \lim_{N-M \to \infty} \frac{1}{N-M} \sum_{n=M+1}^N \mu(A \cap T^{-P(n)}A) = \nu(\{0\}) = \lim_{N-M \to \infty} \frac{1}{N-M} \sum_{n=M+1}^N \mu(A \cap T^{-n}A),
\end{equation*}
and the final expression on the right hand side is equal to $\mu(A)^2$ by the mean ergodic theorem.
In particular, there exist (many values of) $n \in \Z$ such that $\mu(A \cap T^{-P(n)}A) > 0$.
The presence or absence of rational eigenvalues is characterized by the dynamical property of \emph{total ergodicity}, which is usually defined using condition (i) in the following theorem.

\begin{theorem} \label{thm: TE equivalences Z}
    Let $(X, \mu, T)$ be an ergodic system.
    The following are equivalent:
    \begin{enumerate}[(i)]
        \item for any $k \in \N$, the system $(X, \mu, T^k)$ is ergodic;
        \item $(X, \mu, T)$ has no nontrivial periodic factors;
        \item ($X, \mu, T)$ has no nonzero rational eigenvalues;
        \item for any nonconstant polynomial $P(x) \in \Z[x]$ and any $f \in L^2(\mu)$,
            \begin{equation*}
                \lim_{N-M \to \infty} \frac{1}{N-M} \sum_{n=M+1}^N T^{P(n)}f = \int_X f~d\mu.
            \end{equation*}
    \end{enumerate}
\end{theorem}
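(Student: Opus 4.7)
The plan is to establish the cycle (i) $\Rightarrow$ (iii) $\Rightarrow$ (iv) $\Rightarrow$ (i) and then separately to prove (ii) $\Leftrightarrow$ (iii). The main tool throughout is the spectral theorem for the unitary Koopman operator $U_T$: for each $f \in L^2(\mu)$ there is a finite positive measure $\sigma_f$ on $\T = \R/\Z$ with $\langle U_T^n f, f\rangle = \int_0^1 e^{2\pi inx}\,d\sigma_f(x)$ for all $n \in \Z$, and atoms of $\sigma_f$ correspond exactly to eigenvalues of $U_T$ for which $f$ has nonzero projection onto the associated eigenspace. This dictionary between $T$-eigenvalues and atoms of spectral measures will be used in every step.

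I would handle the easy implications first. For (i) $\Rightarrow$ (iii), a nontrivial rational eigenvalue $\lambda = e^{2\pi i p/q} \ne 1$ with eigenfunction $f$ satisfies $U_{T^q} f = \lambda^q f = f$; by ergodicity of $T$, $|f|$ is a nonzero constant, and $f$ itself is nonconstant (otherwise $\lambda = 1$), contradicting ergodicity of $T^q$. For (iv) $\Rightarrow$ (i), specializing (iv) to the polynomial $P(n) = kn$ gives that $\frac{1}{N-M}\sum_{n=M+1}^N U_{T^k}^n f \to \int f\,d\mu$ in $L^2$ for every $f$; by von Neumann's mean ergodic theorem this limit must coincide with the projection onto $T^k$-invariant functions, so that space reduces to the constants and $T^k$ is ergodic.

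The heart of the argument is (iii) $\Rightarrow$ (iv). Writing $f = \int f\,d\mu + f_0$ with $\int f_0\,d\mu = 0$, the constant part contributes exactly $\int f\,d\mu$ to the average, so it suffices to show the averages of $U_T^{P(n)} f_0$ tend to $0$ in $L^2$. Expanding the norm and invoking the spectral identity yields
\begin{equation*}
    \left\| \frac{1}{N-M} \sum_{n=M+1}^N U_T^{P(n)} f_0 \right\|_{L^2(\mu)}^2 = \int_0^1 \left| \frac{1}{N-M} \sum_{n=M+1}^N e^{2\pi i P(n) x} \right|^2 d\sigma_{f_0}(x).
\end{equation*}
Because $f_0$ is orthogonal to the constants, $\sigma_{f_0}$ places no mass at $x = 0$; assumption (iii) moreover rules out atoms at every nonzero rational $x \in [0,1)$. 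Weyl's theorem in the well-distributed form \eqref{eq:uniform Weyl} drives the inner sum to $0$ for every irrational $x$, while the integrand is bounded by $1$. Since $\sigma_{f_0}$-almost every $x$ is irrational, dominated convergence closes the argument.

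Finally, (ii) $\Leftrightarrow$ (iii) reflects the correspondence between periodic factors and rational spectrum. Given a nontrivial factor $(Y,\nu,S)$ with $S^k = \mathrm{id}$ $\nu$-a.e., the image of $L^2(\nu)$ in $L^2(\mu)$ is a $U_T$-invariant subspace on which $U_T^k = I$; hence $U_T$ decomposes there into eigenspaces for $k$-th roots of unity, and non-triviality of the factor provides a nontrivial such eigenvalue. Conversely, given a nontrivial rational eigenvalue with unit-modulus eigenfunction $f$ (constant modulus coming from ergodicity), the $T$-invariant $\sigma$-algebra generated by $f$ produces a factor conjugate to rotation by a rational angle on a finite cyclic group, hence periodic. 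The main obstacle in the whole argument is the spectral step in (iii) $\Rightarrow$ (iv), specifically the verification that the hypothesis on rational eigenvalues of $T$ translates into the absence of rational atoms in every spectral measure $\sigma_f$; once this translation is in place, the dominated convergence argument driven by Weyl's theorem is clean.
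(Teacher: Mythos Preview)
Your proposal is correct. The paper does not give a standalone proof of this $\Z$-case theorem, treating it as classical background; however, it proves the generalization to rings of integers of global fields (Theorem~\ref{thm: TE}) in Section~\ref{sec:TE polynomial}, and your argument is essentially the specialization of that proof to $\Z$. The only differences are organizational: the paper establishes the cycle (i) $\Rightarrow$ (ii) $\Rightarrow$ (iii) $\Rightarrow$ (i) and then separately (iv) $\Leftrightarrow$ (iii), whereas you run (i) $\Rightarrow$ (iii) $\Rightarrow$ (iv) $\Rightarrow$ (i) with (ii) $\Leftrightarrow$ (iii) on the side; and the paper invokes the spectral theorem in the form of multiplication operators on $L^2(\hat{\Z},\sigma)$ while you compute the $L^2$-norm via the scalar spectral measure $\sigma_{f_0}$. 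Both routes reduce (iii) $\Rightarrow$ (iv) to Weyl's equidistribution theorem plus dominated convergence, so the substance is the same.
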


Now, in the case that $T$ is not totally ergodic, the measure $\nu_{rat}$ takes the form $\nu_{rat} = \mu(A)^2 \delta_0 + \sum_j c_j \delta_{q_j}$ for some finite or countably infinite sequence of rational points $q_j \in (0,1)$, where the coefficients $c_j > 0$ satisfy $\sum_j c_j = \nu_{rat}([0,1)) - \mu(A)^2 \le \nu([0,1)) - \mu(A)^2 = \mu(A) - \mu(A)^2$.
Let $\eps > 0$ and take $J$ sufficiently large so that $\sum_{j>J}c_j < \eps$.
Then, up to an $\eps$ approximation, we have
\begin{equation*}
    \mu(A \cap T^{-P(n)}A) \approx \mu(A)^2 + \sum_{j=1}^J c_j e^{2\pi i P(n)q_j} + \int_0^1 e^{2\pi i P(n)x}~d\nu_{te}(x).
\end{equation*}
In light of Weyl's equidistribution theorem, averaging over $n \in \Z$ eliminates the last term, so to prove positivity of the expression $\mu(A \cap T^{-P(n)}A)$, we want to establish positivity of the term
\begin{equation*}
    \sum_{j=1}^J c_j e^{2\pi i P(n)q_j}
\end{equation*}
for a large set of $n \in \Z$.
This is achievable under the following assumption about the polynomial $P$:

\begin{definition} \label{defn: intersective Z}
    A polynomial $P(x) \in \Z[x]$ is called \emph{intersective} if $P$ has a root mod $m$ for every $m \in \N$.
\end{definition}

Supposing that $P$ is intersective and taking $m$ to be a common denominator for all of the numbers $q_j$, $j=1, \dots, J$, we may find $r \in \N$ such that $P(mn+r)q_j \in \Z$ for every $n \in \Z$ and $j \in \{1, \dots, J\}$.
Then
\begin{equation*}
    \lim_{N-M \to \infty} \frac{1}{N-M} \sum_{n=M+1}^N \mu(A \cap T^{-P(mn+r)}A) \ge \mu(A)^2 + \sum_{j=1}^J c_j - \eps \ge \mu(A)^2 - \eps.
\end{equation*}
The argument sketched above leads to the following ``necessary and sufficient'' form of the Furstenberg--S\'{a}rk\"{o}zy theorem:

\begin{theorem} \label{thm: necessary and sufficient FS}
    Let $P(x) \in \Z[x]$.
    The following are equivalent:
    \begin{enumerate}[(i)]
        \item $P$ is intersective;
        \item for any measure-preserving system $\left( X, \mu, T \right)$ and any $A \subseteq X$ with $\mu(A) > 0$, there exists $n \in \Z \setminus \{0\}$ such that
        \begin{equation*}
            \mu \left( A \cap T^{-P(n)}A \right) > 0.
        \end{equation*}
        \item for any measure-preserving system $\left( X, \mu, T \right)$, any $A \subseteq X$ with $\mu(A) > 0$, and any $\eps > 0$, the set
        \begin{equation*}
            \left\{ n \in \Z : \mu \left( A \cap T^{-P(n)}A \right) > \mu(A)^2 - \eps \right\}
        \end{equation*}
        is syndetic.
        \item for any set $E \subseteq \Z$ with $d^*(E) > 0$, there exist $x, y \in E$ and $n \in \Z \setminus \{0\}$ such that $x - y = P(n)$.
    \end{enumerate}
\end{theorem}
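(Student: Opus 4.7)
The plan is to prove the cycle $(i) \Rightarrow (iii) \Rightarrow (ii) \Rightarrow (iv) \Rightarrow (i)$, with $(i) \Rightarrow (iii)$ carrying the main content and the others being short. For $(iii) \Rightarrow (ii)$: choose $\eps < \mu(A)^2$ so the resulting syndetic set is nonempty, and extract $n \ne 0$ from its infinitude. For $(ii) \Rightarrow (iv)$: apply the Furstenberg correspondence principle (Theorem \ref{thm: correspondence}) to obtain $d^*(E \cap (E - P(n))) \ge \mu(A \cap T^{-P(n)} A) > 0$ for some $n \ne 0$, which yields $x, y \in E$ with $x - y = P(n)$. For $(iv) \Rightarrow (i)$: argue contrapositively. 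If $P$ is not intersective, fix $m$ with $P(n) \not\equiv 0 \pmod m$ for every $n \in \Z$; then $E = m\Z$ has positive density $1/m$, and $E - E = m\Z$ contains no $P(n)$. Non-intersectivity also forbids integer roots of $P$, so the degenerate case $x = y$ cannot arise.

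The substantive step is $(i) \Rightarrow (iii)$, which formalizes the sketch preceding the theorem. After reducing to ergodic $(X,\mu,T)$ via ergodic decomposition, invoke the spectral representation
\begin{equation*}
    \mu(A \cap T^{-n} A) = \int_0^1 e^{-2\pi i n x}\, d\nu(x),
\end{equation*}
split $\nu = \nu_{rat} + \nu_{te}$ as in the paper, and write $\nu_{rat} = \mu(A)^2 \delta_0 + \sum_j c_j \delta_{q_j}$. Given $\eps > 0$, truncate so that $\sum_{j > J} c_j < \eps/4$ and let $m$ be the least common multiple of the denominators of $q_1, \ldots, q_J$. Intersectivity supplies $r \in \Z$ with $m \mid P(r)$, and since polynomials respect congruences, $m \mid P(mk+r)$ for every $k \in \Z$, forcing each phase $e^{-2\pi i P(mk+r) q_j}$ with $j \le J$ to equal $1$. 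Along the progression $r + m\Z$ this gives
\begin{equation*}
    \mu(A \cap T^{-P(mk+r)} A) \ge \mu(A)^2 - \eps/2 + \int_0^1 e^{-2\pi i P(mk+r) x}\, d\nu_{te}(x),
\end{equation*}
and the final integral has mean $0$ in $k$ by Weyl's equidistribution theorem combined with dominated convergence, since $\nu_{te}$ is supported on the irrationals.

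The main obstacle is upgrading this averaged positivity to genuine syndeticity of the set $\{n : \mu(A \cap T^{-P(n)} A) > \mu(A)^2 - \eps\}$. The cleanest route is through the Kronecker factor. Decompose $1_A = g + h$ with $g$ in the closed span of eigenfunctions; the Kronecker correlation $\langle g, U_T^n g\rangle = \int e^{-2\pi i n x}\, d\nu_{rat}(x)$ is Bohr almost periodic (a uniformly convergent series of rational trigonometric characters), so its super-level set at $\mu(A)^2 - \eps/3$ contains a Bohr set, and intersectivity of $P$ forces the $P$-pullback of this Bohr set to be syndetic in $\Z$ (the same congruence-trivialization argument as above applied to the frequencies appearing in the Bohr description). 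For the weakly mixing contribution, the spectral measure of $h$ is atomless, so $\frac{1}{N}\sum_{n=1}^N |\langle h, U_T^{P(n)} h\rangle|^2 \to 0$; localizing this cancellation inside each residue class mod $m$ via a standard van der Corput argument shows that the exceptional set is density-zero within every AP making up the syndetic pullback. Intersecting the Kronecker-good Bohr pullback with the complement of this thin exceptional set (while preserving syndeticity via the explicit congruence structure of the Bohr pullback) produces the desired syndetic set of $n$ on which $\mu(A \cap T^{-P(n)} A) > \mu(A)^2 - \eps$.
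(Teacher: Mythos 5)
Your cycle of implications and the easy steps ((iii)$\Rightarrow$(ii), (ii)$\Rightarrow$(iv) via the correspondence principle, (iv)$\Rightarrow$(i) via $E=m\Z$) are correct and match the paper, which proves this theorem only in outline (the sketch preceding the statement plus Remark \ref{rem: necessary and sufficient FS}). The first half of your (i)$\Rightarrow$(iii) argument also matches that sketch. The problem is your final paragraph. First, it is not needed: the averaged positivity you establish is a \emph{uniform} Ces\`{a}ro statement, because Weyl's theorem in the form \eqref{eq:uniform Weyl} gives well-distribution over all windows $[M+1,N]$ with $N-M\to\infty$, and dominated convergence preserves this uniformity. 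So for every sufficiently long window of $k$'s the average of $\mu(A\cap T^{-P(mk+r)}A)$ exceeds $\mu(A)^2-2\eps$; a window containing no $k$ with $\mu(A\cap T^{-P(mk+r)}A)>\mu(A)^2-2\eps$ would violate this, hence the good set of $k$ has bounded gaps, and its image under $k\mapsto mk+r$ is syndetic in $\Z$. That is the entire ``upgrade.'' Second, the replacement argument you give instead is broken: the closed span of eigenfunctions (Kronecker factor) carries \emph{all} atoms of $\nu$, including irrational ones, so either $\langle g, U_T^n g\rangle=\int e^{-2\pi inx}\,d\nu_{rat}(x)$ is false (if $g$ is the full Kronecker projection, its correlation involves irrational frequencies, to which the congruence-trivialization of $P(mk+r)q_j$ does not apply), or else the spectral measure of $h$ is \emph{not} atomless (if $g$ captures only rational eigenfunctions, $h$ retains the irrational atoms and $\frac1N\sum_n|\langle h,U_T^{P(n)}h\rangle|^2$ need not tend to $0$: its limit is $(\nu_{te}\times\nu_{te})\{(x,y):x-y\in\Q\}$, which is positive when $\nu_{te}$ has an atom). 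Either way one of your two halves fails.

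A secondary issue: reducing (iii) to the ergodic case by ergodic decomposition is delicate, since the syndetic set produced for each ergodic component depends on the component and these sets cannot be intersected. The standard fix is to skip the reduction entirely: Herglotz applies to any measure-preserving system, and the mean ergodic theorem together with Cauchy--Schwarz gives $\nu(\{0\})=\|P_{\mathrm{inv}}\ind_A\|_{L^2}^2\ge\mu(A)^2$ without ergodicity, which is all the argument uses. With these two repairs (drop the Kronecker detour in favor of the well-distribution observation, and drop the ergodic decomposition in favor of the inequality $\nu(\{0\})\ge\mu(A)^2$), your proof is correct and coincides with the paper's intended argument.
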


\begin{remark} \label{rem: necessary and sufficient FS}
    The observation that intersectivity plays a crucial role in the recurrence properties of polynomial actions is due to Kamae and Mend\`{e}s-France \cite{km}, who proved that the set of values of an intersective polynomial is a \emph{van der Corput set}.
    We will not need the notion of van der Corput sets elsewhere in the paper and so do not give its formal definition.
    The important property for our present discussion (which was also observed by Kamae and Mend\`{e}s-France) is that the set of values $\{P(n) : n \in \Z\}$ being a van der Corput set implies that item (iv) holds.

    The equivalence between (ii) and (iv) can be deduced as a straightforard application of the Furstenberg correspondence principle (Theorem \ref{thm: correspondence}), and the upgrade from (ii) to the \emph{a priori} stronger statement in (iii) follows easily from the method of proof outlined above.
\end{remark}

One of the goals of this paper is to establish a version of Theorem \ref{thm: necessary and sufficient FS} in the setting of rings of integers of global fields.
In the infinite characteristic case (rings of integers of number fields), there is a fairly direct extension, which was carried out in \cite{br}.
The finite characteristic setting (rings of integers of global function fields) is more subtle.
Nevertheless, we obtain extensions that are valid for all global fields.
These are necessarily more cumbersome than the results in the special cases of the integers and rings of integers of number fields, so we provide comments throughout on the simplifications that arise from having infinite characteristic.
It is useful to keep in mind the ring $\F_p[t]$ as a motivating example for statements involving the characteristic.

Let $\mO_K$ be the ring of integers of a global field $K$.
By a \emph{measure-preserving $\mO_K$-system}, we will mean a tuple $(X, \mu, (T_n)_{n \in \mO_K})$ such that $(X, \mu)$ is a standard probability space and $(T_n)_{n \in \mO_K}$ is an action of the group $(\mO_K,+)$ by measure-preserving transformations.
The system $(X, \mu, (T_n)_{n \in \mO_K})$ is \emph{ergodic} if every $(T_n)_{n \in \mO_K}$-invariant set has measure zero or one.
We say that a measure-preserving $\mO_K$-system $\left( X, \mu, (T_n)_{n \in \mO_K} \right)$ is \emph{periodic} if there exists $m \in \mO_K \setminus \{0\}$ such that $T_{mn+r} = T_r$ for every $n, r \in \mO_K$.
In this case, we say that $m$ is a \emph{period} of the system.
Note that the collection of all periods (together with 0) forms an ideal of $\mO_K$.
The ring $\mO_K$ is not necessarily a principal ideal domain, so it does not make sense in this context to describe an element $m \in \mO_K \setminus \{0\}$ as \emph{the} period of a system.
Instead, we describe the periodicity of an $\mO_K$-system in terms of its ideal of periods.

In the case $\mO_K = \Z$, every ergodic action on a finite set is periodic, and the same remains true for rings of integers of number fields.
However, when $K$ is a global function field of characteristic $p$, the group $(\mO_K,+)$ may act ergodically on a finite set without being periodic.
Namely, if $H \le (\mO_K,+)$ is a finite index subgroup that is not an ideal, then the natural action of $\mO_K$ on the finite set $\mO_K/H$ is not periodic in the sense defined above.
If $\mO_K = \F_p[t]$, one can take as an example the additive subgroup $H = \{\sum_j a_jt^j \in \F_p[t] : a_1 = 0\}$.
For additional discussion, see Remark \ref{rem: total ergodicity} below.

A nonzero function $f \in L^2(\mu)$ is an \emph{eigenfunction} of the system $(X, \mu, (T_n)_{n \in \mO_K})$ if there is a map $\chi : \mO_K \to S^1$ such that $f \circ T_n = \chi(n)f$ for every $n \in \mO_K$.
Since $(T_n)_{n \in \mO_K}$ is a group action, the map $\chi$ is necessarily a group character, which we call the \emph{eigencharacter} associated to $f$. 
We say that a character $\chi : \mO_K \to S^1$ is \emph{rational} if it is periodic in that sense that there exists $m \in \mO_K \setminus \{0\}$ such that $\chi(mn+r) = \chi(r)$ for every $n, r \in \mO_K$.
If $\chi$ is not rational, we say it is \emph{irrational}.

A first step towards generalizing Theorem \ref{thm: necessary and sufficient FS} to the setting of rings of integers of global fields is to observe that, as described in Theorem \ref{thm: TE equivalences Z} for actions of $\Z$, polynomial ergodic averages (in the low degree/high characteristic regime) are controlled by the rational eigencharacters of the underlying system.
We introduce the following terminology and notation for discussing averages in groups.
A \emph{F{\o}lner sequence} in $(\mO_K,+)$ is a sequence of finite subsets $(\Phi_N)_{N \in \N}$ of $\mO_K$ such that for every $n \in \mO_K$,
\begin{equation*}
    \lim_{N \to \infty} \frac{|\Phi_N \triangle (\Phi_N + n)|}{|\Phi_N|} = 0.
\end{equation*}
If $(x_n)_{n \in \mO_K}$ is a sequence indexed by $\mO_K$ in a topological vector space $V$, we say that $(x_n)_{n \in \mO_K}$ has \emph{uniform Ces\`{a}ro limit} equal to $x \in V$, denoted by $\UClim_{n \in \mO_K} x_n = x$, if for every F{\o}lner sequence $(\Phi_N)_{N \in \N}$ in $\mO_K$,
\begin{equation*}
    \lim_{N \to \infty} \frac{1}{|\Phi_N|} \sum_{n \in \Phi_N} x_n = x.
\end{equation*}

\begin{theorem} \label{thm: TE}
    Let $K$ be a global field with ring of integers $\mO_K$.
	Suppose $(T_n)_{n \in \mO_K}$ is an ergodic measure-preserving action of $(\mO_K, +)$ on a probability space $(X, \mu)$.
	The following are equivalent:
	\begin{enumerate}[(i)]
		\item	for any nonzero ideal $\{0\} \ne I \le \mO_K$, the system $\left( X, \mu, (T_n)_{n \in I} \right)$ is ergodic;
		\item	$\left( X, \mu, (T_n)_{n \in \mO_K} \right)$ has no nontrivial periodic factors;
		\item	$\left( X, \mu, (T_n)_{n \in \mO_K} \right)$ has no nonzero rational eigencharacters;
		\item	for any nonconstant polynomial $P(x) \in \mO_K[x]$ of degree $\deg{P} < \textup{char}(K)$ and any $f \in L^2(\mu)$,
			\begin{equation} \label{eq:polynomial avg proj}
				\UClim_{n \in \mO_K} T_{P(n)} f = \int_X f~d\mu.
			\end{equation}
	\end{enumerate}
\end{theorem}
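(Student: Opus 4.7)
The plan is to establish the four-way equivalence via the spectral chain (i) $\Leftrightarrow$ (ii) $\Leftrightarrow$ (iii) (which mirrors the $\Z$ case of Theorem \ref{thm: TE equivalences Z} adapted to the LCA group $(\mO_K, +)$) together with the pair (iii) $\Leftrightarrow$ (iv) (the genuine dynamical content). For the spectral chain: (i) $\Rightarrow$ (ii) is contrapositive, since a nontrivial periodic factor with period ideal $I$ is $(T_n)_{n \in I}$-invariant pointwise and so witnesses non-ergodicity of $(T_n)_{n \in I}$; (ii) $\Rightarrow$ (iii) follows because the factor generated by a nontrivial rational eigenfunction is periodic, with period ideal equal to that of its eigencharacter; and (iii) $\Rightarrow$ (i) proceeds by taking a nonconstant $(T_n)_{n \in I}$-invariant function $f \in L^2(\mu)$ (produced by the mean ergodic theorem on $I$) and spectrally decomposing the resulting finite-dimensional $\mO_K/I$-representation on the $\mO_K$-orbit of $f$ to extract a nontrivial eigencharacter of $\mO_K$ that factors through $\mO_K/I$ and is thus rational. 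The implication (iv) $\Rightarrow$ (iii) is also immediate: for a hypothetical nontrivial rational eigencharacter $\chi$ with period ideal $I$ and mean-zero eigenfunction $f$, taking $P(x) = mx$ for any $m \in I \setminus \{0\}$ gives $T_{P(n)}f = \chi(mn)f = f$, so $\UClim_{n \in \mO_K} T_{P(n)}f = f \ne 0 = \int f\,d\mu$, contradicting (iv) (with $\deg P = 1 < \char(K)$ always).

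The heart of the proof is (iii) $\Rightarrow$ (iv). Via the SNAG spectral theorem for the Koopman representation $n \mapsto U_{T_n}$ of the LCA group $(\mO_K, +)$ on $L^2(\mu)$, each mean-zero $f \in L^2(\mu)$ carries a spectral measure $\sigma_f$ on the Pontryagin dual $\hat{\mO_K}$, supported on irrational characters by (iii). Expanding the squared norm of the Ces\`{a}ro partial average of $T_{P(n)} f$ along a F{\o}lner sequence as a double average of correlations $\langle T_{P(m) - P(n)} f, f \rangle$, applying the spectral representation, and using dominated convergence reduces (iv) to the Weyl-type equidistribution statement
\begin{equation*}
    \UClim_{n \in \mO_K} \chi(P(n)) = 0
\end{equation*}
for every irrational $\chi \in \hat{\mO_K}$ and every nonconstant $P \in \mO_K[x]$ with $\deg P < \char(K)$. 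This I prove by induction on $d = \deg P$ via van der Corput / Weyl differencing in $(\mO_K, +)$: the identity $\chi(P(n+h)) \overline{\chi(P(n))} = \chi(\Delta_h P(n))$ with $\Delta_h P$ of degree $d - 1$ and leading coefficient $d \cdot a_d \cdot h$ (nonzero in the domain $\mO_K$ precisely because $d < \char(K)$ and $h \ne 0$) reduces to the case $d = 1$, in which $\chi \circ P$ is itself a nontrivial character of $(\mO_K, +)$ with vanishing uniform Ces\`{a}ro mean by standard equidistribution of nontrivial characters on the amenable group $(\mO_K, +)$.

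The main obstacle I anticipate lies in the Weyl induction: to pass from degree $d$ to degree $d-1$, one needs the differenced character $\chi \circ \Delta_h P$ to remain \emph{irrational}---not merely nontrivial---for a set of shifts $h$ of full uniform density, since only irrationality triggers the inductive vanishing. Showing that the exceptional locus of ``bad'' $h$ (for which $\chi \circ \Delta_h P$ happens to be rational) is contained in a proper subgroup of $(\mO_K, +)$ of zero uniform density will require the leading-coefficient nonvanishing ensured by $d < \char(K)$ together with a structural analysis of kernels of irrational characters---behavior that differs between the number-field case (where $(\mO_K, +)$ is a finitely generated free abelian group) and the global function field case (where $(\mO_K, +)$ is an infinite-dimensional $\F_p$-vector space). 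Managing the PET-style Weyl bookkeeping along F{\o}lner sequences uniformly across both cases is the delicate part, though the hypothesis $\deg P < \char(K)$ is exactly what rules out the degeneracies caused by $d!$ vanishing in $\mO_K$.
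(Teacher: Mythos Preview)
Your spectral chain (i)$\Leftrightarrow$(ii)$\Leftrightarrow$(iii) and the implication (iv)$\Rightarrow$(iii) match the paper's argument essentially verbatim, and both you and the paper reduce (iii)$\Rightarrow$(iv) via the spectral theorem to the Weyl-type statement $\UClim_{n}\chi(P(n))=0$ for every irrational $\chi$ and every nonconstant $P$ with $\deg P<\char(K)$. The genuine divergence is in how that equidistribution statement is proved. The paper does \emph{not} run a van der Corput induction directly on $\mO_K$; instead it fixes an integral basis, writes $P$ in coordinates as a tuple of algebraically independent polynomials over $\Z$ (number field case, Proposition~\ref{prop: number field irrational equidistribution}) or over $\F_q[t]$ (function field case, Lemmas~\ref{lem: rational character rational element} and~\ref{lem: algebraically independent}), and then invokes external multivariable equidistribution theorems---classical Weyl in the former case and the machinery of \cite{bl} (Theorem~\ref{thm: multivariable equidistribution}) in the latter. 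Your direct van der Corput approach is more elementary and self-contained for the low-degree regime; the paper's route, though heavier, has the advantage of being immediately reusable for the high-degree Theorem~\ref{thm: strongly ergodic}, where the \cite{bl} structural results are genuinely required.

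One correction: the ``main obstacle'' you describe is not an obstacle at all, and your formulation of it is slightly confused. In the inductive step you are applying the hypothesis to the \emph{same} irrational character $\chi$ and the lower-degree polynomial $\Delta_h P$; there is no new character ``$\chi\circ\Delta_h P$'' whose irrationality must be checked. What you need is simply that $\Delta_h P$ is nonconstant, and the hypothesis $\deg P<\char(K)$ guarantees exactly this for every $h\ne 0$ (leading coefficient $d\,a_d\,h\ne 0$ in the domain $\mO_K$). The only exceptional shift is $h=0$, a single point of zero uniform density, so the van der Corput inequality closes immediately. The base case $d=1$ likewise needs only that $n\mapsto\chi(an)$ is a \emph{nontrivial} character for $a\ne 0$, which follows because a character trivial on the nonzero ideal $a\mO_K$ would factor through the finite quotient $\mO_K/a\mO_K$ and hence be rational. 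So your proof works and is in fact cleaner than you feared; the ``structural analysis of kernels of irrational characters'' you anticipated is unnecessary.
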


We remind the reader that we use the convention that $\textup{char}(K) = \infty$ when $K$ is a number field, so the condition $\deg{P} < \textup{char}(K)$ appearing in item (iv) is trivially satisfied for every polynomial $P$ in the number field setting.

We will prove Theorem \ref{thm: TE} in Section \ref{sec:TE polynomial}.

\begin{definition} \label{defn: TE}
    We say that a system $(X, \mu, (T_n)_{n \in \N})$ is \emph{ergodic along ideals} if it satisfies the properties in Theorem \ref{thm: TE}.
\end{definition}

When considering limits of polynomial ergodic averages of the form $\UClim_{n \in \mO_K} T_{P(n)}f$ for polynomials of high degree ($\deg{P} \ge \textup{char}(K)$), the presence of certain irrational eigencharacters can contribute in unexpected ways, and \eqref{eq:polynomial avg proj} may fail even in systems without rational eigencharacters.
This is illustrated by the following example.

\begin{example} \label{eg: weakly rational}
    Let $b = (b_j)_{j=0}^{\infty}$ be the sequence
    \begin{equation*}
        b_j = \begin{cases}
            \floor{j\sqrt{2}} \bmod{3}, & \text{if}~3 \nmid j; \\
            0, & \text{if}~3 \mid j~\text{and}~j \ne 0; \\
            1, & \text{if}~j = 0.
        \end{cases}
    \end{equation*}
    (The exact form of $b$ is not important for the present example.
    The relevant property is that $b$ is an aperiodic sequence that is eventually periodic along the subsequence indexed by multiples of $3$.)
    Consider the character $\chi : \F_3[t] \to S^1$ defined by
    \begin{equation*}
        \chi \left( \sum_{j=0}^k a_j t^j \right) = e \left( \frac{\sum_{j=0}^k a_jb_j}{3} \right),
    \end{equation*}
    where $e(x) = e^{2\pi ix}$.
    The aperiodicity of the sequence $b$ means that $\chi$ is an irrational character, and hence the action of $(\F_3[t],+)$ on the cube roots of unity $C_3$ given by $T_n \zeta = \chi(n)\zeta$ is ergodic along ideals.
    However, for any $n = \sum_{j=0}^k a_jt^j \in \F_3[t]$, we have
    \begin{equation*}
        \chi(n^{6}) = \chi \left( (n^2)^3 \right) = \chi \left( \sum_{i,j=0}^k a_i a_j t^{3(i+j)} \right) = e \left( \frac{\sum_{i,j=0}^k a_i a_j b_{3(i+j)}}{3} \right) = e \left( \frac{a_0^2}{3} \right).
    \end{equation*}
    Thus, rather than obtaining the limiting behavior in \eqref{eq:polynomial avg proj}, we have
    \begin{equation*}
        \UClim_{n \in \F_3[t]} T_{n^6}f(\zeta) = \frac{1}{3} f(\zeta) + \frac{2}{3} f \left( e^{2\pi i/3} \zeta \right)
    \end{equation*}
    for $f : C_3 \to \C$.
\end{example}

In order to obtain a valid result dealing with high degree polynomials, we need to understand and identify the crucial properties underlying situations similar to the preceding example.

\begin{definition} \label{defn: additive}
    A polynomial $\eta(x) \in \mO_K[x]$ is \emph{additive} if $\eta(x+y) = \eta(x) + \eta(y)$ for every $x, y \in \mO_K$.
\end{definition}

In characteristic $p$, additive polynomials are exactly the polynomials of the form $\eta(x) = \sum_{j=0}^N c_j x^{p^j}$ for $N \ge 0$ and $c_j \in \mO_K$.
We say that a character $\chi : \mO_K \to S^1$ is \emph{weakly rational} if there exists an additive polynomial $\eta(x) \in \mO_K[x]$ such that $\chi \circ \eta$ is a nontrivial rational character.
The character in Example \ref{eg: weakly rational} is an example of a weakly rational character, with $n \mapsto \chi(n^3)$ being rational.
A nontrivial character is \emph{strongly irrational} if it is not weakly rational.
We say that a system $(X, \mu, (T_n)_{n \in \mO_K})$ is \emph{strongly ergodic} if it is ergodic and has no nontrivial weakly rational eigencharacters.

\begin{theorem} \label{thm: strongly ergodic}
    Let $K$ be a global field with ring of integers $\mO_K$.
    Let $(X, \mu, (T_n)_{n \in \N})$ be an ergodic $\mO_K$-system.
    The following are equivalent:
    \begin{enumerate}[(i)]
        \item $(X, \mu, (T_n)_{n \in \N})$ is strongly ergodic;
        \item for any polynomial $P(x) \in \mO_K[x]$ with $P(0) = 0$ and any $f \in L^2(\mu)$,
			\begin{equation*}
				\UClim_{n \in \mO_K} T_{P(n)} f = \pi_P(f),
			\end{equation*}
			where $\pi_P$ is the orthogonal projection onto the space $\Hil_P = \{g \in L^2(\mu) : T_{P(n)} g = g~\text{for all}~n \in \mO_K\}$.
    \end{enumerate}
\end{theorem}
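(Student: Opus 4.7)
The plan is to prove the two implications separately using spectral decomposition.

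For the direction (ii)$\implies$(i), I would argue by contrapositive. Suppose there is a nontrivial weakly rational eigencharacter $\chi$, witnessed by an additive polynomial $\eta \in \mO_K[x]$ such that $\psi := \chi \circ \eta$ is a nontrivial rational character, factoring through $\mO_K/I$ for some nonzero ideal $I \le \mO_K$. Let $f$ be a corresponding unit eigenfunction. For any $Q \in \mO_K[x]$ with $Q(0) = 0$, the composition $P := \eta \circ Q$ satisfies $P(0) = 0$, and
\[
    \UClim_{n \in \mO_K} T_{P(n)} f \;=\; \UClim_{n \in \mO_K} \psi(Q(n))\,f \;=\; \frac{1}{|\mO_K/I|} \sum_{m \in \mO_K/I} \psi(\bar Q(m)) \cdot f,
\]
where $\bar Q$ denotes the reduction of $Q$ modulo $I$. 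A short orthogonality argument using unitarity of the Koopman operator shows that whenever $\chi \circ P$ is not identically $1$, $f$ is orthogonal to $\Hil_P$ and hence $\pi_P(f) = 0$. It therefore suffices to exhibit $Q$ with $Q(0) = 0$ for which the finite character sum above is nonzero while $\psi \circ \bar Q$ is nonconstant. Example \ref{eg: weakly rational} provides this concretely for $\eta(x) = x^3$ and $Q(x) = x^2$; in general, a short Gauss-sum-style argument over the finite principal ideal ring $\mO_K/I$ produces such a $Q$ for any nontrivial rational character $\psi$.

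For the direction (i)$\implies$(ii), I would follow the spectral strategy used to prove Theorem \ref{thm: TE}. First I would reduce to the Kronecker factor $\mathcal{K} \subseteq L^2(\mu)$ spanned by eigenfunctions: a van der Corput / PET-type argument gives $\UClim_{n} T_{P(n)} f = 0$ for $f \perp \mathcal{K}$, and $\Hil_P \subseteq \mathcal{K}$ because the $\mO_K$-action restricted to $\Hil_P$ is trivial on the subgroup generated by $\{P(n)\}$ and so decomposes into characters. On $\mathcal{K}$, write $f = \sum_\chi f_\chi$ as an orthogonal sum of eigenfunctions, so that $T_{P(n)} f = \sum_\chi \chi(P(n)) f_\chi$. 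The crux is then the following equidistribution lemma: if $\chi$ is strongly irrational, then $\UClim_{n \in \mO_K} \chi(P(n)) = 0$ unless $\chi \circ P \equiv 1$. Granting this, the only surviving eigenfunctions in the limit are those with $\chi \circ P \equiv 1$, which are precisely the $f_\chi$ lying in $\Hil_P$; summing these gives $\pi_P(f)$, as desired.

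The main obstacle is the equidistribution lemma above. In the number field case it reduces to a classical Weyl-type statement and should be routine. The genuine difficulty appears in positive characteristic, where iterates of Frobenius produce a rich supply of additive polynomials and correspondingly many ways in which $\chi(P(n))$ can fail to equidistribute---it is precisely these obstructions that the notion of strong irrationality is designed to exclude. I anticipate proving the lemma by induction on $\deg P$ via a van der Corput inequality, replacing $P(n)$ by a difference polynomial $P(n+h) - P(n)$ of strictly lower degree and reducing eventually to the case when $P$ is additive, where $\chi \circ P$ is itself a character and the statement is immediate. The delicate step will be tracking how additive polynomials arise through iterated differences of $P$ so that the strong irrationality of $\chi$ can be invoked correctly at each stage of the induction.
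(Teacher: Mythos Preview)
For (i)$\implies$(ii), your route differs from the paper's in two respects. First, reducing to the Kronecker factor via van der Corput is an unnecessary detour for a \emph{single} polynomial average: the paper applies the spectral theorem for the unitary $(\mO_K,+)$-action directly, reducing (ii) to the pointwise claim that $\UClim_n \chi(P(n))$ equals $1$ when $\chi\circ P\equiv 1$ and $0$ otherwise, for $\sigma$-almost every $\chi$. Your side claim $\Hil_P\subseteq\mathcal{K}$ is not needed either, and its justification is shaky since the subgroup generated by $\{P(n):n\in\mO_K\}$ can have infinite index. Second, for the equidistribution lemma itself (Theorem~\ref{thm: Weyl rings of integers}), the paper does not argue by van der Corput induction but by a structural decomposition: pass to $\F_q[t]^d$-coordinates via an integral basis, write the resulting multivariable polynomial as $\sum_j \eta_j(\bm{n}^{\bm{r}_j})$ with $\eta_j$ additive and $\bm{n}^{\bm{r}_j}$ separable, check that strong irrationality of $\chi$ forces each $\eta_j$ to be irrational in the sense of Definition~\ref{defn: irrational element}, and then invoke the multivariate equidistribution theorem of~\cite{bl} (Theorem~\ref{thm: multivariable equidistribution}). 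Your iterated-differencing plan might be workable, but the correct induction parameter in positive characteristic is derivational degree rather than degree, and the base case is not a single additive polynomial but a sum of additive polynomials composed with separable monomials---precisely the situation the~\cite{bl} machinery is designed for.

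Your (ii)$\implies$(i) argument has a genuine gap. You propose $P=\eta\circ Q$ and a ``Gauss-sum-style argument'' to find $Q$ with $Q(0)=0$ for which $\sum_{m\in\mO_K/I}\psi(\bar Q(m))\ne 0$ while $\psi\circ\bar Q$ is nonconstant. But if the nontrivial rational character $\psi$ factors through a quotient isomorphic to $\F_2$, no such $Q$ exists: $\bar Q$ is determined by $\bar Q(1)\in\{0,1\}$, and the two options give respectively $\psi\circ\bar Q$ constant or a character sum $1+(-1)=0$. Example~\ref{eg: weakly rational} succeeds only because there the period has three elements. The paper's one-line proof merely asserts that the argument for Theorem~\ref{thm: TE} transfers; but in that proof the contrapositive used $P(n)=mn$, which here makes $\chi\circ P$ identically~$1$ and hence---since the target is now $\pi_P(f)$ rather than $\int f$---no longer gives a contradiction. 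So this direction requires an idea that neither your proposal nor the paper's brief remark actually supplies.
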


Note that condition (ii) in Theorem \ref{thm: strongly ergodic} differs from condition (iv) in Theorem \ref{thm: TE} in that there is no restriction on the degree of the polynomial $P$ but $P$ must have zero constant term.
The cost of upgrading to the high degree regime is the requirement of strong ergodicity and that we only obtain a projection $\pi_P(f)$ rather than the integral $\int_X f~d\mu$.
The phenomenon that a polynomial ergodic average converges to a nontrivial projection resembles the polynomial ergodic theorem for actions of fields due to Larick (see \cite{larick} and \cite[Theorem 3.10]{blm}).

\begin{remark} \label{rem: total ergodicity}
    For $\Z$-systems (and, more generally, for $\mO_K$-systems when $K$ is a number field), the notions of ergodic along ideals and strongly ergodic coincide and are equivalent to the classical notion of total ergodicity.
    From a group-theoretic point of view, the notion of total ergodicity has other generalizations: for instance, one may hypothesize that the action of every finite index subgroup is ergodic (or, equivalently, that there are no nonconstant functions $f \in L^2(\mu)$ with finite orbit under the action of the group).

    All three notions (ergodicity along ideals, strong ergodicity, and ergodicity along finite index subgroups) become distinct when $K$ is a global function field.
    To see this, note that if $K$ is a global function field of characteristic $p$, then every group character $\chi : \mO_K \to S^1$ takes values in the $p$th roots of unity.
    Therefore, $\ker{\chi}$ is a subgroup of index $p$.
    It follows that the presence of \emph{any} nontrivial eigencharacter (rational or not) means that there is a finite index subgroup along which the action is non-ergodic, and the corresponding eigenfunction will have finite orbit (consisting of multiples of the eigenfunction by $p$th roots of unity).
    Hence, ergodicity along every finite index subgroup is equivalent to \emph{weak mixing} (absence of eigencharacters) for an action of $\mO_K$ when $K$ is a global function field.

    The relations between the different properties are summarized by the following diagrams.

    \begin{figure}[H]
        \begin{floatrow}
        \centering
        \ffigbox{\begin{tikzcd}
            \text{NUMBER FIELDS} \\
            \text{weakly mixing} \arrow[d,Rightarrow] \\
            \makecell{\text{ergodic along ideals} \\ \text{strongly ergodic} \\ \text{ergodic along finite index subgroups}} \arrow[d,Rightarrow] \\
            \text{ergodic}
        \end{tikzcd}}{\caption{Hierarchy of ergodicity properties for actions of rings of integers of number fields.}}
        \ffigbox{\begin{tikzcd}
            \text{FUNCTION FIELDS} \\
            \makecell{\text{ergodic along finite index subgroups} \\ \text{weakly mixing}} \arrow[d,Rightarrow] \\
            \text{strongly ergodic} \arrow[d,Rightarrow] \\
            \text{ergodic along ideals} \arrow[d,Rightarrow] \\
            \text{ergodic}
        \end{tikzcd}}{\caption{Hierarchy of ergodicity properties for actions of rings of integers of function fields.}}
        \end{floatrow}
    \end{figure}
\end{remark}

Our next main result is a general criterion for the Furstenberg--S\'{a}rk\"{o}zy theorem over rings of integers of global fields.
In order to state the result, we need a notion of density for subsets of the ring of integers of a global field.
Given a global field $K$ with ring of integers $\mO_K$ and a subset $E \subseteq \mO_K$, the \emph{upper Banach density} of $E$ is given by
\begin{equation*}
    d^*(E) = \sup_{(\Phi_N)} \limsup_{N \to \infty} \frac{|E \cap \Phi_N|}{|\Phi_N|},
\end{equation*}
where the supremum is taken over all F{\o}lner sequences $(\Phi_N)_{N \in \N}$ in $(\mO_K,+)$.

\begin{restatable}{theorem}{NecessarySufficient} \label{thm: global field Sarkozy}
    Let $K$ be a global field with ring of integers $\mO_K$.
    Let $P(x) \in \mO_K[x]$.
    The following are equivalent:
    \begin{enumerate}[(i)]
        \item for any nonzero ideal $\{0\} \ne I \le \mO_K$, there exists $n \in \mO_K$ such that $P(n) \in I$;
        \item for any measure-preserving system $\left( X, \mu, (T_n)_{n \in \mO_K} \right)$ and any $A \subseteq X$ with $\mu(A) > 0$, there exists $n \in \mO_K$, $n \ne 0$, such that
        \begin{equation*}
            \mu \left( A \cap T_{P(n)}^{-1}A \right) > 0;
        \end{equation*}
        \item for any measure-preserving system $\left( X, \mu, (T_n)_{n \in \mO_K} \right)$, any $A \subseteq X$ with $\mu(A) > 0$, and any $\eps > 0$, the set
        \begin{equation*}
            \left\{ n \in \mO_K : \mu \left( A \cap T_{P(n)}^{-1}A \right) > \mu(A)^2 - \eps \right\}
        \end{equation*}
        is syndetic;
        \item for any set $E \subseteq \mO_K$ with $d^*(E) > 0$, there exist distinct $x, y \in E$ such that $x - y = P(n)$ for some $n \in \mO_K$;
        \item for any finite coloring of $\mO_K$, there exist distinct $x, y$ of the same color such that $x - y = P(n)$ for some $n \in \mO_K$.
        \item for any finite coloring of $\mO_K$, there exist distinct $x, y, z$ all of the same color such that $x - y = P(z)$.
    \end{enumerate}
\end{restatable}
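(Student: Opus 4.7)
The plan is to establish the cycle (i) $\Rightarrow$ (iii) $\Rightarrow$ (ii) $\Rightarrow$ (iv) $\Rightarrow$ (v) $\Rightarrow$ (i), and separately derive (vi) from a partition-regular strengthening of (iii), with (vi) $\Rightarrow$ (v) being immediate.

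The easy directions closely mirror the $\Z$-case argument outlined before Theorem \ref{thm: necessary and sufficient FS}. The implication (iii) $\Rightarrow$ (ii) is trivial. For (ii) $\Rightarrow$ (iv), I invoke the $\mO_K$-analogue of the Furstenberg correspondence principle (Theorem \ref{thm: correspondence} extended to countable discrete amenable groups, which includes $(\mO_K,+)$): given $E \subseteq \mO_K$ with $d^*(E) > 0$ witnessed by a F{\o}lner sequence, build an ergodic $\mO_K$-system $(X,\mu,(T_n)_{n \in \mO_K})$ and $A \subseteq X$ with $\mu(A) = d^*(E)$ satisfying $d^*(E \cap (E - P(n))) \ge \mu(A \cap T_{P(n)}^{-1} A)$; then (ii) supplies a distinct pair. (iv) $\Rightarrow$ (v) is pigeonhole. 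For (v) $\Rightarrow$ (i): if (i) fails via a nontrivial ideal $I \le \mO_K$ with $P(n) \notin I$ for all $n$, then because $[\mO_K:I] < \infty$ (Proposition \ref{prop: rings of integers}(2)), coloring $\mO_K$ by cosets of $I$ gives a finite coloring admitting no monochromatic pair $x-y = P(n)$.

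The substantive step is (i) $\Rightarrow$ (iii). Reduce to an ergodic system by the ergodic decomposition, and apply the spectral theorem for unitary representations of the locally compact abelian group $(\mO_K,+)$: for $f = \mathbf{1}_A - \mu(A)$ there is a positive Borel measure $\nu$ on the Pontryagin dual $\hat{\mO_K}$ with
\begin{equation*}
    \mu(A \cap T_n^{-1}A) = \mu(A)^2 + \int_{\hat{\mO_K}} \chi(n)\, d\nu(\chi).
\end{equation*}
Decompose $\nu = \nu_{rat} + \nu_{wr} + \nu_{si}$ into parts supported on rational, weakly-rational-but-not-rational, and strongly irrational characters. By Theorem \ref{thm: strongly ergodic}, the $\nu_{si}$-contribution vanishes under $\UClim_{n \in \mO_K}$ applied along $P(n)$; the $\nu_{wr}$-contribution, only visible in the function field setting, is controlled by analyzing the additive polynomial making $\chi$ weakly rational and invoking Theorem \ref{thm: strongly ergodic} on a suitable composition. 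The measure $\nu_{rat}$ is discrete and concentrated on characters of finite-index ideal periods; truncate it to finitely many characters $\chi_1, \dots, \chi_J$ carrying all but $\eps$ of its mass, and let $I = \bigcap_{j=1}^J \ker \chi_j$, which is a nontrivial finite-index ideal. Intersectivity (i) furnishes $a \in \mO_K$ with $P(a) \in I$, and since $I$ is an ideal one has $P(a+m) \equiv P(a) \pmod I$ for every $m \in I$, so $\chi_j \circ P$ is trivial on the coset $a + I$. Averaging $\mu(A \cap T_{P(n)}^{-1}A)$ over $n$ ranging over F{\o}lner sequences in $a + I$ (and using F{\o}lner invariance of $\mO_K$) yields values exceeding $\mu(A)^2 - 2\eps$ on a set that is syndetic in $\mO_K$.

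Finally, (iii) $\Rightarrow$ (vi) requires upgrading syndeticity to a Ramsey-theoretic partition regularity statement. The plan is to show that $R_\eps(A) = \{n \in \mO_K : \mu(A \cap T_{P(n)}^{-1}A) > \mu(A)^2 - \eps\}$ is in fact an IP$^*$-set, i.e.\ meets every IP-set in $\mO_K$; this would follow from a Hindman/idempotent-ultrafilter refinement of the spectral argument above, using that rational characters take only finitely many values on polynomial images of IP-sets. Any finite coloring of $\mO_K$ has a color class $E_i$ containing an IP-set by Hindman's theorem; applying this with $(X,\mu,A)$ arising from $E_i$ via Furstenberg correspondence produces $z \in E_i \cap R_\eps(A)$, hence distinct $x,y \in E_i$ with $x - y = P(z)$ and all three points of the same color. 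The main obstacle is (i) $\Rightarrow$ (iii) in positive characteristic: the weakly rational eigencharacters that are invisible in the number field setting can disrupt polynomial equidistribution, and Theorem \ref{thm: strongly ergodic} together with a detailed bookkeeping of additive-polynomial composition structures is essential to pin down their contribution.
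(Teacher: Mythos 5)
Your logical architecture (the cycle through (i)--(v) plus (iii)$\Rightarrow$(vi)$\Rightarrow$(v)) and the routine implications --- (iii)$\Rightarrow$(ii), (ii)$\Rightarrow$(iv) via the correspondence principle, (iv)$\Rightarrow$(v) by pigeonhole, (v)$\Rightarrow$(i) by coloring with cosets of $I$ --- coincide with the paper and are fine. The first genuine gap is in (i)$\Rightarrow$(iii) in positive characteristic. Your trichotomy $\nu=\nu_{rat}+\nu_{wr}+\nu_{si}$ leaves precisely the hard case unargued: the weakly rational characters form (in general) an uncountable set that can carry positive spectral mass, different such characters are witnessed by different additive polynomials, and for each of them $\UClim_{n}\chi(P(n))$ can be a genuinely complex number (Example \ref{eg: weakly rational}); ``invoking Theorem \ref{thm: strongly ergodic} on a suitable composition'' does not produce a sign or a bound for $\int \chi(P(n))\,d\nu_{wr}(\chi)$. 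The paper does not decompose by character type at all: it writes $P$ in coordinates as a sum of additive polynomials applied to separable monomials, uses Lemma \ref{lem: clear denominators} to attach to each character $\chi$ a modulus $m_\chi$ so that after restricting to $m\mO_K$ with $m_\chi\mid m$ every additive piece becomes irrational, chooses a single $m$ divisible enough that all but $\eps$ of the spectral mass is captured, and only then combines Theorem \ref{thm: multivariable equidistribution} with intersectivity (to choose $r$ with $P(r)\equiv 0\pmod m$). A secondary problem with your sketch: even on $\nu_{si}$, Theorem \ref{thm: strongly ergodic} needs zero constant term, and in its degenerate alternative ($\chi(P(n)-P(0))\equiv 1$) the surviving contribution is $\chi(P(0))$, which is not obviously of nonnegative real part; making it so again requires intersectivity, and your argument does not address this.

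The second gap is fatal to your route for (iii)$\Rightarrow$(vi): the set $R_\eps(A)$ is \emph{not} an IP$^*$ set for general intersective polynomials, so no ``Hindman/idempotent refinement'' can deliver it. Already over $\Z$, take $P(n)=n^2-1$ (intersective, since $P(1)=0$), the rotation $T(x)=x+1$ on $\Z/4\Z$, and $A=\{0\}$; then $\mu(A\cap T^{-P(n)}A)>0$ only for odd $n$, and for $\eps<\mu(A)^2$ the set $R_\eps(A)$ consists of odd integers only, while the IP set generated by $\{2,4,8,\dots\}$ contains only even integers. There is also a mismatch in your sketch between the color class guaranteed to contain an IP set (which may have zero density) and the one to which the correspondence principle is applied. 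The paper's argument uses only syndeticity, via the product trick of \cite{b_density-schur}: fix a F{\o}lner sequence, let $L$ index the color classes of positive upper density along it, apply the correspondence principle to $E=\prod_{i\in L}C_i\subseteq\mO_K^L$ with the diagonal action, use syndeticity of the return set to find $z$ lying simultaneously in the return set and in some $C_{i_0}$ with $i_0\in L$, and then the product inequality yields $y, y+P(z)\in C_{i_0}$. You should replace the IP$^*$ step with this argument.
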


\begin{definition} \label{defn: intersective}
    A polynomial $P(x) \in \mO_K[x]$ is called \emph{intersective} if it satisfies the properties in Theorem \ref{thm: global field Sarkozy}.
\end{definition}

\begin{remark}
    Note that Theorem \ref{thm: global field Sarkozy} reduces to Theorem \ref{thm: necessary and sufficient FS} in the case $\mO_K = \Z$.
    The general notion of an intersective polynomial in a ring of integers of a number field was formulated in \cite{br}; see \cite[Theorem 1.17]{br} for a statement relating items (i) and (iii) in the number field context.
    In the global function field setting (when $K$ is a finite extensions of $\F(t)$ with $\F$ a finite field), substantial progress toward Theorem \ref{thm: global field Sarkozy} was made in \cite{bl}.
    However, the definition of ``intersective'' given in \cite{bl} is not sufficiently general to allow for the equivalences expressed in Theorem \ref{thm: global field Sarkozy}.
    In the remark following Theorem 9.5 in \cite{bl}, intersective polynomials over $\F_p[t]$ are defined as polynomials $P(x) \in (\F_p[t])[x]$ such that for any finite index subgroup $\Lambda \le (\F_p[t], +)$, there exists $m \in \F_p[t]$ such that $P(nm) \in \Lambda$ for every $n \in \Lambda$.
	The definition of intersective we have given here deals with a wider class of polynomials and is the correct notion to characterize the Furstenberg--S\'{a}rk\"{o}zy theorem in function fields, as demonstrated by Theorem \ref{thm: global field Sarkozy}.
	An example of an intersective polynomial that does not fit the condition in \cite{bl} is $P(x) = x+1$.
	Taking $\Lambda$ to be the subgroup $\Lambda = t\F_p[t]$ of index $p$, we have $P(nm) \equiv 1 \pmod{\Lambda}$ for every $n \in \Lambda, m \in \F_p[t]$, so the condition from \cite{bl} is not satisfied.
	However, $P(-1) = 0$, so $P$ is intersective (according to Definition \ref{defn: intersective}).

    In the special case $K = \F_q(t)$, Theorem \ref{thm: global field Sarkozy} also answers a question of L\^{e}, Liu, and Wooley (\cite[Question 1]{llw}), which asks if, for the set of values $\{P(n) : n \in \F_q[t]\}$ of a polynomial $P(x) \in (\F_q[t])[x]$, intersecting with every nonzero ideal is equivalent to intersecting with every subgroup of finite index.
    Indeed, $\{P(n) : n \in \F_q[t]\}$ intersecting with every nonzero ideal is the content of item (i), and this implies that $\{P(n) : n \in \F_q[t]\}$ intersects an arbitrary finite index subgroup $H \le \F_q[t]$ by considering the finite coloring of $\F_q[t]$ by cosets of $H$ and applying item (v).
\end{remark}

Another meaningful family of polynomials from the point of view of the limiting behavior of ergodic averages is the class of polynomials that are \emph{good for irrational equidistribution}.

\begin{definition} \label{defn: irrational equidistribution}
    Given a global field $K$ with ring of integers $\mO_K$, we say that $P(x) \in \mO_K[x]$ is \emph{good for irrational equidistribution} if for any irrational character $\chi : (\mO_K,+) \to S^1$, one has
    \begin{equation*}
            \UClim_{n \in \mO_K} \chi(P(n)) = 0.
    \end{equation*}    
\end{definition}

In the number field setting, every nonconstant polynomial is good for irrational equidistribution:

\begin{proposition} \label{prop: number field irrational equidistribution}
    Let $K$ be an algebraic number field with ring of integers $\mO_K$, and let $P(x) \in \mO_K[x]$ be a nonconstant polynomial.
    Then $P$ is good for irrational equidistribution.
\end{proposition}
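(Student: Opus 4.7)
The plan is to proceed by induction on $\deg P$, using the van der Corput inequality for uniform Ces\`{a}ro averages over F{\o}lner sequences in the amenable group $(\mO_K,+)$ (which is isomorphic to $\Z^d$ with $d = [K:\Q]$).

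For the base case $\deg P = 1$, write $P(x) = cx + b$ with $c \in \mO_K \setminus \{0\}$, so that $\chi(P(n)) = \chi(b) \chi(cn)$. The map $n \mapsto \chi(cn)$ is a group character on $(\mO_K,+)$, trivial if and only if $\chi$ vanishes on the principal ideal $c\mO_K$. By Proposition \ref{prop: rings of integers}, every nonzero ideal of $\mO_K$ has finite index, so a trivial restriction would force $\chi$ to factor through the finite quotient $\mO_K/c\mO_K$, making $\chi$ rational; since $\chi$ is irrational by hypothesis, $n \mapsto \chi(cn)$ is a nontrivial character and hence has uniform Ces\`{a}ro mean zero.

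For the inductive step, assume the claim for polynomials of degree at most $K-1$, and let $P$ have degree $K \ge 2$ with leading coefficient $c_K$. Set $Q_h(n) = P(n+h) - P(n)$; as a polynomial in $n$ this has degree $K-1$ with leading coefficient $K c_K h$. Because $\mO_K$ is an integral domain of characteristic zero, this leading coefficient is nonzero for every $h \ne 0$, so $Q_h$ is a nonconstant polynomial of degree exactly $K-1$ for each such $h$. The van der Corput inequality for uniform Ces\`{a}ro averages yields
\[
\bigl|\UClim_{n \in \mO_K} \chi(P(n))\bigr|^2 \le \UClim_{h \in \mO_K} \bigl|\UClim_{n \in \mO_K} \chi(Q_h(n))\bigr|.
\]
By the inductive hypothesis, the inner limit vanishes for every $h \ne 0$, and since $\{0\}$ has zero density in $\mO_K$, the outer uniform Ces\`{a}ro average vanishes as well, which completes the induction.

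The main technical ingredient is the van der Corput inequality in its uniform Ces\`{a}ro form over arbitrary F{\o}lner sequences, which is standard for countable amenable groups once one identifies $(\mO_K,+)$ with $\Z^d$. The essential role of the characteristic-zero hypothesis is to ensure that the leading coefficient $K c_K h$ does not vanish when $h \ne 0$: this is what lets the induction reduce the degree of $P$ at every nonzero differencing parameter. In positive characteristic this degree-reduction can collapse, and that failure is precisely what is exhibited in Example \ref{eg: weakly rational} and what forces the paper to introduce weakly rational characters and strong ergodicity in the global function field setting.
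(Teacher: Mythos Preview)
Your proof is correct and takes a genuinely different route from the paper's. The paper fixes an integral basis to identify $(\mO_K,+)\cong(\Z^d,+)$, writes $\chi$ as $e^{2\pi i\langle\cdot,\bm\alpha\rangle}$ with $\bm\alpha\in\T^d$, uses algebraic independence of the coordinate polynomials $P_1,\dots,P_d$ (cited from \cite{ab-roi}) to conclude that $\sum_j P_j(\bm n)\alpha_j$ has an irrational nonconstant coefficient, and then invokes the multivariable Weyl theorem directly. Your argument instead runs van der Corput induction on $\deg P$: the base case is handled by the finite-index property of nonzero ideals (so $\chi(c\,\cdot)$ is nontrivial), and the inductive step exploits that in characteristic zero the leading coefficient $Kc_Kh$ of the differenced polynomial never vanishes for $h\ne 0$. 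Your approach is more self-contained (it avoids the external lemmas on algebraic independence and rational characters), while the paper's coordinate-based method sets up exactly the framework later reused for the function field case, where the finer structure of characters matters. Two minor remarks: you overload the symbol $K$ for both the field and the degree, which is confusing; and the displayed van der Corput inequality is stated with $\UClim$ on the left before that limit is known to exist---more precisely one should bound $\limsup$ over an arbitrary F{\o}lner sequence, but since the right-hand side vanishes this yields existence and vanishing of the $\UClim$ simultaneously.
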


\begin{proof}
    This essentially follows from Weyl's equidistribution theorem \cite{weyl} after some algebraic processing.

    Let $\chi : \mO_K \to S^1$ be an irrational character.
    Note that as an additive group, $(\mO_K,+)$ is isomorphic to $(\Z^d,+)$ for some $d \in \N$.
    Letting $b_1, \ldots, b_d \in \mO_K$ be an integral basis, we can therefore write
    \begin{equation*}
        \chi \left( \sum_{i=1}^d n_i b_i \right) = e^{2\pi i \sum_{i=1}^d n_i\alpha_i}
    \end{equation*}
    for some $(\alpha_1, \ldots, \alpha _d) \in \T^d$.
    By \cite[Lemma 2.1]{ab-roi}, a character is rational if and only if the corresponding element of the torus has rational coordinates.
    Thus, at least one of the coordinates $\alpha_1, \ldots, \alpha_d$ is irrational.
    Moreover, by \cite[Proposition 2.9]{ab-roi}, the polynomial $P$ satisfies
    \begin{equation*}
        P \left( \sum_{i=1}^d n_i b_i \right) = \sum_{i=1}^d P_i(n_1, \ldots, n_d) b_i,
    \end{equation*}
    where the polynomials $P_1, \ldots, P_d$ are algebraically independent over $\Q$.
    We thus have
    \begin{equation*}
        \chi \left( P \left( \sum_{i=1}^d n_i b_i \right) \right) = e^{2\pi i \sum_{i=1}^d P_i(n_1, \ldots, n_d)\alpha_i},
    \end{equation*}
    and the polynomial $\sum_{i=1}^d P_i(n_1, \ldots, n_d)\alpha_i$ has at least one irrational coefficient other than the constant term, so
    \begin{equation*}
        \UClim_{(n_1, \ldots, n_d) \in \Z^d} \chi \left( P \left( \sum_{i=1}^d n_i b_i \right) \right) = 0
    \end{equation*}
    by Weyl's equidistribution theorem \cite[Satz 20]{weyl} (see also \cite[Remark 3.10]{ab-roi}).
\end{proof}

In the global function field setting, a partial characterization (namely, for polynomials with integer coefficients) of when a polynomial is good for irrational equidistribution is provided by \cite[Theorem 1.15]{ab_Sarkozy}.
We include below some illustrative examples (and nonexamples) of polynomials that are good for irrational equidistribution; see \cite[Theorem 1.15 and Example 1.16]{ab_Sarkozy} for explanations.

\begin{example} \label{eg: irrational equidistribution}
    Let $K$ be a global function field with characteristic $\char(K) = p < \infty$.
    \begin{itemize}
        \item If $P(x) = \sum_{j=1}^N a_j x^{r_j}$ and $r_j$ is coprime to $p$ for every $j$, then $P$ is good for irrational equidistribution.
        \item The additive polynomial $P(x) = x^p$ is not good for irrational equidistribution.
        \item The polynomial $P(x) = x^{p^2} + x^{2p} + x$ is good for irrational equidistribution, while $Q(x) = x^{p^2} + x^{2p}$ is not.
        \item The polynomial $P(x) = x^p + x^2$ is good for irrational equidistribution, but $Q(x) = x^{2p} + x^2$ is not.
        However, $Q(x) + x$ is again good for irrational equidistribution.
    \end{itemize}
\end{example}

We give a characterization of polynomials that are good for irrational equidistribution in terms of the limiting behavior of corresponding polynomial ergodic averages in the following theorem, to be proved in Section \ref{sec:TE polynomial}:

\begin{theorem} \label{thm: TE + ud}
    Let $K$ be a global field with ring of integers $\mO_K$, and let $P(x) \in \mO_K[x]$ be a polynomial.
    The following are equivalent:
    \begin{enumerate}[(i)]
        \item $P$ is good for irrational equidistribution;
        \item for any measure-preserving system $(X, \mu, (T_n)_{n \in \mO_K})$ that is ergodic along ideals and any $f \in L^2(\mu)$,
            \begin{equation*}
                \UClim_{n \in \mO_K} T_{P(n)}f = \int_X f~d\mu.
            \end{equation*}
    \end{enumerate}
\end{theorem}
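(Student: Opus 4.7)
The plan is to derive both implications from the spectral theorem for unitary representations of the additive group $(\mO_K,+)$, using Theorem \ref{thm: TE} to translate between ergodicity along ideals and the absence of nontrivial rational eigencharacters.

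For (i)$\Rightarrow$(ii), let $(X,\mu,(T_n)_{n \in \mO_K})$ be ergodic along ideals and $f \in L^2(\mu)$. Writing $f = \int f\,d\mu + g$ with $g$ of mean zero, and using that $T_{P(n)}$ fixes constants, it suffices to show $\UClim_n T_{P(n)} g = 0$ in $L^2$. Bochner's theorem yields a finite positive spectral measure $\nu_g$ on the Pontryagin dual $\widehat{\mO_K}$ with $\langle T_n g,g\rangle = \int \chi(n)\,d\nu_g(\chi)$, and for every F{\o}lner sequence $(\Phi_N)$,
\[
\left\| \frac{1}{|\Phi_N|}\sum_{n\in\Phi_N} T_{P(n)} g \right\|_{L^2}^2 = \int_{\widehat{\mO_K}} \left| \frac{1}{|\Phi_N|} \sum_{n\in\Phi_N} \chi(P(n)) \right|^2 d\nu_g(\chi).
\]
The atoms of $\nu_g$ occur only at eigencharacters appearing in the spectral decomposition of $g$; ergodicity along ideals (via Theorem \ref{thm: TE}) together with the mean-zero condition on $g$ imply that $\nu_g$ has no atoms at any rational character. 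Since the rational characters form a countable set (they factor through finite quotients $\mO_K/I$, of which there are only countably many), $\nu_g$ assigns full measure to the irrational characters. On this set, hypothesis (i) forces the integrand to tend to $0$ pointwise; since the integrand is uniformly bounded by $1$, dominated convergence finishes the argument.

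For (ii)$\Rightarrow$(i), fix an irrational character $\chi : \mO_K \to S^1$ and construct the rotation system on $G := \overline{\chi(\mO_K)} \le S^1$ with normalized Haar measure $m_G$ and action $T_n z = \chi(n) z$. A routine check yields $G = S^1$ in the number field case and $G = \mu_p$ in the function field case of characteristic $p$. The eigencharacters of the system are the pullbacks $n \mapsto \chi(n)^k$ of characters $z \mapsto z^k$ of $G$, and each nontrivial such pullback $\chi^k$ is itself irrational: in the number field case this follows from the torus identification used in Proposition \ref{prop: number field irrational equidistribution} (scaling by a nonzero integer preserves irrationality of coordinates in $\Q/\Z$), and in the function field case the relevant $k$ satisfy $\gcd(k,p) = 1$, so $z \mapsto z^k$ is a bijection of $\mu_p$ and $\ker \chi^k = \ker \chi$. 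Hence the system has no nontrivial rational eigencharacters and is ergodic along ideals by Theorem \ref{thm: TE}. Applying (ii) to the coordinate function $f(z) = z$, which satisfies $\int f\,dm_G = 0$ and $T_{P(n)} f = \chi(P(n)) f$, gives $(\UClim_n \chi(P(n))) \cdot f = 0$ in $L^2$; since $\|f\|_{L^2} = 1$, we conclude $\UClim_n \chi(P(n)) = 0$.

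The main obstacle is the bookkeeping of eigencharacters in the function field setting, where every character already has finite image in $\mu_p$ and \emph{irrational} is a subtler condition (distinguishing kernels containing an ideal from kernels containing merely a finite-index additive subgroup); once this subtlety is handled, both directions reduce to standard spectral-theoretic manipulations.
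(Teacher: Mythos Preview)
Your argument is correct and follows essentially the same spectral-theoretic strategy as the paper. In the direction (i)$\Rightarrow$(ii) your use of the scalar spectral measure via Bochner's theorem is equivalent to the paper's use of the spectral representation on $L^2(\widehat{\mO_K},\sigma)$; in (ii)$\Rightarrow$(i) you build the counterexample as a rotation on the small group $\overline{\chi(\mO_K)}$ (either $S^1$ or $\mu_p$), whereas the paper builds it as the rotation $T_n x = \chi(n\,\cdot\,)\,x$ on the full dual $\widehat{\mO_K}$. Your choice is more concrete but forces the number-field/function-field case split to check that each $\chi^k$ is irrational; the paper's choice avoids the split because $n\mapsto\chi(nm)$ is irrational for every nonzero $m\in\mO_K$ by a one-line argument, at the cost of working on a larger space.
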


As we shall see in Theorem \ref{thm: equivalences} below, the improved behavior of ergodic averages along polynomials that are good for irrational equidistribution (converging to the integral rather than a more complicated projection) leads to stronger combinatorial statements than what can be obtained for arbitrary polynomials.

%%%%%%%%%%%%%%%%%%%%%%%%%%%%%%%%%%%%%%%%%%%%%%%%%%%%%%%%

\section{Asymptotic total ergodicity} \label{sec:ATE}

The phenomena identified in the previous section can be detected already in an asymptotic form in the finitary setup of measure-preserving actions by rotations on finitely many points.
As a consequence, in Section \ref{sec:finitary applications}, we obtain new combinatorial results related to polynomial configurations in large subsets of finite rings and to quasirandomness of graphs associated to polynomials over finite rings.

As noted in Theorem \ref{thm: TE equivalences Z}, if a system is totally ergodic, then polynomial ergodic averages in the system converge to the ``correct limits.''
A rotation on finitely many points cannot be totally ergodic, but replacing the condition ``$T^k$ is ergodic for any $k \in \N$'' with the weaker condition ``$T^k$ is ergodic for any $k \le D$'' opens up the possibility of approximating the properties of totally ergodic systems (with the approximation improving as $D \to \infty$).
This is the basis for the notion of \emph{asymptotic total ergodicity} developed in \cite{bb1, bb2}.
In the following theorem, we collect results from \cite{bb1} revealing the main features of asymptotic total ergodicity in the context of modular rings.

\begin{theorem} \label{thm: bb}
    Let $(N_m)_{m \in \N}$ be an increasing sequence in $\N$.
    We denote by $T_m : \Z/N_m\Z \to \Z/N_m\Z$ the transformation $T_m(n) = n + 1$, and we let $\mu_m$ be the normalized counting measure on $\Z/N_m\Z$.
    The following are equivalent:
    \begin{enumerate}[(i)]
        \item the smallest prime factor of $N_m$ tends to $\infty$;
        \item for any $k \in \N$, there exists $M \in \N$ such that if $m \ge M$, then $T_m^k$ is ergodic;
        \item for any $k \in \N$,
            \begin{equation*}
                \lim_{m \to \infty} \sup_{A, B \subseteq \Z/N_m\Z} \left| \frac{1}{N_m} \sum_{n \in \Z/N_m\Z} \mu_m(A \cap T_m^{kn}B) - \mu_m(A) \mu_m(B)\right| = 0;
            \end{equation*}
        \item for any nonconstant polynomial $P(x) \in \Z[x]$,
            \begin{equation*}
                \lim_{m \to \infty} \sup_{A,B \subseteq \Z/N_m\Z} \left| \frac{1}{N_m} \sum_{n \in \Z/N_m\Z} \mu_m(A \cap T_m^{P(n)}B) - \mu_m(A) \mu_m(B)\right| = 0.
            \end{equation*}
    \end{enumerate}
\end{theorem}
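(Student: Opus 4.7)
The plan is to prove the cyclic chain (iv) $\Rightarrow$ (iii) $\Rightarrow$ (ii) $\Rightarrow$ (i) $\Rightarrow$ (iv). The first three arrows are elementary, and the last is the main content, which I would establish via Fourier analysis on $\Z/N_m\Z$ together with Weil-type bounds on exponential sums.

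For (iv) $\Rightarrow$ (iii), specialize to the linear polynomial $P(x) = kx$. For (iii) $\Rightarrow$ (ii), suppose $T_m^k$ is non-ergodic along an infinite subsequence; passing to a further subsequence, $d := \gcd(k, N_m)$ is a constant integer $>1$, and with $A = B = d\Z/N_m\Z$ one has $T_m^{kn}A = A$ for every $n$, so the average in (iii) equals $1/d$ while $\mu_m(A)\mu_m(B) = 1/d^2$, producing a discrepancy $1/d - 1/d^2 > 0$ that does not tend to zero. For (ii) $\Rightarrow$ (i), if (i) fails then some prime $p$ divides $N_m$ for infinitely many $m$; for those $m$, $T_m^p$ has orbits of size $N_m/p$ and is non-ergodic, contradicting (ii) with $k = p$.

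For (i) $\Rightarrow$ (iv), I would reduce to mean-zero functions $f = \ind_A - \mu_m(A)$, $g = \ind_B - \mu_m(B)$ and expand in the orthonormal character basis $\chi_a(x) = e^{2\pi i ax/N_m}$ of $L^2(\Z/N_m\Z, \mu_m)$. A routine calculation yields
\begin{equation*}
    \frac{1}{N_m} \sum_n \mu_m(A \cap T_m^{P(n)} B) - \mu_m(A)\mu_m(B) = \sum_{a \ne 0} \hat f(a) \overline{\hat g(a)}\, S_m(a)
\end{equation*}
(up to complex conjugation), where $S_m(a) = \frac{1}{N_m}\sum_{n \in \Z/N_m\Z} e^{2\pi i a P(n)/N_m}$. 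Cauchy--Schwarz and Plancherel then bound the left-hand side in absolute value by $\|f\|_2 \|g\|_2 \cdot \max_{a\ne 0}|S_m(a)| \le \max_{a\ne 0}|S_m(a)|$, so it suffices to prove $\max_{a \ne 0}|S_m(a)| \to 0$ uniformly in $a$. The standard reduction via $d = \gcd(a, N_m)$, $N' = N_m/d$ rewrites $S_m(a)$ as an exponential sum modulo $N'$ whose character is coprime to $N'$, and every prime factor of $N'$ divides $N_m$, hence is at least the smallest prime factor of $N_m$, which tends to $\infty$ by (i).

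It remains to establish the uniform bound $|\tfrac{1}{N}\sum_n e^{2\pi i aP(n)/N}| \le C_P / \sqrt{p_0}$ for $N$ with smallest prime factor $p_0$ sufficiently large in terms of $P$, and any $a$ coprime to $N$. By the Chinese remainder theorem the sum factors over prime powers $p^e \| N$; Weil's bound handles each prime $p > \deg P$ (avoiding the finitely many primes dividing the content of the nonconstant part of $P$) with an $O(1/\sqrt p)$ estimate, and a Hensel-lifting argument (expanding $P(u + p^{\lceil e/2\rceil}v) \equiv P(u) + p^{\lceil e/2\rceil}P'(u)v \pmod{p^e}$) yields the same $O(1/\sqrt p)$ bound for $e \ge 2$. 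Bounding all but the smallest prime-power factor trivially by $1$ then gives the claim. The main obstacle is precisely this prime-power extension of Weil's bound; pure Weil handles only the squarefree case, and uniformity over $a$ and $N$ must be carefully tracked through the Hensel reduction.
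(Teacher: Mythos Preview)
Your cyclic chain and the three easy implications are correct and match the paper's treatment (the paper simply observes that $T_m^k$ is ergodic iff $\gcd(k,N_m)=1$, which is equivalent to your (ii)$\Leftrightarrow$(i), and the implications (iv)$\Rightarrow$(iii)$\Rightarrow$(iii')$\Rightarrow$(i) are handled identically). For (i)$\Rightarrow$(iv) the paper does not argue directly but cites \cite[Theorem~2.6]{bb1}; the method underlying that result (and the more general Lemma~\ref{lem: character bound} proved later in the paper) is \emph{Weyl differencing / van der Corput}, yielding the weaker exponent $\lpf(N_m)^{-1/2^{d-1}}$, whereas you propose Weil bounds plus a $p$-adic stationary-phase (Hensel) argument, aiming for the sharper $\lpf(N_m)^{-1/2}$. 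Your Fourier reduction to $\max_{a\ne 0}|S_m(a)|$, the $\gcd$ reduction to $(a',N')=1$, and the Chinese remainder factorisation are all correct, and for $e=1$ the Weil bound does give $(d-1)/\sqrt{p}$ once $p$ exceeds the finitely many primes dividing $d$ or the leading coefficient.

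The one point that needs more care is the claimed $O(1/\sqrt{p})$ for $e\ge 2$. Your expansion $P(u+p^{\lceil e/2\rceil}v)\equiv P(u)+p^{\lceil e/2\rceil}P'(u)v\pmod{p^e}$ reduces the bound to $N_g/p^g$ with $g=\lfloor e/2\rfloor$ and $N_g=\#\{u\bmod p^g: p^g\mid P'(u)\}$. When $P'$ has \emph{simple} roots mod $p$ (i.e.\ $p\nmid\mathrm{disc}(P')$) Hensel's lemma gives $N_g\le d-1$ and hence the desired $O(1/p)\le O(1/\sqrt p)$, and these bad primes are eventually avoided since $\lpf(N_m)\to\infty$. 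But if $\mathrm{disc}(P')=0$ in $\Z$ (e.g.\ $P(x)=x^d$, so $P'(x)=dx^{d-1}$), \emph{every} prime is ``bad'' in this sense and the simple-root Hensel argument does not apply as stated; one must instead count lifts of a higher-order critical point, which still gives $N_g\le C_d\,p^{g-1}$ but requires a separate argument. Since Theorem~\ref{thm: bb} is purely qualitative, you can sidestep this entirely by either (a) carrying out the multiplicity-aware lift count, (b) invoking Hua's classical bound $|S(p^e)|\ll_d p^{e(1-1/d)}$, or (c) abandoning Weil and using Weyl differencing directly on $\Z/N'\Z$ as the paper does---any of these yields $\max_{a\ne 0}|S_m(a)|\to 0$.
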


\begin{proof}
    This is a reformulation of results appearing in \cite{bb1}.
    We give a brief explanation of how to deduce Theorem \ref{thm: bb} from the main results of \cite{bb1}.
    
    Let (iii') be the statement:
    \begin{enumerate}
        \item[(iii')] for any $k \in \N$,
            \begin{equation*}
                \lim_{m \to \infty} \sup_{A \subseteq \Z/N_m\Z} \left| \frac{1}{N_m} \sum_{n \in \Z/N_m\Z} \mu_m(A \cap T_m^{kn}A) - \mu_m(A)^2\right| = 0.
            \end{equation*}
    \end{enumerate}
    In \cite[Proposition 1.3]{bb1}, it is shown that (i) and (iii') are equivalent.

    The implication (i)$\implies$(iv) follows from \cite[Theorem 2.6]{bb1}, and the implications (iv)$\implies$(iii)$\implies$(iii') are trivial.

    Ergodicity of the transformation $T^k_m$ is equivalent to $k$ being coprime to $N_m$, so (ii) is also equivalent to (i).
\end{proof}

Items (ii) and (iii) in Theorem \ref{thm: bb} are asymptotic formulations of the notion of total ergodicity, while item (iv) connects to the Furstenberg--S\'{a}rk\"{o}zy theorem, since the quantity $\mu_m(A \cap T_m^{P(n)}B)$ is a normalized count of polynomial configurations $(x, x+P(n)) \in A \times B$.
Theorem \ref{thm: bb} thus shows that the relationship between total ergodicity and polynomial ergodic averages captured by Theorem \ref{thm: TE equivalences Z} is preserved in an asymptotic form.
An interesting point to note is that we do not need to make any assumptions about the polynomial $P$ in item (iv).
This is in contrast to the situation in $\Z$, where local obstructions must be overcome and polynomials must be intersective in order to satisfy the Furstenberg--S\'{a}rk\"{o}zy theorem (see Definition \ref{defn: intersective Z} and Theorem \ref{thm: necessary and sufficient FS}).

For $N \in \N$, define $\lpf(N)$ to be the least prime factor of $N$.
Motivated by the equivalences in Theorem \ref{thm: bb}, we say that a sequence of modular rings $(\Z/N_m\Z)_{m \in \N}$ is \emph{asymptotically totally ergodic} if $\lpf(N_m) \to \infty$.
In \cite{bb2}, it was shown that the notion of asymptotic total ergodicity and its connection with polynomial configurations as expressed in Theorem \ref{thm: bb} extends to other sequences of finite rings $(R_n)_{n \in \N}$ when $\lpf$ of the characteristic of $R_n$ tends to infinity.
This leaves unaddressed the phenomenon of asymptotic total ergodicity for sequences of rings where the characteristic is fixed.
In order to capture asymptotic total ergodicity in the low characteristic setting, we must revisit $\lpf$ from a more ring-theoretic point of view.
The dynamical results from the previous section (Theorems \ref{thm: TE} and \ref{thm: strongly ergodic}) provide some insight about the proper generalization of $\lpf$ to a wider class of finite rings.

We now consider the following generalization of rotations on finitely many points.
Let $K$ be a global field with ring of integers $\mO_K$.
Given a nontrivial ideal $\{0\} \ne I \lneq \mO_K$, the group $(\mO_K,+)$ acts on the quotient ring $\mO_K/I$ by rotations $T_n : x \mapsto x+n$ for $n \in \mO_K$.
(We remind the reader that every nonzero ideal in the ring of integers of a global field is of finite index, so the quotient ring $\mO_K/I$ is finite; see item (2) in Proposition \ref{prop: rings of integers}.)
This action is periodic along the ideal $I$, i.e. if $m \in I$, then $T_{mn+r} = T_r$ for every $n, r \in \mO_K$.
As demonstrated by Theorem \ref{thm: TE}, the obstacle to ``correct'' limiting behavior of polynomial ergodic averages is failure of ergodicity along ideals, so the action of $\mO_K$ on $\mO_K/I$ by rotations will not produce ``correct'' limiting values for polynomial averages.
However, in analogy with Theorem \ref{thm: bb}, one may guess that if the ideal $I$ and all of its prime factors have index at least $D$ in $\mO_K$, then polynomial averages approach the ``correct'' limiting value asymptotically as $D \to \infty$.
Indeed, we can prove the following generalization of Theorem \ref{thm: bb}.

\begin{theorem} \label{thm: bb low characteristic}
    Let $K$ be a global field with ring of integers $\mO_K$.
    Let $(I_m)_{m \in \N}$ be a sequence of nontrivial ideals of $\mO_K$.
    For $n \in \mO_K$, we denote by $T_{I_m,n} : \mO_K/I_m \to \mO_K/I_m$ the transformation $T_{I_m,n}(x) = x + n$, and we let $\mu_m$ be the normalized counting measure on $\mO_K/I_m$.
    The following are equivalent:
    \begin{enumerate}[(i)]
        \item the minimal index of a prime factor of $I_m$ tends to $\infty$;
        \item for any $k \in \mO_K \setminus \{0\}$, there exists $M \in \N$ such that if $m \ge M$, then $(T_{I_m,kn})_{n \in \mO_K}$ is ergodic;
        \item for any $k \in \mO_K \setminus \{0\}$,
            \begin{equation*}
                \lim_{m \to \infty} \sup_{A, B \subseteq \mO_K/I_m} \left| \frac{1}{[\mO_K:I_m]} \sum_{n \in \mO_K/I_m} \mu_m(A \cap T_{I_m,kn}B) - \mu_m(A) \mu_m(B)\right| = 0;
            \end{equation*}
        \item for any nonconstant polynomial $P(x) \in \mO_K[x]$ with $\deg{P} < \textup{char}(K)$,
            \begin{equation*}
                \lim_{m \to \infty} \sup_{A, B \subseteq \mO_K/I_m} \left| \frac{1}{[\mO_K:I_m]} \sum_{n \in \mO_K/I_m} \mu_m(A \cap T_{I_m,P(n)}B) - \mu_m(A) \mu_m(B)\right| = 0;
            \end{equation*}
        \item for any nonconstant polynomial $P(x) \in \mO_K[x]$ with $P(0) = 0$,
            \begin{equation*}
                \lim_{m \to \infty} \sup_{A, B \subseteq \mO_K/I_m} \left| \frac{1}{[\mO_K:I_m]} \sum_{n \in \mO_K/I_m} \mu_m(A \cap T_{I_m,P(n)}B) - \frac{1}{|H_{I_m,P}|} \sum_{n \in H_{I_m,P}} \mu_m(A \cap T_{I_m,n}B) \right| = 0,
            \end{equation*}
            where $H_{I_m,P}$ is the subgroup of $(\mO_K/I_m,+)$ generated by $\{P(n) : n \in \mO_K/I_m\}$.
    \end{enumerate}
\end{theorem}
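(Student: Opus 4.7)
The plan is to mirror the structure of Theorems \ref{thm: TE} and \ref{thm: strongly ergodic} in the finite setting of rotations on $R_m := \mO_K/I_m$, reducing everything to Fourier analysis on this finite abelian group and to a uniform Weyl-type bound on exponential sums $\frac{1}{|R_m|}\sum_{n \in R_m} \chi(P(n))$ as $\lpf(I_m) \to \infty$.

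I would first dispatch the cycle (i) $\Leftrightarrow$ (ii) $\Leftrightarrow$ (iii) by direct algebra. The rotation action $n \mapsto kn$ on $R_m$ is ergodic iff $k R_m + I_m/I_m = R_m$ iff $(k) + I_m = \mO_K$ iff no prime factor of $I_m$ contains $k$; since the nonzero principal ideal $(k)$ has only finitely many prime divisors in the Dedekind domain $\mO_K$ (all of fixed maximal index), this holds for all sufficiently large $m$ precisely when $\lpf(I_m) \to \infty$, giving (i) $\Leftrightarrow$ (ii). When $k$ is a unit in $R_m$, a one-line Fourier computation shows that the average in (iii) equals $\mu_m(A)\mu_m(B)$ exactly; conversely, if a prime $\p$ with $[\mO_K : \p] = p$ bounded divides $I_m$ infinitely often, then picking $k \in \p \setminus \{0\}$ and $A = B = \p/I_m$ yields $\mu_m(A \cap (B + kn)) = 1/p$ uniformly in $n$, producing a discrepancy of $(p-1)/p^2$ and hence (iii) $\Rightarrow$ (i).

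The main content lies in (i) $\Rightarrow$ (iv) and (i) $\Rightarrow$ (v), both of which proceed by Fourier expansion on $R_m$:
\[
\frac{1}{|R_m|}\sum_{n \in R_m} \mu_m(A \cap T_{I_m, P(n)}B) = \sum_{\chi} \hat{\mathbf{1}}_A(\chi)\,\overline{\hat{\mathbf{1}}_B(\chi)} \cdot \frac{1}{|R_m|}\sum_{n \in R_m} \chi(P(n)),
\]
where the outer sum runs over characters of $R_m$. The trivial character contributes exactly $\mu_m(A)\mu_m(B)$. For (iv) in the range $\deg P < \textup{char}(K)$, the main Weyl-type estimate from Section \ref{sec:ATE proof} must provide uniform decay of $\bigl|\frac{1}{|R_m|}\sum_n \chi(P(n))\bigr|$ over all nontrivial characters $\chi$ as $\lpf(I_m) \to \infty$; Parseval then controls the full error uniformly in $A, B$. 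For (v), the characters $\chi$ trivial on $H_{I_m, P}$ are precisely those with $\chi \circ P \equiv 1$ on $R_m$, and isolating their contribution reconstructs the subgroup-average on the right-hand side exactly, while the main estimate must force the remaining strongly irrational characters to contribute $o(1)$. This is the finitary counterpart of the separation of weakly rational from strongly irrational eigencharacters in Theorem \ref{thm: strongly ergodic}.

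To close the cycle, (iv) $\Rightarrow$ (iii) is immediate via the degree-one polynomial $P(x) = kx$. For (v) $\Rightarrow$ (i) I would argue contrapositively: if some prime $\p$ of bounded index $p$ divides $I_m$ for infinitely many $m$, then for a carefully chosen nonlinear polynomial with $P(0) = 0$ (e.g.\ $P(x) = x^2$ over $\Z/2^k\Z$) one constructs sets $A, B$ producing a bounded-below discrepancy in (v)---Fourier support on those nontrivial characters $\chi$ for which the Gauss-type sum $\frac{1}{|R_m|}\sum_n \chi(P(n))$ remains bounded below (as in the explicit calculation over $R_m = \Z/8\Z$ with $A = \{0,3,4,7\}, B = \{0,1,4,5\}$, yielding a discrepancy of $1/8$) gives the desired witness. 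The principal obstacle throughout is the Weyl-type estimate itself: in infinite characteristic, standard Weyl differencing lifted to $R_m$ suffices, but in the low-characteristic regime one must track weakly rational characters (those $\chi$ for which $\chi \circ \eta$ is rational for some additive polynomial $\eta$, as in Example \ref{eg: weakly rational}), since these can cause $\chi \circ P$ to remain periodic even for polynomials exhibiting irrational behavior at the $\mO_K$ level. Establishing that only such characters survive averaging---and that their combined contribution matches precisely the subgroup average in (v)---is the content of the main estimate to be proved in Section \ref{sec:ATE proof}.
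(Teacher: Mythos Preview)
Your overall strategy matches the paper's: the hard implications (i)$\Rightarrow$(iv) and (i)$\Rightarrow$(v) are exactly the content of Theorem~\ref{thm: quantitative TE}, which is proved via the exponential sum estimate Lemma~\ref{lem: character bound} combined with the Fourier/Parseval expansion you describe. The cycle (i)$\Leftrightarrow$(ii)$\Leftrightarrow$(iii) and the implication (iv)$\Rightarrow$(iii) via $P(x)=kx$ are routine and correctly sketched; the paper does not spell these out.

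There is, however, a genuine gap in your argument for (v)$\Rightarrow$(i). Your $\Z/8\Z$ example requires the small prime to appear in $I_m$ with multiplicity at least $3$, which the mere failure of (i) does not guarantee. Worse, if the small prime $\p$ has index exactly $2$ and appears only to the first power---say $\mO_K = \Z$ and $I_m = 2 q_m$ with $q_m$ an increasing sequence of odd primes---then (v) actually \emph{holds} for every nonconstant $P$ with $P(0) = 0$. Over the factor $\mO_K/\p \cong \F_2$ any such polynomial acts, as a function, as either the zero map or the identity, so its values are uniformly distributed on the subgroup $H_1 \in \{\{0\}, \F_2\}$ they generate; by the CRT factorization of the character sum, every $\chi \notin H_{I_m,P}^\perp$ therefore either vanishes already on the $\F_2$ factor or is controlled by the Weyl bound on the large-prime factor, and Parseval gives the uniform $o(1)$ bound. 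In this regime (v) holds while (i)--(iv) all fail (take $k = 2$), so the implication (v)$\Rightarrow$(i) cannot be established by your construction. This edge case---primes of index $2$ appearing only to the first power---is not addressed in the paper either, which proves only the forward direction (i)$\Rightarrow$(v) via Theorem~\ref{thm: quantitative TE}.
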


Note that item (ii) in Theorem \ref{thm: bb low characteristic} expresses the property of ergodicity along ideals in an asymptotic form.
The equivalence of item (ii) with item (i) allows for a formulation purely in terms of ring theory without needing to check a dynamical statement.
Using the notation introduced in Section \ref{sec:setting} (see Definition \ref{defn: lpf}), item (i) is the statement $\lpf(I_m) \to \infty$.
Hence, just as in the case of modular rings, the quantity $\lpf$ captures the phenomenon of asymptotic total ergodicity.

Item (iv) generalizes the corresponding item from Theorem \ref{thm: bb}, and we once again encounter an interesting phenomenon that does not occur in the infinite ring $\mO_K$.
Namely, we have a Furstenberg--S\'{a}rk\"{o}zy-type result that makes no assumption about the constant term of $P$ (or, more generally, about intersectivity of $P$) and applies to arbitrary pairs of sets $(A,B)$.

Item (v) in Theorem \ref{thm: bb low characteristic} reveals a new phenomenon in the low characteristic setting.
When $K$ is a field of finite characteristic, there exist polynomials $P(x) \in \mO_K[x]$ with $\deg{P} \ge \textup{char}(K)$ for which the subgroup $H_{I_m,P}$ is a proper subgroup of $\mO_K/I_m$ for sequences of ideals $(I_m)_{m \in \N}$ satisying property (i), so the limiting property expressed in item (iv) may fail when $\deg{P} \ge \textup{char}(K)$.
The fact that, under the assumption of asymptotic total ergodicity expressed by item (i), we can always approximate a polynomial average by a linear average over a subgroup is nevertheless sufficient for establishing rather strong versions of the Furstenberg--S\'{a}rk\"{o}zy theorem in finite rings.
If the constant term is nonzero, say $P(0) = c$, then the average over the subgroup $H_{I_m,P}$ in item (v) must be replaced by an average over the coset $H_{I_m,P} + c$. In order to obtain meaningful combinatorial results, we need this coset to again be a subgroup (i.e., $c \in H_{I_m,P})$.
We can safely ignore the constant term in the case that $H_{I_m,P} = \mO_K/I_m$, and this leads to an interesting and nontrivial class of polynomials in the finite characteristic setting.
As we shall see, if a polynomial $P$ is good for irrational equidistribution (see Definition \ref{defn: irrational equidistribution}), then $H_{I,P} = \mO_K/I$ for all ideals with $\lpf(I)$ sufficiently large,\footnote{It is an interesting question whether the converse is also true. That is, if there exists $D \in \N$ such that $H_{I,P} = \mO_K/I$ for all ideals $I$ with $\lpf(I) \ge D$, must $P$ be good for irrational equidistribution? In Section \ref{sec: open problems}, we discuss some special cases where this is known to be true and indicate some of the difficulties that arise in the general case.} and we can use this property to obtain even stronger combinatorial results for polynomials that are good for irrational equidistribution.
In Section \ref{sec:finitary applications}, we give applications of Theorem \ref{thm: bb low characteristic} to problems in additive combinatorics and quasirandomness of graphs. \\

Our method of proof actually produces quantitative rates of convergence to $0$ in items (iii)--(v) of Theorem \ref{thm: bb low characteristic} in terms of the rate at which $\lpf(I_m)$ diverges to $\infty$.
An important parameter for this quantitative sharpening is the \emph{derivational degree} of a polynomial.
Let $P(x) \in \mO_K[x]$.
For $n \in \mO_K$, we define $\partial_n P$ to be the polynomial $\partial_nP(x) = P(x+n) - P(x)$ and write $\partial^k_{n_1, \dots, n_k}$ for the iterated differencing operation $\partial_{n_1} \circ \dots \circ \partial_{n_k}$.
The \emph{derivational degree} of $P(x) \in \mO_K[x]$ is the minimal $k \in \N$ such that $\partial^{k+1}_{n_1, \dots, n_{k+1}}P(x) = 0$ for every $n_1, \dots, n_{k+1}, x \in \mO_K$.
In the case $\textup{char}(K) = \infty$, the derivational degree of polynomials agrees with the usual degree.
When $\textup{char}(K) < \infty$ and $\deg{P} \ge \textup{char}(K)$, the derivational degree is always bounded above by $\deg{P}$ but can be much smaller.
For example, every additive polynomial $P(x) = \sum_{j=0}^N c_j x^{p^j}$ has derivational degree equal to 1.
Also, if $K$ has characteristic $p < \infty$, then the derivational degree of the monomial $x^d \in \mO_K[x]$ is equal to the sum of the digits in the base $p$ expansion of $d$ (see \cite[1.7]{blm}).

The next theorem provides a quantitative amplification of item (v) in Theorem \ref{thm: bb low characteristic}.
We express the result in terms of $L^2$ norms, as this form of the theorem is more readily proved using tools from Fourier analysis.
For a function $f : S \to \C$ defined on a finite set $S$, we define the $L^2$ norm by
\begin{equation*}
    \norm{L^2(S)}{f} = \left( \frac{1}{|S|} \sum_{s \in S} |f(s)|^2 \right)^{1/2}.
\end{equation*}
We will occasionally add a subscript to $L^2$ to indicate which symbol is being treated as the variable.
For example, $\norm{L^2_x(S)}{f(x)}$ means the same thing as $\norm{L^2(S)}{f}$.

\begin{restatable}{theorem}{QuantitativeTE} \label{thm: quantitative TE}
	Let $\mO_K$ be the ring of integers of a global field $K$ with characteristic $c \in \mathbb{P} \cup \{\infty\}$.
	Let $\{0\} \ne I \lneq \mO_K$ be a nontrivial ideal.
	Let $P(x) \in \mO_K[x]$ be a polynomial of degree $d$ and derivational degree $k$ with $P(0) = 0$.
    Let $H_{I,P}$ be the subgroup of $(\mO_K/I,+)$ generated by $\{P(n) : n \in \mO_K/I\}$.
    Then for any $f : \mO_K/I \to \C$,
    \begin{equation} \label{eq: L^2 estimate}
		\norm{L^2_x(\mO_K/I)}{\frac{1}{[\mO_K:I]}\sum_{n \in \mO_K/I} f(x+P(n)) - \frac{1}{|H_{I,P}|} \sum_{n \in H_{I,P}} f(x+n)} \ll_{d,k,c} \lpf(I)^{-1/2^{k-1}} \cdot \norm{L^2(\mO_K/I)}{f}.
	\end{equation}
\end{restatable}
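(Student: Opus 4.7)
The plan is to combine Fourier analysis on $(\mO_K/I,+)$ with iterated Weyl differencing and a local root count using the prime factorization of $I$.

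First, expand $f = \sum_\chi \hat{f}(\chi) \chi$ over the additive characters of $\mO_K/I$. If $\chi$ is trivial on $H_{I,P}$, then $\chi(P(n)) = 1$ for every $n \in \mO_K/I$, so the two averages in \eqref{eq: L^2 estimate} contribute identically $\hat{f}(\chi)\chi(x)$ and cancel. The remaining terms combine into
\begin{equation*}
	\frac{1}{[\mO_K:I]}\sum_{n} f(x+P(n)) - \frac{1}{|H_{I,P}|}\sum_{n \in H_{I,P}} f(x+n) = \sum_{\substack{\chi : \chi|_{H_{I,P}} \ne 1}} \hat{f}(\chi) S_\chi \chi(x),
\end{equation*}
where $S_\chi := \frac{1}{[\mO_K:I]}\sum_{n} \chi(P(n))$. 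Parseval's identity then reduces \eqref{eq: L^2 estimate} to the pointwise Weyl-type estimate $\max_{\chi : \chi|_{H_{I,P}} \ne 1} |S_\chi| \ll_{d,k,c} \lpf(I)^{-1/2^{k-1}}$.

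Second, apply Weyl differencing $k-1$ times. The standard identity $|S_\chi|^2 = \frac{1}{[\mO_K:I]}\sum_m \frac{1}{[\mO_K:I]}\sum_x \chi(\partial_m P(x))$ together with iterated Cauchy--Schwarz produces
\begin{equation*}
	|S_\chi|^{2^{k-1}} \le \frac{1}{[\mO_K:I]^{k-1}} \sum_{n_1,\ldots,n_{k-1} \in \mO_K/I} \left|\frac{1}{[\mO_K:I]}\sum_{x} \chi(\partial^{k-1}_{n_1,\ldots,n_{k-1}} P(x))\right|.
\end{equation*}
Because $P$ has derivational degree $k$, the polynomial $\partial^{k-1}_{\vec n} P(x)$ has derivational degree at most $1$ in $x$ and hence takes the form $c(\vec n) + \eta_{\vec n}(x)$ with $\eta_{\vec n}$ additive in $x$. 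Orthogonality of characters makes the inner average equal to $\ind[\chi \circ \eta_{\vec n} \equiv 1 \text{ on } \mO_K/I]$, so the desired estimate reduces to the counting claim
\begin{equation*}
	\bigl|\{\vec n \in (\mO_K/I)^{k-1} : \chi \circ \eta_{\vec n} \equiv 1 \text{ on } \mO_K/I\}\bigr| \ll_{d,k,c} \frac{[\mO_K:I]^{k-1}}{\lpf(I)}.
\end{equation*}

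Third, to obtain this count, factor $I = \p_1^{e_1} \cdots \p_r^{e_r}$ and decompose $\mO_K/I \cong \prod_i \mO_K/\p_i^{e_i}$ via the Chinese remainder theorem, so that $\chi = \bigotimes_i \chi_i$ and $H_{I,P} = \prod_i H_{\p_i^{e_i},P}$. Nontriviality of $\chi$ on $H_{I,P}$ forces some local component $\chi_{i_0}$ to be nontrivial on $H_{\p_{i_0}^{e_{i_0}},P}$, and we may select $i_0$ so that $[\mO_K:\p_{i_0}]$ is minimal among the prime factors of $I$, i.e., equals $\lpf(I)$. Locally, the condition $\chi_{i_0} \circ \eta_{\vec n} \equiv 1$ on $\mO_K/\p_{i_0}^{e_{i_0}}$ cuts out an algebraic subvariety of $(\mO_K/\p_{i_0}^{e_{i_0}})^{k-1}$; reducing modulo $\p_{i_0}$ yields a polynomial constraint over the residue field, and a Schwartz--Zippel-type root count produces the required $\lpf(I)^{-1}$ saving. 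The main obstacle lies in this last step in the low-characteristic regime $\deg P \ge \char(K)$: here the additive linearization $\eta_{\vec n}$ may take a Frobenius-twisted form $\sum_j a_j(\vec n)\, x^{p^j}$, and one must show that nontriviality of $\chi_{i_0}$ on $H_{\p_{i_0}^{e_{i_0}},P}$ forces at least one coefficient polynomial $a_j(\vec n)$ (of bounded degree in $d, k, c$) to be nonzero modulo $\p_{i_0}$. This is precisely where working with the \emph{derivational} degree rather than the classical degree becomes crucial.
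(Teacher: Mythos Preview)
Your Fourier reduction via Parseval and the iterated Weyl differencing step are exactly what the paper does. You also correctly isolate the real obstacle: in low characteristic the iterated difference $\partial^{k-1}_{\vec n}P$ is a Frobenius-twisted additive polynomial in $x$, and the vanishing condition $\chi\circ\eta_{\vec n}\equiv 1$ is \emph{not} a priori polynomial in $\vec n$.

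Two remarks on the CRT step. First, the sentence ``we may select $i_0$ so that $[\mO_K:\p_{i_0}]$ is minimal'' is not justified: nontriviality of $\chi$ only guarantees \emph{some} local component is nontrivial, not the one over the smallest prime. This is harmless for the final bound (a saving of $[\mO_K:\p_{i_0}]^{-1}$ is always $\le \lpf(I)^{-1}$), but the phrasing is incorrect. Second, and more seriously, ``reducing modulo $\p_{i_0}$'' can kill the character: if $\chi_{i_0}$ has conductor $\p_{i_0}^e$ with $e\ge 2$, then its reduction mod $\p_{i_0}$ is trivial and you lose the constraint entirely. So the localization-then-reduce-to-residue-field plan, as stated, has a gap.

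The paper avoids CRT altogether and resolves the obstacle by a different mechanism. First it writes $P(x)=\sum_i \eta_i(x^{r_i})$ with $\eta_i$ additive and $r_i$ separable, and replaces $\chi(P(x))$ by $e\bigl(\sum_i s_i x^{r_i}\bigr)$ for suitable $s_i\in I^\perp$; this passes to a separable polynomial with coefficients in the dual group. Then it runs the van der Corput inequality not over all of $\mO_K/I$ but over the Frobenius-image subgroup $K=\{x^{p^m}:x\in\mO_K/I\}$ with $m=\lfloor\log_p d\rfloor$ (this is Lemma~\ref{lem: vdC}). The substitution $v_i=w_i^{p^m}$ then converts the twisted condition $\varphi(\vec v)=0$ into a genuine polynomial condition $\psi(\vec w)=0$ with coefficients in $I^\perp$ and degree $\le p^{2m}$ in each variable; the point is that $e(u^{p^j})=e(z_j u)$ for some $z_j\in I^\perp$, so after the substitution all Frobenius twists get absorbed into the dual side. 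Nontriviality of $\chi$ on $H_{I,P}$ is used to check that $\psi\not\equiv 0$. Finally a global Schwartz--Zippel estimate over $\mO_K/I$ (Lemma~\ref{lem: nonzero polynomial}) gives the $\lpf(I)^{-1}$ saving directly, with no CRT needed.

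So your outline is on the right track up to the differencing step, but the proposed endgame (CRT plus reduction mod $\p$) is not how the paper closes the argument and, as written, does not handle characters of higher conductor. The missing idea is the subgroup van der Corput combined with the $v_i\mapsto w_i^{p^m}$ substitution.
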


Interpreting Theorem \ref{thm: quantitative TE} asymptotically, if $\lpf(I) \to \infty$, then the polynomial average
\begin{equation*}
    \E_{n \in \mO_K/I} f(x + P(n))
\end{equation*}
is approximated by the linear average
\begin{equation*}
    \E_{z \in H_{I,P}} f(x + z)
\end{equation*}
over the subgroup generated by the values of $P$.
In other words, the distribution of the polynomial $P(n)$ becomes increasingly uniform within the subgroup it generates.
Theorem \ref{thm: quantitative TE} (in its just-described asymptotic formulation) is analogous to the dynamical result expressed in Theorem \ref{thm: strongly ergodic}.
To make the analogy more explicit, we can define a space of functions $\mathcal{H}_{I,P} = \{g : \mO_K/I \to \C : g(x+P(n)) = g(x)~\text{for all}~x, n \in \mO_K/I\}$ and note that the function
\begin{equation*}
    x \mapsto \E_{x \in H_{I,P}} f(x+z)
\end{equation*}
is equal to the orthogonal projection of $f$ onto the space $\mathcal{H}_{I,P}$.

If $P(x) \in \mO_K[x]$ has a nonzero constant term, say $P(0) = c$, then we can apply Theorem \ref{thm: quantitative TE} directly to the polynomial $P(x) - c$.
Theorem \ref{thm: quantitative TE} then says that for the group $H$ generated by $\{P(x) - c: x \in \mO_K/I\}$, if $f : \mO_K/I \to \C$ is any function, then
\begin{equation*}
		\norm{L^2_x(\mO_K/I)}{\frac{1}{[\mO_K:I]}\sum_{n \in \mO_K/I} f(x+P(n)) - \frac{1}{|H|} \sum_{n \in H+c} f(x+n)} \ll_{d,k,c} \lpf(I)^{-1/2^{k-1}} \cdot \norm{L^2(\mO_K/I)}{f}.
\end{equation*}
Notice that the average in $n$ is now over the coset $H + c$ rather than the group $H$.
However, if $c \in H$, then $H + c = H$, so we recover an average over a subgroup.

In the proof of Theorem \ref{thm: quantitative TE}, appearing in Section \ref{sec:ATE proof}, we obtain a formula for the implicit constant in \eqref{eq: L^2 estimate}, but it is rather cumbersome.
Defining a function $B : \N \times (\N \cup \infty) \to \N$ by $B(d,p) = p^{2\floor{\log_p{d}}}$ for $p$ a prime number and $B(d,c) = 1$ otherwise, the constant is equal to
\begin{equation*}
    \left( B(d,c) \cdot (k-1) \right)^{1/2^{k-1}}.
\end{equation*}
Using the bounds $B(d,m) \le d^2$ and $k \le d$, we have the weaker but less cumbersome bound

\begin{equation*}
    \norm{L^2(\mO_K/I)}{\frac{1}{[\mO_K:I]}\sum_{n \in \mO_K/I} T_{I,P(n)} f - \frac{1}{|H_{I,P}|} \sum_{n \in H_{I,P}} T_{I,n} f} \le \left( \frac{d^3}{\lpf(I)} \right)^{1/2^{d-1}} \norm{L^2(\mO_K/I)}{f}.
\end{equation*}

%%%%%%%%%%%%%%%%%%%%%%%%%%%%%%%%%%%%%%%%%%%%%%%%%%%%%%%%

\section{Finitary combinatorial applications} \label{sec:finitary applications}

We now discuss combinatorial applications of Theorem \ref{thm: quantitative TE} and the notion of asymptotic total ergodicity in the context of finite rings.
For these results, we will not make any direct reference to infinite rings but will instead state the results in terms of properties intrinsic to the finite rings themselves.
This is enabled in part by the observation that for an ideal $I \le \mO_K$, the quantity $\lpf(I)$ can be computed inside of the quotient ring $\mO_K/I$; see Proposition \ref{prop: lpf finite ring ideals}.
The class of rings to which our results apply is the class of finite principal ideal rings, which, by Proposition \ref{prop: good iff principal}, coincides with the family of rings that can be obtained as quotients of rings of integers of global fields.
As a reminder, the means of obtaining a finite principal ideal ring as a quotient is non-unique (see Example \ref{eg: F_4 realization}).
As a preliminary step toward obtaining combinatorial applications, we give a reformulation of Theorem \ref{thm: quantitative TE} in the language of finite principal ideal rings:

\begin{theorem} \label{thm: finite ring polynomial average}
	Let $R$ be a finite principal ideal ring, and let $P(x) \in R[x]$ be a polynomial of degree $d$ and derivational degree $k$.
    Assume $P(0) = 0$, and suppose $d < \lpf(R)$.
	Let $H \le (R, +)$ be the subgroup generated by $\{P(x) : x \in R\}$.
	Then for any $f : R \to \C$,
	\begin{equation} \label{eq: polynomial equidistribution}
		\norm{L^2_x(R)}{\E_{y \in R} f(x + P(y)) - \E_{z \in H} f(x + z)} \le \left( B(d,\textup{char}(R)) \frac{k-1}{\lpf(R)} \right)^{1/2^{k-1}} \norm{L^2(R)}{f}.
	\end{equation}
\end{theorem}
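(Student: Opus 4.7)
The plan is to deduce Theorem \ref{thm: finite ring polynomial average} from Theorem \ref{thm: quantitative TE} by expressing $R$ as a quotient of the ring of integers of a global field. First, I would invoke Proposition \ref{prop: good iff principal} to select a global field $K$ and a nontrivial ideal $I \le \mO_K$ with $R \cong \mO_K/I$, and then use Proposition \ref{prop: lpf finite ring ideals} to identify $\lpf(R) = \lpf(I)$, so that the denominators on the right-hand side of the two bounds agree. I would then lift the polynomial $P \in R[x]$ to a polynomial $\tilde P \in \mO_K[x]$ of the same algebraic degree $d$ with $\tilde P(0) = 0$, selecting $K$ to be a global function field of characteristic $p$ when $\char(R) = p$ is prime and a number field otherwise. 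Under the hypothesis $d < \lpf(R)$, these choices align $B(d, \char(K))$ from Theorem \ref{thm: quantitative TE} with the factor $B(d, \char(R))$ that appears in the target estimate.

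The subgroup $H \le (R,+)$ generated by $\{P(y) : y \in R\}$ corresponds under the isomorphism $R \cong \mO_K/I$ to the subgroup $H_{I,\tilde P}$ appearing in Theorem \ref{thm: quantitative TE} (since both are generated by the same set of residues), so applying Theorem \ref{thm: quantitative TE} to $\tilde P$ and to $f$ viewed as a function on $\mO_K/I$ produces an $L^2$ inequality of exactly the desired shape, once all parameters are matched.

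The subtle point, and the main obstacle I expect, is matching the derivational degree. The derivational degree of $P$ in $R[x]$ can be strictly smaller than the derivational degree of any polynomial lift $\tilde P \in \mO_K[x]$, since non-unit leading coefficients in $R$ cause additional cancellations in iterated differences $\partial^{k+1} P$ that are invisible to a characteristic-$0$ or characteristic-$p$ lift (Galois rings $GR(p^n, r)$ with $n \geq 2$ furnish concrete instances where this occurs even when $d < \lpf(R)$). To handle this discrepancy, I would appeal not merely to the statement of Theorem \ref{thm: quantitative TE} but to its proof in Section \ref{sec:ATE proof}, which proceeds by expanding $f$ in the characters of $\mO_K/I$ and bounding $\left| \mathbb{E}_{n \in \mO_K/I} \chi(P(n)) \right|$ via Weyl differencing carried out inside the finite ring $\mO_K/I$. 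Since those differencings are performed intrinsically in $R \cong \mO_K/I$, the rate of decay they produce depends on the derivational degree of $P$ as measured in $R[x]$, which yields the sharper constant $(B(d,\char(R))(k-1)/\lpf(R))^{1/2^{k-1}}$ and completes the deduction.
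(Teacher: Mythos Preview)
Your overall plan coincides with the paper's approach: the paper introduces Theorem \ref{thm: finite ring polynomial average} as a ``reformulation of Theorem \ref{thm: quantitative TE} in the language of finite principal ideal rings,'' implicitly via Propositions \ref{prop: good iff principal} and \ref{prop: lpf finite ring ideals}, and supplies no further argument. So the basic lift-and-apply strategy is exactly what is intended.

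You are also right that the derivational degree is the delicate point, and your example class is apt. Concretely, take $P(x)=3x^{3}$ in $R=GR(9,2)$: here $d=3<\lpf(R)=9$, the polynomial is additive in $R$ (so its $R$-derivational degree is $1$), yet every number-field lift has derivational degree $3$. Citing Theorem \ref{thm: quantitative TE} as a black box therefore yields a strictly weaker exponent than Theorem \ref{thm: finite ring polynomial average} asserts with $k=1$. Your remedy---rerun the argument of Section \ref{sec:ATE proof} inside $R$---is the right instinct, and the van der Corput step (Lemma \ref{lem: vdC}) is indeed purely group-theoretic, so one can difference down using the $R$-derivational degree. However, the subsequent portion of the proof of Lemma \ref{lem: character bound}---the decomposition $P=\sum_i\eta_i(x^{r_i})$ into additive-times-separable pieces, the explicit formula for $\partial^{k-1}P$ via base-$p$ digit expansions, and the root count of Lemma \ref{lem: nonzero polynomial}---relies on $\mO_K$ having prime or infinite characteristic. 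Over a ring of composite prime-power characteristic the class of additive maps is strictly larger than $\{\sum_j c_jx^{p^j}\}$ (as $3x^3$ over $GR(9,2)$ already shows), and these structural formulas do not transfer verbatim. So the intrinsic-$R$ argument requires more care than simply replacing $k$ throughout; you should say explicitly how the post--van der Corput analysis adapts in composite characteristic.

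One minor correction: the hypothesis $d<\lpf(R)$ is not what aligns $B(d,\char(K))$ with $B(d,\char(R))$. With your choice of $K$ (function field when $\char(R)$ is prime, number field otherwise) the two $B$-factors already agree unconditionally, since $B(d,c)=1$ whenever $c$ is composite or infinite.
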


\begin{remark}
    As with Theorem \ref{thm: quantitative TE}, if $P(0) = c \ne 0$, then we must replace the average over $z \in H$ by an average $z \in H + c$, where $H$ is the subgroup generated by $\{P(x) - c : x \in R\}$.
\end{remark}

An informal interpretation of Theorem \ref{thm: finite ring polynomial average} is that the polynomial sequence $(P(y))_{y \in R}$ is \emph{approximately equidistributed} in the subgroup $H$ that it generates, with the approximation improving as $\lpf(R)$ tends to $\infty$.

As an application of Theorem \ref{thm: finite ring polynomial average}, we deduce a strong quantitative form of the Furstenberg--S\'{a}rk\"{o}zy theorem in finite principal ideal rings:

\begin{restatable}{corollary}{Sarkozy} \label{cor: intersective Sarkozy}
    Let $R$ be a finite principal ideal ring.
	Let $P(x) \in R[x]$ be a polynomial of degree $d$ and derivational degree $k$, and suppose $P$ has a root in $R$.
	If $A \subseteq R$ does not contain distinct $a, b \in A$ with $b - a = P(x)$ for some $x \in R$, then
	\begin{equation*}
		|A| \ll_{d,k} |R| \cdot \lpf(R)^{-1/2^{k-1}}.
	\end{equation*}
\end{restatable}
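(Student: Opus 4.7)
The plan is to apply Theorem \ref{thm: finite ring polynomial average} to $f = \ind_A$ and combine the resulting $L^2$ approximation with the fact that, under the forbidden-configuration hypothesis on $A$, the polynomial average $\E_{x,y \in R} \ind_A(x) \ind_A(x+P(y))$ is supported entirely on $(x,y)$ with $P(y) = 0$. Two preliminary reductions: if $d \geq \lpf(R)$ then $|A| \leq |R| \leq d \cdot |R|/\lpf(R)^{1/2^{k-1}}$ is trivial, and if $|A| \leq 1$ there is nothing to prove, so we may assume $d < \lpf(R)$ and $|A| \geq 2$. Since $P$ has a root $y_0 \in R$, replacing $P(y)$ by $P(y+y_0)$ gives a polynomial with $P(0) = 0$ having the same degree, derivational degree, and image in $R$, so the hypothesis on $A$ is unchanged. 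Writing $H \leq (R,+)$ for the subgroup generated by $P(R)$, Theorem \ref{thm: finite ring polynomial average} together with the Cauchy--Schwarz inequality gives
\begin{equation*}
\left| \E_{x,y \in R} \ind_A(x) \ind_A(x+P(y)) - \E_{x \in R} \E_{z \in H} \ind_A(x) \ind_A(x+z) \right| \ll_{d,k} \frac{|A|}{|R|} \cdot \lpf(R)^{-1/2^{k-1}}.
\end{equation*}

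The subgroup average is bounded below by partitioning $A$ into cosets of $H$ and applying Cauchy--Schwarz, yielding $\E_x \E_{z \in H} \ind_A(x) \ind_A(x+z) \geq |A|^2/|R|^2$. For the polynomial average, the hypothesis on $A$ forces it to equal exactly $N_0 |A|/|R|^2$, where $N_0 = |\{y \in R : P(y) = 0\}|$, since any pair $(x,y)$ with $x, x+P(y) \in A$ and $P(y) \neq 0$ would produce distinct elements of $A$ with difference in $P(R)$. Combining yields
\begin{equation*}
|A| \leq N_0 + C(d,k) \cdot |R| \cdot \lpf(R)^{-1/2^{k-1}}.
\end{equation*}

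The main remaining task---and the step I expect to be the least routine---is to bound $N_0$. By Proposition \ref{prop: good iff principal} and the standard decomposition of finite commutative rings, write $R \cong R_1 \times \cdots \times R_r$ as a product of finite local principal ideal rings, each with residue field $k_i$ of size $q_i \geq \lpf(R)$. Since the shifted $P$ is nonzero in $R[x]$, at least one projection $P_i$ is nonzero in $R_i[x]$; for each such $i$, factoring $P_i = \pi_i^s Q_i$, with $\pi_i$ a uniformizer of $R_i$ and $s$ maximal, produces $\bar{Q}_i \in k_i[x]$ nonzero of degree $\leq d$. A root $x$ of $P_i$ satisfies $Q_i(x) \in \textup{Ann}(\pi_i^s) \subseteq \mathfrak{m}_i$ and hence projects to one of the at most $d$ roots of $\bar{Q}_i$ in $k_i$; this gives $N_{0,i} \leq d |R_i|/q_i$. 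With the trivial bound $|R_i|$ at the other indices,
\begin{equation*}
\frac{N_0}{|R|} \leq \prod_{i \, : \, P_i \neq 0} \frac{d}{q_i} \leq \frac{d}{\lpf(R)},
\end{equation*}
using that at least one nontrivial factor appears and each $d/q_i \leq 1$ since $d < \lpf(R) \leq q_i$. Since $\lpf(R)^{-1} \leq \lpf(R)^{-1/2^{k-1}}$, we conclude $|A| \ll_{d,k} |R|/\lpf(R)^{1/2^{k-1}}$, as required.
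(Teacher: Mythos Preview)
Your proof is correct and follows the same architecture as the paper's: apply Theorem~\ref{thm: finite ring polynomial average} to $\ind_A$, bound the subgroup average below by $(|A|/|R|)^2$ (the paper cites Lemma~\ref{lem: vdC} with $k=1$, which is your Cauchy--Schwarz step), bound the polynomial average above by $|A| N_0/|R|^2$, and control $N_0$ by reducing modulo a prime. The only substantive difference is cosmetic: for the root count the paper invokes Lemma~\ref{lem: nonzero polynomial}, whereas you unpack the same idea directly via the product decomposition into local principal ideal rings, and you make the preliminary reduction $d<\lpf(R)$ explicit where the paper leaves it implicit in the hypothesis of Theorem~\ref{thm: finite ring polynomial average}.
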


Corollary \ref{cor: intersective Sarkozy} was previous established in the special cases of modular rings $R = \Z/N\Z$ (see \cite{bb1}) and finite fields $R = \F_q$ (see, e.g. \cite{ls,ab_Sarkozy}).
For finite fields, the exponent $\frac{1}{2^{k-1}}$ can in fact be improved to $\frac{1}{2}$ using the Weil bound for exponential sums over finite fields (see \cite[Theorem 1.3]{ab_Sarkozy}), but the exponent in Corollary \ref{cor: intersective Sarkozy} matches the best known exponent for general modular rings.
The statement in Corollary \ref{cor: intersective Sarkozy} applies to a vast collection of finite rings, some of which are enumerated in Section \ref{sec:setting}, that are not encompassed in previous special cases.

An equivalent form of the conclusion of Corollary \ref{cor: intersective Sarkozy} is that any set $A$ with density at least
\begin{equation*}
    \frac{|A|}{|R|} \gg_{d,k} \lpf(R)^{-1/2^{k-1}}
\end{equation*}
contains two points $a, b \in A$ whose difference is a value of the polynomial $P$ (cf. Theorem \ref{thm: global field Sarkozy}(iii)).

Let us now make an observation related to the assumption that $P$ has a root in the ring $R$.
Suppose $K$ is a global field, and $\mO_K$ is its ring of integers.
If $P(x) \in \mO_K[x]$ is an intersective polynomial (see Definition \ref{defn: intersective}), then its reduction modulo an ideal has a root, so Corollary \ref{cor: intersective Sarkozy} is satisfied for $P$ in all of the quotient rings $R = \mO_K/I$.

We will show in Section \ref{sec:FS good rings} that the necessary and sufficient condition on a polynomial $P(x) \in R[x]$ to satisfy the conclusion of Corollary \ref{cor: intersective Sarkozy} is that the constant term $P(0)$ belongs to the subgroup $H \le (R,+)$ generated by $\{P(x) - P(0) : x \in R\}$.
This is weaker than $P$ having a root in $R$ (for example, a consequence of Theorem \ref{thm: equivalences} below is that if $\deg{P} < \char(R)$, then the group $H$ is equal to $R$, regardless of the behavior of the roots of $P$) but may be more difficult to interpret and to check in general.
Nevertheless, in the case that $R$ is a finite field and $P$ has integer coefficients, there is a simple algorithmic method to test whether or not $P(0)$ belongs to the group generated by $\{P(x) - P(0) : x \in R\}$; see \cite[Theorem 1.12]{ab_Sarkozy}. \\

Let $R$ be a finite principal ideal ring and $P(x) \in R[x]$ a nonconstant polynomial.
As we have just discussed, the group $H$ generated by $\{P(x) - P(0) : x \in R\}$ is the main algebraic object governing the Furstenberg--S\'{a}rk\"{o}zy phenomenon.
A natural question is to describe for which polynomials the group $H$ is equal to the entirety of $(R,+)$.
As a consequence of Theorem \ref{thm: finite ring polynomial average}, such polynomials have an approximate equidistribution property in the ring $R$, which allows for stronger combinatorial results.
Theorem \ref{thm: equivalences} below summarizes the stronger forms of the Furstenberg--S\'{a}rk\"{o}zy theorem that hold in the case that $H = R$ and elucidates the connection between this finitary equidistribution property and an infinitary equidsitribution property.
We will see afterwards that there is yet another interesting interpretation in terms of quasirandomness of graphs.

\begin{restatable}{theorem}{Equivalences} \label{thm: equivalences}
    Let $R$ be a finite principal ideal ring, and let $P(x) \in R[x]$ be a nonconstant polynomial.
	The following are equivalent:
	\begin{enumerate}[(i)]
		\item
			\begin{equation*}
				\sup_{\norm{L^2(R)}{f} = 1}{\norm{L^2_x(R)}{\E_{y \in R}{f(x+P(y))} - \E_{z \in R}{f(z)}}}
				 = o_{\lpf(R) \to \infty}(1);
			\end{equation*}
		\item	there exist $C_1, C_2, \gamma > 0$ such that if $\lpf(R) \ge C_1$, then
			\begin{equation*}
				\sup_{\norm{L^2(R)}{f} = 1}{\norm{L^2_x(R)}{\E_{y \in R}{f(x+P(y))} - \E_{z \in R}{f(z)}}}
				 \le C_2 \cdot \lpf(R)^{-\gamma};
			\end{equation*}
		\item there exists $C > 0$ such that if $\lpf(R) \ge C$, then the group generated by $\{P(x) - P(0) : x \in R\}$ is $(R, +)$.
		\item	for any $\delta > 0$, there exists $N > 0$ such that if $\lpf(R) \ge N$ and $A, B \subseteq R$ are subsets with $|A| |B| \ge \delta |R|^2$, then there exist $x, y \in R$ such that $x \in A$ and $x + P(y) \in B$;
		\item	there exist $C_1, C_2, \gamma > 0$ such that if $\lpf(R) \ge C_1$, then
			\begin{equation*}
				\bigg| \left| \left\{ (x,y) \in R \times R : x \in A, x + P(y) \in B \right\} \right| - |A| |B| \bigg|
				 \le C_2 |A|^{1/2} |B|^{1/2} |R| \cdot \lpf(R)^{-\gamma}.
			\end{equation*}
	\end{enumerate}
	Moreover, if $R = \mO_K/I$ and $P$ is the reduction mod $I$ of a polynomial $\tilde{P} \in \mO_K[x]$ that is good for irrational equidistribution, then each of the properties (i)-(v) holds.
\end{restatable}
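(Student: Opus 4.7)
The plan is to prove the cycle (iii) $\Rightarrow$ (ii) $\Rightarrow$ (i) $\Rightarrow$ (iii) together with (ii) $\Rightarrow$ (v) $\Rightarrow$ (iv) $\Rightarrow$ (iii), and then to derive the ``Moreover'' conclusion independently by establishing (iii) from the hypothesis on $\tilde{P}$. The decisive analytic input is Theorem \ref{thm: finite ring polynomial average}, which quantitatively compares the polynomial average to the average over the subgroup $H_{R,P}$. Combining this bound with elementary Fourier analysis on the finite abelian group $(R,+)$ drives every equivalence.

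For the chain (i)--(iii), the direction (iii) $\Rightarrow$ (ii) is immediate from Theorem \ref{thm: finite ring polynomial average} applied once $\lpf(R) > \deg P$, since $H_{R,P} = R$ collapses the average $\E_{z \in H_{R,P}} f(x+z)$ to the constant $\E_{z \in R} f(z)$; (ii) $\Rightarrow$ (i) is trivial. For (i) $\Rightarrow$ (iii), I argue by contrapositive: if (iii) fails along a sequence of rings with $\lpf(R_n) \to \infty$ and proper subgroups $H_n$, choose nontrivial characters $\chi_n$ of $R_n$ trivial on $H_n$. Then $\chi_n(P_n(y)) = \chi_n(P_n(0))$ independently of $y$, so taking $f = \chi_n$ yields $\E_y f(x+P_n(y)) - \E_z f(z) = \chi_n(P_n(0))\chi_n(x)$, of unit $L^2$-norm, contradicting (i).

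For the equivalences with (iv) and (v): the implication (ii) $\Rightarrow$ (v) is a one-line Cauchy--Schwarz calculation, since the discrepancy between the count and $|A||B|$ is $|R|^2 \cdot \langle \ind_A,\, \E_y \ind_B(\cdot + P(y)) - \E_z \ind_B(z)\rangle_{L^2(R)}$, bounded by the product of $\|\ind_A\|_{L^2}\|\ind_B\|_{L^2}$ and the norm in (ii). The implication (v) $\Rightarrow$ (iv) is immediate. For (iv) $\Rightarrow$ (iii), suppose $H_n \subsetneq R_n$ along some sequence with $\lpf(R_n) \to \infty$. Passing to $\bar R_n = R_n/H_n$, the image of $P_n(y)$ is constantly $\overline{P_n(0)}$; choose $\bar A_n \subseteq \bar R_n$ of density near $1/2$ and set $\bar B_n = \bar R_n \setminus (\bar A_n + \overline{P_n(0)})$. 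The preimages $A_n, B_n \subseteq R_n$ then satisfy $|A_n||B_n|/|R_n|^2$ bounded below by a universal constant while admitting no pair $(x,y)$ with $x \in A_n$ and $x + P_n(y) \in B_n$, contradicting (iv).

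For the ``Moreover'' statement, let $H_0 \le (\mO_K,+)$ be the subgroup generated by $\{\tilde{P}(y) - \tilde{P}(0) : y \in \mO_K\}$. I first claim $[\mO_K : H_0] < \infty$: otherwise $\mO_K/H_0$ is a countably infinite discrete abelian group, whose Pontryagin dual is uncountable, while the set of rational characters of $\mO_K$ is countable (parameterized by pairs of an ideal $I$ and a character of the finite group $\mO_K/I$). Hence some irrational character $\tilde{\chi}$ of $\mO_K$ is trivial on $H_0$; but then $\tilde{\chi}(\tilde{P}(y)) = \tilde{\chi}(\tilde{P}(0))$ for all $y$, giving $|\UClim_{y \in \mO_K} \tilde{\chi}(\tilde{P}(y))| = 1 \ne 0$, contradicting that $\tilde{P}$ is good for irrational equidistribution. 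With $[\mO_K : H_0] < \infty$, only finitely many nontrivial characters $\tilde{\chi}_1, \ldots, \tilde{\chi}_s$ of $\mO_K$ are trivial on $H_0$; setting $D$ equal to the maximum of $[\mO_K : \p]$ over all primes $\p$ dividing the conductor of some $\tilde{\chi}_j$, any ideal $I$ with $\lpf(I) > D$ has no prime factor appearing in any such conductor, so $I$ is not contained in any conductor and no $\tilde{\chi}_j$ is trivial on $I$. By Pontryagin duality on $\mO_K/I$, this forces the image of $H_0$ to exhaust $\mO_K/I$, which is (iii). The main obstacle is the finite-index claim for $H_0$: it is the only place where the good-for-irrational-equidistribution hypothesis is used in full, and the cardinality/Pontryagin-duality argument is what converts this dynamical hypothesis into the required algebraic finiteness statement.
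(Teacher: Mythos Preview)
Your argument for the equivalence of (i)--(v) matches the paper's proof essentially line for line: the same diagram of implications, the same use of Theorem~\ref{thm: finite ring polynomial average} for (iii)$\Rightarrow$(ii), the same character-based contrapositive for (i)$\Rightarrow$(iii), Cauchy--Schwarz for (ii)$\Rightarrow$(v), and the same coset construction for (iv)$\Rightarrow$(iii) (the paper's Proposition~\ref{prop: iv,iii}).

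For the ``Moreover'' clause your route differs somewhat from the paper's Proposition~\ref{prop: irrational equidistribution}. The paper splits into cases: for number fields it uses iterated finite differences to produce a linear polynomial whose image is a coset of a nonzero ideal $J$, then argues $I+J=\mO_K$ once $\lpf(I)$ exceeds the indices of the prime factors of $J$; for function fields it argues by contrapositive that if (iii) failed along a sequence $I_n$ then $H^\perp$ would be infinite, hence uncountable, hence contain an irrational character. Your argument unifies both cases via the finite-index claim $[\mO_K:H_0]<\infty$, proved by the same uncountability trick, and then handles the finitely many annihilating characters by their period ideals. This is a genuine simplification over the paper's case split. One point you leave implicit deserves to be made explicit: after establishing finite index, you invoke the ``conductor'' of each $\tilde\chi_j$, i.e.\ you assume each is rational. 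In the number field setting this is automatic (any finite-index subgroup contains $m\mO_K$ for $m=[\mO_K:H_0]$), but in the function field setting it is not; it follows because the very argument you used for finite index shows directly that no \emph{irrational} character can be trivial on $H_0$. Stating this once before introducing the conductors would make the argument self-contained.
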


\begin{remark}
    For some classes of polynomials $P$, we can show that the polynomial $\tilde{P}$ being good for irrational equidistribution is also necessary for items (i)-(v) to hold, but we do not know in general if the properties (i)-(v) can be fully characterized in terms of infinitary equidistribution properties.
    See Subsection \ref{sec: A,B equidistribution} for additional discussion.
\end{remark}

We may interpret item (v) in Theorem \ref{thm: equivalences} as a statement about Paley-type graphs over finite principal ideal rings.
The \emph{Paley graph} $\bfP_q$ for a prime power $q \equiv 1 \pmod{4}$ is the graph with vertex set $\F_q$ and edges $\{x,y\}$ if $y - x$ is a nonzero square in $\F_q$.
The Furstenberg--S\'{a}rk\"{o}zy theorem has been connected to Paley graphs from its inception.
If one modifies the construction of the Paley graph and considers instead the graph $G = (V,E)$ with vertex set $V = \{1, \ldots, N\}$ and edges $\{x,y\} \in E$ if $y-x$ is a nonzero square, then the Furstenberg--S\'{a}rk\"{o}zy theorem is equivalent to the statement that the largest independent set in $G$ has size $o(N)$.
Knowing well the properties of Paley graphs (for example, that the independence number is very small, bounded by $\sqrt{q}$), Lov\'{a}sz\footnote{Lov\'{a}sz's conjecture never appeared in print, but S\'{a}rk\"{o}zy credits Lov\'{a}sz as the source of the conjecture in the paper in which he proves it (see \cite{sarkozy}).} was the first to conjecture what later became the Furstenberg--S\'{a}rk\"{o}zy theorem.

Over time, the graph-theoretic aspects of the Furstenberg--S\'{a}rk\"{o}zy theorem have become more muted.
This is at least in part due to the fact that many interesting extensions of the Furstenberg--S\'{a}rk\"{o}zy theorem, when approached from an additive combinatorial or ergodic-theoretic point of view, are not easily formulated in graph-theoretic language.
For example, a dynamical reformulation of the Furstenberg--S\'{a}rk\"{o}zy theorem is the assertion that $\{n^2 : n\in \N\}$ is a set of recurrence in $\N$.
A natural line of inquiry suggested by this result is to determine for which other polynomials $P$ the set $\{P(n) : n \in \N\}$ is a set of recurrence (solved by \cite{km}, see Theorem \ref{thm: necessary and sufficient FS} and Remark \ref{rem: necessary and sufficient FS}).
Moving beyond polynomials, there has also been extensive activity in finding other natural sequences $(a_n)_{n \in \N}$ for which $\{a_n : n\in\N\}$ is a set of recurrence; see, e.g., \cite{sarkozy_prime,bfm,bks}.
Another direction of generalization is to produce Szemer\'{e}di-type extensions (that is, establishing multiple recurrence along polynomials or other sequences); see, e.g. \cite{bl_polynomial_sz}.

There are, however, many interesting previously unexplored aspects of the graph-theoretic side of the Furstenberg--S\'{a}rk\"{o}zy theorem over finite rings.
One of the aims of the present paper is to revisit the linkage between Furstenberg--S\'{a}rk\"{o}zy-type results and graph theory, with additional connections to algebra and ergodic theory.
We mainly focus on the graph-theoretic property of quasirandomness, which captures the phenomenon that graphs defined by simple deterministic means may nevertheless share many of the features of random graphs.

Randomness of graphs is epitomized by the Erd\H{o}s--R\'{e}nyi random graph model, which is in a sense the ``most random'' random graph.\footnote{Strictly speaking, any graph whatsoever can be considered as a random graph by utilizing a constant random variable. The Erd\H{o}s--R\'{e}nyi random graph fits with the intuitive idea of what ``random'' should mean for a graph, and the fact that it is the ``most random'' can be formalized in terms of the entropy of a random variable, for example (see \cite[Section III. A.]{pn}).}
Given $n \in \N$ and $p \in [0,1]$, the \emph{Erd\H{o}s--R\'{e}nyi random graph} $\mathbb{G}(n,p)$ is the random graph on $n$ vertices where each edge is included independently at random with probability $p$.

A family of graphs is \emph{quasirandom} if it satisfies the statements in the following theorem:

\begin{theorem}[cf. {\cite{cgw}}] \label{thm: CGW quasirandom}
    Let $G_n = (V_n, E_n)$ be a finite graph for each $n \in \N$, and let $p \in (0,1)$.
    Suppose the graphs $(G_n)_{n \in \N}$ have edge densities satisfying $\frac{|E_n|}{\binom{|V_n|}{2}} = p + o(1)$.
    The following are equivalent:
    \begin{enumerate}[(i)]
        \item for any $m, e \in \N$ and any graph $H$ with $m$ vertices labeled $\{u_1, \dots, u_m\}$ and $e$ edges, the number of induced\footnote{The graph $H$ is an \emph{induced subgraph} of a graph $G$ if there is a collection of $m$ vertices $\{v_1, \dots, v_m\}$ of $G$ such that $\{v_i,v_j\}$ is an edge in $G$ if and only if $\{u_i,u_j\}$ is an edge in $H$.} copies of $H$ in $G_n$, $n \in \N$, satisfies
            \begin{equation*}
                \left| \{(v_1, \dots, v_m) \in V_n^m : \{v_i,v_j\} \in E_n \iff \{u_i,u_j\} \in E(H)\} \right| = (1 + o_{n \to \infty}(1)) p^e (1-p)^{\binom{m}{2}-e}|V_n|^m;
            \end{equation*}
        \item for any $n \in \N$ and any subsets $A, B \subseteq V_n$,
            \begin{equation*}
                \left| \{(a,b) \in A \times B : \{a,b\} \in E_n\} \right| = p|A| |B| + o_{n \to \infty}(|V_n|^2);
            \end{equation*}
        \item letting $\lambda_{n,1}, \dots, \lambda_{n,k_n}$ be the eigenvalues of the adjacency matrix of $G_n$ with $|\lambda_{n,1}| \ge \dots \ge |\lambda_{n,k_n}|$, one has $|\lambda_{n,2}| = o_{n \to \infty}(|V_n|)$.
    \end{enumerate}
\end{theorem}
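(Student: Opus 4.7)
The plan is to establish the cycle of implications (iii) $\Rightarrow$ (ii) $\Rightarrow$ (i) $\Rightarrow$ (iii), following the classical Chung--Graham--Wilson strategy. Write $N_n = |V_n|$ and let $A_n$ denote the adjacency matrix of $G_n$, so that for subsets $U, W \subseteq V_n$ one has $|\{(u,w) \in U \times W : \{u,w\} \in E_n\}| = \mathbf{1}_U^T A_n \mathbf{1}_W$ (up to a diagonal contribution of order $O(N_n)$). For (iii) $\Rightarrow$ (ii) I would invoke the expander mixing lemma: expanding $\mathbf{1}_A^T A_n \mathbf{1}_B$ in the spectral basis of $A_n$ and isolating the contribution of the top eigenpair, the remainder is bounded by $|\lambda_{n,2}| \cdot \|\mathbf{1}_A\|_2 \|\mathbf{1}_B\|_2 \le |\lambda_{n,2}| N_n = o(N_n^2)$ via Cauchy--Schwarz. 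The density hypothesis together with $\mathrm{tr}(A_n^2) = 2|E_n| = (p + o(1))N_n^2$ and the Rayleigh lower bound $\lambda_{n,1} \ge \mathbf{1}^T A_n \mathbf{1}/N_n = (p+o(1))N_n$ pin down the top eigenvector as approximately $\mathbf{1}/\sqrt{N_n}$, so the top eigenspace contribution is $p|A||B| + o(N_n^2)$, yielding (ii).

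For (i) $\Rightarrow$ (iii), the idea is that the count of $4$-cycles sees the full spectrum of $A_n$. Inclusion--exclusion over the finitely many labeled graphs on four vertices containing $C_4$ as a subgraph converts the induced counts supplied by (i) into the labeled (non-induced) count of $C_4$; allowing the four vertices of the $C_4$ to coincide perturbs the count by only $O(N_n^3)$. The resulting labeled count equals $\mathrm{tr}(A_n^4) = \sum_i \lambda_{n,i}^4 = (1 + o(1)) p^4 N_n^4$, and combined with the lower bound $\lambda_{n,1}^4 \ge (p + o(1))^4 N_n^4$ coming from the density hypothesis, this gives $\lambda_{n,2}^4 \le \sum_{i \ge 2} \lambda_{n,i}^4 = o(N_n^4)$, i.e., (iii).

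The main obstacle is (ii) $\Rightarrow$ (i), which is the counting lemma from the Szemer\'{e}di regularity framework. My plan is to show by induction on $e(H)$ that for every graph $H$ the number of labeled homomorphisms from $H$ to $G_n$ is $(1 + o(1)) p^{e(H)} N_n^{v(H)}$; the induced count in (i) then follows by a second inclusion--exclusion over graphs $H'$ obtained from $H$ by adding edges (using that there are only finitely many such $H'$ and that each labeled count approaches its expected value). The inductive step fixes an edge $\{u_i, u_j\} \in E(H)$, splits the homomorphism sum by freezing the images of the other vertices of $H$, and invokes (ii) with $A$ and $B$ determined by the adjacency constraints imposed by the frozen vertices on $v_i$ and $v_j$. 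The technical crux is tracking error terms uniformly across the exponentially many auxiliary pairs $(A,B)$ that arise as the other vertices vary; this is handled by defect-form Cauchy--Schwarz arguments standard in the counting-lemma literature, exploiting that the discrepancy bound in (ii), once extended to bounded functions by linearity and approximation, is stable under taking $L^2$ means over the frozen vertices.
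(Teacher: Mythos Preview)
The paper does not give its own proof of this theorem; it is quoted as background with a citation to Chung--Graham--Wilson \cite{cgw}. Your cycle (iii)$\Rightarrow$(ii)$\Rightarrow$(i)$\Rightarrow$(iii) is exactly the classical CGW route, so at the level of strategy there is nothing to compare.

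There is, however, a genuine gap in your sketch of (iii)$\Rightarrow$(ii), and it traces back to the paper's formulation of (iii) rather than to your overall plan. You claim that the density hypothesis, $\mathrm{tr}(A_n^2)=(p+o(1))N_n^2$, and the Rayleigh bound $\lambda_{n,1}\ge(p+o(1))N_n$ together ``pin down the top eigenvector as approximately $\mathbf{1}/\sqrt{N_n}$''. They do not: the trace identity yields only $\lambda_{n,1}\le(\sqrt{p}+o(1))N_n$, which for $p\in(0,1)$ does not meet the lower bound, and nothing in (iii) forces the Perron vector to be nearly constant. In fact (iii) as stated here---without the clause $\lambda_{n,1}=(p+o(1))N_n$ that is part of the original CGW eigenvalue condition---does \emph{not} imply (ii): take $G_n$ to be the disjoint union of a clique on $\lfloor(1-\varepsilon)N_n\rfloor$ vertices and $\lceil\varepsilon N_n\rceil$ isolated vertices, so that $p=(1-\varepsilon)^2+o(1)$, $\lambda_{n,1}\sim(1-\varepsilon)N_n=\sqrt{p}\,N_n$, and $|\lambda_{n,2}|\le 1$; then (iii) holds, but choosing $A$ to be the set of isolated vertices shows (ii) fails. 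To repair your argument you must either restore the hypothesis $\lambda_{n,1}=(p+o(1))N_n$ in (iii), or replace the direct expander-mixing step by the CGW detour through the $C_4$/codegree count, from which both (ii) and the full eigenvalue statement follow. Your arguments for (ii)$\Rightarrow$(i) and (i)$\Rightarrow$(iii) are the standard ones and are unaffected by this issue.
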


\begin{remark}
    Item (i) expresses a natural property of the Erd\H{o}s--R\'{e}nyi random graph.
    Fix a graph $H$ with $m$ vertices and $e$ edges.
    Suppose $n \ge m$.
    Then selecting $m$ vertices $(v_1, \dots, v_m)$ at random from $\mathbb{G}(n,p)$, the probability that the induced subgraph formed by $(v_1, \dots, v_m)$ is isomorphic to the graph $H$ is exactly $p^e (1-p)^{\binom{m}{2}-e}$, since each of the $e$ edges from $H$ has a corresponding edge in $\mathbb{G}(n,p)$ with probability $p$ and each of the $\binom{m}{2}-e$ non-edges from $H$ corresponds to a non-edge in $\mathbb{G}(n,p)$ with probability $1-p$.

    Item (ii) similarly expresses a randomness property, stating that the number of edges between any two sets is roughly (up to lower order terms) the expected number in a random graph with the same edge density $p$.

    It is not obvious \emph{a priori} that the two randomness properties (i) and (ii) should be equivalent, and even less so that the algebraic criterion (iii) captures this same randomness phenomenon.
    This is part of the richness of quasirandomness as a notion in graph theory, and we will see that condition (iii) is an especially useful technical tool for establishing quasirandomness of families of Cayley graphs and other graphs of algebraic origin.
\end{remark}

The definition of quasirandomness was formally introduced in \cite{cgw}, where it was also shown that Paley graphs are an example of a quasirandom family of graphs (see \cite[Section 5]{cgw}).
Related randomness properties of Paley graphs were observed in earlier work (see, e.g., \cite[Theorem 3]{bt}).

Extending beyond the classical Paley graphs defined above, we consider quasirandomness of graphs generated by values of polynomials over finite rings.
Let $R$ be a finite commutative ring and $d \in \N$.
We let $\bfP(R,d)$ be the graph with vertex set $R$ and edges $\{x,y\}$ if $x-y$ or $y-x$ is a $d$th power in $R$.
Note that for a prime power $q \equiv 1 \pmod{4}$, the graph $\bfP(\F_q,2)$ is the Paley graph $\bfP_q$.

\begin{remark}
    In order to stay in the realm of undirected graphs, we have defined $\bfP(R,d)$ to be the Cayley graph for $(R,+)$ generated by $\{\pm x^d : x \in R\} \setminus \{0\}$.
    Traditionally, Paley graphs are defined only for prime powers $q \equiv 1 \pmod{4}$ so that $-1$ is a square and consequently the set of squares $\{x^2 : x \in \F_q \setminus \{0\}\}$ is a symmetric set generating $\bfP_q$ as a Cayley graph of the group $(\F_q,+)$.
    If one takes the set of squares as the generators for a Cayley graph of $(\F_q,+)$ with $q \equiv 3 \pmod{4}$, the result is a directed graph, sometimes called the \emph{Paley digraph} (see \cite[Section 9.7]{jones} for a discussion and historical remarks about Paley digraphs).
    Because half of the elements of $\F_q \setminus \{0\}$ are squares, the set $\{\pm x^2 : x \in \F_q \setminus \{0\}\}$ is all of $\F_q \setminus \{0\}$ when $q \equiv 3 \pmod{4}$, so the undirected version of the Paley graph is the complete graph on $q$ vertices.
    Complete graphs form a quasirandom family, but admittedly not a very interesting one.
    By considering more general rings, we can obtain nontrivial graphs $\bfP(R,2)$, and moving to higher degree produces graphs $\bfP(R,d)$ over finite rings that are nontrivial even for $R = \F_q$ with $q \equiv 3 \pmod{4}$.
\end{remark}

It is important in Theorem \ref{thm: CGW quasirandom} that the sequence of graphs is \emph{dense}, i.e. that the number of edges $|E_n|$ grows as $c|V_n|^2$ for some constant $c$ ($ = \frac{p}{2}$).
In our general setup, a sequence of graphs $\mathbf{P}(R_n,d)$ may have edge densities tending to 0 as $n \to \infty$.
We therefore consider notions of \emph{sparse quasirandomness}, as introduced in \cite{cg}.
We will say that a sequence of graphs $G_n = (V_n, E_n)$ is \emph{(sparse) quasirandom} if $G_n$ has edge density $p_n = \frac{|E_n|}{\binom{|V_n|}{2}}$ and for every $A, B \subseteq V_n$,
\begin{equation*}
    \left| \{(a,b) \in A \times B : \{a,b\} \in E_n\} \right| = p_n |A| |B| + o_{n \to \infty}(p_n|V_n|^2).
\end{equation*}

The following theorem, to be proved in Section \ref{sec:quasirandomness}, establishes a two-way connection between quasirandomness of a sequence of Paley-type graphs and asymptotic total ergodicity of the underlying sequence of rings.

\begin{restatable}{theorem}{Quasirandom} \label{thm: quasirandom iff aTE}
	Let $(R_n)_{n \in \N}$ be a sequence of finite principal ideal rings such that either $\textup{char}(R_n) \to \infty$ or $\textup{char}(R_n)$ is constant.\footnote{This assumption is to ensure that there exists $d \ge 2$ such that $\char(R_n) \nmid d$ for all but finitely many $n \in \N$.
    We note that every sequence of rings has a subsequence where either the characteristic is constant or diverges to infinity.}
	Then:
	\begin{enumerate}[(1)]
        \item If $\frac{|R_n^{\times}|}{|R_n|} \to 1$, then for any $d \ge 2$ such that $\char(R_n) \nmid d$ for all but finitely many $n \in \N$, the family of graphs $\left( \bfP(R_n,d) \right)_{n \in \N}$ is quasirandom.
        \item If $d \ge 2$ such that $\char(R_n) \nmid d$ for all but finitely many $n \in \N$ and $\bfP(R_n, d)$ is quasirandom, then
        \begin{equation*}
            \min \left\{ [R_n, \p] : \p~\text{is a prime ideal of}~R_n~\text{and}~\{\pm x^d : x \in R_n/\p\} \ne R_n/\p \right\} \to \infty.
        \end{equation*}
	\end{enumerate}
\end{restatable}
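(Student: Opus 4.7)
I would prove the two implications separately. For Part (1), the strategy is to reduce to Theorem \ref{thm: equivalences} via the polynomial $P(x) = x^d$; for Part (2), I construct explicit bad subsets violating quasirandomness. First, the hypothesis $|R_n^\times|/|R_n| \to 1$ is equivalent to $\lpf(R_n) \to \infty$, since decomposing $R_n = \prod_j R_{n,j}$ into local factors with residue fields $\F_{q_{n,j}}$ gives $|R_n^\times|/|R_n| = \prod_j (1 - 1/q_{n,j})$, which tends to $1$ iff $\min_j q_{n,j} \to \infty$. Combined with $\char(R_n) \nmid d$, this forces the residue characteristic $p_n \to \infty$ and hence $p_n \nmid d$ for all large $n$. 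Realizing $R_n \cong \mO_{K_n}/I_n$ by Proposition \ref{prop: good iff principal}, the lift $\tilde{P}(x) = x^d \in \mO_{K_n}[x]$ is good for irrational equidistribution (Proposition \ref{prop: number field irrational equidistribution} in characteristic zero; Example \ref{eg: irrational equidistribution} when $p_n \nmid d$). By the ``moreover'' clause of Theorem \ref{thm: equivalences}, properties (i)--(v) of that theorem hold for both $P(y) = y^d$ and $P(y) = -y^d$ in $R_n$ for large $n$.

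The next task is to convert property (v) into the edge-count form of sparse quasirandomness for $\bfP(R_n, d)$. Writing $S_n = \{\pm x^d : x \in R_n\} \setminus \{0\}$ and $N_d(s) = |\{y \in R_n : y^d = s\}|$, applying (v) to both signs yields
\begin{equation*}
    \sum_{(a,b) \in A \times B} \bigl( N_d(b-a) + N_d(a-b) \bigr) = 2|A||B| + O\bigl(|A|^{1/2}|B|^{1/2}|R_n|\,\lpf(R_n)^{-\gamma}\bigr).
\end{equation*}
The $d$-th power map is a group homomorphism on $R_n^\times$ with constant fiber size $e_n \mid d$, so $N_d(s) + N_d(-s)$ is constant (equal to $e_n$ or $2e_n$, according as $-1 \in (R_n^\times)^d$) on the ``generic'' part of $S_n$ arising from units. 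The non-unit contribution, in particular the $s = 0$ term, satisfies $N_d(0)/|R_n| = O(\lpf(R_n)^{-1})$, so it contributes $O(|R_n|^2/\lpf(R_n))$ to the weighted sum. Since $|R_n^\times|/|R_n| \to 1$ also forces $p_n := |S_n|/|R_n| \ge 1/(2d)$ for large $n$ (as $|S_n| \ge |R_n^\times|/d$), dividing the displayed identity by the constant weight produces $e(A,B) = p_n|A||B| + o(p_n|R_n|^2)$, which is precisely the sparse quasirandomness condition. The principal technical obstacle in Part (1) is this reconciliation between the polynomial-average identity (weighted by $N_d$) and the unweighted edge count, which is where the strength of the unit-density hypothesis is genuinely used.

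For Part (2), I argue by contrapositive. Suppose there exist $D \in \N$, a subsequence $(n_k)$, and prime ideals $\p_{n_k} \le R_{n_k}$ with $[R_{n_k} : \p_{n_k}] \le D$ and $T_{n_k} := \{\pm x^d : x \in R_{n_k}/\p_{n_k}\}$ a proper subset of $R_{n_k}/\p_{n_k}$. Let $H_{n_k}$ be the proper additive subgroup of $R_{n_k}/\p_{n_k}$ generated by $T_{n_k}$, fix $c \in (R_{n_k}/\p_{n_k}) \setminus H_{n_k}$, and let $A_{n_k}, B_{n_k} \subseteq R_{n_k}$ be the preimages under the quotient map $\pi : R_{n_k} \to R_{n_k}/\p_{n_k}$ of the cosets $c + H_{n_k}$ and $H_{n_k}$ respectively. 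Then $|A_{n_k}|, |B_{n_k}| \ge |R_{n_k}|/D$. For any $(a,b) \in A_{n_k} \times B_{n_k}$, the image $\pi(b-a)$ lies in $-c + H_{n_k}$, which is disjoint from $T_{n_k} \subseteq H_{n_k}$ (else $-c \in T_{n_k} - H_{n_k} \subseteq H_{n_k}$, contradicting $c \notin H_{n_k}$). Hence no edge of $\bfP(R_{n_k}, d)$ joins $A_{n_k}$ to $B_{n_k}$, while $p_{n_k}|A_{n_k}||B_{n_k}| \ge p_{n_k}|R_{n_k}|^2/D^2$; this is a clear violation of sparse quasirandomness along the subsequence $(n_k)$, completing the contrapositive.
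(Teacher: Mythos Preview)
Your Part~(1) is essentially the paper's argument, though routed through Theorem~\ref{thm: equivalences} rather than Theorem~\ref{thm: finite ring polynomial average} directly; both amount to controlling the polynomial average and then absorbing the non-unit contribution using $|R_n\setminus R_n^\times| = o(|R_n|)$. Two inaccuracies to flag: the claimed equivalence $|R_n^\times|/|R_n|\to 1 \iff \lpf(R_n)\to\infty$ is false (only $\Rightarrow$ holds, as the paper notes explicitly after Theorem~\ref{thm: quasirandom iff aTE}; if the number of local factors grows, $\prod_j(1-1/q_{n,j})$ can stay bounded away from $1$ even when $\min_j q_{n,j}\to\infty$), and the sentence ``this forces the residue characteristic $p_n\to\infty$'' is wrong (take $R_n=\F_{2^n}$). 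Neither error is fatal since you only use the forward implication and the hypothesis $\char(R_n)\nmid d$ already gives what you need.

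Your Part~(2), however, has a genuine gap. You assert that the additive subgroup $H_{n_k}$ of $R_{n_k}/\p_{n_k}$ generated by $T_{n_k}=\{\pm x^d:x\in R_{n_k}/\p_{n_k}\}$ is proper whenever $T_{n_k}$ is a proper subset. This is false: in $\F_5$ with $d=2$ one has $T=\{0,1,4\}\subsetneq\F_5$, yet the additive span of $T$ is all of $\F_5$ since $1\in T$. More generally, in any prime field $\F_p$ the additive subgroup generated by any set containing $1$ is everything, so your construction is vacuous along, say, the sequence $R_n=\F_{p_n}$ with $p_n\equiv 1\pmod{4}$ and $d=2$. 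The fix is immediate: drop $H_{n_k}$ entirely, pick any $c\in(R_{n_k}/\p_{n_k})\setminus T_{n_k}$, and take $A_{n_k}=\pi^{-1}(\{0\})$, $B_{n_k}=\pi^{-1}(\{c\})$. Then $\pi(b-a)=c\notin T_{n_k}$ for all $(a,b)\in A_{n_k}\times B_{n_k}$, so there are no edges, while $|A_{n_k}||B_{n_k}|\ge |R_{n_k}|^2/D^2$.

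With this fix, your Part~(2) is a genuinely different (and more elementary) route than the paper's. The paper proceeds spectrally: it invokes the Conlon--Zhao bound (Proposition~\ref{prop: quasirandom eigenvalues}) that $\eps$-uniformity forces $|\lambda_2|\le 8\eps r$, computes the Cayley-graph eigenvalues as character sums over $S_n$ (Lemma~\ref{lem: eigenvalues of Cayley graph}), and then uses Parseval on $\ind_{\pi(S)}$ in $\F_q$ to exhibit a nontrivial character with $|\sum_{t\in\pi(S)}\chi(t)|^2\ge q(d-1)/d^2$, which via the quotient lifts to a large eigenvalue of $\bfP(R,d)$. The paper's approach yields an explicit quantitative bound (Theorem~\ref{thm: quasirandom}(2)); your combinatorial construction gives the qualitative statement more directly but without the quantitative dependence on $\eps$.
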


The conditions in Theorem \ref{thm: quasirandom iff aTE} are related but not identical to the notion of asymptotic total ergodicity captured by the quantity $\lpf(R_n)$.
Expressing a finite principal ideal ring $R$ as a quotient $\mO_K/I$ for some ring of integers $\mO_K$ of a global field $K$ and some ideal $I$, we can factor $I = \p_1^{e_1} \ldots \p_r^{e_r}$ and see that
\begin{equation*}
    |R^{\times}| = |R| \cdot \prod_{i=1}^r \left( 1 - \frac{1}{[R:\p_i]} \right) \leq |R| \cdot \left( 1 - \frac{1}{\lpf(R)} \right).
\end{equation*}
Hence, if $\frac{|R_n^{\times}|}{|R_n|} \to 1$, then $\lpf(R_n) \to \infty$.
However, the converse is not true.
If the number of prime factors $r$ grows very rapidly, one can construct a sequence for which $\lpf(R_n) \to \infty$ but $\frac{|R_n^{\times}|}{|R_n|} \not\to 1$.
We give concrete examples in Example \ref{eg: Paley-type graphs} below. 
The quantity
\begin{equation*}
    \min \left\{ [R, \p] : \p~\text{is a prime ideal of}~R~\text{and}~\{\pm x^d : x \in R/\p\} \ne R/\p \right\}
\end{equation*}
looks almost like the definition of $\lpf(R)$, the difference being that we now disregard prime factors $\p$ for which every element of $R/\p$ is either a $d$th power or its negative.
The reason for discarding such prime ideals is that the corresponding graph $\bfP(R/\p,d)$ is a complete graph, which is trivially quasirandom regardless of the size of $R/\p$.

The condition $\textup{char}(R_n) \nmid d$ is to ensure that the graph $\bfP(R_n,d)$ is connected.
If $\char(R_n) = p$ and $p \mid d$, say $d = p^r k$ for some $r \in \N$ and $p \nmid k$, then $\bfP(R_n,d)$ may be disconnected, but each of its connected components is isomorphic to $\bfP(R'_n,k)$, where $R'_n$ is the ring $R'_n = \{x^{p^r} : x \in R_n\}$.
Determining whether the connected components are quasirandom thus reduces to a problem that can be addressed by Theorem \ref{thm: quasirandom iff aTE}.

The following examples illustrate the content of Theorem \ref{thm: quasirandom iff aTE} in natural special cases.
To our knowledge, outside of the case that the rings $R_n$ are finite fields, all of the examples of quasirandom sequences produced by item (1) in Theorem \ref{thm: quasirandom iff aTE} are new.

\begin{example} \label{eg: Paley-type graphs}~
    \begin{itemize}
        \item For a finite field $\F_q$, the multiplicative group has size $|\F_q^{\times}| = q - 1$, so the family of finite fields satisfies $\frac{|\F_q^{\times}|}{|\F_q|} = \frac{q-1}{q} \to 1$ as $q \to \infty$.
        Item (1) in Theorem \ref{thm: quasirandom iff aTE} therefore recovers the known fact that the sequence of Paley graphs (and its higher degree generalizations) is quasirandom (see \cite[Section 5]{cgw}).
        \item For a modular ring $R = \Z/N\Z$, the cardinality $|R^{\times}|$ is given by Euler's totient function $\phi(N)$.
        Hence, if $R_n = \Z/N_n\Z$, we have $\frac{|R_n^{\times}|}{|R_n|} \to 1$ if and only if $\frac{\phi(N_n)}{N_n} \to 1$.
        Factoring $N_n = p_{n,1}^{e_{n,1}} \cdot \ldots \cdot p_{n,k_n}^{e_{n,k_n}}$ and using $\frac{\phi(N_n)}{N_n} = \prod_{i=1}^{k_n} \left( 1 - \frac{1}{p_{n,i}} \right)$, we see that a sufficient condition for $(\bfP(\Z/N_n\Z,d))_{n \in \N}$ to form a quasirandom sequence is that $\sum_{i=1}^{k_n} \frac{1}{p_{n,i}} \to 0$ as $n \to \infty$.
        On the other hand, a necessary condition is that for each prime factor $p_{n,i}$, either $p_{n,i}$ is large (growing to infinity as $n \to \infty$) or every element is a $d$th power or its negative mod $p_{n,i}$.
        For instance, if $d = 2$, then we require that $\min\{p_{n,i} : 1 \le i \le k_n, p_{n,i} \equiv 1 \pmod{4}\} \to \infty$.
        \item The above analysis of modular rings generalizes with only minor modifications to arbitrary finite principal ideal rings.
        Given $R = \mO_K/I$, we can factor $I = \p_1^{e_1} \cdot \ldots \cdot \p_k^{e_k}$ and compute
        \begin{equation*}
            \frac{|R^{\times}|}{|R|} = \prod_{i=1}^k \left( 1 - \frac{1}{[\mO_K:\p_i]} \right).
        \end{equation*}
        By Theorem \ref{thm: quasirandom iff aTE}, if $\sum_{i=1}^k \frac{1}{[\mO_K : \p_i]}$ is sufficiently small and $\char(R) \nmid d$, then $\bfP(R,d)$ is highly quasirandom.
    \end{itemize}
\end{example}

The estimates provided by item (v) in Theorem \ref{thm: equivalences} suggest that a version of Theorem \ref{thm: quasirandom iff aTE} may hold for graphs generated by values of more general polynomials $P(x) \in R[x]$ in place of $x^d$.
However, when translating the results into a graph-theoretic setting, it is important that all of the values of the polynomial $P(x)$ occur with equal frequency, and this is often not the case for general polynomials.
This algebraic issue is also the reason that we need information about the multiplicative subgroup in item (1) of Theorem \ref{thm: quasirandom iff aTE} and information about $\lpf$ is insufficient on its own.

Theorem \ref{thm: quasirandom iff aTE} as stated establishes only a qualitative relationship between asymptotic total ergodicity and quasirandomness.
It turns out that this relationship can be made quantitative.
The quantitative statement is given in Theorem \ref{thm: quasirandom}, and we will deduce Theorem \ref{thm: quasirandom iff aTE} from Theorem \ref{thm: quasirandom} in Section \ref{sec:quasirandomness}.

%%%%%%%%%%%%%%%%%%%%%%%%%%%%%%%%%%%%%%%%%%%%%%%%%%%%%%%%
% ---- TE ---- %
%%%%%%%%%%%%%%%%%%%%%%%%%%%%%%%%%%%%%%%%%%%%%%%%%%%%%%%%

\section{Total ergodicity and polynomial ergodic averages} \label{sec:TE polynomial}

The goal of this section is to prove Theorems \ref{thm: TE}, \ref{thm: strongly ergodic}, and \ref{thm: TE + ud}, connecting the behavior of polynomial ergodic averages with the property of total ergodicity.
The proof strategy for these theorems is broadly the same in the number field and global function field settings, but there are important differences in some technical arguments arising from the distinction between finite and infinite characteristic\footnote{We remind the reader that in this paper we adopt the convention that the characteristic is equal to the additive order of the multiplicative identity to enable more convenient formulations of our theorems. The term ``infinite characteristic'' therefore refers to what is more often referred to as ``zero characteristic''.}.
As we shall see, the main ingredients in the proofs of Theorems \ref{thm: TE} and \ref{thm: strongly ergodic} are appropriate versions of Weyl's equidistribution theorem for polynomials over rings of integers of global fields.
The number field version of Weyl's equidistribution theorem that we require was already proved in Proposition \ref{prop: number field irrational equidistribution} above.
In the function field setting, we will use two different equidistribution theorems, one dealing with low degree (relative to the characteristic) polynomials and the other with high degree polynomials.
While both of these theorems are also valid in number fields (and follow directly from Proposition \ref{prop: number field irrational equidistribution}), some of the assumptions are not as meaningful in infinite characteristic, so one should keep in mind global function fields when interpreting the statements of Theorems \ref{thm: Weyl rings of integers low degree} and \ref{thm: Weyl rings of integers} below.

The first equidistribution theorems deals with polynomials of low degree and closely resembles the classical polynomial equidistribution theorem of Weyl:

\begin{theorem} \label{thm: Weyl rings of integers low degree}
    Let $K$ be a global field with ring of integers $\mO_K$.
    Let $\chi : \mO_K \to S^1$ be an irrational character, and let $P(x) \in \mO_K[x]$ be a nonconstant polynomial with $\deg{P} < \char(K)$.
    Then $\UClim_{n \in \mO_K} \chi(P(n)) = 0$.
\end{theorem}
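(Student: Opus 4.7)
My plan is to handle the two types of global fields separately. In the algebraic number field case, the result is an immediate corollary of Proposition \ref{prop: number field irrational equidistribution}, so the substance of the proof concerns the global function field case, where $\char(K) = p$ for some prime $p$. In this regime, I would establish the result by induction on $d = \deg P$, with the induction hypothesis phrased uniformly in the irrational character: for every irrational $\chi$ and every nonconstant $Q \in \mO_K[x]$ of degree strictly less than $d$ (and less than $p$), $\UClim_{n \in \mO_K} \chi(Q(n)) = 0$. The mechanism of the induction is classical Weyl differencing, instantiated here by the van der Corput lemma for countable abelian groups.

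For the base case $d = 1$, write $P(n) = an + b$ with $a \ne 0$ and set $\chi_a(n) := \chi(an)$, so that $\chi(P(n)) = \chi(b)\chi_a(n)$. The key observation is that $\chi$ being irrational forces $\chi_a$ to be a nontrivial character of $(\mO_K,+)$: if $\chi_a$ were trivial, then $\chi$ would vanish on the nonzero ideal $a\mO_K$ and thus be periodic with period $a$, contradicting the definition of irrationality. A standard F\o lner-averaging argument (choose $m$ with $\chi_a(m) \ne 1$ and use $(1 - \chi_a(m))\UClim_n \chi_a(n) = 0$) then yields the claim.

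For the inductive step with $2 \le d < p$, I would invoke van der Corput's inequality in the form
\begin{equation*}
    \left| \UClim_{n \in \mO_K} \chi(P(n)) \right|^2 \le \UClim_{h \in \mO_K} \left| \UClim_{n \in \mO_K} \chi(\partial_h P(n)) \right|.
\end{equation*}
For any $h \ne 0$, the polynomial $\partial_h P(x) = P(x+h) - P(x)$ has leading term $d \, a_d \, h \, x^{d-1}$, where $a_d$ is the leading coefficient of $P$. Since $d < p = \char(K)$ the element $d$ is a unit mod-$p$, so $d \ne 0$ in $\mO_K$; together with $a_d \ne 0$, $h \ne 0$, and the fact that $\mO_K$ is a domain (being Dedekind, by Proposition \ref{prop: rings of integers}), this leading coefficient is nonzero. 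Hence $\partial_h P$ is nonconstant of degree $d - 1 < p$, and the inductive hypothesis applied to the same irrational character $\chi$ gives $\UClim_n \chi(\partial_h P(n)) = 0$ for every $h \ne 0$. Because $\{0\}$ has density zero in any F\o lner sequence, the outer average over $h$ is also zero, completing the induction.

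The only place the hypothesis $\deg P < \char(K)$ is used — and the sole obstacle a proof in this style faces — is exactly in ensuring $d \ne 0$ in $\mO_K$, which in turn guarantees that differencing strictly decreases the degree. In the function field setting, once $\deg P \ge p$ this fails catastrophically: a polynomial such as $x^p$ is additive and has derivational degree $1$, so the differencing scheme collapses, and indeed Example \ref{eg: weakly rational} shows that genuine obstructions from weakly rational characters appear. That phenomenon is the reason a separate treatment (Theorem \ref{thm: strongly ergodic} and the companion equidistribution result Theorem \ref{thm: Weyl rings of integers}) is needed for the high-degree regime; under the present hypothesis $\deg P < \char(K)$, however, the clean Weyl induction above suffices.
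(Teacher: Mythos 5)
Your proof is correct, but it takes a genuinely different route from the paper's. The paper does not treat the low-degree case by Weyl differencing: it fixes an integral basis of $\mO_K$ over $\F_q[t]$, writes the character as $\sum_j n_j b_j \mapsto e(\sum_j n_j \alpha_j)$ and the polynomial in coordinates as $P(\sum_j n_j b_j) = \sum_j P_j(\bm{n}) b_j$, proves via the Jacobi criterion that $P_1,\dots,P_d$ are algebraically independent (Lemma \ref{lem: algebraically independent}), and then invokes the Bergelson--Leibman multivariable equidistribution theorem in finite characteristic (Theorem \ref{thm: multivariable equidistribution}, i.e.\ \cite[Theorem 10.1]{bl}) to conclude that $\sum_j P_j(\bm{n})\alpha_j$ is well-distributed in $\mathbf{T}$. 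Your argument bypasses all of this: the van der Corput induction on $\deg P$ works directly in $\mO_K$, needs no integral basis and no external equidistribution input beyond the triviality of averages of nontrivial characters, and isolates exactly where $\deg P < \char(K)$ enters (the coefficient $d\,a_d\,h$ of $x^{d-1}$ in $\partial_h P$ is nonzero because $\mO_K$ is a domain and $d$ is invertible, so differencing strictly drops the degree). What the paper's heavier setup buys is uniformity with the high-degree regime: the same coordinate-plus-equidistribution machinery also yields Theorem \ref{thm: Weyl rings of integers} for strongly irrational characters, where, as you note, differencing collapses on additive polynomials and your induction cannot run. One small presentational caveat: the van der Corput inequality should strictly be stated with $\limsup$'s along a fixed F{\o}lner sequence rather than with $\UClim$'s whose existence is being asserted, but since the right-hand side vanishes (the inner limit is $0$ for every $h \ne 0$ by induction, and $\{0\}$ has density zero), this yields that the limsup of the modulus of the averages is $0$ along every F{\o}lner sequence, which is exactly the desired conclusion; this is a routine fix, not a gap.
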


The second equidistribution theorem deals with polynomials of high degree, and this introduces new difficulties, as polynomials whose degree exceeds the characteristic have more intricate distributional behavior.
Indeed, Example \ref{eg: irrational equidistribution} above shows that the conclusion of Theorem \ref{thm: Weyl rings of integers low degree} may fail for high degree polynomials.
However, we are able to obtain an appropriate version of Weyl's equidistribution theorem for high degree polynomials by replacing the assumption of irrationality by strong irrationality:

\begin{theorem} \label{thm: Weyl rings of integers}
    Let $K$ be a global field with ring of integers $\mO_K$.
    Let $\chi : \mO_K \to S^1$ be a strongly irrational character, and let $P(x) \in \mO_K[x]$ with $P(0) = 0$.
    Then either
    \begin{itemize}
        \item $\chi(P(n)) = 1$ for every $n \in \mO_K$, or
        \item $\UClim_{n \in \mO_K} \chi(P(n)) = 0$.
    \end{itemize}
\end{theorem}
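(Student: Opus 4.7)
The plan is to induct on the derivational degree $k = \ddeg P$, using the van der Corput lemma to reduce one level at a time. For the base cases: if $k = 0$ then $P \equiv 0$ (because $P(0) = 0$), so $\chi \circ P \equiv 1$; if $k = 1$ then $P$ is an additive polynomial (again because $P(0) = 0$), so $\chi \circ P \colon \mO_K \to S^1$ is a group character. Strong irrationality of $\chi$ forbids $\chi \circ P$ from being a nontrivial rational character, so $\chi \circ P$ is either trivial (first alternative) or a nontrivial character, in which case the mean ergodic theorem for the amenable group $(\mO_K,+)$ gives $\UClim_n \chi(P(n)) = 0$.

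For the inductive step $k \geq 2$, apply van der Corput to get
\begin{equation*}
    \left|\UClim_n \chi(P(n))\right|^2 \leq \UClim_h \, \UClim_n \chi(\partial_h P(n)).
\end{equation*}
For each $h$, decompose $\partial_h P(n) = P(h) + Q_h(n)$ where $Q_h(n) := P(n+h) - P(n) - P(h)$ satisfies $Q_h(0) = 0$ and $\ddeg Q_h \leq k-1$, so the inductive hypothesis applies and yields the dichotomy: either $h \in S := \{h : \chi \circ Q_h \equiv 1\}$, or $\UClim_n \chi(Q_h(n)) = 0$. Using the symmetry $Q_h(n) = Q_n(h)$, one verifies that $S$ is a subgroup of $\mO_K$ and that $\tilde\chi := \chi \circ P|_S$ is a well-defined character of $S$. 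The bound therefore reduces to
\begin{equation*}
    \left|\UClim_n \chi(P(n))\right|^2 \leq \left|\UClim_h \mathbf{1}_S(h) \, \tilde\chi(h)\right|.
\end{equation*}

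Now split into subcases. If $\tilde\chi$ is a nontrivial character on $S$, the mean ergodic theorem on $S$ (combined with the fact that a F\o{}lner sequence in $\mO_K$ induces one in any finite-index subgroup and gives density $0$ on any infinite-index one) shows the right-hand side vanishes, yielding the second alternative. If $S = \mO_K$ and $\tilde\chi \equiv 1$, then in fact $\chi \circ P$ is a character of $\mO_K$ that is identically $1$, giving the first alternative.

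The main obstacle is the remaining possibility: $S$ is a \emph{proper} finite-index subgroup on which $\tilde\chi$ is trivial, and yet $\chi \circ P \not\equiv 1$ globally. Here van der Corput only gives $|\UClim_n \chi(P(n))|^2 \leq d^*(S) < 1$, which is too weak. The resolution is to use strong irrationality in full force: from $\partial_s P \in \ker\chi$ pointwise for every $s \in S$, one shows that an appropriate $(k-1)$-fold iterated difference $\partial_{s_1, \dots, s_{k-1}} P$ with $s_i \in S$ produces an additive polynomial $\eta \in \mO_K[x]$ whose composite $\chi \circ \eta$ is forced to be a \emph{nontrivial rational} character---the rationality coming from the $S$-periodicity of $\chi \circ P$, and the nontriviality from $\chi \circ P \not\equiv 1$. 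This contradicts the strong irrationality of $\chi$, so the only escape is $\chi \circ P \equiv 1$, completing the proof.
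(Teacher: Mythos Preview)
Your approach via induction on the derivational degree and the van der Corput inequality is different from the paper's proof. The paper handles the number field case directly by Weyl (Proposition~\ref{prop: number field irrational equidistribution}) and, in the function field case, passes to coordinates over $\F_q[t]^d$ via an integral basis, writes $P$ in the form $\sum_j \eta_j(x^{r_j})$ with $\eta_j$ additive and $x^{r_j}$ separable, and then invokes the multivariable equidistribution theorem of Bergelson--Leibman (Theorem~\ref{thm: multivariable equidistribution}). No van der Corput induction appears.

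Your base cases and the reduction to the quantity $\UClim_h \ind_S(h)\,\tilde\chi(h)$ are fine, and in the number field case your ``main obstacle'' argument can indeed be completed: there every finite-index subgroup $S \le \mO_K$ contains a nonzero ideal, so the $S$-periodicity of $\chi \circ \eta_h$ (for $\eta_h$ the additive part of a $(k-1)$-fold difference) genuinely forces $\chi \circ \eta_h$ to be rational, and strong irrationality then makes it trivial for every choice of shifts, contradicting $\ddeg P = k$.

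The gap is in the function field case. You claim that taking $s_1,\dots,s_{k-1}\in S$ yields an additive $\eta$ with $\chi\circ\eta$ a \emph{nontrivial rational} character. But with all shifts in $S$ one has $\chi\circ\partial_{s_1,\dots,s_{k-1}}P\equiv 1$ (since $\chi\circ\partial_sP\equiv 1$ for $s\in S$ and $\ker\chi$ is a group), so $\chi\circ\eta$ is the \emph{trivial} character---no contradiction. If instead you take arbitrary shifts $h_i$, then indeed $\chi\circ\eta_h$ is $S$-periodic, but in positive characteristic a finite-index additive subgroup of $\mO_K$ need not contain any nonzero ideal (for instance $\{\sum_j a_jt^j\in\F_p[t]:a_1=0\}$ contains no nonzero principal ideal), so $S$-periodicity does \emph{not} imply rationality in the sense of Definition~\ref{defn: additive} and the sentences preceding it. Thus neither the asserted rationality nor the asserted nontriviality of $\chi\circ\eta$ is justified, and the contradiction with strong irrationality does not go through. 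This is precisely the obstruction that the paper sidesteps by importing the structural decomposition and the Bergelson--Leibman machinery rather than relying on raw differencing.
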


We will use a similar argument to the one appearing in the proof of Proposition \ref{prop: number field irrational equidistribution} to prove the function field case of Theorems \ref{thm: Weyl rings of integers low degree} and \ref{thm: Weyl rings of integers}.
The argument makes use of the algebraic facts contained in the next three lemmas.

The first lemma is classical, asserting the existence of integral bases for rings of integers of function fields:

\begin{lemma}[cf. {\cite[Chapter IV, Theorem 3]{chevalley}}]
    Let $K$ be a degree $d$ extension of the field of rational functions $\F_q(t)$ over a finite field $\F_q$, and let $\mO_K$ be the ring of integers in $K$.
    Then there exist elements $b_1, \dots, b_d \in \mO_K$ such that
    \begin{equation*}
        \mO_K = \left\{ \sum_{j=1}^d n_j b_j : n_j \in \F_q[t] \right\}.
    \end{equation*}
\end{lemma}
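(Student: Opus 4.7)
The plan is to prove this as an application of the structure theorem for finitely generated modules over a principal ideal domain. Since $\F_q[t]$ is a PID, it will suffice to show that $\mO_K$ is a finitely generated, torsion-free $\F_q[t]$-module of rank exactly $d$. Torsion-freeness is immediate from $\mO_K \subseteq K$. The rank must equal $d$ because any $\F_q[t]$-basis of $\mO_K$ is \emph{a fortiori} an $\F_q(t)$-linearly independent spanning set for $K$. Thus the entire content is proving finite generation as an $\F_q[t]$-module.

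The standard tool is the trace pairing. First, by choosing a separating transcendence basis (taking $\F_q$ to be the exact constant field of $K$ and replacing $t$ if necessary), one can arrange that $K/\F_q(t)$ is a separable extension. Then the trace form $(x,y) \mapsto \textup{Tr}_{K/\F_q(t)}(xy)$ is non-degenerate. Select any $\F_q(t)$-basis $\alpha_1, \dots, \alpha_d$ of $K$; by clearing denominators of the coefficients of their minimal polynomials over $\F_q(t)$ (multiplying each $\alpha_j$ by a suitable element of $\F_q[t]$), one may assume $\alpha_j \in \mO_K$. Let $\alpha_1^*, \dots, \alpha_d^*$ denote the dual basis with respect to the trace form, i.e.\ $\textup{Tr}(\alpha_i \alpha_j^*) = \delta_{ij}$.

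Given any $x \in \mO_K$, write $x = \sum_j c_j \alpha_j^*$ with $c_j \in \F_q(t)$. Pairing against $\alpha_j$ gives $c_j = \textup{Tr}(x \alpha_j)$. Now $x \alpha_j \in \mO_K$, and the trace of an element of $\mO_K$ is integral over $\F_q[t]$ (being a sum of $\F_q(t)$-Galois conjugates of an integral element), so $c_j \in \F_q(t)$ is integral over $\F_q[t]$, and therefore $c_j \in \F_q[t]$ by integral closure of $\F_q[t]$ in $\F_q(t)$. This shows $\mO_K \subseteq \bigoplus_{j=1}^d \F_q[t] \alpha_j^*$, a free $\F_q[t]$-module of rank $d$. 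The structure theorem for modules over a PID then yields $\mO_K \cong \F_q[t]^d$, producing the desired basis $b_1, \dots, b_d$.

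The main obstacle I anticipate is the separability issue at the very start: in characteristic $p$, a finite extension of $\F_q(t)$ need not be separable, and for an inseparable extension the trace form is identically zero, so the dual basis argument collapses. The remedy is to choose the transcendental element $t$ carefully so that $K/\F_q(t)$ becomes separable; this is a standard but nontrivial fact about function fields and is the technical reason behind the careful treatment in Chevalley's reference.
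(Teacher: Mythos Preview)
The paper does not give a proof of this lemma; it simply records it as classical and cites Chevalley. So there is no in-paper argument to compare against, and your trace-pairing proof is exactly the standard route to this result.

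Your argument is correct in the separable case, and you correctly flag separability as the pressure point. The gap is in your proposed remedy. ``Replacing $t$ by a separating transcendence element'' changes the base ring $\F_q[t]$ and therefore changes $\mO_K$ itself (which is defined as the integral closure of that particular $\F_q[t]$); so this reduction does not prove the lemma as stated for a given presentation $K/\F_q(t)$. To close the gap for a fixed $t$, factor $K/\F_q(t)$ as a separable extension $K_{\mathrm{sep}}/\F_q(t)$ followed by a purely inseparable extension $K/K_{\mathrm{sep}}$. Your trace argument shows that the integral closure $B$ of $\F_q[t]$ in $K_{\mathrm{sep}}$ is a free $\F_q[t]$-module of the right rank (hence a PID). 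For the purely inseparable step, one can argue directly: if $K = K_{\mathrm{sep}}(\alpha)$ with $\alpha^{p^e} = a \in B$, then the integral closure of $B$ in $K$ is generated as a $B$-module by the $p^e$-th roots of a finite set of generators of $B$ modulo $p^e$-th powers (or, more slickly, invoke that $\F_q[t]$ is excellent, hence Japanese, so its integral closure in \emph{any} finite extension is module-finite). Either way one gets finite generation, and then your PID argument finishes.

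That said, in the paper's actual usage the choice of $t$ is not sacred---the authors only need \emph{some} $\F_q[t]$-basis to set up coordinates for Fourier analysis---so in practice one is free to pick $t$ separating from the outset, which is presumably why the paper is content to cite Chevalley without comment.
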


The integral basis $\{b_1, \dots, b_d\}$ corresponds to a group isomorphism $\mO_K \cong \F_q[t]^d$.
We can use this isomorphism to describe the dual group $\hat{\mO}_K$.
Let $\F_q((t^{-1}))$ be the field of formal Laurent series $\sum_{k=-\infty}^N a_k t^k$ with coefficients $a_k \in \F_q$.
Then $\hat{\F_q[t]}$ is isomorphic to the quotient group $\F_q((t^{-1}))/\F_q[t]$, and we will use the suggestive notation $\mathbf{T}$ for the space $\F_q((t^{-1}))/\F_q[t]$, indicating its analogous role to the circle group $\T$.
To make the isomorphism more explicit, every character $\chi : \F_q[t] \to S^1$ is uniquely representable as $\chi(n) = e(n\alpha)$ for some $\alpha \in \mathbf{T}$, where $e : \F_q[t] \to S^1$ is the map defined by
\begin{equation*}
    e \left( \sum_{k=-\infty}^N a_k t^k \right) = \textup{Tr}_{\F_q/\F_p}(a_{-1}).
\end{equation*}
Hence, characters on $\mO_K$ are represented by $d$-tuples $\bm{\alpha} = (\alpha_1, \dots, \alpha_d) \in \mathbf{T}^d$ via
\begin{equation*}
    \sum_{j=1}^d n_j b_j \mapsto e \left( \sum_{j=1}^d n_j \alpha_j \right).
\end{equation*}
The following lemma relates properties of a character $\chi$ on $\mO_K$ with corresponding properties of its representative tuple $\bm{\alpha} \in \mathbf{T}^d$.
For the relevant definitions, see Section \ref{sec:FS and TE}.

\begin{lemma} \label{lem: rational character rational element}
    Let $K$ be a degree $d$ extension of the field of rational functions $\F_q(t)$ over a finite field $\F_q$, and let $\mO_K$ be the ring of integers in $K$.
    Let $\{b_1, \dots, b_d\} \subseteq \mO_K$ be an integral basis.
    Let $\chi : \mO_K \to S^1$ be a group character, and express $\chi$ in coordinates as
    \begin{equation*}
        \chi \left( \sum_{j=1}^d n_j b_j \right) = e \left( \sum_{j=1}^d n_j \alpha_j \right)
    \end{equation*}
    for some $\bm{\alpha} = (\alpha_1, \dots, \alpha_d) \in \mathbf{T}^d$.
    \begin{enumerate}[(1)]
        \item $\chi$ is rational if and only if $\bm{\alpha} \in \F_q(t)^d$.
        \item $\chi$ is weakly rational if and only if there exists $N \ge 0$ and coefficients $(C_{k,l})_{0 \le k \le N, 1 \le l \le d}$ in $\F_q[t]$ such that
        \begin{equation*}
            n \mapsto e \left( \sum_{k=0}^N \left( \sum_{l=1}^d C_{k,l} \alpha_l \right) n^{p^k} \right)
        \end{equation*}
        is a nontrivial rational character on $\F_q[t]$.
    \end{enumerate}
\end{lemma}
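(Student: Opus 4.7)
The plan is to reduce both parts to coordinate computations via the integral basis, combined with Pontryagin duality between $(\F_q[t],+)$ and $\mathbf{T} = \F_q((t^{-1}))/\F_q[t]$ under the pairing $\langle n, \alpha \rangle = e(n \alpha)$. Without loss of generality we may choose the integral basis so that $b_1 = 1$, which is possible because $\F_q[t]$ is a direct summand of $\mO_K$ as an $\F_q[t]$-module (equivalently, $\mO_K/\F_q[t]$ is torsion-free since $\F_q[t]$ is integrally closed in $\F_q(t)$).

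For part (1), first unfold the definition: $\chi$ is rational iff $\chi|_{m\mO_K} \equiv 1$ for some $m \in \mO_K \setminus \{0\}$. Multiplication by such an $m$ is an $\F_q[t]$-linear endomorphism of $\mO_K$, representable in the integral basis by a matrix $M \in M_d(\F_q[t])$ with $\det M \ne 0$ (since $\mO_K$ is a domain). Reading $\chi|_{m\mO_K} \equiv 1$ coordinatewise and invoking duality then converts the condition into $M^T \bm{\alpha} \in \F_q[t]^d$ inside $\F_q((t^{-1}))^d$, which by the invertibility of $M^T$ over $\F_q(t)$ gives $\bm{\alpha} \in \F_q(t)^d$. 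The converse is immediate: clearing denominators in $\bm{\alpha}$ produces a nonzero period $m \in \F_q[t]$.

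For part (2), the key observation is that in characteristic $p$ every additive polynomial is of the form $\eta(x) = \sum_k c_k x^{p^k}$ with $c_k \in \mO_K$. Writing $c_k = \sum_l C_{k,l} b_l$ in the integral basis and using $n^{p^k} \in \F_q[t]$ for $n \in \F_q[t]$ (Frobenius), a direct computation identifies $\chi \circ \eta|_{\F_q[t]}$ with the character in the lemma. For the forward implication, if $\chi$ is weakly rational with witness $\eta_0$, the restriction of $\chi \circ \eta_0$ to $\F_q[t]$ could be trivial, so first replace $\eta_0$ by $\eta(x) := \eta_0(x_0 x)$ where $x_0 \in \mO_K$ is chosen so that $\chi(\eta_0(x_0)) \ne 1$. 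This remains additive, remains rational with the same period, and now $\chi(\eta(1)) = \chi(\eta_0(x_0)) \ne 1$, giving nontriviality on $\F_q[t]$; reading off the coefficients of $\eta$ yields the required $C_{k,l}$.

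The converse direction of (2) is the principal obstacle. Given the data $(C_{k,l})$, the natural candidate $\eta(x) := \sum_{k,l} C_{k,l} b_l x^{p^k}$ produces the prescribed character on $\F_q[t]$, hence makes $\chi \circ \eta$ nontrivial on $\mO_K$; however, rationality on all of $\mO_K$ is not automatic. The plan is to expand $\chi \circ \eta(x)$ for general $x = \sum_m x_m b_m$ via the Frobenius identity $\bigl(\sum_m x_m b_m\bigr)^{p^k} = \sum_m x_m^{p^k} b_m^{p^k}$, then for each $m$ collapse the resulting one-variable factor $x_m \mapsto \prod_k e(\cdots x_m^{p^k})$ into the form $e(\beta_m x_m)$ using the dual map to Frobenius on $\F_q[t]$ (well-defined thanks to Galois-invariance of $\textup{Tr}_{\F_q/\F_p}$). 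By part (1), rationality of $\chi \circ \eta$ then reduces to verifying $\beta_m \in \F_q(t)$ for every $m \in \{1,\dots,d\}$. If this fails for the naive $\eta$, the plan is to adjust by replacing $\eta$ with $x \mapsto \eta(b_j x)$ for appropriate $j$ (which remains additive and alters only the structure constants $E_{l,m,k,s}$ defined by $b_l b_m^{p^k} = \sum_s E_{l,m,k,s} b_s$) and by taking $\F_q[t]$-linear combinations, until simultaneous rationality across all $d$ coordinates is achieved. The hard part will be managing this Frobenius--multiplication interaction and verifying that the hypothesis on a single $\F_q[t]$-character is strong enough to guarantee such an adjustment exists.
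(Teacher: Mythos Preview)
Your treatment of part (1) is correct and amounts to a linear-algebra repackaging of the paper's argument: where the paper finds a specific $D \in \F_q[t]$ with $D/m \in \mO_K$ and checks $e(nD\alpha_j)=1$ directly, you encode multiplication by $m$ as a matrix $M$ and read off $M^{T}\bm{\alpha}\in\F_q[t]^d$. Both are fine.

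For the forward direction of (2) the two approaches diverge more visibly. The paper carries out the full coordinate expansion of $\chi\circ\eta$ for an arbitrary additive $\eta(x)=\sum_k c_k x^{p^k}$, obtaining $(\chi\circ\eta)(\sum_j n_j b_j)=e(\sum_j \gamma_j n_j)$ with $\gamma_j$ explicit in terms of structure constants $a_{i,j,k,l}$; it then invokes part (1) to say $\chi\circ\eta$ is nontrivial rational iff $\bm{\gamma}\in\F_q(t)^d\setminus\{\bm{0}\}$, and for a nonzero $\gamma_j$ sets $C_{k,l}=d_{j,k,l}$. Your substitution $\eta(x)=\eta_0(x_0 x)$ achieves the same end more directly (in the paper's notation it forces $\gamma_1\ne 0$ once $b_1=1$), and your verification that the period survives is correct.

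On the converse of (2), your identification of this as the principal obstacle is well founded---and in fact the paper's proof, as written, does not address this direction either: its final sentence only extracts $C_{k,l}$ from a given $\eta$, not the reverse. Your proposed fix (take the naive $\eta(x)=\sum_{k,l} C_{k,l} b_l\, x^{p^k}$ and then adjust by precomposing with multiplication by $b_j$ and taking $\F_q[t]$-linear combinations) does not obviously succeed. With $b_1=1$ you get $\gamma_1$ rational for free, but the remaining $\gamma_j$ are determined by the structure constants and by $\bm{\alpha}$, with no evident mechanism forcing them into $\F_q(t)$; moreover each adjustment you list alters the effective $C_{k,l}$ as well, so you cannot hold $\gamma_1$ fixed while tuning the others. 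If you pursue this direction you will need a genuinely new idea---or else observe that in the downstream applications the coordinate condition is taken as the \emph{definition} of ``$\bm{\alpha}$ strongly irrational'', so that only the forward implication of (2) is actually required, in which case the honest move is to weaken the statement accordingly.
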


\begin{proof}
   (1)
   Suppose $\bm{\alpha} \in F_q(t)^d$.
   Then there exists $m \in \F_q[t]$ such that $m\bm{\alpha} \in \F_q[t]^d$.
   Let $I$ be the set of elements $\sum_{j=1}^d n_j b_j \in \mO_K$ with $m \mid n_j$ for every $j \in \{1, \ldots, d\}$.
   It is easy to see that $I$ is an ideal in $\mO_K$.
   Moreover, if $n = \sum_{j=1}^d n_j b_j \in I$, then $\sum_{j=1}^d n_j \alpha_j \in \F_q[t]$, so $\chi(n) = 1$.
   Thus, $\chi$ is periodic along the ideal $I$.

   Conversely, suppose that $\chi$ is rational.
   Let $m \in \mO_K$ such that $\chi(mn) = 1$ for every $n \in \mO_K$.
   We can write $\frac{1}{m} = \sum_{j=1}^d a_j b_j$ for some coefficients $a_j \in \F_q((t))$.
   Taking $D \in \F_q[t]$ so that $Da_j \in \F_q[t]$ for every $j$, we have $\frac{D}{m} \in \mO_K$.
   Therefore, for every $n \in \F_q[t]$,
   \begin{equation*}
       e(n D\alpha_j) = \chi(nDb_j) = \chi \left( m \cdot n\frac{D}{m}b_j \right) = 1.
   \end{equation*}
   That is, the character $n \mapsto e(nD\alpha_j)$ is the trivial character on $\F_q[t]$, so $D\alpha_j \in \F_q[t]$.
   Hence, $\bm{\alpha} \in \frac{1}{D} \F_q[t]^d \subseteq \F_q(t)^d$. \\
   
   (2)
   Let $\eta(x) = \sum_{k=0}^N c_k x^{p^k} \in \mO_K[x]$ be an additive polynomial.
   We want a formula for evaluating the character $\chi \circ \eta$.
   Write $c_k = \sum_{i=1}^d c_{k,i} b_i$.
   Then
   \begin{equation*}
       \eta \left( \sum_{j=1}^d n_j b_j \right) = \sum_{k=0}^N \left( \sum_{i=1}^d c_{k,i} b_i \right) \left( \sum_{j=1}^d n_j b_j \right)^{p^k}
       = \sum_{k=0}^N \left( \sum_{i=1}^d c_{k,i} b_i \right) \sum_{j=1}^d n_j^{p^k} b_j^{p^k}.
   \end{equation*}
   For $i, j \in \{1, \ldots, d\}$ and $k \ge 0$, we may write
   \begin{equation*}
       b_i b_j^{p^k} = \sum_{l=1}^d a_{i,j,k,l} b_l
   \end{equation*}
   for some coefficients $a_{i,j,k,l} \in \F_q[t]$.
   Hence,
   \begin{equation*}
       \eta \left( \sum_{j=1}^d n_j b_j \right) = \sum_{l=1}^d \left( \sum_{i,j=1}^d \sum_{k=0}^N a_{i,j,k,l} c_{k,i} n_j^{p^k} \right) b_l.
   \end{equation*}
   Therefore,
   \begin{equation*}
       (\chi \circ \eta) \left( \sum_{j=1}^d n_j b_j \right) = e \left( \sum_{l=1}^d \left( \sum_{i,j=1}^d \sum_{k=0}^N a_{i,j,k,l} c_{k,i} n_j^{p^k} \right) \alpha_l \right)
       = e \left( \sum_{j=1}^d \sum_{k=0}^N \left( \sum_{i,l=1}^d a_{i,j,k,l} c_{k,i} \alpha_l \right) n_j^{p^k} \right).
   \end{equation*}
   Let $d_{j,k,l} = \sum_{i=1}^d a_{i,j,k,l} c_{k,i} \in \F_q[t]$, and let $\beta_{j,k} = \sum_{l=1}^d d_{j,k,l} \alpha_l$.
   For each $j \in \{1, \ldots, d\}$, the map
   \begin{equation*}
       n \mapsto e\left( \sum_{k=0}^N \beta_{j,k} n^{p^k} \right)
   \end{equation*}
   is a group character on $\F_q[t]$, so there exists $\gamma_j$ such that
   \begin{equation*}
       e\left( \sum_{k=0}^N \beta_{j,k} n^{p^k} \right) = e(\gamma_j n)
   \end{equation*}
   for every $n \in \F_q[t]$.
   Thus,
   \begin{equation*}
       (\chi \circ \eta) \left( \sum_{j=1}^d n_j b_j \right) = e \left( \sum_{j=1}^d n_j \gamma_j \right).
   \end{equation*}
   By (1), $\chi \circ \eta$ is a nontrivial rational character if and only if $\bm{\gamma} \in \F_q(t)^d \setminus \{\bm{0}\}$.
   Taking $C_{k,l} = d_{j,k,l}$ for $j$ with $\gamma_j \ne 0$ proves (2).
\end{proof}

\begin{lemma} \label{lem: algebraically independent}
    Let $K$ be a degree $d$ extension of the field of rational functions $\F_q(t)$ over a finite field $\F_q$, and let $\mO_K$ be the ring of integers in $K$.
    Let $\{b_1, \dots, b_d\} \subseteq \mO_K$ be an integral basis.
    Let $P(x) \in \mO_K[x]$ be a nonconstant polynomial, and assume that $P - P(0)$ is not additive.
    Then expressing $P$ in coordinates as
    \begin{equation}
         P \left( \sum_{j=1}^d n_j b_j \right) = \sum_{j=1}^d P_j(n_1, \dots, n_d) b_j,
    \end{equation}
    the polynomials $P_1, \dots, P_d$ are algebraically independent over $\F_q(t)$.
\end{lemma}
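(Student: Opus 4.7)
The plan is to base-change everything to the algebraic closure $\bar{F} := \overline{\F_q(t)}$ and exploit the fact that, in the separable setting that one may assume by choosing $t$ as a separating transcendence element, the polynomial map $P\colon K \to K$ diagonalizes into $d$ independent one-variable polynomials. Algebraic independence of the coordinate polynomials will then follow from the general principle that $d$ nonconstant polynomials in $d$ distinct variables are algebraically independent over any base field.

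First, I would introduce the $d$ distinct $\F_q(t)$-embeddings $\sigma_1, \ldots, \sigma_d \colon K \hookrightarrow \bar{F}$; these provide an isomorphism $K \otimes_{\F_q(t)} \bar{F} \xrightarrow{\sim} \bar{F}^d$ via $x \otimes 1 \mapsto (\sigma_1(x), \ldots, \sigma_d(x))$. The matrix $M = (\sigma_i(b_j))_{i,j}$ has determinant a square root of the (nonzero) discriminant of $K/\F_q(t)$, so $M$ is invertible over $\bar{F}$. Setting $y_i := \sigma_i(x) = \sum_j n_j \sigma_i(b_j)$ then gives an invertible $\bar{F}$-linear change of coordinates $(n_j) \leftrightarrow (y_i)$.

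Next, I would invoke the diagonal structure. Because each $\sigma_i$ is a ring homomorphism, $\sigma_i(P(x)) = (\sigma_i P)(y_i)$, where $\sigma_i P$ denotes the polynomial whose coefficients are the $\sigma_i$-images of those of $P$. On the other hand, applying $\sigma_i$ to $P(x) = \sum_l P_l(n) b_l$ and using that $P_l(n) \in \F_q(t)$ is fixed by $\sigma_i$, we get $\sigma_i(P(x)) = \sum_l P_l(n)\sigma_i(b_l)$. Combining, $((\sigma_i P)(y_i))_{i=1}^d = M \cdot (P_l(n))_{l=1}^d$ as column vectors over $\bar{F}[n_1, \ldots, n_d] = \bar{F}[y_1, \ldots, y_d]$. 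Thus algebraic independence of $(P_l)$ over $\bar{F}$ (equivalently, over $\F_q(t)$) is equivalent, via the invertible linear map $M$, to algebraic independence of the ``diagonal polynomials'' $((\sigma_i P)(y_i))_{i=1}^d$ over $\bar{F}$.

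Finally, I would verify that the diagonal polynomials are algebraically independent over $\bar{F}$. Each $(\sigma_i P)(y_i)$ is a nonconstant polynomial in the single variable $y_i$, since $P$ is nonconstant and field embeddings preserve polynomial degree. A short induction on $d$ shows that any $d$ such polynomials in $d$ distinct variables are algebraically independent: a putative nontrivial relation $R((\sigma_1 P)(y_1), \ldots, (\sigma_d P)(y_d)) = 0$, written as a polynomial in $y_d$ with coefficients in $\bar{F}[y_1, \ldots, y_{d-1}]$, has $y_d$-leading coefficient equal to $c_N(f_1, \ldots, f_{d-1})$ times a nonzero scalar (a power of the leading coefficient of $f_d$), whose vanishing forces $c_N \equiv 0$ by the inductive hypothesis, contradicting nontriviality. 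This concludes the separable case. The main obstacle is the separability passage itself: if one truly wishes to allow inseparable $K/\F_q(t)$ then $M$ can be singular and the diagonalization fails, which is presumably where the hypothesis that $P - P(0)$ is not additive earns its keep, via first extracting the decomposition $P(x) - P(0) = \widetilde{P}(x^{p^{e_0}})$ with $\widetilde{P}' \not\equiv 0$ (where $p = \textup{char}(K)$ and $e_0$ is maximal) and then arguing separately about the $\F_q(t)$-linear independence of the $b_j^{p^{e_0}}$.
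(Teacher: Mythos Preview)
Your diagonalization approach is genuinely different from the paper's and is correct in the separable case; indeed it proves more there, since it shows $P_1, \ldots, P_d$ are algebraically independent for \emph{any} nonconstant $P$, with no non-additivity hypothesis needed (the counterexample in the paper's Remark lives over a purely inseparable extension). The paper instead uses the Jacobian criterion: the chain rule gives $\partial P(x)/\partial n_j = P'(x)\cdot b_j$ for $x = \sum_k n_k b_k$, so the rows of the Jacobian of $(P_1,\ldots,P_d)$ are the coordinate vectors of $P'(x)b_1,\ldots,P'(x)b_d$, which are linearly independent over $\F_q(t)(n_1,\ldots,n_d)$ whenever $P' \not\equiv 0$; full-rank Jacobian then forces algebraic independence. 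This argument is shorter and, crucially, works uniformly in the inseparable case.

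Your proposed inseparable workaround, however, does not go through. Writing $P(x)-P(0)=\widetilde P(x^{p^{e_0}})$ leads you to need $\F_q(t)$-linear independence of $b_1^{p^{e_0}},\ldots,b_d^{p^{e_0}}$, which can fail over inseparable extensions (already in the paper's Remark one has $\omega^2\in\F_2(t)$). In fact the lemma is false as literally stated once inseparable extensions are admitted: take $K=\F_3(s)$ viewed as a degree-$3$ extension of $\F_3(t)$ via $t=s^3$, with integral basis $\{1,s,s^2\}$, and $P(x)=x^6$. Then $x^6$ is not additive (since $(x+y)^6 = x^6 + 2x^3y^3 + y^6$ in characteristic $3$), yet $x^3 = n_1^3 + n_2^3 t + n_3^3 t^2$ already lies in $\F_3[t][n_1,n_2,n_3]$, so $x^6$ does too, forcing $P_2=P_3=0$. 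The paper's closing equivalence ``$P'\ne 0$ is equivalent to $P-P(0)$ not being additive'' is therefore also incorrect; what the Jacobian argument actually establishes is the lemma under the hypothesis $P' \not\equiv 0$, which is the form used downstream (for Theorem~\ref{thm: Weyl rings of integers low degree} one has $\deg P < p$, whence $P'\not\equiv 0$ automatically).
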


\begin{proof}
    By the Jacobi criterion for algebraic independence\footnote{The Jacobi criterion is often stated only in infinite characteristic, where it is both a necessary and sufficient condition for algebraic independence. The finite characteristic context is more subtle, but having full rank Jacobian is nevertheless sufficient for algebraic independence (see \cite{speyer}).}, it suffices to prove that the Jacobian matrix
    \begin{equation*}
        J = \left( \begin{array}{ccc}
            \frac{\partial P_1}{\partial n_1} & \ldots & \frac{\partial P_d}{\partial n_1} \\
            \vdots & \ddots & \vdots \\
             \frac{\partial P_1}{\partial n_d} & \ldots & \frac{\partial P_d}{\partial n_d}
        \end{array} \right)
    \end{equation*}
    has full rank.
    Note that the $j$th row of the matrix is given by $\frac{\partial P}{\partial n_j} = P' \cdot b_j$, and the elements $b_1, \ldots, b_d$ are linearly independent over $\mO_K$.
    Therefore $J$ has full rank if and only if $P' \ne 0$.
    But $P' \ne 0$ is equivalent to $P - P(0)$ not being additive.
\end{proof}

\begin{remark}
    Lemma \ref{lem: algebraically independent} may fail for additive polynomials, as shown by the following example.
    Let $K = \F_2(t)(\omega)$, where $\omega$ satisfies $\omega^2 = t^3 + t + 1$.
    (That is, $K$ is the field of rational functions on the elliptic curve $y^2 = x^3+x+1$.)
    Then $\mO_K$ has an integral basis $\{1, \omega\}$.
    Writing the additive polynomial $P(x) = x^2$ in this basis, we have
    \begin{equation*}
        P(n_1 + n_2\omega) = (n_1+n_2\omega)^2 = n_1^2 + n_2^2(t^3+t+1),
    \end{equation*}
    so $P_1(n_1,n_2) = n_1^2 + n_2^2(t^3+t+1)$ and $P_2(n_1,n_2) = 0$.
    A similar computation shows that the coordinates of an arbitrary additive polynomial over $\mO_K$ will be linearly dependent over $\F_2(t)$.
\end{remark}

The inapplicability of Lemma \ref{lem: algebraically independent} for some additive polynomials is no hindrance to proving Theorems \ref{thm: Weyl rings of integers low degree} and \ref{thm: Weyl rings of integers}.
First, in the context of Theorem \ref{thm: Weyl rings of integers low degree}, if $P(x) \in \mO_K[x]$ is nonconstant and $\deg{P} < \char(K)$, then $P$ cannot be additive.
Second, in the context of Theorem \ref{thm: Weyl rings of integers}, if $P(x) \in \mO_K[x]$ is an additive polynomial, then $\chi \circ P$ is a group character on $\mO_K$, so Theorem \ref{thm: Weyl rings of integers} in this case reduces to the classical fact that nontrival characters have uniform Ces\`{a}ro averages equal to 0.
We may therefore assume that $P$ is not an additive polynomial, so combining the previous three lemmas, the function field case of Theorems \ref{thm: Weyl rings of integers low degree} and \ref{thm: Weyl rings of integers} is reduced to the following proposition:

\begin{proposition} \label{prop: equidistribution theorems, exponential sum form}
    Let $\F_q$ be a finite field of characteristic $p$, and let $d \in \N$.
    For each $j \in \{1, \dots, d\}$, let $P_j(x_1, \dots, x_d) \in \F_q(t)[x_1, \dots, x_d]$ be an $\F_q[t]$-valued polynomial such that $P_1, \dots, P_d$ are algebraically independent over $\F_q(t)$.
    Let $\bm{\alpha} = (\alpha_1, \ldots, \alpha_d) \in \mathbf{T}^d$.
    \begin{enumerate}[(1)]
        \item If $\deg{P_j} < p$ for every $j \in \{1, \ldots, d\}$ and at least one $\alpha_j$ is irrational, then
        \begin{equation*}
            \UClim_{\bm{n} \in \F_q[t]^d} e \left( \sum_{j=1}^d P_j(\bm{n}) \alpha_j \right) = 0.
        \end{equation*}
        \item If $\bm{\alpha}$ is strongly irrational\footnote{By this, we mean that $\bm{\alpha}$ is not weakly rational in the sense of item (2) in Lemma \ref{lem: rational character rational element}.}, then either $e \left( \sum_{j=1}^d P_j(\bm{n})\alpha_j \right) = 1$ for every $\bm{n} \in \F_q[t]^d$ or
    \begin{equation*}
        \UClim_{\bm{n} \in \F_q[t]^d} e \left( \sum_{j=1}^d P_j(\bm{n}) \alpha_j \right) = 0.
    \end{equation*}
    \end{enumerate}
\end{proposition}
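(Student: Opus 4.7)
I will prove both parts by Weyl differencing on the amenable group $(\F_q[t]^d, +)$, combined with the elementary identity $\UClim_n e(n\beta) = \ind_{\beta = 0}$ for $\beta \in \mathbf{T}$. For $Q(\bm{n}) = \sum_j P_j(\bm{n})\alpha_j$ and any $k \ge 1$, the van der Corput inequality gives
\[
\left|\UClim_{\bm{n} \in \F_q[t]^d} e(Q(\bm{n}))\right|^{2^k} \le \UClim_{\bm{h}_1,\ldots,\bm{h}_k}\left|\UClim_{\bm{n}} e\bigl(\Delta^k_{\bm{h}_1,\ldots,\bm{h}_k} Q(\bm{n})\bigr)\right|,
\]
where $\Delta_{\bm{h}} Q(\bm{n}) = Q(\bm{n}+\bm{h})-Q(\bm{n})$. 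The strategy is to iterate differencing until the inner polynomial becomes tractable, and then read off the conclusion from the algebraic structure.

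For Part (1), I appeal to the standard Weyl equidistribution theorem for polynomials $Q \colon \F_q[t]^d \to \mathbf{T}$ of degree $D < p$, which says: if some non-constant coefficient of $Q$ is nonzero in $\mathbf{T}$, then $\UClim_{\bm{n}} e(Q(\bm{n})) = 0$. This is established by $D-1$ iterated differencings, which, since $D < p$, reduce the degree by exactly one each step without any Frobenius-type collapse, ultimately reducing to the linear case handled by $\UClim_n e(n\beta) = \ind_{\beta = 0}$. To apply this to $Q = \sum_j P_j \alpha_j$, I must show that, under irrationality of at least one $\alpha_j$, some non-constant coefficient of $Q$ is nonzero in $\mathbf{T}$. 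By $\F_q(t)$-linear independence of $P_1, \ldots, P_d$ (a consequence of algebraic independence), I can select $d$ multi-indices $\bm{a}_1, \ldots, \bm{a}_d$ so that the matrix $M = (c_{j,\bm{a}_l})_{j,l}$ of monomial coefficients is invertible in $GL_d(\F_q(t))$. If all non-constant coefficients $\sum_j c_{j,\bm{a}} \alpha_j$ of $Q$ were in $\F_q[t]$, then $M\bm{\alpha} \in \F_q[t]^d$, whence $\bm{\alpha} = M^{-1}(M\bm{\alpha}) \in \F_q(t)^d$, contradicting the irrationality of some $\alpha_j$.

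For Part (2), I take $k = \ddeg(P) - 1$, where $P = \sum_j P_j b_j \in \mO_K[x]$. By the definition of derivational degree, $\Delta^{k+1} P \equiv 0$ in $\bm{n}$, which forces $\Delta^k P(\bm{n};\bm{h})$ to be affine-additive in $\bm{n}$ (an additive polynomial plus an $\bm{n}$-independent term). Consequently, for each fixed $\bm{h}$, the function $\bm{n} \mapsto e(\Delta^k Q(\bm{n};\bm{h}))$ is a group character $\chi_{\bm{h}}$ on $(\mO_K,+)$ times a constant phase, and its uniform Ces\`{a}ro average has modulus $\ind_{\chi_{\bm{h}}\,\text{trivial}}$. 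The van der Corput bound becomes
\[
|\UClim_{\bm{n}} e(Q(\bm{n}))|^{2^k} \le \UClim_{\bm{h}} \ind_{\chi_{\bm{h}}\,\text{trivial}}.
\]
The dichotomy in the statement then unfolds as follows: either the right-hand side is $0$ (giving the second alternative directly), or it is positive. In the latter case, triviality of $\chi_{\bm{h}}$ is an algebraic condition on $\bm{h}$ in $\F_q[t]^{kd}$, and by rigidity of such conditions it must hold for \emph{every} $\bm{h}$. Unpacking via Lemma \ref{lem: rational character rational element}, triviality of $\chi_{\bm{h}}$ for every $\bm{h}$ is equivalent to a system of identities of the form $\chi \circ \eta \equiv 1$ for additive polynomials $\eta \in \mO_K[x]$ extracted from specializations of $\Delta^k P$. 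Strong irrationality of $\chi$ (which forbids $\chi \circ \eta$ from being a nontrivial rational character) forces these $\chi \circ \eta$ to be the trivial character, and backward induction through the iterated differencing then forces $\chi \circ P \equiv 1$ on $\mO_K$, i.e.\ $e(Q(\bm{n})) = 1$ identically, placing us in the first alternative.

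The principal obstacle is the algebraic step in Part (2): explicitly identifying the additive polynomials $\eta$ arising from $\Delta^k P$ and showing that triviality of each $\chi \circ \eta$ can be propagated backward through all $k$ differencings to yield $\chi \circ P \equiv 1$. Each monomial $c_{\bm{a}} \bm{n}^{\bm{a}}$ of $P$ contributes, after $k$ differencings, a sum of mixed Frobenius-power products whose additive-in-$\bm{n}$ part must be disentangled; the identities $\Delta_h(x^{p^i}) = h^{p^i}$ mix contributions across differencing levels in a combinatorially delicate way. The analysis parallels and extends the treatment of strong ergodicity underlying Theorem \ref{thm: strongly ergodic}, and the use of strong (rather than plain) irrationality is essential precisely because it rules out the weakly rational characters that would otherwise give a genuine obstruction.
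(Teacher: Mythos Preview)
Your treatment of Part~(1) is correct and matches the paper's: both use that the $\F_q(t)$-linear independence of $P_1,\ldots,P_d$ (from algebraic independence) forces some non-constant coefficient of $Q=\sum_j P_j\alpha_j$ to be irrational, after which Weyl differencing in degree $<p$ (equivalently, the degree-$<p$ case of \cite[Theorem~10.1]{bl}) finishes.

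Part~(2), however, has two genuine gaps.

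\textbf{The rigidity step is not justified.} The condition ``$\chi_{\bm h}$ trivial'' is \emph{not} cut out by polynomial equations in $\bm h$ over $\F_q[t]$. Writing the additive-in-$\bm n$ part of $\partial^k Q$ as $\sum_{i,s}\gamma_{i,s}(\bm h)\,n_i^{p^s}$, the character $n_i\mapsto e(\gamma_{i,s}(\bm h)\,n_i^{p^s})$ corresponds to an element of $\mathbf T$ obtained from $\gamma_{i,s}(\bm h)$ via the splitting isomorphisms $\psi_s$. These maps are $\F_p$-linear but not $\F_q[t]$-linear (indeed $\psi_1(c\gamma)\ne c\,\psi_1(\gamma)$ for generic $c\in\F_q[t]$), so $\bm h\mapsto\chi_{\bm h}$ is not a polynomial map and its triviality locus has no reason to satisfy a Zariski-type dichotomy. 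Positive uniform density does not force it to be everything.

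\textbf{The use of strong irrationality is a non sequitur.} You write that strong irrationality forbids $\chi\circ\eta$ from being a \emph{nontrivial rational} character and conclude $\chi\circ\eta$ is trivial. But strong irrationality leaves open that $\chi\circ\eta$ is \emph{irrational}; nothing you have said excludes this. Even granting ``$\chi_{\bm h}$ trivial for all $\bm h$'', that only says the $(k{+}1)$-st multiplicative difference of $e\circ Q$ is identically $1$, which is far weaker than $e\circ Q\equiv 1$; your ``backward induction'' would have to re-invoke the same unjustified rigidity at every lower level.

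The paper avoids both issues by a different route: it writes $Q(\bm n)=Q(\bm 0)+\sum_i\eta_i(\bm n^{\bm r_i})$ in additive--separable form and checks, using strong irrationality of $\bm\alpha$ together with algebraic independence of the $P_j$, that each additive piece $\eta_i$ is irrational in the sense of Definition~\ref{defn: irrational element}. This places $Q$ inside the hypotheses of Theorem~\ref{thm: multivariable equidistribution} (\cite[Theorem~10.1]{bl}), which outputs well-distribution in a subgroup of $\mathbf T$ as a black box; the dichotomy then follows by integrating $e$ over that subgroup. The equidistribution theory your differencing scheme would need to reconstruct is precisely what that cited theorem packages.
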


In order to apply results from \cite{bl} in the proof, it is convenient to reformulate Proposition \ref{prop: equidistribution theorems, exponential sum form} in terms of \emph{well-distribution} of sequences in $\mathbf{T}^d$.
A sequence $(x_{\bm{n}})_{\bm{n} \in \F_q[t]^d}$ is \emph{well-distributed} in $\mathbf{T}^d$ if for every F{\o}lner sequence $(\Phi_N)_{N \in \N}$ in $\F_q[t]^d$,
\begin{equation*}
    \lim_{N \to \infty} \frac{1}{|\Phi_N|} \sum_{\bm{n} \in \Phi_N} \delta_{x_{\bm{n}}} = m_{\mathbf{T}},
\end{equation*}
where the limit is taken in the weak* topology and $m_{\mathbf{T}}$ is the Haar probability measure on $\mathbf{T}$.
With this terminology, Proposition \ref{prop: equidistribution theorems, exponential sum form} can be restated in the following equivalent form.

\begin{proposition} \label{prop: equidistribution theorems}
    Let $\F_q$ be a finite field of characteristic $p$, and let $d \in \N$.
    For each $j \in \{1, \dots, d\}$, let $P_j(x_1, \dots, x_d) \in \F_q(t)[x_1, \dots, x_d]$ be an $\F_q[t]$-valued polynomial such that $P_1, \dots, P_d$ are algebraically independent over $\F_q(t)$.
    Let $\bm{\alpha} = (\alpha_1, \ldots, \alpha_d) \in \mathbf{T}^d$.
    \begin{enumerate}[(1)]
        \item If $\deg{P_j} < p$ for every $j \in \{1, \ldots, d\}$ and at least one $\alpha_j$ is irrational, then $\left( \sum_{j=1}^d P_j(\bm{n})\alpha_j \right)_{\bm{n}\in\F_q[t]^d}$ is well-distributed in $\mathbf{T}$.
        \item If $\bm{\alpha}$ is strongly irrational, then $\left( \sum_{j=1}^d P_j(\bm{n})\alpha_j \right)_{\bm{n}\in\F_q[t]^d}$ is well-distributed in a subgroup of $\mathbf{T}$.
    \end{enumerate}
\end{proposition}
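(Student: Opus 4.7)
The plan is to use the Weyl criterion for well-distribution in the compact group $\mathbf{T}$: it suffices to establish, for every character $\psi$ of $\mathbf{T}$, the dichotomy that either $\psi(\sum_j P_j(\bm n)\alpha_j) \equiv 1$ in $\bm n$, or $\UClim_{\bm n}\psi(\sum_j P_j(\bm n)\alpha_j) = 0$. Every character takes the form $\psi(x) = e(mx)$ for some $m \in \F_q[t]$; since multiplication by a nonzero $m$ preserves algebraic independence of $P_1,\ldots,P_d$ over $\F_q(t)$, it is enough to prove the dichotomy for $R(\bm n) := \sum_{j=1}^d P_j(\bm n)\alpha_j$ viewed as an element of $\mathbf{T}[\bm n]$.

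I would next expand $R(\bm n) = \sum_{\bm k}\beta_{\bm k}\bm n^{\bm k}$ with $\beta_{\bm k} = \sum_j c_{j,\bm k}\alpha_j \in \mathbf{T}$ and $c_{j,\bm k}\in\F_q(t)$ the coefficient of $\bm n^{\bm k}$ in $P_j$. Algebraic independence of $P_1,\ldots,P_d$ over $\F_q(t)$ descends to the non-constant parts $P_j - P_j(\bm 0)$ and in particular implies their linear independence, so there exist non-zero multi-indices $\bm k_1,\ldots,\bm k_d$ for which the $d \times d$ matrix $C := (c_{j,\bm k_i})_{i,j}$ is invertible over $\F_q(t)$. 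Inverting $C$ expresses each $\alpha_j$ as an $\F_q(t)$-linear combination of $\beta_{\bm k_1},\ldots,\beta_{\bm k_d}$; this will be the bridge between the rationality properties of the coefficients of $R$ and those of $\bm\alpha$.

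For part (1), since $\deg P_j < p$ and at least one $\alpha_j$ is irrational, invertibility of $C$ forces some non-constant $\beta_{\bm k_i}$ to lie outside $\F_q(t)$. Then $R$ is a polynomial in $\bm n$ of degree strictly less than $p$ with an irrational non-constant coefficient, and I would conclude $\UClim_{\bm n}e(R(\bm n)) = 0$ via the function-field analogue of Weyl's equidistribution theorem in $\F_q[t]^d$ in the low-degree regime, where a standard van~der~Corput / PET induction on degree goes through without interference from additive polynomials (cf.\ the analogous results in \cite{bl}).

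For part (2), I would invoke the high-degree Weyl equidistribution theorem of \cite{bl}: a polynomial sequence $e(R(\bm n))$ over $\F_q[t]^d$ has uniform Ces\`aro average in $\{0, 1\}$, with value $1$ precisely when an appropriate additive polynomial manipulation of $R$ collapses all of its coefficients into $\F_q(t)$. Suppose the average is $1$ but $e(R(\bm n)) \not\equiv 1$. The additive polynomial manipulation acts linearly on the coefficients $\beta_{\bm k_i}$, so inverting $C$ and using Lemma~\ref{lem: rational character rational element}(2) would translate this into weak rationality of $\bm\alpha$, contradicting strong irrationality. The dichotomy therefore holds, and well-distribution in the closed subgroup of $\mathbf{T}$ generated by $\{R(\bm n) : \bm n \in \F_q[t]^d\}$ follows from the Weyl criterion. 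The main obstacle will be the precise formulation and application of the high-degree multi-variable Weyl theorem from \cite{bl}, together with the bookkeeping needed to transport weak rationality of $R$ back to weak rationality of $\bm\alpha$ through $C$ and the algebra of additive polynomials.
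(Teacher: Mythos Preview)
Your treatment of part~(1) is correct and matches the paper's argument: linear independence of $P_1,\ldots,P_d$ (a consequence of algebraic independence) forces some non-constant coefficient $\beta_{\bm r}$ of $R$ to be irrational, and then the low-degree multivariable Weyl theorem over $\F_q[t]^d$ applies. The paper reaches the same conclusion by invoking Theorem~\ref{thm: multivariable equidistribution} in the special case where every additive piece $\eta_{\bm r}(x)=\beta_{\bm r}x$ is linear.

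For part~(2) there is a genuine gap. The unconditional dichotomy you invoke---that $\UClim_{\bm n} e(R(\bm n))\in\{0,1\}$ for an arbitrary polynomial $R$, with the value $1$ characterised by a rationality condition---is false. Take $p>2$ and $R(n)=\alpha n^2$ with $\alpha\in\F_q(t)$ nonzero: the sequence $e(R(n))$ is periodic and its UC-average is a normalized quadratic Gauss sum of modulus strictly between $0$ and $1$. The result from \cite{bl} that is actually available is Theorem~\ref{thm: multivariable equidistribution}, and it is \emph{conditional}: writing $R(\bm n)=R(\bm 0)+\sum_i \eta_i(\bm n^{\bm r_i})$ with distinct separable monomials $\bm n^{\bm r_i}$ and additive $\eta_i$, one must first verify that every $\eta_i$ is irrational (Definition~\ref{defn: irrational element}) before concluding well-distribution in a subgroup. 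The substance of part~(2) is precisely this verification, and this is the paper's route: strong irrationality of $\bm\alpha$ together with algebraic independence of the $P_j$ forces each $\eta_i$ to be irrational, after which Theorem~\ref{thm: multivariable equidistribution} applies directly. The mechanism is that the coefficients of each $\eta_i$ are $\F_q[t]$-linear combinations of the $\alpha_l$, so a nonzero rational $\F_q[t]$-combination of their splitting-isomorphism components would, via Lemma~\ref{lem: rational character rational element}(2), witness weak rationality of $\bm\alpha$. Your matrix $C$ selects only $d$ particular monomials and does not see the additive/separable decomposition, so it is not the right tool here; the Weyl-criterion detour and the contradiction argument can be dropped once one aims directly at the irrationality of the $\eta_i$.
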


We will deduce Proposition \ref{prop: equidistribution theorems} using a very general result from \cite{bl}, for which we need some additional notation and terminology.

\begin{definition}
    Let $\F_q$ be a finite field of characeristic $p$, and let $\mathbf{T} = \F_q((t^{-1}))/\F_q[t]$.
    The \emph{splitting isomorphism} $\psi_1 : \mathbf{T} \to \mathbf{T}^p$ is the map
    \begin{equation*}
        \psi_1 \left( \sum_{j=1}^{\infty} c_j t^{-j} \right) = \left( \sum_{j=1}^{\infty} c_{p(j-1)+1}t^{-j}, \sum_{j=1}^{\infty} c_{p(j-1)+2}t^{-j}, \ldots, \sum_{j=1}^{\infty} c_{pj}t^{-j} \right).
    \end{equation*}
    The \emph{merging isomorphism} $\varphi_1$ is the inverse map $\varphi_1 = \psi_1^{-1} : \mathbf{T}^p \to \mathbf{T}$.
\end{definition}

The reason for introducing the splitting isomorphism is that we have the identity $\psi_1(\alpha n^p) = \psi_1(\alpha)n$ for $n \in \F_q[t]$ and $\alpha \in \mathbf{T}$.
We can thus use the map $\psi_1$ to convert the additive polynomial $n^p$ into something linear.
In order to handle general additive polynomials, we introduce higher order versions of the splitting isomorphism.
For $k \ge 2$, define
\begin{equation*}
    \psi_k = \psi_1^{\times p^{k-1}} \circ \psi_{k-1},
\end{equation*}
where $\psi_1^{\times p^{k-1}}$ is the map that applies $\psi_1$ to each of $p^{k-1}$ coordinates.
This is the appropriate definition to make in order to obtain the identity $\psi_k(\alpha n^{p^k}) = \psi_k(\alpha) n$ for $n \in \F_q[t]$ and $\alpha \in \mathbf{T}$.
We also define $\varphi_k = \psi_k^{-1}$.

Now let $k \in \N$.
For notational convenience, write $\mathbf{T}^{(k)}$ for the group
\begin{equation*}
    \mathbf{T}^{(k)} = \mathbf{T} \oplus \mathbf{T}^p \oplus \mathbf{T}^{p^2} \oplus \ldots \oplus \mathbf{T}^{p^k}.
\end{equation*}
Define $\sigma_k : \mathbf{T}^{(k)} \to \mathbf{T}$ by
\begin{equation*}
    \sigma_k(x_0, x_1, \ldots, x_k) = x_0 + \varphi_1(x_1) + \ldots + \varphi_k(x_k).
\end{equation*}

The following proposition shows that additive polynomials can be viewed as projections of linear polynomials from higher dimensions:

\begin{proposition}[{\cite[Proposition 7.1]{bl}}] \label{prop: additive linear representation}
    Let $\eta(x) \in \F_q((t^{-1}))[x]$ be an additive polynomial, say
    \begin{equation*}
        \eta(x) = \sum_{j=0}^k \alpha_j x^{p^j}.
    \end{equation*}
    Then there exists $\bm{\beta} \in \mathbf{T}^{(k)}$ such that $\eta(n) = \sigma_k(\bm{\beta} n) \pmod{\F_q[t]}$ for $n \in \F_q[t]$.
\end{proposition}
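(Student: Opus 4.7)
The plan is to construct $\bm{\beta}$ coordinate-by-coordinate from the coefficients $\alpha_0, \alpha_1, \ldots, \alpha_k$ of $\eta$, using the defining property of the splitting isomorphism. The key identity on which everything hinges is
\begin{equation*}
    \psi_k(\alpha n^{p^k}) = \psi_k(\alpha) \cdot n \qquad \text{for all } \alpha \in \mathbf{T},\ n \in \F_q[t],
\end{equation*}
where the product on the right is componentwise multiplication in $\mathbf{T}^{p^k}$. Equivalently, applying $\varphi_k$ to both sides, $\varphi_k(\bm{\gamma} n) = \varphi_k(\bm{\gamma}) \cdot n^{p^k}$ for every $\bm{\gamma} \in \mathbf{T}^{p^k}$. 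This is exactly the property that motivated the definition of $\psi_k$, and I would establish it by induction on $k$: given the base case $k=1$, the inductive step follows immediately from $\psi_k = \psi_1^{\times p^{k-1}} \circ \psi_{k-1}$ and the identity $n^{p^k} = (n^{p^{k-1}})^p$.

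Granting the identity, the construction is direct. For $\alpha_j \in \F_q((t^{-1}))$, let $[\alpha_j] \in \mathbf{T}$ denote its reduction modulo $\F_q[t]$. Set
\begin{equation*}
    \beta_0 = [\alpha_0] \in \mathbf{T}, \qquad \beta_j = \psi_j([\alpha_j]) \in \mathbf{T}^{p^j} \text{ for } 1 \le j \le k,
\end{equation*}
and let $\bm{\beta} = (\beta_0, \beta_1, \ldots, \beta_k) \in \mathbf{T}^{(k)}$. Then for every $n \in \F_q[t]$,
\begin{align*}
    \sigma_k(\bm{\beta} n) &= \beta_0 n + \sum_{j=1}^k \varphi_j(\beta_j n) = [\alpha_0] n + \sum_{j=1}^k \varphi_j\bigl( \psi_j([\alpha_j]) \cdot n \bigr) \\
    &= [\alpha_0] n + \sum_{j=1}^k [\alpha_j] n^{p^j} = \eta(n) \pmod{\F_q[t]},
\end{align*}
where the middle equality uses the key identity (in the form $\varphi_j(\psi_j([\alpha_j]) n) = [\alpha_j] n^{p^j}$).

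The only nontrivial piece is the base case $\psi_1(\alpha n^p) = \psi_1(\alpha) \cdot n$, which I expect to be the main obstacle in the sense that it requires careful indexing. Writing $\alpha = \sum_{i \ge 1} c_i t^{-i}$ and $n = \sum_{\ell \ge 0} a_\ell t^\ell$, Frobenius in characteristic $p$ gives $n^p = \sum_\ell a_\ell^p t^{\ell p}$, so the coefficient of $t^{-m}$ in $\alpha n^p$ (with $m \ge 1$) equals $\sum_{\ell} c_{\ell p + m} a_\ell^p$. On the $r$-th component of $\psi_1(\alpha)$ one has the Laurent series $\sum_{j \ge 1} c_{p(j-1)+r} t^{-j}$, and multiplying by $n$ and collecting coefficients reproduces the same sums after matching $m$ with its residue class mod $p$; the identification $a_\ell^p t^{\ell p - i}$ with a contribution to the appropriate component of $\psi_1(\alpha) \cdot n$ is exactly what the splitting isomorphism was designed to package. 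Once this bookkeeping is done, the proposition follows from the display above with essentially no further work.
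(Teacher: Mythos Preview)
The paper does not prove this proposition; it is quoted from \cite[Proposition~7.1]{bl}. Your construction $\beta_j=\psi_j([\alpha_j])$ together with the identity $\varphi_j(\psi_j(\alpha)\,n)=\alpha\,n^{p^j}$ is the intended approach---indeed, the paper states exactly this identity in the paragraph introducing $\psi_k$.

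That said, your base-case bookkeeping has a real gap. Carrying out the computation you sketch: the $t^{-j}$-coefficient of the $r$-th component of $\psi_1(\alpha)\cdot n$ is $\sum_\ell c_{p(j+\ell-1)+r}\,a_\ell$, whereas the matching coefficient extracted from $\alpha n^p$ is $\sum_\ell c_{p(j+\ell-1)+r}\,a_\ell^{\,p}$. These agree only when every $a_\ell$ lies in $\F_p$. More structurally, $n\mapsto\sigma_k(\bm{\beta}\,n)$ (with scalar multiplication) is $\F_q$-linear in $n$, while $n\mapsto\alpha\,n^{p^j}$ is merely $\F_p$-linear for $j\ge 1$; so for $q>p$ the identity cannot hold as written, and your phrase ``reproduces the same sums'' conceals a Frobenius discrepancy. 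Over $\F_p[t]$ (the setting in which \cite{bl} works) your argument is complete; for general $\F_q[t]$ one must either pass to coordinates over $\F_p$ via a basis of $\F_q/\F_p$, or build a Frobenius twist into the action $\bm{\beta}\cdot n$. You should check \cite{bl} for the precise formulation rather than relying on the identity as restated here.
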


We now introduce a notion of irrationality for elements of $\mathbf{T}^{(k)}$ and use it to define a corresponding notion of irrationality for additive polynomials.

\begin{definition} \label{defn: irrational element}
    Call a nonzero element $\alpha \in \mathbf{T}$ \emph{rational} if $\alpha \in \F_q(t)/\F_q[t]$ and \emph{irrational} otherwise.
    For $l \in \N$, we say that an element $\bm{\alpha} = (\alpha_1, \ldots, \alpha_l) \in \mathbf{T}^l$ is \emph{irrational} if for every $c_1, \ldots, c_l \in \F_q[t]$ the element
    \begin{equation*}
        \sum_{j=1}^l c_j \alpha_j
    \end{equation*}
    is either $0$ or irrational.
    An additive polynomial $\eta(x) \in \F_q((t^{-1}))[x]$ is \emph{irrational} if $\eta(n) = \sigma_k(\bm{\beta} n)$ for some $k \in \N$ and an irrational element $\bm{\beta} \in \mathbf{T}^{(k)}$.
\end{definition}

\begin{remark}
    A few comments are in order on the notion of irrationality defined here, particularly regarding the possibility that a linear combination $\sum_{j=1}^l c_j \alpha_j$ is equal to 0.
    An analogy with Weyl's equidistribution theorem for polynomial sequences in tori is useful for explaining the role of this notion of irrationality.
    Consider an element $\bm{\alpha} = (\alpha_1, \ldots, \alpha_l) \in \T^l$.
    The orbit-closure $H = \overline{\{n\bm{\alpha} : n \in \Z\}} \subseteq \T^{l}$ is a subgroup of $\T^{l}$.
    This subgroup can be connected (a \emph{subtorus} isomorphic to $\T^k$ for some $k \le l$) or it can be disconnected, in which case it has finitely many connected components, each of which is a translate of a fixed subtorus.
    The subgroup $H$ is a subtorus precisely when $\bm{\alpha}$ is irrational in the sense of Definition \ref{defn: irrational element}, i.e. when every linear combination $\sum_{j=1}^l c_j \alpha_j$ with $c_1, \ldots, c_l \in \Z$ is either 0 or irrational.
    Nontrivial subgroups of the finite characteristic object $\mathbf{T}^l = (\F_q((t^{-1}))/\F_q[t])^l$ are totally disconnected, so the connectedness properties of subtori are not retained in finite characteristic.
    However, several other important features of irrationality remain.

    The first property has to do with equidistribution of linear sequences.
    An element $\bm{\alpha} \in \T^l$ is irrational if and only if it is \emph{totally well-distributed} in its orbit closure $H = \overline{\{n\bm{\alpha} : n \in \Z\}}$, meaning that $((mn+r)\bm{\alpha})_{n \in \Z}$ is well-distributed in $H$ for every $m \in \N$ and $r \in \Z$.
    The corresponding property holds in finite characteristic: $\bm{\alpha} \in \mathbf{T}^l$ is irrational if and only if $((mn+r)\bm{\alpha})_{n \in \F_q[t]}$ is well-distributed in $H = \overline{\{n\bm{\alpha} : n \in \F_q[t]\}}$ for every $m \in \F_q[t] \setminus \{0\}$ and $r \in \F_q[t]$.

    Second, and most importantly for our purposes, irrationality has a direct bearing on polynomial equidistribution properties.
    In the classical integer setting, if $\bm{\alpha} \in \T^l$ is irrational and $P$ is a nonconstant polynomial, then $(P(n)\bm{\alpha})_{n \in \Z}$ is well-distributed in the subtorus $H = \overline{\{n\bm{\alpha} : n \in \Z\}}$.
    This may fail for general elements of $\T^l$.
    Consider $\bm{\alpha} = (\sqrt{2}, \frac{1}{3}) \in \T^2$.
    The orbit-closure $H = \overline{\{(n\sqrt{2}, \frac{n}{3}) : n \in \Z\}}$ can be written as $H = \bigcup_{j=0}^2 \T \times \{\frac{j}{3}\}$.
    The different connected components of $H$ receive unequal treatment from polynomial sequences.
    For example, taking the polynomial $P(n) = n^2$ and using the fact that $1^2 \equiv 2^2 \equiv 1 \pmod{3}$, one can check that $(P(n)\bm{\alpha})_{n \in \Z}$ is well-distributed not with respect to the Haar measure on $H$ but with respect to the measure $\frac{\mu_0 + 2\mu_1}{3}$, where $\mu_j$ is the Haar measures on $\T \times \{\frac{j}{3}\}$.
    A similar property carries over to finite characteristic: if $\bm{\alpha} \in \mathbf{T}^l$ is irrational, then $(P(n)\bm{\alpha})_{n \in \F_q[t]}$ is well-distributed in $\overline{\{n\bm{\alpha} : n \in \F_q[t]\}}$ for every separable polynomial $P$.
    (This may fail for nonseparable polynomials, which is why we utilize a decomposition into additive and separable pieces in Theorem \ref{thm: multivariable equidistribution} below.)
\end{remark}

We can now state the main equidistribution theorem from \cite{bl} for multivariable polynomial sequences.

\begin{theorem}[{\cite[Theorem 10.1]{bl}}] \label{thm: multivariable equidistribution}
    Let $P(x_1, \ldots, x_d) \in \F_q((t^{-1}))[x_1, \ldots, x_d]$ be a polynomial with $P(\bm{0}) = 0$.
    Write $P$ in the form
    \begin{equation*}
        P(\bm{n}) = \sum_{j=1}^m \eta_j(\bm{n}^{\bm{r}_j}),
    \end{equation*}
    where $\bm{n} = (n_1, \ldots, n_d) \in \F[t]^d$, the monomials $\bm{n}^{\bm{r}_j}$ are distinct and separable, meaning $n_1^{r_{j,1}} \cdot \ldots \cdot n_d^{r_{j,d}}$ with $p \nmid \gcd(r_{j,1}, \ldots, r_{j,d})$, and $\eta_j(x) \in \F_q((t^{-1}))[x]$ is an additive polynomial for each $j$.
    If all of the polynomials $\eta_1, \ldots, \eta_d$ are irrational, then $(P(\bm{n}))_{\bm{n} \in \F[t]^d}$ is well-distributed in a subgroup of $\mathbf{T}$.
\end{theorem}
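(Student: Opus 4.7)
The plan is to use the character criterion for well-distribution in $\mathbf{T}$: since $\widehat{\mathbf{T}} \cong \F_q[t]$ via $\alpha \mapsto e(m\alpha)$, it suffices to show that for every nonzero $m \in \F_q[t]$, either $e(m\,P(\bm{n})) = 1$ for every $\bm{n}$, or
$$\UClim_{\bm{n} \in \F_q[t]^d} e(m\,P(\bm{n})) = 0.$$
Scaling by $m$ replaces each additive polynomial $\eta_j$ by $m \cdot \eta_j$, and via Proposition \ref{prop: additive linear representation} the representing tuple $\bm{\beta} \in \mathbf{T}^{(k)}$ is replaced by $m\bm{\beta}$, which preserves irrationality in the sense of Definition \ref{defn: irrational element} when $m \ne 0$. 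So I may assume $m=1$ and work directly with $P$.

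The main engine is a van der Corput / PET-induction adapted to $\F_q[t]^d$. Assign to $P$ a complexity that tracks (i) the set of distinct separable multi-exponents $\{\bm{r}_j\}$ appearing in $P$, weighted by their total degrees, and (ii) the heights of the additive polynomials $\eta_j$. Given a direction $\bm{h} \in \F_q[t]^d$, additivity of each $\eta_j$ yields
$$\partial_{\bm{h}} P(\bm{n}) \;=\; \sum_{j=1}^m \eta_j\!\left((\bm{n}+\bm{h})^{\bm{r}_j} - \bm{n}^{\bm{r}_j}\right).$$
Because each $\bm{r}_j$ is separable, i.e.\ $p \nmid \gcd(r_{j,1},\dots,r_{j,d})$, the expansion $(\bm{n}+\bm{h})^{\bm{r}_j} - \bm{n}^{\bm{r}_j}$ has strictly smaller total degree in some chosen PET-leading variable while retaining a nonzero top-order $\bm{h}$-coefficient — this is precisely the content of the separability hypothesis. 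A straightforward van der Corput lemma along F\o lner sequences in $\F_q[t]^d$ then bounds $\UClim_{\bm{n}} e(P(\bm{n}))$ in terms of analogous averages for $\partial_{\bm{h}} P$ over generic $\bm{h}$, so one can iterate.

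At the base of the induction, $P(\bm{n})$ is reduced to an $\F_q[t]$-linear combination $\sum_{i=1}^d \eta'_i(n_i)$ of additive polynomials of the individual coordinates. Proposition \ref{prop: additive linear representation} expresses each $\eta'_i(n_i)$ as $\sigma_{k_i}(\bm{\beta}_i n_i) \pmod{\F_q[t]}$, so the sequence $(P(\bm{n}))$ is a $\sigma$-image of a linear $\F_q[t]^d$-orbit in a product $\prod_i \mathbf{T}^{(k_i)}$. The irrationality of each original $\eta_j$ survives the PET reductions (the differencings only compose irrational $\eta_j$ with nonzero multiplications by polynomial combinations of shifts), so the assembled tuple is irrational in the sense of Definition \ref{defn: irrational element}; the standard fact that an irrational element of $\mathbf{T}^{(k)}$ generates a linear sequence well-distributed in its orbit-closure subgroup, pushed forward by $\sigma_k$, yields well-distribution of $(P(\bm{n}))$ in a closed subgroup of $\mathbf{T}$.

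The hard part will be the combinatorial bookkeeping of the PET scheme in positive characteristic: one must set up a complexity on $P$ that strictly decreases under differencing along a generic $\bm{h}$, and must verify that the residual additive polynomials appearing after differencing still satisfy the irrationality hypothesis for almost every shift $\bm{h}$. In characteristic $0$, the Jacobi criterion and ordinary degree induction handle this cleanly; in characteristic $p$, inseparable monomials can collapse under differencing, so the separability assumption $p \nmid \gcd(\bm{r}_j)$ must be used at every step to ensure that at least one coordinate of the chosen variable has a nontrivial first derivative. Formulating the induction so that both the separable-monomial structure and the irrationality of the additive coefficients are simultaneously preserved is the technical crux.
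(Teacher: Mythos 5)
This theorem is not proved in the paper at all: it is quoted verbatim from Bergelson--Leibman \cite[Theorem 10.1]{bl} and used as a black box, so there is no internal proof to compare against. Judged on its own terms, your proposal is a strategy outline rather than a proof, and the strategy you outline (Weyl's criterion to reduce to exponential sums, van der Corput/PET induction on a complexity of the system of separable monomials, linearization of additive polynomials via the splitting isomorphisms at the base of the induction) is indeed the route taken in \cite{bl}. The reduction to a single character and the observation that multiplying the representing tuple $\bm{\beta}$ by a nonzero $m \in \F_q[t]$ preserves irrationality are fine.

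The gap is that the entire technical content of the theorem is deferred to what you call ``the hard part.'' Two steps are asserted but not carried out, and both are exactly where a positive-characteristic PET argument can break. First, no complexity function is defined and no decrease under differencing is verified; after expanding $\eta_j\bigl((\bm{n}+\bm{h})^{\bm{r}_j}-\bm{n}^{\bm{r}_j}\bigr)$ by additivity, the resulting monomials $\bm{n}^{\bm{s}}$ need not be separable (all entries of $\bm{s}$ can be divisible by $p$), so one must rewrite $\bm{n}^{\bm{s}}=(\bm{n}^{\bm{s}'})^{p^e}$ and absorb the Frobenius power into a \emph{new} additive polynomial; the bookkeeping that makes this terminate is not sketched. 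Second, the claim that ``the irrationality of each original $\eta_j$ survives the PET reductions'' is not justified: after differencing, contributions from distinct $j$ to the same separable monomial are \emph{summed}, and a sum of irrational additive polynomials (with coefficients depending on the shift $\bm{h}$) can fail to be irrational for particular $\bm{h}$; one needs a genericity argument in $\bm{h}$ of the kind underlying Lemma \ref{lem: clear denominators}, and this is precisely the crux of \cite[Sections 7--10]{bl}. As written, the proposal identifies the right plan but does not establish the theorem.
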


\begin{proof}[Proof of Proposition \ref{prop: equidistribution theorems}]
    (1) The polynomials $P_1, \ldots, P_d$ are linearly independent, so $Q(\bm{n}) = \sum_{j=1}^d P_j(\bm{n}) \alpha_j$ is a polynomial of degree less than $p$ with an irrational coefficient other than the constant term.
    That is, if we write
    \begin{equation*}
        Q(\bm{n}) = Q(\bm{0}) + \sum_{k=1}^{p-1} \sum_{r_1+\ldots+r_d=k} \beta_{\bm{r}} \bm{n}^{\bm{r}},
    \end{equation*}
    then $\beta_{\bm{r}}$ is irrational for some $\bm{r}$.
    Since $\eta(n) = \beta_{\bm{r}}n$ is well-distributed in $\mathbf{T}$ (see \cite[Theorem 0.1]{bl}), it follows that $Q(\bm{n})$ is well-distributed in $\mathbf{T}$ by Theorem \ref{thm: multivariable equidistribution}. \\

    (2) Let $Q(\bm{n}) = \sum_{j=1}^d P_j(\bm{n}) \alpha_j$, and write $Q(\bm{n})$ in the form
    \begin{equation*}
        Q(\bm{n}) = Q(\bm{0}) + \sum_{i=1}^m \eta_i(\bm{n}^{\bm{r}_i}),
    \end{equation*}
    where $\eta_1, \ldots, \eta_m$ are additive polynomials and the ``monomials'' $\bm{n}^{\bm{r}_i} = n_1^{r_{i,1}} \ldots n_d^{r_i,d}$ are distinct and separable.
    The assumption that $\bm{\alpha}$ is strongly irrational together with the algebraic independence of the polynomials $P_1, \ldots, P_d$ ensures that each of the additive polynomials $\eta_i$ is irrational.
    Hence, by \cite[Theorem 10.1]{bl}, the $\F[t]_q^d$-sequence $Q(\bm{n})$ is well-distributed in a subgroup of $\mathbf{T}$.
\end{proof}

With Theorem \ref{thm: Weyl rings of integers low degree} at hand, we can now prove Theorem \ref{thm: TE}.

\begin{proof}[Proof of Theorem \ref{thm: TE}]
    Let us first prove the equivalences between (i), (ii), and (iii).
    We deal with the contrapositive of each statement.

    (i) $\implies$ (ii).
    Suppose $(X, \mu, (T_n)_{n \in \mO_K})$ has a nontrivial periodic factor, and let $I$ be the ideal of periods of this factor.
    Let $A \subseteq X$ be measurable with respect to this periodic factor and with $\mu(A) \in (0,1)$.
    Then $T_n^{-1}A = A$ for all $n \in I$ by periodicity of the factor, so $(T_n)_{n \in I}$ is not ergodic.

    (ii) $\implies$ (iii).
    Suppose there is a nonzero rational eigencharacter $\chi : \mO_K \to S^1$, and let $G$ be the subgroup of $\hat{\mO}_K$ generated by $\chi$.
    Then the space of functions
    \begin{equation*}
        \textup{span}\{f \in L^2(\mu) : f~\text{is an eigenfunction with eigencharacter in}~G\}
    \end{equation*}
    defines a periodic factor.

    (iii) $\implies$ (i).
    Let $I$ be a nonzero propert ideal of $\mO_K$, and suppose $(T_n)_{n \in I}$ is not ergodic.
    Then there is a non-constant function $f \in L^2(\mu)$ with $T_nf = f$ for all $n \in I$.
    For a character $\chi : \mO_K/I \to S^1$, define a new function
    \begin{equation*}
        g_{\chi} = \sum_{r \in \mO_K/I} \chi(-r) T_r f.
    \end{equation*}
    Note that $T_rf$ is well-defined for $r \in \mO_K/I$, since $f$ is an $I$-invariant function.
    Then for any $n \in \mO_K$,
    \begin{equation*}
        T_ng_{\chi} = \sum_{r \in \mO_K/I} \chi(-r) T_{n+r} f
         = \sum_{r \in \mO_K/I} \chi(n) \chi(-(n+r)) T_{n+r} f
         = \chi(n) g.
    \end{equation*}
    That is, $g$ is an eigenfunction with $I$-periodic eigencharacter $\tilde{\chi} = \chi \circ \pi$, where $\pi : \mO_K \to \mO_K/I$ is the projection map. \\

    Now let us check that (iv) implies ergodicity along ideals.
    We will prove (iv) $\implies$ (iii) by contrapositive.
    Suppose there is a nonzero rational eigencharacter $\chi : \mO_K \to S^1$, and let $f \in L^2(\mu)$ be an associated eigenfunction.
    Let $m$ be a period of $\chi$.
    Then taking $P(n) = mn$, we have
    \begin{equation*}
        \UClim_{n \in \mO_K} T_{P(n)}f = \UClim_{n \in \mO_K} \chi(mn)f = f \ne \int_X f~d\mu,
    \end{equation*}
    so (iv) fails.

    Finally, let us check that ergodicity along ideals (as expressed by items (i), (ii), and/or (iii)) implies (iv).
	By the spectral theorem for actions of $(\mO_K,+)$ by unitary operators on a Hilbert space, we may work with the Hilbert space $\Hil = L^2 \left( \hat{\mO}_K, \sigma \right)$, where $\sigma$ is a positive Borel measure on the dual group $\hat{\mO}_K$, and the unitary action $(T_n)_{n \in \mO_K}$ is represented by the multiplication operators $(U_nh)(x) = \chi(n) h(\chi)$ for $h \in \Hil$ and $\chi \in \hat{\mO}_K$.

    Let $P(n) \in \mO_K[n]$ be a nonconstant polynomial with $\deg{P} < \char(K)$.
    We want to show that for $\sigma$-a.e. $\chi \in \hat{\mO}_K$, $\UClim_{n \in \mO_K} \chi(P(n)) = 0$.
The assumption that $(T_n)_{n \in \mO_K}$ is ergodic along ideals means that $\sigma$ is supported on irrational characters, so this follows immediately from Theorem \ref{thm: Weyl rings of integers low degree}.
\end{proof}

The same argument with Theorem \ref{thm: Weyl rings of integers} in place of Theorem \ref{thm: Weyl rings of integers low degree} proves Theorem \ref{thm: strongly ergodic}. \\

Now we use the strategy behind the proof of Theorem \ref{thm: TE} to deduce Theorem \ref{thm: TE + ud}.

\begin{proof}[Proof of Theorem \ref{thm: TE + ud}]
    (i)$\implies$(ii).
    Upon replacing $T_n$ by the multiplication operators $(U_nh)(x) = \chi(n) h(\chi)$ on $L^2 \left( \hat{\mO}_K, \sigma \right)$, we use the fact that $P$ is good for irrational equidistribution to conclude
    \begin{equation*}
    	\UClim_{n \in \mO_K} U_{P(n)} h(\chi)
	   = \UClim_{n \in \mO_K} \chi(P(n)) h(\chi)
	   = \begin{cases}
	 	 h(0), & \text{if}~\chi \equiv 1; \\
		  0, & \text{otherwise}
	   \end{cases}
    \end{equation*}
    in $L^2(\sigma)$.
    This corresponds to the desired $L^2$ convergence result
    \begin{equation*}
	   \UClim_{n \in \mO_K} T_{P(n)}f = \int_X f~d\mu.
    \end{equation*}
    
    (ii)$\implies$(i).
    Suppose $P$ is not good for irrational equidistribution, and let $\chi \in \hat{\mO}_K$ be an irrational character such that $\UClim_{n \in \mO_K} \chi(P(n)) \ne 0$.
    We then take as our totally ergodic system $X = \hat{\mO}_K$, $\mu = m_X$ the normalized Haar measure on the compact group $X$, and $T_nx$ is the character $(T_nx)(m) = \chi(nm) x(m)$.
    For the function $f(x) = x(1)$, we have $\int_X f(x)~dx = 0$, since $x \mapsto x(1)$ is a nontrivial character on $X$, while
    \begin{equation*}
	   \UClim_{n \in \mO_K} T_{P(n)} f
	   = \UClim_{n \in \mO_K} \chi(P(n))f \ne 0.
    \end{equation*}
\end{proof}

%%%%%%%%%%%%%%%%%%%%%%%%%%%%%%%%%%%%%%%%%%%%%%%%%%%%%%%%
% ---- NECESSARY AND SUFFICIENT SARKOZY ---- %
%%%%%%%%%%%%%%%%%%%%%%%%%%%%%%%%%%%%%%%%%%%%%%%%%%%%%%%%

\section{Necessary and sufficient conditions for the Furstenberg--S\'{a}rk\"{o}zy theorem over rings of integers of global fields} \label{sec:nec/suff}

In this section, we prove Theorem \ref{thm: global field Sarkozy}, establishing necessary and sufficient conditions for the Furstenberg--S\'{a}rk\"{o}zy theorem over rings of integers of global fields.
For convenience of the reader, we reproduce the statement below.

\NecessarySufficient*

In the number field case, several of the equivalences were established in \cite{br}.
To handle the function field case, we need to incorporate some additional ideas from \cite{bl} expressed in Theorem \ref{thm: multivariable equidistribution} above.
One additional ingredient that we need and that was not present in Section \ref{sec:TE polynomial} is a lemma allowing us to pass to the case that the additive polynomials $\eta_j$ appearing in the decomposition of a polynomial
\begin{equation*}
    P(\bm{n}) = P(0) + \sum_{j=1}^m \eta_j(\bm{n}^{\bm{r}_j})
\end{equation*}
are irrational:

\begin{lemma} \label{lem: clear denominators}
    Let $\eta(x) \in \F_q((t^{-1}))[x]$ be an additive polynomial.
    There exists $m_0 \in \F_q[t]$ with the following property: if $m \in \F_q[t]$ and $m_0 \mid m$, then $\eta(mx)$ is irrational.
\end{lemma}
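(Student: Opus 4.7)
The plan is to work through the linear representation of additive polynomials provided by Proposition \ref{prop: additive linear representation} and then choose $m_0$ to annihilate the (finitely generated) torsion part of a suitable $\F_q[t]$-submodule of $\mathbf{T}$.

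First, I would write $\eta(x) = \sum_{j=0}^k \alpha_j x^{p^j}$ with $\alpha_j \in \F_q((t^{-1}))$, and invoke Proposition \ref{prop: additive linear representation} to obtain $\bm{\beta} \in \mathbf{T}^{(k)}$ such that $\eta(n) \equiv \sigma_k(\bm{\beta} n) \pmod{\F_q[t]}$. Iterating the defining identity $\psi_1(\alpha n^p) = \psi_1(\alpha) n$ yields $\psi_j(\alpha m^{p^j}) = \psi_j(\alpha) m$ for all $j$ and $m \in \F_q[t]$, from which it follows that $\eta(mx) = \sum_j (\alpha_j m^{p^j}) x^{p^j}$ has linear representative $\bm{\beta} m \in \mathbf{T}^{(k)}$. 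Thus the lemma reduces to choosing $m_0$ so that $\bm{\beta} m$ is irrational (in the sense of Definition \ref{defn: irrational element}) whenever $m_0 \mid m$.

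Viewing $\bm{\beta}$ as a tuple $(\beta_1, \ldots, \beta_l) \in \mathbf{T}^l$ with $l = 1 + p + \cdots + p^k$, I let $V \subseteq \mathbf{T}$ be the $\F_q[t]$-submodule generated by $\beta_1, \ldots, \beta_l$. Two elementary observations about $\mathbf{T} = \F_q((t^{-1}))/\F_q[t]$ are decisive: the set of torsion elements (those annihilated by some nonzero element of $\F_q[t]$) is exactly $\F_q(t)/\F_q[t]$, i.e., coincides with the set of rational elements; consequently multiplication by any nonzero $m \in \F_q[t]$ sends irrational elements to irrational elements. Since $V$ is finitely generated over the PID $\F_q[t]$, its torsion submodule $T = V \cap (\F_q(t)/\F_q[t])$ is finitely generated and thus annihilated by a single nonzero $m_0 \in \F_q[t]$ via the structure theorem.

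To finish, given any $c_1, \ldots, c_l \in \F_q[t]$ and any $m$ with $m_0 \mid m$, set $\gamma = \sum_i c_i \beta_i \in V$, so that $\sum_i c_i (\bm{\beta} m)_i = m\gamma$. If $\gamma$ is irrational, then $m\gamma$ is irrational by the second observation above; if $\gamma$ is rational, then $\gamma \in T$, so $m\gamma = 0$. In either case, $\sum_i c_i (\bm{\beta} m)_i$ is $0$ or irrational, so $\bm{\beta} m$ is irrational and hence $\eta(mx)$ is irrational. The main piece requiring care is the verification of the two facts about $\mathbf{T}$ above and the observation that a finitely generated torsion $\F_q[t]$-module admits a single annihilator; these are essentially standard, but must be stated explicitly since they are precisely what converts the infinitary irrationality question into a finite algebraic one that can be solved by a single choice of $m_0$.
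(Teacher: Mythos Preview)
Your proof is correct and follows essentially the same route as the paper's: both pass to the linear representative $\bm{\beta} \in \mathbf{T}^{(k)}$ via Proposition~\ref{prop: additive linear representation}, observe that $\eta(mx)$ has representative $m\bm{\beta}$, and then find $m_0$ making $m\bm{\beta}$ irrational for all $m_0 \mid m$. The only difference is that the paper delegates the existence of such an $m_0$ to \cite[Lemma~4.1]{bl}, whereas you supply a self-contained argument by identifying the rational elements of $\mathbf{T}$ with its $\F_q[t]$-torsion and annihilating the (finitely generated) torsion part of the submodule spanned by the coordinates of $\bm{\beta}$ via the structure theorem over a PID---which is precisely what that cited lemma does.
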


\begin{proof}
    This lemma is proved implicitly in \cite{bl} (see the proof of Theorem 8.1 in \cite{bl} and the remark thereafter).
    For completeness, we include a short proof here.
    By Proposition \ref{prop: additive linear representation}, let $k \in \N$ and $\bm{\beta} \in \mathbf{T}^{(k)}$ such that $\eta(n) = \sigma_k(\bm{\beta} n)$.
    By \cite[Lemma 4.1]{bl}, there exists $m_0 \in \F_q[t]$ such that $m_0 \bm{\beta}$ is an irrational element of $\mathbf{T}^{(k)}$.

    Let $m \in \F_q[t]$ with $m_0 \mid m$.
    Then $m\bm{\beta}$ is irrational (an arbitrary linear combination of coordinates of $m\bm{\beta}$ can also be expressed as a linear combination of coordinates of $m_0\bm{\beta}$), and $\eta(mn) = \sigma_k((m\bm{\beta})n)$, so $n \mapsto \eta(mn)$ is irrational as desired.
\end{proof}

We are now ready to turn to the proof of Theorem \ref{thm: global field Sarkozy}.

\begin{proof}[Proof of Theorem \ref{thm: global field Sarkozy}]
    We will prove the equivalences expressed in the following diagram.

    \begin{center}
	    \begin{tikzcd}
		      (i) \arrow[rr, Rightarrow] & & (iii) \arrow[dd, Rightarrow] \arrow[ld, Rightarrow] \\
             & (vi) \arrow[ld, Rightarrow] & \\
            (v) \arrow[uu, Rightarrow] & (iv) \arrow[l, Rightarrow] & (ii) \arrow[l, Rightarrow]
        \end{tikzcd}
    \end{center}

    (i)$\implies$(iii).
	It suffices to show that for any measure-preserving $\mO_K$-system $\left( X, \mu, (T_n)_{n \in \mO_K} \right)$,
	any measurable set $A \subseteq X$, and any $\eps > 0$, there exists $m, r \in \mO_K$ with $m \ne 0$ such that
	\begin{align*}
		\UClim_{n \in \mO_K} \mu \left( A \cap T_{P(mn+r)}^{-1}A \right) > \mu(A)^2 - \eps.
	\end{align*}
    When $K$ is a number field, this was shown in \cite[Theorem 4.1]{ab-roi} (see also \cite[Theorem 1.17]{br} for a different method of establishing (iii) in the number field setting), so we will prove only the function field case here.

    Let $K$ be a degree $d$ extension of $\F_q(t)$, and let $\{b_1, \ldots, b_d\}$ be an integral basis for $\mO_K$.
    We may then express characters on $\mO_K$ in the form
    \begin{equation*}
        \sum_{j=1}^d n_jb_j \mapsto e \left( \sum_{j=1}^d n_j\alpha_j \right)
    \end{equation*}
    with $\bm{\alpha} = (\alpha_1, \ldots, \alpha_d) \in (\F_q((t^{-1}))/\F_q[t])^d = \mathbf{T}^d$.
    
	Let $f = \ind_A$.
	By the spectral theorem, there exists a measure $\sigma_f$ on $\mathbf{T}^d$ so that the action of $T$ on the closure of the orbit
	$\{T_nf : n \in \mO_K\}$ in $L^2(X, \mu)$ is isomorphic to the action of the multiplication operators $U_n h(\bm{x}) = e \left( \sum_{j=1}^d n_j x_j \right) h(\bm{x})$ on $L^2(\mathbf{T}^d, \sigma_f)$, where $n = \sum_{j=1}^d n_j b_j$.
	By the mean ergodic theorem, the function $\tilde{f} = \UClim_{n \in \mO_K} T_nf$ is represented in $L^2(\mathbf{T}^d, \sigma_f)$
	by the function $\ind_{\{\bm{0}\}}$.
	Hence,
 \begin{equation*}
     \sigma_f(\{\bm{0}\}) = \norm{L^2(\mathbf{T}^d, \sigma_f)}{\ind_{\{\bm{0}\}}}^2 = \norm{L^2(X, \mu)}{\tilde{f}}^2 \ge \mu(A)^2.
 \end{equation*}

    Let $Q(x_1, \ldots, x_d) \in \F_q[t]^d[x_1, \ldots, x_d]$ be the polynomial such that
    \begin{equation*}
        \sum_{j=1}^d Q_j(\bm{n})b_j = P \left( \sum_{j=1}^d n_j b_j \right),
    \end{equation*}
    where $Q = (Q_1, \ldots, Q_d)$.
    Write $Q$ in the form
    \begin{equation*}
        Q(\bm{n}) = Q(\bm{0}) + \sum_{j=1}^l \eta_j(\bm{n}^{\bm{r}_j})
    \end{equation*}
    with $\eta_j$ additive polynomials and $\bm{n}^{\bm{r}_j}$ distinct separable monomials.
    For $\bm{x} = (x_1, \ldots, x_d) \in \mathbf{T}^d$, let $\eta_{j,\bm{x}}$ be the $\mathbf{T}$-valued additive polynomial $\eta_{j,\bm{x}}(n) = \eta_{j}(n) \cdot\bm{x}$.
    Note that if $n = \sum_{i=1}^d n_i b_i \in \mO_K$, then
    \begin{equation*}
        U_{P(n)} h(\bm{x}) = e \left( Q(\bm{n}) \cdot \bm{x} \right) h(\bm{x})
         = e \left( Q(\bm{0}) \cdot \bm{x} + \sum_{j=1}^l \eta_{j,\bm{x}}(\bm{n}^{\bm{r}_j}) \right) h(\bm{x}).
    \end{equation*}

    Fix $\bm{x} = (x_1, \ldots, x_d) \in \mathbf{T}^d$.
    By Lemma \ref{lem: clear denominators}, for each $j$, let $m_{j,\bm{x}} \in \F_q[t]$ such that if $m_{j,\bm{x}} \mid m$, then $\eta_{j,\bm{x}}(mn)$ is irrational (as a polynomial in $n \in \F_q[t]$).
    Set $m_{\bm{x}}$ to be the least common multiple of $m_{1,\bm{x}}, \ldots, m_{l,\bm{x}}$..
    By construction, if $m_{\bm{x}} \mid m$, then each of the additive polynomials $\eta_{j,\bm{x}}(mn)$ is irrational.
    Therefore, by Theorem \ref{thm: multivariable equidistribution},
    \begin{equation*}
        \bm{n} \mapsto Q(m\bm{n}) \cdot \bm{x}
    \end{equation*}
    is well-distributed in $H_{\bm{x}} + Q(\bm{0}) \cdot \bm{x}$ for some subgroup $H_{\bm{x}}$ of $\mathbf{T}$.
    In fact, for any $\bm{r} \in \F_q[t]^d$,
    \begin{equation*}
        \bm{n} \mapsto Q(m\bm{n} + \bm{r}) \cdot \bm{x}
    \end{equation*}
    is well-distributed in $H_{\bm{x}} + Q(\bm{r}) \cdot \bm{x}$.
	
	Choose $m \in \F_q[t]$ sufficiently divisible so that $D = \{\bm{x} \in \mathbf{T}^d : m_{\bm{x}} \mid m\} \cup \{\bm{0}\}$ satisfies $\sigma_f(\mathbf{T}^d \setminus D) < \eps$.
	By condition (i), there exists $r \in \mO_K$ such that $P(r) \equiv 0 \pmod{m}$.
    (Here we view $m$ as an element of $\mO_K$.)
    Hence, for $\bm{x} \in D$,
    \begin{equation*}
        \UClim_{n \in \mO_K} U_{P(mn+r)}h(\bm{x}) = \UClim_{\bm{n} \in \F_q[t]^d} e \left( Q(m\bm{n}+\bm{r}) \cdot \bm{x} \right) h(\bm{x}) = \begin{cases}
		 	h(\bm{x}), & \text{if}~e(H_{\bm{x}}) = \{1\} \\
			0, & \text{otherwise}.
		 \end{cases}
	\end{equation*}
	In particular,
	\begin{equation*}
        \UClim_{n \in \mO_K} \overline{h(\bm{x})} \cdot U_{P(mn+r)}h(\bm{x}) \ge 0
	\end{equation*}
	for every $\bm{x} \in D$.
	Therefore,
	\begin{multline*}
		\UClim_{n \in \mO_K} \mu \left( A \cap T_{P(mn+r)}^{-1}A \right)
		 = \UClim_{\bm{n} \in \F_q[t]^d} \int_{\mathbf{T}^d}{e \left( Q(m\bm{n}+\bm{r}) \cdot \bm{x} \right)~d\sigma_f(x)} \\
		= \sigma_f(\{\bm{0}\})
		 + \int_{D \setminus \{\bm{0}\}}{\UClim_{\bm{n} \in \F_q[t]^d} e \left( Q(m\bm{n}+\bm{r}) \cdot \bm{x} \right)}
		+ \int_{\mathbf{T}^d \setminus D}{\UClim_{\bm{n} \in \F_q[t]^d} e \left( Q(m \bm{n} + \bm{r}) \cdot \bm{x} \right)} \\
		\ge \sigma_f(\{\bm{0}\}) - \sigma_f(\mathbf{T}^d \setminus D) > \mu(A)^2 - \eps.
	\end{multline*}
    
    The implication (iii)$\implies$(ii) is trivial. \\
    
    (ii)$\implies$(iv).
    Suppose (ii) holds, and let $E \subseteq \mO_K$ with $d^*(E) > 0$.
    By the Furstenberg correspondence principle (Theorem \ref{thm: correspondence}), there is a measure-preserving system $(X, \mu, (T_n)_{n\in \mO_K})$ and a set $A \subseteq X$ with $\mu(A) > 0$ such that
    \begin{equation} \label{eq: correspondence inequality}
        d^* \left( \bigcap_{j=1}^k (E - n_j) \right) \ge \mu \left( \bigcap_{j=1}^k T^{-n_j}A \right)
    \end{equation}
    for every $k \in \N$ and $n_1, \dots, n_k \in \mO_K$.
    Applying (ii), there exists $n \in \mO_K \setminus \{0\}$ such that
    \begin{equation*}
        \mu \left( A \cap T_{P(n)}^{-1}A \right) > 0.
    \end{equation*}
    Returning to the inequality \eqref{eq: correspondence inequality}, we conclude that $d^*(E \cap (E - P(n)) > 0$; in particular, $E \cap (E - P(n)) \ne \es$.
    Taking $y \in E \cap (E - P(n))$ and letting $x = y + P(n)$ provides the desired points $x,y \in E$ with $x-y = P(n)$. \\
    
    (iv)$\implies$(v).
    Given any finite coloring of $\mO_K$, one of the color classes must have positive upper Banach density, so (iv)$\implies$(v). \\

    (v)$\implies$(i).
    Suppose (i) fails.
    Let $I$ be a nonzero ideal of $\mO_K$ such that $P(\mO_K) \cap I = \es$.
    We can then color $\mO_K$ by cosets mod $I$ to see that (v) fails. \\

    (iii)$\implies$(vi).
    We use an argument originating in \cite{b_density-schur} to establish partition regularity of the equation $x - y = P(z)$.
    Let $\mO_K = \bigcup_{i=1}^r C_i$ be a finite coloring of $\mO_K$.
    Fix a F{\o}lner sequence $(\Phi_N)_{N \in \N}$.
    We partition the colors into ``large'' and ``small'' color classes as
    \begin{equation*}
        L = \{1 \le i \le r : \overline{d}_{\Phi}(C_i) > 0\} \qquad \text{and} \qquad S = \{1 \le i \le r : \overline{d}(C_i) = 0\},
    \end{equation*}
    where $\overline{d}_{\Phi}(C) = \limsup_{N \to \infty} \frac{|C\cap\Phi_N|}{|\Phi_N|}$.
    Consider the set $E = \prod_{i \in L} C_i \subseteq \mO_K^L$.
    We have $d_{\Psi}(E) > 0$ for the F{\o}lner sequence
    \begin{equation*}
        \Psi_k = \prod_{i \in L} \Phi_{N_{i,k}},
    \end{equation*}
    where the subsequence $\Phi_{N_{i,k}}$ is chosen so that $\overline{d}_{\Phi}(C_i) = \lim_{k \to \infty} \frac{|C_i \cap \Phi_{N_{i,k}}|}{|\Phi_{N_{i,k}}|}$.
    By the Furstenberg correspondence principle, there exists a measure-preserving system $(X, \mu, (T_n)_{n \in \mO_K})$ and a measurable set $A$ with $\mu(A) = d_{\Psi}(E) = \prod_{i \in L} \overline{d}_{\Phi}(C_i) > 0$ such that
    \begin{equation*}
        \overline{d}_{\Psi} \left( \bigcap_{j=1}^k (E - (n_j,\ldots,n_j)) \right) \ge \mu \left( \bigcap_{j=1}^k T_{n_j}^{-1}A \right)
    \end{equation*}
    for every $k \in \N$ and $n_1, \ldots, n_k \in \mO_K$.
    By (iii), the set
    \begin{equation*}
        R = \left\{ n \in \mO_K : \mu \left( A \cap T_{P(n)}^{-1}A \right) > \frac{\mu(A)^2}{2} \right\}
    \end{equation*}
    is syndetic.
    In particular, $\underline{d}_{\Phi}(R) > 0$, so $R \cap \bigcup_{i \in L} C_i \ne \es$.
    Let $z \in R \cap C_{i_0}$ for some $i_0 \in L$.
    Then
    \begin{equation*}
        \overline{d}_{\Phi}(C_{i_0} \cap (C_{i_0} - P(z)) \ge \prod_{i \in L} \overline{d}_{\Phi}(C_i \cap (C_i - P(z)) \ge \overline{d}_{\Psi} \left( E \cap (E - (P(z),\ldots,P(z)) \right) \ge \mu(A \cap T_{P(z)}^{-1}A) > 0.
    \end{equation*}
    Let $y \in C_{i_0} \cap C_{i_0} - P(z)$, and let $x = y + P(z)$.
    Then $\{x,y,z\} \subseteq C_{i_0}$ is a monochromatic solution to the equation $x - y = P(z)$.
    \\
    
    (vi)$\implies$(v) is trivial.
\end{proof}

\begin{remark}
    The proof of the implication (i)$\implies$(iii) relies on multivariable polynomial equidistribution results.
    A similar application of equidistribution theorems in conjunction with the spectral theorem can be used to establish a multivariable enhancement: if $P(x_1, \ldots, x_l) \in \mO_K[x_1, \ldots, x_l]$ and for any nonzero ideal $I$ of $\mO_K$, there exists $\bm{n} \in \mO_K^l$ such that $P(\bm{n}) \in I$, then for any measure-preserving system $(X, \mu, (T_n)_{n \in \mO_K})$, any measurable set $A \subseteq X$ with $\mu(A) > 0$, and any $\eps > 0$, the set
    \begin{equation*}
        \left\{ \bm{n} \in \mO_K^l : \mu \left( A \cap T_{P(\bm{n})}^{-1}A \right) > \mu(A)^2 - \eps \right\}
    \end{equation*}
    is syndetic in $\mO_K^l$.
\end{remark}

%%%%%%%%%%%%%%%%%%%%%%%%%%%%%%%%%%%%%%%%%%%%%%%%%%%%%%%%
% ---- ASYMPTOTIC TE ---- %
%%%%%%%%%%%%%%%%%%%%%%%%%%%%%%%%%%%%%%%%%%%%%%%%%%%%%%%%

\section{Asymptotic total ergodicity} \label{sec:ATE proof}

The goal of this section is to prove Theorem \ref{thm: quantitative TE}, restated here for the convenience of the reader.

\QuantitativeTE*

Using Fourier analysis, we will reduce Theorem \ref{thm: quantitative TE} to the following exponential sum estimate whose proof occupies the bulk of this section:

\begin{lemma} \label{lem: character bound}
    Let $\mO_K$ be the ring of integers of a global field $K$ with characteristic $c \in \P \cup \{\infty\}$.
	Let $P(x) \in \mO_K[x]$ be a nonconstant polynomial of degree $d$ and derivational degree $k$ with $P(0) = 0$.
	Then for any nontrivial ideal $I \le \mO_K$ and any $\chi \in \hat{\mO_K/I}$, if $\chi \notin H_{I,P}^{\perp}$, then
	\begin{equation*}
		\left| \frac{1}{[\mO_K:I]} \sum_{x \in \mO_K/I}{\chi(P(x))} \right|^{2^{k-1}} \le B(d,c) \frac{k-1}{\lpf(I)}.
	\end{equation*}
\end{lemma}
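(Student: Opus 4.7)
The plan is to bound the character sum by iterated Cauchy--Schwarz (Weyl differencing) $k-1$ times, converting the problem into counting solutions to a polynomial equation in the tuple of difference parameters, which is then handled by exploiting the lower bound $\lpf(I)$ on indices of proper ideals.

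First, set $N = [\mO_K : I]$ and $S = \frac{1}{N} \sum_{x \in \mO_K/I} \chi(P(x))$. Standard van der Corput gives $|S|^2 \le \frac{1}{N} \sum_{h} |\frac{1}{N} \sum_x \chi(\partial_h P(x))|$, and inductively applying Cauchy--Schwarz in the inner sum $k-1$ times (noting that each differencing lowers the derivational degree by one) yields
\begin{equation*}
    |S|^{2^{k-1}} \le \frac{1}{N^{k-1}} \sum_{h_1, \dots, h_{k-1}} \Bigl| \frac{1}{N} \sum_x \chi\bigl(\partial^{k-1}_{h_1, \dots, h_{k-1}} P(x)\bigr) \Bigr|.
\end{equation*}
Because $P$ has derivational degree $k$, the polynomial $Q_{\vec h}(x) = \partial^{k-1}_{h_1, \dots, h_{k-1}} P(x)$ has derivational degree at most $1$ in $x$, so it decomposes as $Q_{\vec h}(x) = A_{\vec h}(x) + c_{\vec h}$, where $A_{\vec h}$ is additive in $x$. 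By orthogonality of characters on the finite abelian group $(\mO_K/I, +)$, the inner average equals $\mathbbm{1}[\chi \circ A_{\vec h} \equiv 1]$ (up to a unit-modulus factor $\chi(c_{\vec h})$), so the problem reduces to estimating
\begin{equation*}
    N_{\chi, P} = \# \bigl\{ \vec h \in (\mO_K/I)^{k-1} : \chi \circ A_{\vec h} \equiv 1 \bigr\}.
\end{equation*}

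The next step is to identify the structure of the additive polynomial $A_{\vec h}$. In the "clean" regime ($\deg P < \char(K)$, where $k = d$), a direct computation shows $A_{\vec h}(x) = d! \, a_d \, h_1 h_2 \cdots h_{k-1} \, x$, where $a_d$ is the leading coefficient of $P$. Using Pontryagin duality on $\mO_K/I$ to pair $\chi$ with an element $a_\chi \in \mO_K/I$, the triviality of $\chi \circ A_{\vec h}$ translates into the condition $\alpha \cdot h_1 \cdots h_{k-1} = 0$ for a fixed element $\alpha = d! a_d a_\chi$ of $\mO_K/I$. The hypothesis $\chi \notin H_{I,P}^\perp$, combined with the containment $A_{\vec h}(\mO_K/I) \subseteq H_{I,P}$, ensures that $\alpha$ is nonzero (otherwise the "leading term" would produce $\chi$-triviality on all of $H_{I,P}$). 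I would then prove by induction on $k-1$ that
\begin{equation*}
    \#\bigl\{ (h_1, \dots, h_{k-1}) : \alpha \cdot h_1 \cdots h_{k-1} = 0 \bigr\} \le \frac{k-1}{\lpf(I)} N^{k-1},
\end{equation*}
where the base case uses that the annihilator of a nonzero $\alpha \in \mO_K/I$ is a proper ideal of index at least $\lpf(I)$ (via Proposition \ref{prop: lpf finite ring ideals}), and the inductive step splits according to whether $\alpha h_1$ vanishes.

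The main obstacle is the positive-characteristic regime where $\deg P \ge \char(K) = p$: then $d! a_d$ may vanish in the ring, so the naive leading coefficient no longer controls $A_{\vec h}$, and $A_{\vec h}$ instead involves iterated Frobenius twists $h_i^{p^j}$ coming from the base-$p$ expansion of the exponents of $P$. The constant $B(d, c) = p^{2 \lfloor \log_p d \rfloor}$ is designed precisely to absorb this Frobenius-related loss: expanding $\partial^{k-1}_{\vec h} P$ introduces multiplicities bounded by powers of $p$ that appear in the derivational structure of monomials $x^d$ with $d \ge p$. I expect the bulk of the technical work to be in carefully expanding $\partial^{k-1}_{\vec h} P(x)$ in characteristic $p$, isolating the effective "leading additive coefficient" of $A_{\vec h}$ (which will now be a polynomial in Frobenius-iterates of the $h_i$), verifying that $\chi \notin H_{I,P}^\perp$ forces this coefficient to be nonzero, and reducing the counting problem — after absorbing the factor $B(d,c)$ — back to the same product-vanishing count handled above.
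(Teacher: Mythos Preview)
Your overall architecture matches the paper's: Weyl differencing $k-1$ times, then count the tuples $\vec{h}$ for which $\chi\circ A_{\vec h}$ is trivial. In the regime $\deg P < \char(K)$ your outline is essentially complete and agrees with what the paper does (there $m=0$, $B(d,c)=1$, and the paper's Schwartz--Zippel lemma specializes to exactly your product-vanishing induction).

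The gap is in positive characteristic, and it is more than a matter of ``carefully expanding.'' After differencing, $A_{\vec h}(u)=\sum_{j=0}^{m}C_j(\vec h)\,u^{p^j}$ with each $C_j$ a polynomial in the $h_i$ of degree $\le p^m$ in each variable. The condition $\chi\circ A_{\vec h}\equiv 1$ is \emph{not} a polynomial equation in $\vec h$: to linearize $u\mapsto \chi(C\,u^{p^j})$ one must in effect extract a $p^j$-th root of $C$, so the ``coefficient'' $\varphi(\vec h)$ you would need to vanish is not polynomial in the $h_i$, and there is no reduction to a product $\alpha\,h_1\cdots h_{k-1}=0$. The paper resolves this with two devices you do not mention. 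First, it reduces to \emph{separable} polynomials by writing $P(x)=\sum_i\eta_i(x^{r_i})$ with $\eta_i$ additive and $p\nmid r_i$, and absorbing $\chi\circ\eta_i$ into the character; the hypothesis $\chi\notin H_{I,P}^\perp$ is used exactly here to guarantee at least one $\chi\circ\eta_i$ is nontrivial. Second---and this is the crux---it runs van der Corput not over all of $\mO_K/I$ but over the \emph{subgroup} $K=\{x^{p^m}:x\in\mO_K/I\}$ (Lemma~\ref{lem: vdC} is stated with a subgroup parameter precisely for this purpose). The substitution $h_i=w_i^{p^m}$ then converts $\varphi(\vec h)$ into a genuine polynomial $\psi(\vec w)$ of degree at most $p^{2m}$ in each $w_i$; this is where $B(d,c)=p^{2m}$ enters. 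The count is finished not by your product argument but by a multivariable Schwartz--Zippel bound over $\mO_K/I$ (Lemma~\ref{lem: nonzero polynomial}), and nonvanishing of $\psi$ is checked by isolating the $j=0$ summand and using separability.

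So: your plan would stall in characteristic $p$ unless you introduce the subgroup van der Corput and the $w_i^{p^m}$ change of variables, and you should replace the product-vanishing count by a Schwartz--Zippel-type estimate for a polynomial of degree $\le p^{2m}$ per variable.
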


An important ingredient in the proof of Lemma \ref{lem: character bound} is the following variant of the van der Corput inequality.
We do not use any ring structure for this result, so we state and prove it in the setting of an arbitrary finite abelian group $G$.
For a function $f : G \to \C$, define a multiplicative differencing operator by $\Delta_vf(u) = f(u+v) \overline{f(u)}$,
and let $\Delta_{v_1, \dots, v_k}f = \Delta_{v_k}\left( \Delta_{v_1, \dots, v_{k-1}}f \right)$
for $k \in \N$ and $v_1, \dots, v_k \in G$.

\begin{lemma} \label{lem: vdC}
	Let $G$ be a finite abelian group, and let $H \le G$ be a subgroup.
	For any function $f : G \to \C$ and any $k \in \N$,
	\begin{equation*}
		\left| \E_{x \in G}{f(x)} \right|^{2^k} \le \E_{v_1, \dots, v_k \in H}{\E_{u \in G}{\Delta_{v_1, \dots, v_k}f(u)}}
	\end{equation*}
\end{lemma}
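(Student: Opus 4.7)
The plan is to prove the inequality by induction on $k$, following the standard van der Corput strategy, with the only twist being that the averaging is over a subgroup $H \le G$ rather than all of $G$.

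For the base case $k=1$, the key observation is that since $H$ is a subgroup of $G$, translation by any $v \in H$ is a measure-preserving bijection of $G$. Hence
\[
\E_{x \in G} f(x) = \E_{u \in G}\E_{v \in H} f(u+v).
\]
I would then apply Cauchy--Schwarz to the outer average over $u$ and expand the square to get
\[
\left|\E_{x\in G} f(x)\right|^2 \le \E_{u \in G}\E_{v_1, v_2 \in H} f(u+v_1)\overline{f(u+v_2)}.
\]
The substitution $w = u + v_2$ (a bijection of $G$) turns the right-hand side into $\E_{w \in G}\E_{v_1, v_2 \in H} f(w+(v_1-v_2))\overline{f(w)}$. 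Since the difference of two independent uniform random variables on a subgroup $H$ is again uniform on $H$, the $(v_1,v_2)$-average collapses to a single average over $v \in H$, yielding $\E_{v \in H}\E_{u \in G}\Delta_v f(u)$.

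For the inductive step, assuming the bound for $k$, I would square it and apply Cauchy--Schwarz (Jensen) to the $H^k$-average to get
\[
\left|\E_{x\in G} f(x)\right|^{2^{k+1}} \le \E_{v_1,\dots,v_k \in H}\left|\E_{u \in G}\Delta_{v_1,\dots,v_k}f(u)\right|^2,
\]
and then apply the base case $k=1$ to the function $g := \Delta_{v_1,\dots,v_k}f$ with the new shift variable $v_{k+1} \in H$. Using the identity $\Delta_{v_{k+1}}\bigl(\Delta_{v_1,\dots,v_k}f\bigr) = \Delta_{v_1,\dots,v_{k+1}}f$ closes the induction.

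There is no serious obstacle; the only subtlety worth flagging is that the intermediate quantities $\E_{\bm v}\E_u \Delta_{\bm v}f(u)$ are automatically real and nonnegative (each can be unwound as $\langle Pg,g\rangle = \|Pg\|^2$ for $P$ the orthogonal projection onto $H$-invariant functions applied to an appropriate iterated difference $g$), so squaring the inductive hypothesis and pulling absolute values through Cauchy--Schwarz is legitimate.
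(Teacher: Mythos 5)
Your proof is correct and follows essentially the same route as the paper's: the identical base case (insert the $H$-average by translation invariance, apply Cauchy--Schwarz, and reparametrize to collapse the two $H$-variables into one), followed by the same inductive step of squaring, pushing the square inside the $H^{k}$-average, and applying the base case to $\Delta_{v_1,\dots,v_k}f$. Your remark that the intermediate averages are real and nonnegative (being of the form $\|Pg\|^2$ for the projection onto $H$-invariant functions) is a correct justification of the squaring step, which the paper leaves implicit.
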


\begin{remark}
	It is worth commenting on two special cases of Lemma \ref{lem: vdC}.
	When $H = G$, the quantity on the right hand side is equal to $\norm{U^k(G)}{f}^{2^k}$, so the conclusion of Lemma \ref{lem: vdC} reduces to the inequality $\norm{U^1(G)}{f} \le \norm{U^k(G)}{f}$, which is a special case of monotonicity for the Gowers (semi)norms.
	On the other hand, when $H = \{0\}$, the right hand side is equal to $\mathbb{E}_{u \in G}{|f(u)|^{2^k}}$, so the conclusion of Lemma \ref{lem: vdC} follows by Jensen's inequality.
	The general case can be seen as interpolating between these two extremes.
\end{remark}

\begin{proof}
	Suppose $k =1$.
	Note that
	\begin{equation*}
		\E_{x \in G}{f(x)} = \E_{x \in G}{\E_{h \in H}{f(x+h)}}.
	\end{equation*}
	Therefore, by Jensen's inequality,
	\begin{equation*}
		\left| \E_{x \in G}{f(x)} \right|^2
		 = \left| \E_{x \in G}{\E_{h \in H}{f(x+h)}} \right|^2
		 \le \E_{x \in G}{\left| \E_{h \in H}{f(x+h)} \right|^2}
		 = \E_{x \in G}{\E_{h_1, h_2 \in H}{f(x+h_1) \overline{f(x+h_2)}}}.
	\end{equation*}
	Interchanging the order of averaging and making the substitutions $v = h_1 - h_2$, $u = x - h_2$,
	we obtain the desired inequality
	\begin{equation*}
		\left| \E_{x \in G}{f(x)} \right|^2 \le \E_{v \in H}{\E_{u \in G}{f(u+v) \overline{f(u)}}}.
	\end{equation*} \\
	
	Suppose the inequality holds for $k - 1$.
	Then
	\begin{equation*}
		\left| \E_{x \in G}{f(x)} \right|^{2^k}
		 = \left( \left| \E_{x \in G}{f(x)} \right|^{2^{k-1}} \right)^2
		 \le \left| \E_{v_1, \dots, v_{k-1} \in H}{\E_{u \in G}{\Delta_{v_1, \dots, v_{k-1}}f(u)}} \right|^2,
	\end{equation*}
	which is in turn bounded above by
	\begin{equation*}
		\E_{v_1, \dots, v_{k-1} \in H}{\left| \E_{u \in G}{\Delta_{v_1, \dots, v_{k-1}}f(u)} \right|^2}.
	\end{equation*}
	For fixed $v_1, \dots, v_{k-1}$, applying the $k=1$ case with the function $\Delta_{v_1, \dots, v_{k-1}}f$ gives
	\begin{equation*}
		\left| \E_{u \in G}{\Delta_{v_1, \dots, v_{k-1}}f(u)} \right|^2
		 \le \E_{v_k \in H}{\E_{u \in G}{\Delta_{v_k}\Delta_{v_1, \dots, v_{k-1}}f(x)}}
		 = \E_{v_k \in H}{\E_{u \in G}{\Delta_{v_1, \dots, v_{k-1}, v_k}f(x)}}.
	\end{equation*}
	Putting everything together,
	\begin{equation*}
		\left| \E_{x \in G}{f(x)} \right|^{2^k} \le \E_{v_1, \dots, v_k \in H}{\E_{u \in G}{\Delta_{v_1, \dots, v_k}f(u)}}.
	\end{equation*}
\end{proof}

Before proving Lemma \ref{lem: character bound}, we review some basic facts from Fourier analysis and introduce convenient notation for Fourier analysis in quotient rings $\mO_K/I$.
A classical result in Fourier analysis says that the dual group of $\mO_K/I$ is isomorphic to the group of characters $\chi : \mO_K \to S^1$ that annihilate $I$, which we denote by $I^{\perp}$ (see \cite[Theorem 2.1.2]{rudin}).
As mentioned in Section \ref{sec:TE polynomial}, characters on $\mO_K$ can be represented in the form $\chi(n) = e(n \cdot \alpha)$ for some $\alpha$ with coordinates in either $\T$ (if $\mO_K$ is a number field) or $\mathbf{T} = \F_q((t^{-1}))/\F_q[t]$ (if $\mO_K$ is a function field).
If $\alpha$ represents a character $\chi \in I^{\perp}$, we will also write $\alpha \in I^{\perp}$.
Then, in the finite ring $\mO_K/I$, we may decompose every function $f : \mO_K/I \to \C$ as a Fourier series
\begin{equation*}
    f(x) = \sum_{s \in I^{\perp}} \hat{f}(s) e(s \cdot x),
\end{equation*}
where the Fourier coefficients $\hat{f}(s)$ are given by
\begin{equation*}
    \E_{x \in \mO_K/I} f(x)e(-s \cdot x).
\end{equation*}

\begin{proof}[Proof of Lemma \ref{lem: character bound}]
	We first make a reduction to separable polynomials.
    Write $P(x) = \sum_{i=1}^n \eta_i(x^{r_i})$ with $\eta_i$ additive polynomials and $r_i$ distinct powers not divisible by the characteristic of $\mO_K$.
    Let $H_i = \eta_i(\mO_K/I)$.
    Note that the group $H_I$ generated by the values of $P$ is a subgroup of $H_1 + H_2 + \ldots + H_n$.
    Therefore, if $\chi \notin H_I^{\perp}$, then $\chi \notin H_i^{\perp}$ for some $i$.
    Let
	\begin{equation*}
		J = \left\{ 1 \le i \le n : \chi \notin H_i^{\perp} \right\} \ne \es.
	\end{equation*}
	For any $x \in \mO_K/I$,
	\begin{equation*}
		\chi(P(x)) = \chi \left( \sum_{i \in J}{\eta_i(x^{r_i})} \right).
	\end{equation*}
	Now, for each $i \in J$, the function $\chi_i(x) = \chi(\eta_i(x))$ is a nontrivial character on $\mO_K/I$, so there exists $s_i \in I^{\perp} \setminus \{0\}$ such that $\chi_i(x) = e(s_i \cdot x)$.

	It therefore suffices to prove the following: for any nonconstant separable polynomial $P(x)$ with coefficients in $I^{\perp}$,
	\begin{equation*}
		\left| \E_{x \in \mO_K/I}{e \left( P(x) \right)} \right|^{2^{k-1}} \le B(d,c) \frac{k-1}{\lpf(I)}.
	\end{equation*} \\
	
	Suppose $k = 1$.
	Then $P(x) = s_1 \cdot x$ with $s_1 \ne 0$, so by orthogonality of characters,
	\begin{equation*}
		\E_{x \in \mO_K/I}{e(P(x))} = \E_{x \in \mO_K/I}{e(s_1 \cdot x)} = 0.
	\end{equation*} \\
	
	Now suppose $k \ge 2$.
	Let $P(x) = \sum_{i=i}^n{s_i \cdot x^{r_i}} + P'(x)$, where $\ddeg{x^{r_i}} = k$ for $i \in \{1, \dots, n\}$, and $\ddeg{P'} \le k-1$.
	By Lemma \ref{lem: vdC},
	\begin{equation*}
		\left| \E_{x \in \mO_K/I}{e(P(x))} \right|^{2^{k-1}} \le \E_{v_1, \dots, v_{k-1} \in K}{\E_{u \in \mO_K/I}{e \left( \partial_{v_1, \dots, v_{k-1}}P(u) \right)}}
	\end{equation*}
	for any subgroup $K \le (\mO_K/I, +)$.
	(We will take a convenient choice for $K$ later.)
	We now wish to obtain an expression for $\partial_{v_1, \dots, v_k}P(u)$ that will allow us to bound the avaerage
	\begin{equation*}
		\E_{u \in \mO_K/I}{e \left( \partial_{v_1, \dots, v_{k-1}}P(u) \right)}.
	\end{equation*}
	
	For $v_1, \dots, v_{k-1} \in \mO_K/I$, one has that $\partial_{v_1, \dots, v_{k-1}}P'(u)$ is constant (as a function of $u$),
	since $\ddeg{P'} \le k-1$, so we can pull the constant $e \left( \partial_{v_1, \dots, v_{k-1}}P'(u) \right)$
	outside of the average.
    
    If the characteristic $c$ is a prime number $p$, we let $m = \floor{\log_p{d}}$ so that $p^m \le d$ and $p^{m+1} > d$.
    Otherwise, if $c = \infty$, take $m = 0$.
	For each $i = 1, \dots, n$, we may write $r_i = \sum_{j=0}^m{c_{i,j}p^j}$ with $c_{i,j} \in \{0, \dots, p-1\}$ and $\sum_{j=0}^m{c_{i,j}} = k$.
	Since $P$ is separable by assumption, we have $c_{i,0} \ne 0$ for $i \in \{1, \dots, n\}$.
	Then
	\begin{equation*}
		\partial_{v_1, \dots, v_{k-1}}(u^{r_i}) = b_i \sum_{j=1}^m{S_{i,j}(v_1, \dots, v_{k-1}) u^{p^j}} + R_i(v_1, \dots, v_{k-1}),
	\end{equation*}
	where $b_i = \prod_{j=1}^m{c_{i,j}!}$, $S_{i,j}(v_1, \dots, v_{k-1})$ is the sum of all monomials of the form
	\begin{equation*}
		\prod_{l=1}^{k-1}{v_l^{p^{j_l}}}
	\end{equation*}
	with
	\begin{equation*}
		\left| \left\{ 1 \le l \le k-1 : j_l = j' \right\} \right| = \begin{cases}
			c_{i,j'}, & \text{if}~j' \ne j \\
			c_{i,j} - 1, & \text{if}~j' = j
		\end{cases}
	\end{equation*}
	and $R_i$ is a symmetric polynomial in $k-1$ variables.
	(If $c_{i,j} = 0$, then $S_{i,j} = 0$.)
	We can therefore write
	\begin{align*}
		\partial_{v_1, \dots, v_{k-1}}P(u)
		 & = \sum_{i=1}^n{s_i \cdot b_i \sum_{j=1}^m{S_{i,j}(v_1, \dots, v_{k-1}) u^{p^j}}} + R(v_1, \dots, v_{k-1}) \\
		 & = \sum_{j=0}^m{\left( \sum_{i=1}^n{s_i \cdot b_i S_{i,j}(v_1, \dots, v_{k-1})} \right) u^{p^j}} + R(v_1, \dots, v_{k-1}),
	\end{align*}
	where $R = \sum_{i=1}^n{R_i}$.
	
	Let
	\begin{equation*}
		\eta_{v_1, \dots, v_{k-1}}(u) = \sum_{j=0}^m{\left( \sum_{i=1}^n{s_i \cdot b_i S_{i,j}(v_1, \dots, v_{k-1})} \right) u^{p^j}}.
	\end{equation*}
	Note that $\eta_{v_1, \dots, v_{k-1}}$ is a group homomorphism.
	It follows that
	\begin{equation*}
		\left| \E_{u \in \mO_K/I}{e \left( \partial_{v_1, \dots, v_{k-1}}P(u) \right)} \right|
		 = \left| \E_{u \in \mO_K/I}{e \left( \eta_{v_1, \dots, v_{k-1}}(u) \right)} \right|
		 = 0
	\end{equation*}
	whenever $\eta_{v_1, \dots, v_{k-1}}$ is a nonzero function.
	Noting that $e \circ \eta_{v_1, \dots, v_{k-1}}$ is a character on $\mO_K/I$, it may be written in the form $u \mapsto e \left( \varphi(v_1, \dots, v_{k-1}) \cdot u \right)$ for some $\varphi(v_1, \dots, v_{k-1}) \in I^{\perp}$.
	We have thus obtained the bound
	\begin{equation*}
		\left| \E_{x \in \mO_K/I}{e(P(x))} \right|^{2^{k-1}}
		 \le \frac{\left| \left\{ (v_1, \dots, v_{k-1}) \in K^{k-1} : \varphi(v_1, \dots, v_{k-1}) = 0 \right\} \right|}{|K|^{k-1}}.
	\end{equation*}
	
	The remainder of the proof consists of two main steps.
	First, we show that, for a convenient choice of $K$, the function $\varphi$ becomes (after a change of coordinates) a polynomial in $k-1$ variables.
	Next, we establish a bound on the number of roots of multivariable polynomials mod $I$. \\
	
	Recall $m = \floor{\log_p{d}}$ (or 0 if the characteristic of $K$ is infinite).
	Let $K = \{x^{p^m} : x \in \mO_K/I\}$.
	This is a subgroup, since the function $x \mapsto x^{p^m}$ is a homomorphism.
	For $0 \le j \le m$, let $C_j = \sum_{i=1}^n{s_i \cdot b_i S_{i,j}}$ so that
	\begin{equation*}
		\eta_{v_1, \dots, v_{k-1}}(u) = \sum_{j=0}^m{C_j(v_1, \dots, v_{k-1}) u^{p^j}},
	\end{equation*}
	and each of the polynomials $C_j(v_1, \dots, v_{k-1})$ is an additive polynomial of degree at most $p^m$ in each coordinate.
	In particular, $v_i \mid C_j(v_1, \dots, v_{k-1})$ for each $i \in \{1, \dots, k-1\}$.
	Making the substitution $v_i = w_i^{p^m}$, we therefore have
	\begin{equation*}
		\eta_{w_1^{p^m}, \dots, w_{k-1}^{p^m}}(u) = \sum_{j=0}^m{C_j \left( w_1^{p^m}, \dots, w_{k-1}^{p^m} \right) u^{p^j}}
		 = \sum_{j=0}^m{\tilde{C}_j \left( w_1, \dots, w_{k-1} \right) w_1^{p^j} \dots w_{k-1}^{p^j} u^{p^j}}
	\end{equation*}
	for some $\tilde{C}_j$.
	
	For each $j \ge 0$, the function $\chi(u) = e(u^{p^j})$ is a character on $\mO_K/I$, so there exists $z_j \in I^{\perp}$ such that $\chi(u) = e(z_j \cdot u)$.
	Hence, defining $\psi(w_1, \dots, w_{k-1}) = \varphi \left( w_1^{p^m}, \dots, w_{k-1}^{p^m} \right)$, we have
	\begin{equation*}
		\psi(w_1, \dots, w_{k-1}) = \sum_{j=0}^m{z_j \tilde{C}_j(w_1, \dots, w_{k-1}) w_1 \dots w_{k-1}}.
	\end{equation*}
	That is, $\psi$ is a polynomial of degree at most $p^{2m}$ in each coordinate.
	
	We claim that $\psi$ is not the zero polynomial.
	By definition, $z_0 = 1$.
	We also have
	\begin{equation*}
		C_0(v_1, \dots, v_{k-1}) = \sum_{i=1}^n{s_i b_i S_{i,0}(v_1, \dots, v_{k-1})}.
	\end{equation*}
	The coefficients $b_i$ are integers coprime to $p$, so $s_ib_i \ne 0$.
	Now, $S_{i,0}$ is a sum of terms of the form
	\begin{equation*}
		\prod_{l=1}^{k-1}{v_l^{p^j_l}}
	\end{equation*}
	with the property $\sum_{l=1}^{k-1}{p^{j_l}} = r_i - p$.
	Therefore, the monomials appearing in $S_{i,0}$ are distinct from the monomials appearing in $S_{i',0}$ for $i \ne i'$.
	It follows that $C_0$ is not the zero polynomial.
	Thus,
	\begin{equation*}
		z_0 \tilde{C}_0(w_1, \dots, w_{k-1})w_1 \dots w_{k-1} = C_0 \left( w_1^{p^m}, \dots, w_{k-1}^{p^m} \right)
	\end{equation*}
	is not the zero polynomial, and each monomial appearing has degree divisible by $p^m$.
	Finally, for $j \ne 0$, we have
	\begin{equation*}
		\tilde{C}_j(w_1, \dots, w_{k-1})w_1 \dots w_{k-1}
		 = \frac{C_j \left( w_1^{p^m}, \dots, w_{k-1}^{p^m} \right)}{\prod_{l=1}^{k-1}{w_l^{p^j-1}}},
	\end{equation*}
	which consists of monomials in which each variable has degree congruent to $1$ mod $p$.
	This proves that $\psi$ is not the zero polynomial. \\
	
	The final step is to show that $\psi$ has only a small number of zeros.
	
	\begin{lemma} \label{lem: nonzero polynomial}
		Let $l \in \N$, and let $T(x_1, \dots, x_l)$ be a nonzero polynomial with coefficients in $I^{\perp}$ of degree $d_i$ in the variable $x_i$ for $i = 1, \dots, l$.
		Then
		\begin{equation*}
			\left| \left\{ (x_1, \dots, x_l) \in (\mO_K/I)^l : T(x_1, \dots, x_l) = 0 \pmod{\mO_K} \right\} \right| \le \left( \sum_{i=1}^l{d_i} \right) \frac{[\mO_K:I]^l}{\lpf(I)}.
		\end{equation*}
	\end{lemma}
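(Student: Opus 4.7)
The plan is to prove the lemma by induction on $l$. The base case $l=1$ will combine the Chinese Remainder Theorem with a Schwartz--Zippel-type count in a local ring, and the inductive step will proceed by slicing off the last variable.

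For the base case, factor $I = \p_1^{e_1} \cdots \p_r^{e_r}$ using item (1) of Proposition \ref{prop: rings of integers}, and decompose $\mO_K/I \cong \prod_{i=1}^r \mO_K/\p_i^{e_i}$ by the Chinese Remainder Theorem. Dualizing induces a decomposition $I^\perp \cong \bigoplus_{i=1}^r (\p_i^{e_i})^\perp$, so $T$ splits as $T = T_1 + \cdots + T_r$, where $T_i$ has coefficients in $(\p_i^{e_i})^\perp$ and consequently $T_i(x)\pmod{\mO_K}$ depends only on the projection $x_i$ of $x$ to $\mO_K/\p_i^{e_i}$. The condition $T(x) \equiv 0 \pmod{\mO_K}$ is then equivalent to $T_i(x_i) \equiv 0$ for every $i$ (since the $T_i(x)$ lie in complementary subgroups of the dual); choosing an index $i_0$ with $T_{i_0} \ne 0$ and discarding the other constraints bounds the zero set by $|\{x_{i_0} : T_{i_0}(x_{i_0}) \equiv 0\}| \cdot \prod_{i \ne i_0}[\mO_K:\p_i]^{e_i}$. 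After clearing denominators, using the (abstract) group isomorphism $(\p_{i_0}^{e_{i_0}})^\perp \cong \mO_K/\p_{i_0}^{e_{i_0}}$, $T_{i_0}$ becomes a nonzero polynomial of degree at most $d$ over the local principal ideal ring $\mO_K/\p_{i_0}^{e_{i_0}}$. Factoring out the largest power of a uniformizer dividing all of its coefficients and then reducing modulo $\p_{i_0}$ produces a nonzero polynomial over the residue field with at most $d$ roots; each such root lifts to $[\mO_K:\p_{i_0}]^{e_{i_0}-1}$ elements of the local ring. Multiplying by the trivial bound for the remaining coordinates gives at most $d \cdot [\mO_K:I]/[\mO_K:\p_{i_0}] \le d \cdot [\mO_K:I]/\lpf(I)$ zeros in $\mO_K/I$.

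For the inductive step, write $T(x_1,\ldots,x_l) = \sum_{j=0}^{d_l} T_j(x_1,\ldots,x_{l-1})\,x_l^j$ and choose $j_0$ so that $T_{j_0}$ is a nonzero polynomial with coefficients in $I^\perp$ and degree at most $d_i$ in $x_i$ for $i<l$. Split the tuples $(x_1,\ldots,x_{l-1}) \in (\mO_K/I)^{l-1}$ into those for which $T_{j_0}(x_1,\ldots,x_{l-1}) \equiv 0 \pmod{\mO_K}$ and those for which it is nonzero. By the inductive hypothesis the former contribute at most $\bigl(\sum_{i<l} d_i\bigr)[\mO_K:I]^{l-1}/\lpf(I)$ tuples, each paired with up to $[\mO_K:I]$ values of $x_l$. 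For the latter, $x_l \mapsto T(x_1,\ldots,x_{l-1},x_l)$ is a nonzero univariate polynomial with coefficients in $I^\perp$, so the $l=1$ case bounds the number of $x_l$ by $d_l[\mO_K:I]/\lpf(I)$; there are at most $[\mO_K:I]^{l-1}$ such tuples. Adding the two contributions yields exactly $\bigl(\sum_{i=1}^l d_i\bigr)[\mO_K:I]^l/\lpf(I)$.

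The main obstacle I expect lies in the base case: correctly identifying $(\p_{i_0}^{e_{i_0}})^\perp$ with $\mO_K/\p_{i_0}^{e_{i_0}}$ so as to treat $T_{i_0}$ as a genuine polynomial over a finite ring, and then handling the subcase where $T_{i_0}$ vanishes identically modulo $\p_{i_0}$ after clearing denominators. Dealing with the latter requires the iterative uniformizer-factoring argument sketched above, which relies on $\mO_K/\p_{i_0}^{e_{i_0}}$ being a local principal ideal ring; this in turn follows from Proposition \ref{prop: good iff principal} applied to the principal ideal $\p_{i_0}^{e_{i_0}}$, or more directly from the Dedekind structure recorded in Proposition \ref{prop: rings of integers}.
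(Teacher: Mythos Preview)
Your argument is correct and follows the same induction-on-$l$ skeleton as the paper, but both steps are organized differently. For $l=1$ the paper avoids the Chinese Remainder Theorem: it introduces the single ideal $J = \bigl(\mathrm{span}_{\mO_K}\{\alpha_0,\ldots,\alpha_d\}\bigr)^{\perp}$, the largest ideal along which $T$ is integral, notes $I \subseteq J \subsetneq \mO_K$, picks any prime $\p \supseteq J$, and argues that the reduction $\tilde T$ of $T$ modulo $\p$ is a nonzero polynomial over the field $\mO_K/\p$, so a zero of $T$ in $\mO_K/I$ lies over one of at most $d$ residues mod $\p$. Your CRT route followed by uniformizer-clearing in one local factor reaches the same conclusion; the paper's passage from $J$ to a prime divisor is the global avatar of your local reduction to the residue field. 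For $l \ge 2$, the paper also slices on $x_l$ but splits according to whether the full specialization $T_x := T(\cdot,\ldots,\cdot,x)$ is identically zero, which forces a second use of the induction hypothesis merely to exhibit some $(x_1,\ldots,x_{l-1})$ making the univariate polynomial in $x_l$ nonzero, and a separate disposal of the trivial regime $\lpf(I) \le \sum_i d_i$. Your direct Schwartz--Zippel split on a single nonzero coefficient $T_{j_0}$ avoids both detours and gives the bound in one pass. One point to tighten in your base case: the identification $(\p_{i_0}^{e_{i_0}})^{\perp} \cong \mO_K/\p_{i_0}^{e_{i_0}}$ must be as $\mO_K$-modules, not merely as abstract groups, so that a polynomial with dual-valued coefficients transports to a genuine polynomial over the ring; this is exactly where the local-PIR (equivalently Frobenius) structure you flag actually enters.
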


	\begin{proof}[Proof of Lemma]
		Let us first consider the case $l = 1$.
        Write $T(x) = \alpha_d x^d + \ldots + \alpha_1 x + \alpha_0$ with $\alpha_0, \ldots, \alpha_d \in I^{\perp}$.
        We view $T$ as a map on $\mO_K$ with $T(I) \subseteq \mO_K$.
        Then let $J$ be the ideal
        \begin{equation*}
            \left( \textup{span}_{\mO_K}\left\{ \alpha_0, \ldots, \alpha_d \right\} \right)^{\perp}.
        \end{equation*}
        In other words, $J$ is the largest ideal such that $T(J) \subseteq \mO_K$.
        By construction, $J \supseteq I$ and $J$ is a proper ideal of $\mO_K$.
        Let $\p$ be a prime ideal with $\p \mid J$.
        Letting $\tilde{T}$ be the reduction of $T$ mod $\p$, we have that $\tilde{T}$ is a nonzero polynomial.
        But $\mO_K/\p$ is a field, so $\tilde{T}$ can have at most $d$ roots mod $\p$.
		
		Now suppose $T(x) = 0 \pmod{\mO_K}$.
		Then $\tilde{T}(x) = 0$.
		Therefore,
		\begin{equation*}
			\left| \{x \in \mO_K/I : T(x) = 0 \pmod{\mO_K} \} \right| \le d \frac{[\mO_K:I]}{[\mO_K:\p]} \le d \frac{[\mO_K:I]}{\lpf(I)}.
		\end{equation*} \\
		
		Suppose $l \ge 2$.
		If $\lpf(I) \le \sum_{i=1}^l{d_i}$, then there is nothing to prove, so assume $\lpf(I) > \sum_{i=1}^l{d_i}$.
		Fix $x \in \mO_K/I$, and let $T_x(x_1, \dots, x_{l-1}) = T(x_1, \dots, x_{l-1}, x)$.
		If $T_x(x_1, \dots, x_{l-1})$ is not the zero polynomial, then by the induction hypothesis,
		\begin{equation*}
			\left| \left\{ (x_1, \dots, x_{l-1}) \in (\mO_K/I)^{l-1} : T_x(x_1, \dots, x_{l-1}) = 0 \right\} \right| \le \left( \sum_{i=1}^{l-1}{d_i} \right) \frac{[\mO_K:I]^{l-1}}{\lpf(I)}.
		\end{equation*}
		Hence,
		\begin{align*}
			& \left| \left\{ (x_1, \dots, x_l) \in (\mO_K/I)^l : T(x_1, \dots, x_l) = 0 \right\} \right| \\
			 &~= [\mO_K:I]^{l-1} \left| \{x \in \mO_K/I : T_x = 0\} \right|
			 + \left| \{(x_1, \dots, x_{l-1}, x) : T_x \ne 0, T_x(x_1, \dots, x_{l-1}) = 0\} \right| \\
			 &~\le [\mO_K:I]^{l-1} \left| \{x \in \mO_K/I : T_x = 0\} \right| + \left( \sum_{i=1}^{l-1}{d_i} \right) \frac{[\mO_K:I]^l}{\lpf(I)}.
		\end{align*}
		It therefore suffices to prove
		\begin{equation*}
			\left| \{x \in \mO_K/I : T_x = 0\} \right| \le d_l \frac{[\mO_K:I]}{\lpf(I)}.
		\end{equation*}
		
		Fix $x_1, \dots, x_{l-1} \in \mO_K/I$, and let $T^{x_1, \dots, x_{l-1}}(x) = T_x(x_1, \dots, x_{l-1}) = T(x_1, \dots, x_{l-1}, x)$.
		If $T_x = 0$, then $T^{x_1, \dots, x_{l-1}}(x) = 0$.
		Therefore, by the $l=1$ case above, it follows that
		\begin{equation*}
			\left| \{x \in \mO_K/I : T_x = 0\} \right|
			 \le \left| \{x \in \mO_K/I : T^{x_1, \dots, x_{l-1}}(x) = 0\} \right|
			 \le d_l \frac{[\mO_K:I]}{\lpf(I)},
		\end{equation*}
		unless $T^{x_1, \dots, x_{l-1}}$ is the zero polynomial.
		So, it remains to find $x_1, \dots, x_{l-1}$ such that $T^{x_1, \dots, x_{l-1}}$ is a nonzero polynomial.
		Note that the coefficients of $T^{x_1, \dots, x_{l-1}}$ are polynomial expressions in $x_1, \dots, x_{l-1}$ of degree at most $d_i$ in the variable $x_i$.
		Since $T$ is not the zero polynomial, there is at least one coefficient that is a nonzero polynomial $C(x_1, \dots, x_{l-1})$.
		By the induction hypothesis,
		\begin{equation*}
			\left| \left\{ (x_1, \dots, x_{l-1}) \in (\mO_K/I)^{l-1} : C(x_1, \dots, x_{l-1}) = 0 \right\} \right|
			 \le \left( \sum_{i=1}^{l-1}{d_i} \right) \frac{[\mO_K:I]^{l-1}}{\lpf(I)}.
		\end{equation*}
		Since $\lpf(I) > \sum_{i=1}^l{d_i} \ge \sum_{i=1}^{l-1}{d_i}$ by assumption, it follows that $C(x_1, \dots, x_{l-1}) \ne 0$ for some $(x_1, \dots, x_{l-1}) \in (\mO_K/I)^{l-1}$.
		For this choice of $x_1, \dots, x_{l-1}$, the polynomial $T^{x_1, \dots, x_{l-1}}(x)$ is not the zero polynomial, so we are done.
	\end{proof}
	
	Applying Lemma \ref{lem: nonzero polynomial} to $\psi$, we get the bound
	\begin{equation*}
		\left| \left\{ (w_1, \dots, w_{k-1}) \in (\mO_K/I)^{k-1} : \psi(w_1, \dots, w_{k-1}) = 0 \right\} \right|
		 \le p^{2m} (k-1) \frac{[\mO_K:I]^{k-1}}{\lpf(I)}.
	\end{equation*}
	Thus,
	\begin{align*}
		\left| \E_{x \in \mO_K/I}{e(P(x))} \right|^{2^{k-1}}
		 & \le \frac{\left| \left\{ (v_1, \dots, v_{k-1}) \in K^{k-1} : \varphi(v_1, \dots, v_{k-1}) = 0 \right\} \right|}{|K|^{k-1}} \\
		 & = \frac{\left| \left\{ (w_1, \dots, w_{k-1}) \in (\mO_K/I)^{k-1} : \psi(w_1, \dots, w_{k-1}) = 0 \right\} \right|}{[\mO_K:I]^{k-1}} \\
		 & \le \frac{p^{2m}(k-1)}{\lpf(I)}.
	\end{align*}
\end{proof}

We can now quickly deduce Theorem \ref{thm: quantitative TE}.

\begin{proof}[Proof of Theorem \ref{thm: quantitative TE}]
	Expand $f$ as a Fourier series $f = \sum_{\chi \in \hat{\mO_K/I}} \hat{f}(\chi) \chi$.
    Then by Parseval's identity,
    \begin{multline*}
        \norm{L^2_x(\mO_K/I)}{\frac{1}{[\mO_K:I]}\sum_{n \in \mO_K/I} f(x+P(n)) - \frac{1}{|H_{I,P}|} \sum_{n \in H_{I,P}} f(x+n)}^2 \\
         = \norm{L^2(\mO_K/I)}{\sum_{\chi \in \hat{\mO_K/I}} \hat{f}(\chi) \chi \cdot \left( \frac{1}{[\mO_K:I]}\sum_{n \in \mO_K/I} \chi(P(n)) - \ind_{H_{I,P}^{\perp}}(\chi) \right)}^2 \\
         = \sum_{\chi \in \hat{\mO_K/I}} \left| \hat{f}(\chi) \right|^2 \cdot \left| \frac{1}{[\mO_K:I]}\sum_{n \in \mO_K/I} \chi(P(n)) - \ind_{H_{I,P}^{\perp}}(\chi) \right|^2.
    \end{multline*}
    Applying Lemma \ref{lem: character bound} and Parseval's inequality again gives Theorem \ref{thm: quantitative TE}.
\end{proof}

%%%%%%%%%%%%%%%%%%%%%%%%%%%%%%%%%%%%%%%%%%%%%%%%%%%%%%%%
% ---- QUOTIENT RING SARKOZY ---- %
%%%%%%%%%%%%%%%%%%%%%%%%%%%%%%%%%%%%%%%%%%%%%%%%%%%%%%%%

\section{Furstenberg--S\'{a}rk\"{o}zy-type theorems in finite principal ideal rings} \label{sec:FS good rings}

In this section, we apply Theorem \ref{thm: quantitative TE} to deduce Furstenberg--S\'{a}rk\"{o}zy-type results over finite principal ideal rings (Corollary \ref{cor: intersective Sarkozy} and Theorem \ref{thm: equivalences}).
Let us start with Corollary \ref{cor: intersective Sarkozy}, which we restate for the convenience of the reader.

\Sarkozy*

\begin{proof}
    Let $c = \char(R)$.
    
    As mentioned in the discussion after Corollary \ref{cor: intersective Sarkozy}, we may prove the result under the weaker hypothesis that $P(0)$ belongs to the subgroup $H \le (R,+)$ generated by $\{P(x)-P(0) : x \in R\}$ so that $H + P(0) = H$.
    By Theorem \ref{thm: finite ring polynomial average} applied to the function $f = \ind_A$,
    \begin{equation*}
		\left| \E_{x,y \in R}{\ind_A(x) \ind_A(x + P(y))} - \E_{x \in R, z \in H}{\ind_A(x) \ind_A(x+z)} \right|
		 \le \frac{|A|}{|R|} \left( B(d,c) \frac{k-1}{\lpf(R)} \right)^{1/2^{k-1}}
	\end{equation*}
	by the Cauchy--Schwarz inequality.
	
	On the one hand, if $A$ contains no nontrivial patterns $\{x, x + P(y)\}$, then by Lemma \ref{lem: nonzero polynomial},
	\begin{equation*}
		\E_{x,y \in R}{\ind_A(x) \ind_A(x + P(y))} = \frac{|A|}{|R|} \frac{\left| \left\{ y \in R : P(y) = 0 \right\} \right|}{|R|}
		 \le \frac{|A|}{|R|} \frac{d}{\lpf(R)}.
	\end{equation*}
	On the other hand,
	\begin{equation*}
		\E_{x \in R, z \in H}{\ind_A(x) \ind_A(x+z)} \ge \left( \frac{|A|}{|R|} \right)^2
	\end{equation*}
	by Lemma \ref{lem: vdC}.
	Therefore,
	\begin{equation*}
		\left( \frac{|A|}{|R|} \right)^2 - \frac{|A|}{|R|} \frac{d}{\lpf(R)} \le C \frac{|A|}{|R|} \lpf(R)^{-1/2^{k-1}},
	\end{equation*}
	where $C = \left( B(d,c) (k-1) \right)^{1/2^{k-1}}$.
	Multiplying both sides by $\frac{|R|^2}{|A|}$, we get the desired bound
	\begin{equation*}
		|A| \le C |R| \cdot \lpf(R)^{-1/2^{k-1}} + d \frac{|R|}{\lpf(R)} \ll |R| \cdot \lpf(R)^{-1/2^{k-1}}.
	\end{equation*}

    As written, our bound depends on the characteristic $c$.
    However, we can use the characteristic-independent bound $B(d,c) \le d^2$ to eliminate dependence on the characteristic of $R$.
\end{proof}

Now we turn to the proof of Theorem \ref{thm: equivalences}, which we restate for ease of reference.

\Equivalences*

First we prove that irrational equidistribution implies condition (iii).

\begin{proposition} \label{prop: irrational equidistribution}
	Suppose $P(y)$ is good for irrational equidistribution,
	and let $H \le (\mO_K, +)$ be the group generated by $\{P(y) - P(0) : y \in \mO_K\}$.
	Then there exists $C > 0$ such that if an ideal $I$ satisfies $\lpf(I) \ge C$,
	then $H + I = \mO_K$.
	That is, (iii) holds.
\end{proposition}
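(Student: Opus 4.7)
The strategy is to invoke Pontryagin duality between the discrete group $(\mO_K, +)$ and its compact dual $\hat{\mO_K}$. The conclusion $H + I = \mO_K$ is equivalent to the annihilator $(H + I)^{\perp} = H^{\perp} \cap I^{\perp}$ being trivial in $\hat{\mO_K}$, and since characters vanishing on the ideal $I$ are exactly the $I$-periodic (hence rational) ones, the task reduces to controlling rational characters in $H^{\perp}$.

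The first step is to observe that the hypothesis forces $H^{\perp}$ to consist entirely of rational characters. If $\chi \in H^{\perp}$, then $\chi(P(y)) = \chi(P(y) - P(0)) \chi(P(0)) = \chi(P(0))$ is constant in $y$, so $\UClim_{n \in \mO_K} \chi(P(n)) = \chi(P(0)) \in S^1$ has modulus $1$ and in particular is nonzero. Good-for-irrational-equidistribution then rules out $\chi$ being irrational, so $\chi$ must be rational.

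The crux of the argument is to conclude that $H^{\perp}$ is \emph{finite}. The set $Q \subseteq \hat{\mO_K}$ of rational characters is countable because each factors through one of the countably many finite quotients $\mO_K/I'$. As an annihilator, $H^{\perp}$ is closed in the compact Hausdorff group $\hat{\mO_K}$, hence itself compact; combined with $H^{\perp} \subseteq Q$ this makes $H^{\perp}$ a countable compact Hausdorff topological group, which is necessarily finite by a Baire category argument: as a countable union of closed singletons, some singleton has nonempty interior, so one point is isolated, and by translation invariance every point is isolated, forcing the group to be discrete and hence finite.

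With $H^{\perp} = \{1, \chi_2, \ldots, \chi_r\}$ finite, let $J_i$ denote the (nonzero, proper) ideal of periods of $\chi_i$ for $i \ge 2$, i.e. the largest ideal of $\mO_K$ contained in $\ker \chi_i$. Choose $C = 1 + \max\{[\mO_K:\p] : i \ge 2,~\p~\text{is a prime factor of}~J_i\}$, the maximum being over a finite set. For any ideal $I$ with $\lpf(I) \ge C$, none of these primes $\p$ can divide $I$, so for each $i \ge 2$ some prime factor of $J_i$ fails to divide $I$; this forces $I \not\subseteq J_i$, so $\chi_i$ is not $I$-periodic. Therefore $(H + I)^{\perp} = \{1\}$ and $H + I = \mO_K$, establishing (iii). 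The main subtlety is the Baire step: without the finiteness of $H^{\perp}$, the ideals $J_i$ could collectively have prime factors of arbitrarily large index and no uniform $C$ would suffice.
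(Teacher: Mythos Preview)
Your proof is correct and, at its core, uses the same mechanism as the paper's function field argument: the hypothesis forces $H^{\perp}$ to consist of rational characters, and a compact subgroup of $\hat{\mO}_K$ that is contained in the countable set of rational characters must be finite. The paper runs this as a contrapositive (assume (iii) fails, produce infinitely many distinct rational characters in $H^{\perp}$, invoke ``infinite compact group $\Rightarrow$ uncountable'' to find an irrational element of $H^{\perp}$, contradict the hypothesis), whereas you argue directly, but the Baire step and the conclusion are identical.

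The one genuine difference is that the paper splits into cases: for number fields it gives an elementary algebraic argument (iterated finite differences $P_k(x)=P_{k-1}(x+1)-P_{k-1}(x)$ until one reaches a linear polynomial, whose image is a coset of a nonzero ideal $J$, and then $C$ is chosen to beat the prime factors of $J$), while the duality argument is reserved for function fields. Your proof handles both cases at once, which is cleaner; the paper's number field argument, on the other hand, never touches Pontryagin duality and gives a more concrete handle on $C$ in that case.
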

\begin{proof}
    Suppose that $K$ is a number field.
    Let $d = \deg{P}$, and write
    \begin{equation*}
        P(x) = \sum_{j=0}^d a_j x^j.
    \end{equation*}
    Let $P_0(x) = P(x)$, and define inductively $P_k(x) = P_{k-1}(x+1) - P_{k-1}(x)$.
    We make two observations.
    First, if $k \ge 1$, then $P_k(\mO_K) \subseteq H$.
    Second, the polynomial $P_{d-1}$ is linear, so $P_{d-1}(\mO_K)$ is a coset of a nonzero ideal, say $P_{d-1}(\mO_K) = m+J$ with $m = P_{d-1}(0)$.
    Factor $J = \p_1 \ldots \p_l$ into prime ideals, and let $C = 1 + \max_{1 \le i \le l} [\mO_K : \p_i]$.
    If $I$ is an ideal in $\mO_K$ and $\lpf(I) \ge C$, then $I$ and $J$ are coprime, so $I + J = \mO_K$.
    But since $m+J \subseteq H$, we conclude that $H + I = \mO_K$. \\

    Now suppose $K$ is a function field.
    We prove the contrapositive.
	Suppose (iii) fails.
    Then there is a sequence of ideals $(I_n)_{n \in \N}$ in $\mO_K$ such that $\lpf(I_n) \to \infty$ and $H + I_n \ne \mO_K$ for $n \in \N$.
	Equivalently,
	\begin{align*}
		H^{\perp} \cap I_n^{\perp} = (H + I_n)^{\perp} \ne \{0\}.
	\end{align*}
    Refining the sequence $(I_n)_{n \in \N}$ if necessary, we may assume that the ideals $(I_n)_{n \in \N}$ are pairwise coprime.
    For each $n \in \N$, let $\chi_n \in H^{\perp} \cap I_n^{\perp}$ with $\chi_n \ne 0$.
    Note that $I_n^{\perp} \cap I_m^{\perp} = (I_n + I_m)^{\perp} = \{0\}$ by the coprimeness assumption, so $(\chi_n)_{n \in \N}$ is a sequence of distinct elements.
    Hence, $H^{\perp}$ is infinite.
	Every infinite compact group is uncountable, and there are only countable many rational points, so $H^{\perp}$ must contain an irrational element.
	That is, for some irrational $\chi$, we have $\chi(P(y)) = \chi(P(0))$ for every $y \in \mO_K$, so $P(y)$ is not good for irrational equidistribution.
\end{proof}

We will now prove the equivalences in Theorem \ref{thm: equivalences}
by showing the implications illustrated in the following diagram:

\begin{center}
	\begin{tikzcd}
         & (i) \arrow[rd, Rightarrow] & \\
		(ii) \arrow[ru, Rightarrow] \arrow[d, Rightarrow] & & (iii) \arrow[ll, Rightarrow] \\
		(v) \arrow[rr, Rightarrow] & & (iv) \arrow[u, Rightarrow]
	\end{tikzcd}
\end{center}

Condition (ii) is a quantitative refinement of condition (i), so we immediately have the implication (ii)$\implies$(i).
By Theorem \ref{thm: finite ring polynomial average}, we have the additional implications (i)$\implies$(iii)$\implies$(ii).

Condition (v) follows from (ii) by a straightforward application of the Cauchy--Schwarz inequality.

\begin{proposition} \label{prop: v,iv}
	(v)$\implies$(iv).
\end{proposition}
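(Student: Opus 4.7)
The plan is to deduce (iv) directly from the quantitative estimate in (v) by choosing the threshold $N$ large enough (depending on $\delta$) that the error term in (v) is dominated by the main term $|A||B|$ whenever $|A||B| \ge \delta |R|^2$. There is no delicate combinatorial or equidistribution input needed beyond what (v) already provides: this is essentially a two-line argument.

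More precisely, I would fix constants $C_1, C_2, \gamma > 0$ provided by (v). Given $\delta > 0$, I would set
\[
    N = \max\left\{ C_1,\ \left( \frac{C_2^2}{\delta} \right)^{1/(2\gamma)} + 1 \right\}.
\]
Assume $R$ is a finite principal ideal ring with $\lpf(R) \ge N$ and $A, B \subseteq R$ satisfy $|A||B| \ge \delta|R|^2$. Applying (v) in its lower-bound form yields
\[
    \left| \left\{ (x,y) \in R \times R : x \in A,\ x + P(y) \in B \right\} \right|
     \ge |A||B| - C_2 \sqrt{|A||B|}\, |R|\, \lpf(R)^{-\gamma}
     = \sqrt{|A||B|}\left( \sqrt{|A||B|} - C_2 |R|\, \lpf(R)^{-\gamma} \right).
\]

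The key step is then to verify that the second factor on the right-hand side is strictly positive. From $|A||B| \ge \delta |R|^2$ we get $\sqrt{|A||B|} \ge \sqrt{\delta}\, |R|$, while the choice $\lpf(R) \ge N > (C_2^2/\delta)^{1/(2\gamma)}$ gives $C_2 \lpf(R)^{-\gamma} < \sqrt{\delta}$, hence $C_2 |R|\, \lpf(R)^{-\gamma} < \sqrt{\delta}\, |R| \le \sqrt{|A||B|}$. Therefore the count above is strictly positive, and any witnessing pair $(x,y)$ provides the required elements for (iv).

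I do not anticipate a real obstacle here: the proof is purely a calibration of constants, and the only subtlety is to ensure that both thresholds (the structural threshold $C_1$ from (v) and the density-dependent threshold needed to absorb the error term) are respected by the choice of $N$. No further input beyond (v) itself is required.
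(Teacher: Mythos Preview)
Your proof is correct and follows essentially the same approach as the paper: both arguments apply the lower bound from (v), factor out $\sqrt{|A||B|}$, and choose $N$ to be the maximum of $C_1$ and (a value slightly exceeding) $(C_2/\sqrt{\delta})^{1/\gamma}$ so that the remaining factor is positive. Your version is in fact marginally more careful, since the ``$+1$'' guarantees the strict inequality needed for a nonzero count.
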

\begin{proof}
	Let $\delta > 0$.
	Let $C_1$, $C_2$, and $\gamma$ be as in (v).
	Suppose $\lpf(R) \ge C_1$.
	Let $A, B \subseteq R$ with $|A| |B| \ge \delta |R|^2$.
	Then by (v),
	\begin{align*}
		\left| \left\{ (x,y) \in R \times R : x \in A, x + P(y) \in B \right\} \right|
		 & \ge |A| |B| - C_2 |A|^{1/2} |B|^{1/2} |R| \cdot \lpf(R)^{-\gamma} \\
		 & = |A|^{1/2} |B|^{1/2} \left( |A|^{1/2} |B|^{1/2} - C_2 |R| \cdot \lpf(R)^{-\gamma} \right) \\
		 & \ge \delta^{1/2} \left( \delta^{1/2} - C_2 \cdot \lpf(R)^{-\gamma} \right) |R|^2.
	\end{align*}
	Thus, if
	\begin{align*}
		\lpf(R) \ge N(\delta) = \max{\left\{C_1, \left( \frac{C_2}{\delta^{1/2}} \right)^{1/\gamma} \right\}},
	\end{align*}
	then we can find $x, y \in R$ with $x \in A$ and $x + P(y) \in B$.
\end{proof}

We now prove the final implication to complete the proof of Theorem \ref{thm: equivalences}:

\begin{proposition} \label{prop: iv,iii}
	(iv)$\implies$(iii).
\end{proposition}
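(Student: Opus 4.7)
The plan is to prove the contrapositive: if (iii) fails, produce a fixed $\delta > 0$ and, for arbitrarily large $\lpf(R)$, sets $A, B \subseteq R$ with $|A||B| \ge \delta |R|^2$ but no pair $(x,y)$ satisfying $x \in A$ and $x + P(y) \in B$. Failure of (iii) furnishes, for every $C$, a finite principal ideal ring $R$ with $\lpf(R) \ge C$ such that $H := \langle P(x) - P(0) : x \in R \rangle$ is a proper subgroup of $(R,+)$, of some index $m = [R:H] \ge 2$.

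The key algebraic input I plan to exploit is that $P(y) \in P(0) + H$ for every $y \in R$, so if $A \subseteq R$ is any union of cosets of $H$, then
\[
    \{x + P(y) : x \in A,\, y \in R\} \subseteq A + P(0),
\]
which is itself a union of $|A|/|H|$ cosets (the shift by $P(0)$ permutes cosets of $H$). Setting $B := R \setminus (A + P(0))$ therefore automatically rules out the desired pattern. To secure a density bound, I would let $A$ be the preimage of any $k$-element subset under the quotient map $\pi : R \to R/H$; then $|A|/|R| = k/m$ and $|B|/|R| = (m-k)/m$, so $|A||B|/|R|^2 = k(m-k)/m^2$. Choosing $k = \lfloor m/2 \rfloor$ gives, by a routine case check, $k(m-k)/m^2 \ge 2/9$ for all integers $m \ge 2$, with the minimum attained at $m = 3$. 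This contradicts (iv) for any $\delta \le 2/9$.

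The argument is essentially forced once one recognizes the coset structure imposed on the image of $P$, so there is no substantive obstacle. The only subtlety worth flagging is the insistence that $A$ be a union of cosets of $H$, which guarantees $A + H + P(0) = A + P(0)$; without this, one would have to contend with $A + P(0)$ being a thicker set and the density bound would degrade. Note also that no hypothesis on $\lpf(R)$ is used in the construction itself, only to supply rings of arbitrarily large $\lpf$ on which $H \ne R$ — this is exactly the content of the negation of (iii).
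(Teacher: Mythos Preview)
Your proof is correct and follows essentially the same approach as the paper: both argue by contrapositive, take $A$ to be a union of $\lfloor m/2 \rfloor$ cosets of the proper subgroup $H$, set $B = R \setminus (A + P(0))$, and observe that $x + P(y) \in A + P(0)$ for every $x \in A$. The only difference is cosmetic: you compute the product $|A||B|/|R|^2 \ge 2/9$ directly, whereas the paper bounds the two factors separately as $\ge 1/3$ and $\ge 1/2$ to obtain the slightly weaker $\delta = 1/6$.
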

\begin{proof}
	We prove the contrapositive.
	Suppose (iii) fails.
	Then there is a sequence of rings $(R_n)_{n \in \N}$ with $\lpf(R_n) \to \infty$ such that the group $H_n$ generated by $\left\{ P(y) - P(0) : y \in R_n \right\}$ is a proper subgroup of $R_n$ for every $n \in \N$.
    Let $k_n$ be the index $[R_n : H_n]$.
	Let $A_n$ be a union of $\floor{\frac{k_n}{2}}$ cosets of $H_n$,
	and let $B_n = R_n \setminus \left( A_n + P(0) \right)$.
	For any $x \in A_n$ and $y \in R_n$, we have
	\begin{align*}
		x + P(y) = x + P(0) + (P(y) - P(0)) \in A_n + P(0) + H_n = A_n + P(0).
	\end{align*}
	That is, if $x \in A_n$ and $y \in R_n$, then $x + P(y) \notin B_n$.
	Moreover, since $k_n \ge 2$, we have
	\begin{align*}
		\frac{|A_n|}{|R_n|} = \frac{\floor{\frac{k_n}{2}}}{k_n} \ge \frac{1}{3} \qquad \text{and} \qquad
		 \frac{|B_n|}{|R_n|} \ge \frac{1}{2}.
	\end{align*}
	Therefore, property (iv) fails for $\delta = \frac{1}{6}$.
\end{proof}

%%%%%%%%%%%%%%%%%%%%%%%%%%%%%%%%%%%%%%%%%%%%%%%%%%%%%%%%
% ---- QUASIRANDOMNESS ---- %
%%%%%%%%%%%%%%%%%%%%%%%%%%%%%%%%%%%%%%%%%%%%%%%%%%%%%%%%

\section{Quasirandomness of Paley-type graphs} \label{sec:quasirandomness}

The goal of this section is establish quasirandomness of graphs generated by polynomials over asymptotically totally ergodic sequences of finite principal ideal rings, as described in Theorem \ref{thm: quasirandom iff aTE}.
We will in fact prove a quantitative strengthening of Theorem \ref{thm: quasirandom iff aTE}, and for this we need to quantify quasirandomness.

\begin{definition}
    Let $G = (V, E)$ be a finite $r$-regular graph.
    We say that $G$ is \emph{$\eps$-uniform} for some $\eps > 0$ if for every $A, B \subseteq V$,
    \begin{equation*}
        \left| \left| \{(a,b) \in A \times B : \{a,b\} \in E\} \right| - \frac{r}{|V|} |A| |B| \right| \leq \eps r |V|.
    \end{equation*}
\end{definition}

We can now produce a quantitative relationship between quasirandomness and asymptotic total ergodicity, strengthening Theorem \ref{thm: quasirandom iff aTE}.
Recall that for $d, c \in \N$, $B(d,c) = c^{2\floor{\log_c{d}}}$ if $c$ is a prime number and $B(d,c) = 1$ otherwise.

\begin{theorem} \label{thm: quasirandom}
    Let $R$ be a finite principal ideal ring, and let $d \ge 2$ with $\textup{char}(R) \nmid d$.
	Let $k$ be the derivational degree of the monomial $x^d$ over the ring $R$.
	Let $\eps > 0$.
	\begin{enumerate}[(1)]
		\item	If $\frac{|R^{\times}|}{|R|} > 1 - \left( \frac{\eps}{3d \cdot B(d,\char(R)) (k-1)} \right)^{2^{k-1}}$, then $\bfP(R,d)$ is $\eps$-uniform.
		\item	If $\bfP(R,d)$ is $\eps$-uniform, then for each prime ideal $\p \le R$, either $\{\pm x^d : x \in R/\p\} = R/\p$ or $[R:\p] \ge \frac{d-1}{64d^2\eps^2}$.
	\end{enumerate}
\end{theorem}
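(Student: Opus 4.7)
The plan is to realize $\bfP(R, d)$ as the Cayley graph of $(R, +)$ with symmetric generating set $S := \{\pm y^d : y \in R\} \setminus \{0\}$, so $\eps$-uniformity is controlled by the Fourier quantity $\sigma = \max_{\chi \ne 1} |\hat{\ind_S}(\chi)|$ via the expander mixing lemma and its approximate converse. The two parts of the theorem then become upper and lower bounds on $\sigma$ under appropriate hypotheses.

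For Part~(1), I plan to bound $\sigma$ directly using Lemma~\ref{lem: character bound}. First I will verify that the hypothesis $|R^{\times}|/|R| > 1 - \delta_0$ forces $H_{R, x^d} = R$, so that every nontrivial character lies outside $H_{R, x^d}^{\perp}$ and Lemma~\ref{lem: character bound} is applicable to it. Writing $R = \prod_i R_i$ as a product of local principal ideal rings with residue fields $k_i$, this reduces to showing that the additive span of $(k_i^{\times})^d$ equals $k_i$; the multiplicative subgroup $(k_i^{\times})^d$ has index at most $d$ in $k_i^{\times}$, and its $\F_{p_i}$-span coincides with the subfield $\F_{p_i}[(k_i^{\times})^d]$, which by the proper-subfield dichotomy must equal $k_i$ once $|k_i| > d^2$---a condition ensured by $\lpf(R) > 1/\delta_0$. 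Next, I split $S$ into a unit part $D_d^u \cup -D_d^u$ with $D_d^u = (R^{\times})^d$ and a non-unit remainder of size at most $2\delta_0|R|$, and exploit that the $d$-th power map on $R^{\times}$ has constant fiber size $m$, so $\hat{\ind_{D_d^u}}(\chi) = m^{-1}\sum_{y \in R^{\times}} \chi(y^d)$. Lemma~\ref{lem: character bound} controls this character sum, and combining with the regularity bound $r \ge (1 - \delta_0)|R|/m$ yields $\sigma/r$ bounded in terms of $(B(d, \char(R))(k-1)/\lpf(R))^{1/2^{k-1}}$ and $\delta_0$, matching the stated threshold. The hardest step will be verifying $H_{R, x^d} = R$: surjection onto each residue field follows from the subfield dichotomy, but lifting to $R_i$ itself requires an additional inductive argument using $(1 + y)^d - y^d - 1 \in H_{R_i, x^d}$ along the $\mathfrak{m}_i$-adic filtration.

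For Part~(2), I plan to invoke the approximate converse of the expander mixing lemma, which turns the $\eps$-uniformity hypothesis into a spectral bound $\sigma \le C\eps r$ (via the standard half-plane construction applied to the extremal character), and then restrict attention to characters of $R$ that factor through the reduction $\pi\colon R \to R/\p$. Applying Parseval's identity on $(R/\p, +)$ to the multiplicity function $N_S(w) := |\{z \in S : z \equiv w \pmod{\p}\}|$ produces $q \sum_{w} N_S(w)^2 - r^2 \le (q-1)(C\eps r)^2$ with $q = [R:\p]$, while Cauchy--Schwarz delivers $\sum_w N_S(w)^2 \ge r^2/|T|$, where $T = \{\pm x^d : x \in R/\p\}$ contains the support of $N_S$. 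Combining these produces $(q - |T|)/(q|T|) = O(\eps^2)$, and the structural hypothesis $T \ne R/\p$ forces $\gcd(d, q-1) \ge 2$ and hence $q - |T| \ge (q-1)(d-1)/d^2$ (from the coset count $|(\F_q^{\times})^d| = (q-1)/\gcd(d, q-1)$ together with the two cases depending on whether $-1$ is a $d$-th power), which upon rearrangement gives $q \ge (d-1)/(64 d^2 \eps^2)$. The main obstacle in Part~(2) is tracking constants carefully through the approximate converse of the expander mixing lemma, since losses of order $\pi^2$ in the half-plane argument are precisely what translate into the factor of $64$ appearing in the theorem.
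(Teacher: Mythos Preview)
Your approach matches the paper's: Part~(1) is driven by the character-sum bound of Lemma~\ref{lem: character bound} (the paper counts edges $N(A,B)$ directly via Theorem~\ref{thm: finite ring polynomial average} rather than passing through the eigenvalue $\sigma$ and the expander mixing lemma, but this is the same computation in dual form), and Part~(2) combines the Conlon--Zhao spectral converse (Proposition~\ref{prop: quasirandom eigenvalues}) with Parseval on $R/\p$. Your explicit verification that $H_{R,x^d}=R$ via the subfield dichotomy and the $\mathfrak m_i$-adic lift fills in a step the paper simply asserts; note that your lift $(a+y)^d-a^d\equiv da^{d-1}y\pmod{\mathfrak m_i^2}$ needs $d$ to be a unit in each local factor $R_i$, i.e.\ $p_i\nmid d$ for every residue characteristic $p_i$, which is a shade stronger than the stated hypothesis $\char(R)\nmid d$ when $R$ has local factors of distinct residue characteristics.

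One difference in Part~(2) is worth recording. The paper applies Parseval to the indicator $\ind_{\pi(S)}$ and invokes the identity $\bigl|\sum_{s\in S}(\chi\circ\pi)(s)\bigr|=\frac{|R|}{q}\,\bigl|\sum_{t\in\pi(S)}\chi(t)\bigr|$, which presumes that every fibre of $\pi|_S$ has size exactly $|R|/q$. This can fail: for $R=\F_p\times\F_p$ with $d=2$, $p\equiv 1\pmod 4$, and $\p$ one of the two factor ideals, the fibre over $0$ has size $(p-1)/2$ while fibres over nonzero squares have size $(p+1)/2$. Your route through the multiplicity function $N_S(w)=|\{s\in S:\pi(s)=w\}|$ avoids this issue entirely, since $\hat{N_S}(\chi)=\sum_{s\in S}\chi(\pi(s))$ is the eigenvalue in question regardless of fibre structure. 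Your Parseval-plus-Cauchy--Schwarz chain then gives $(q-|T|)/|T|\le 64(q-1)\eps^2$, and combining with $q-|T|\ge (q-1)(d-1)/d^2$ (your coset count) yields $|T|\ge \frac{d-1}{64d^2\eps^2}$, hence $q\ge \frac{d-1}{64d^2\eps^2}$ on the nose. So your Part~(2) is the more robust argument.
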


\begin{remark}
    The bound $\frac{|R^{\times}|}{|R|} > 1 - \left( \frac{\eps}{3d \cdot B(d,\char(R)) (k-1)} \right)^{2^{k-1}}$ we obtain as a sufficient condition for $\eps$-uniformity can be improved when $R$ is a finite field (in which case $|R^{\times}| = |R| - 1$).
    This is because, in the finite field context, the work of Weil (see \cite[Theorem 3.2]{kowalski}) provides better exponential sum estimates with the exponent on $\eps$ no longer depending on the (derivational) degree of $x^d$; see \cite[Section 1.6]{ab_Sarkozy} for the use of the Weil bound in establishing quasirandomness of Paley-type graphs.
    However, Weil's bound crucially depends on algebraic properties of fields and cannot be extended to the generality of rings covered by Theorem \ref{thm: quasirandom}.
\end{remark}

We will prove the two parts of Theorem \ref{thm: quasirandom} separately.
We begin with the easier of the two, corresponding to item (1).

\begin{proposition}
    Let $R$ be a finite principal ideal ring, and let $d \ge 2$ with $\textup{char}(R) \nmid d$.
	Let $k$ be the derivational degree of the monomial $x^d$ over the ring $R$, and assume $k \ge 2$.
	Let $\eps > 0$.
	If $\frac{|R^{\times}|}{|R|} > 1 - \left( \frac{\eps}{3d \cdot B(d,\char(R)) (k-1)} \right)^{2^{k-1}}$, then $\bfP(R,d)$ is $\eps$-uniform.
\end{proposition}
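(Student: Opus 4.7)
The plan is to bound the nontrivial eigenvalues of the adjacency operator of $\bfP(R,d)$ using Fourier analysis on the additive group $(R,+)$ combined with Lemma~\ref{lem: character bound}. Writing $\bfP(R,d) = \mathrm{Cay}((R,+), S)$ with symmetric generating set $S = \{\pm y^d : y \in R\} \setminus \{0\}$, the eigenvalues of the adjacency operator are $\lambda_\chi = \sum_{s \in S}\chi(s)$ for $\chi \in \hat R$. A standard Parseval/Cauchy--Schwarz computation (expand $\ind_S$ as a Fourier series and separate the trivial character) yields
\[
\left| N_S(A,B) - \tfrac{|S|}{|R|}|A||B| \right| \leq \max_{\chi \neq 0}|\lambda_\chi| \cdot \sqrt{|A||B|},
\]
so the required $\eps$-uniformity $|N_S(A,B) - \tfrac{r}{|R|}|A||B|| \leq \eps r|R|$ reduces to showing $\max_{\chi \neq 0}|\lambda_\chi| \leq \eps|S|$.

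First I would relate $\lambda_\chi$ to the exponential sum $T(\chi) := \sum_{y \in R}\chi(y^d)$. Let $\phi(z) = \#\{y \in R : y^d = z\}$. Since $y \mapsto y^d$ restricted to $R^\times$ is an $m$-to-one homomorphism onto $R^{\times d}$, where $m := |\{u \in R^\times : u^d = 1\}|$, the multiplicity $\phi$ is constant equal to $m$ on $R^{\times d}$, while $\sum_{z \notin R^\times}\phi(z) = |R \setminus R^\times|$. Consequently $|T(\chi) - m\sum_{z \in R^{\times d}}\chi(z)| \leq |R \setminus R^\times|$, and combining with the analogous estimate for $T(-\chi)$ (covering the $-y^d$ portion of $S$) gives $m|\lambda_\chi| \leq |T(\chi)| + |T(-\chi)| + 2|R \setminus R^\times|$.

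Next I would invoke Lemma~\ref{lem: character bound} applied to $P(x) = x^d$ (degree $d$, derivational degree $k$, $P(0)=0$): whenever $\chi$ lies outside the annihilator $H^\perp$ of the additive subgroup $H \leq (R,+)$ generated by $\{y^d : y \in R\}$, we obtain $|T(\chi)/|R||^{2^{k-1}} \leq B(d,\textup{char}(R))(k-1)/\lpf(R)$. The main obstacle of the proof is the structural claim $H = R$, needed so that the character-sum bound applies to every nontrivial $\chi$. To establish it, decompose $R = \prod_i R_i$ as a product of local principal ideal rings with residue fields $k_i$ of size $q_i$. From $|R^\times|/|R| = \prod_i(1 - 1/q_i) \geq 1 - \sum_i 1/q_i$, the hypothesis forces $q_i \geq 1/\delta$ for each $i$, where $\delta := 1 - |R^\times|/|R|$ is extremely small; in particular each $q_i$ comfortably exceeds $(d-1)^2$. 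A Weil-type estimate $|\sum_{y \in k_i}\psi(y^d)| \leq (d-1)\sqrt{q_i}$ for nontrivial additive characters $\psi$ of $k_i$ then implies that the additive subgroup of $k_i$ generated by $\{\bar y^d : \bar y \in k_i\}$ is all of $k_i$. A Nakayama-style lift using the identity $(1+m)^d - 1 = dm + O(m^2)$---a bijection $\mathfrak m_i \to \mathfrak m_i$ because $d \in R_i^\times$---then promotes this to $\mathfrak m_i \subseteq H_i$ and hence $H_i = R_i$; multiplying over local factors gives $H = R$.

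For the quantitative conclusion, rewrite the hypothesis as $\delta^{1/2^{k-1}} \leq \eps/[3d\cdot B(d,\textup{char}(R))(k-1)]$; combined with $\lpf(R) \geq 1/\delta$ this gives $|T(\chi)|/|R| \leq \eps/(3d)$ for every $\chi \neq 0$. Since $m|S| \geq m|R^{\times d}| = |R^\times| \geq (1-\delta)|R|$, the first step yields $|\lambda_\chi|/|S| \leq (2\eps/(3d) + 2\delta)/(1-\delta)$, which is at most $\eps$ for the $\delta$'s permitted by the hypothesis, completing the verification of $\eps$-uniformity.
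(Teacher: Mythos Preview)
Your route is genuinely different from the paper's. The paper does not pass through eigenvalues at all: it writes $N(A,B)$ directly as a weighted double sum, splits it into a main term $\mathcal{M}(A,B)$ (in which every preimage $\{y:y^d=w\}$ is treated as if it had size $|K|$) and an error term $\mathcal{E}(A,B)$ collecting the discrepancies coming from non-unit $y$, and then controls $\mathcal{M}$ with the $L^2$ polynomial-average estimate (Theorem~\ref{thm: finite ring polynomial average}) plus Cauchy--Schwarz, while bounding $\mathcal{E}$ crudely by $2c\,|R|\,|R\setminus R^\times|$. Your spectral approach via Lemma~\ref{lem: eigenvalues of Cayley graph} and the expander-mixing inequality is a perfectly natural alternative, and the two are morally equivalent since Theorem~\ref{thm: finite ring polynomial average} is itself proved by Fourier expansion and Lemma~\ref{lem: character bound}.

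There is, however, a real gap in your eigenvalue bookkeeping. From $|T(\chi)-m\sum_{z\in R^{\times d}}\chi(z)|\le |R\setminus R^\times|$ and the analogous estimate for the $-y^d$ piece you conclude $m|\lambda_\chi|\le |T(\chi)|+|T(-\chi)|+2|R\setminus R^\times|$, but this silently drops the contribution of $S\setminus R^\times$ to $\lambda_\chi=\sum_{s\in S}\chi(s)$. The correct inequality carries an extra term $m\,|S\setminus R^\times|$, and since $m=|K|$ is \emph{not} bounded by $d$ in general (e.g.\ $R=\F_q^r$ gives $m=\gcd(d,q-1)^r$), your final numerics $(2\eps/(3d)+2\delta)/(1-\delta)\le\eps$ do not follow as written. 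The paper's argument runs into an analogous issue at the step ``$r\ge \frac{1}{d}|R^\times|$,'' which likewise presumes $|K|\le O(d)$; so the discrepancy is not unique to your approach, but it is a point where more care is needed (one must actually bound $|S\setminus R^\times|/|S|$, not just $|R\setminus R^\times|/|R|$).

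Separately, your detour through the Weil bound and a Nakayama-type lift to prove $H=R$ is heavier than necessary. For the separable monomial $P(x)=x^d$ (which $\char(R)\nmid d$ guarantees once $R$ is realised as a quotient of a number ring via Proposition~\ref{prop: good iff principal}), the \emph{proof} of Lemma~\ref{lem: character bound} already yields the exponential-sum bound for every nontrivial $\chi$: in the decomposition $P=\eta_1(x^{r_1})$ one has $\eta_1=\mathrm{id}$ and hence $H_1=R$, so the condition ``$\chi\notin H_i^\perp$ for some $i$'' is just $\chi\ne 0$. This immediately gives $|\E_x\chi(x^d)|<1$ for all nontrivial $\chi$ once $\lpf(R)>B(d,c)(k-1)$, whence $H^\perp=\{0\}$ and $H=R$---no Weil bound required.
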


\begin{proof}
    Let $\delta = \left( \frac{\eps}{3d \cdot B(d,\char(R)) (k-1)} \right)^{2^{k-1}}$.
    Suppose $|R^{\times}| \ge (1 - \delta)|R|$.
    Note that $|R^{\times}| = |R| \cdot \prod_{i=1}^r \left( 1 - \frac{1}{[R:\p_i]} \right)$, where $\p_1, \ldots, \p_r$ are the prime ideals of $R$.
    Hence, $\lpf(R) = \min_{1 \le i \le r} [R:\p_i] \ge \delta^{-1}$.

    Let $E = \{\{x,y\} \in R : y - x = \pm z^d~\text{for some}~z \in R\}$ be the set of edges in the graph $\bfP(R,d)$.
    The map $z \mapsto z^d$ is a homomorphism of the multiplicative group $R^{\times}$.
    Let $K = \{z \in R : z^d = 1\}$ be the kernel.
    Then for each $y \in R$, we have $\{z \in R : z^d = y^d\} = yK$.

    We can now count the number of edges between arbitrary subsets of $R$.
    Let $A, B \subseteq R$.
    Then
    \begin{multline*}
        N(A,B) = |\{(a,b) \in A \times B : \{a,b\} \in E\}|
        = \sum_{a \in A} \sum_{w \in \{\pm z^d : z \in R\}} \ind_B(a + w) \\
        = c \sum_{x \in R} \ind_A(x) \left( \sum_{y \in R} \frac{1}{|yK|}\ind_B(x+y^d) + \sum_{z \in R} \frac{1}{|zK|} \ind_B(x - z^d) \right) \\
         = \underbrace{c \sum_{x \in R} \ind_A(x) \left( \sum_{y \in R} \frac{1}{|K|} \ind_B(x+y^d) + \sum_{z \in R} \frac{1}{|K|} \ind_B(x-z^d) \right)}_{\mathcal{M}(A,B)} \\
         + \underbrace{c \sum_{x \in R} \ind_A(x) \left( \sum_{y \notin R^{\times}} \left( \frac{1}{|yK|} - \frac{1}{|K|} \right) \ind_B(x+y^d) + \sum_{z \notin R^{\times}} \left( \frac{1}{|zK|} - \frac{1}{|K|} \right) \ind_B(x-z^d) \right)}_{\mathcal{E}(A,B)},
    \end{multline*}
    where $c = 1$ if $-1$ is not a $d$th power in $R$ and $\frac{1}{2}$ if $-1$ is a $d$th power in $R$.
    
    To control the main term $\mathcal{M}(A,B)$, we use Theorem \ref{thm: finite ring polynomial average} with the estimate $\lpf(R) \ge \delta^{-1}$.
    Since $\char(R) \nmid d$, we have
    \begin{equation*}
        \norm{L^2_x(R)}{\E_{y \in R} f(x+y^d) - \E_{z \in R} f(z)} \le \left( B(d,\char(R)) (k-1)\delta \right)^{1/2^{k-1}} \norm{L^2(R)}{f} \le \frac{\eps}{2d} \norm{L^2(R)}{f}.
    \end{equation*}
    for every $f : R \to \C$ by Theorem \ref{thm: finite ring polynomial average}.
    Taking $f = \ind_B$ and apply the Cauchy--Schwarz inequality,
    \begin{multline*}
        \left| \sum_{x,y \in R} \ind_A(x) \ind_B(x+y^d) - |A||B| \right| = |R|^2 \cdot \left| \innprod{\ind_A}{\E_{y \in R}\ind_B(\cdot+y^d) - \frac{|B|}{|R|}}_{L^2(R)} \right| \\
         \le |R|^2 \frac{\eps}{2d} \norm{L^2(R)}{\ind_A} \norm{L^2(R)}{\ind_B} \le \frac{\eps}{3d} |R|^2.
    \end{multline*}
    A similar inequality holds for $-z^d$ in place of $+y^d$.
    
    Now we control the error term $\mathcal{E}(A,B)$ using the assumption $|R^{\times}| \ge (1-\delta)|R|$.
    Bounding $\ind_A$ and $\ind_B$ from above by 1 bounding $|xK|$ from below by 1 for $x \in R$, we have
    \begin{equation*}
        \mathcal{E}(A,B) \le 2 c |R| |R \setminus R^{\times}| \le 2c\delta |R|^2 \le \frac{\eps}{3d} |R|^2.
    \end{equation*}
    Thus,
    \begin{equation*}
        \left| N(A,B) - \frac{2c}{|K|} |A||B| \right| \le \frac{2}{3} \eps \frac{|R|}{d} |R|.
    \end{equation*}
    The graph $\mathbf{P}(R,d)$ is $r$-regular for some $r \ge \frac{1}{d} |R^{\times}| \ge \frac{1-\delta}{d} |R| \ge \frac{2|R|}{3d}$, so $\bfP(R,d)$ is $\eps$-uniform.
\end{proof}

To prove item (2) in Theorem \ref{thm: quasirandom}, we use a spectral characterization of quasirandomness in conjunction with bounds on exponential sums over finite fields.
Let us first state a consequence of quasirandomness for the eigenvalues of the adjaceny matrix of a Cayley graph.

\begin{proposition}[{\cite[Theorem 1.4]{cz}}] \label{prop: quasirandom eigenvalues}
    Let $G = (V,E)$ be an $r$-regular Cayley graph of a group of order $n$.
    Let $\lambda_1, \lambda_2, \ldots, \lambda_n$ be the eigenvalues of the adjacency matrix of $G$, ordered so that $|\lambda_1| \ge |\lambda_2| \ge \ldots \ge |\lambda_n|$.
    Let $\eps > 0$.
    If $G$ is $\eps$-uniform, then $|\lambda_2| \le 8\eps r$.
\end{proposition}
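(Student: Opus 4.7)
The plan is to bound $|\lambda_2|$ by directly estimating the quadratic form $\langle f, \mathcal{A} f \rangle$ (where $\mathcal{A}$ denotes the adjacency matrix of $G$) for a well-chosen eigenvector $f$. The key advantage of the Cayley structure is that $\mathcal{A}$ commutes with the left regular representation of the underlying group $H$, so its eigenspaces decompose according to the isotypic components of that representation. In particular, each eigenvalue admits an eigenvector that can be taken to be a character (in the abelian case) or a matrix coefficient of an irreducible representation (in general). Such an eigenvector is automatically bounded in sup-norm and orthogonal to the constant function $\mathbf{1}$ whenever its eigenvalue $\lambda$ is not the trivial eigenvalue $r$.

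With such an $f$ in hand, satisfying $\|f\|_\infty \le 1$, $\langle f, \mathbf{1}\rangle = 0$, and eigenvalue $\lambda$ (necessarily real, since $\mathcal{A}$ is symmetric) with $|\lambda| = |\lambda_2|$, I would decompose $f = f_1 - f_2 + i(f_3 - f_4)$, where each $f_j : V \to [0,1]$ is the positive or negative part of the real or imaginary part of $f$. Each $f_j$ then admits a layer-cake representation $f_j(x) = \int_0^1 \mathbf{1}_{A_j(t)}(x)\, dt$ with $A_j(t) = \{y \in V : f_j(y) \ge t\}$, expressing $f_j$ as a continuous superposition of indicator functions.

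The $\eps$-uniformity hypothesis is precisely the statement
\begin{equation*}
\left| \langle \mathbf{1}_A, \mathcal{A} \mathbf{1}_B \rangle - \tfrac{r}{|V|} |A|\,|B| \right| \le \eps r |V| \quad \text{for all } A, B \subseteq V.
\end{equation*}
Substituting the layer-cake representations of the $f_j$ into $\langle f, \mathcal{A} f \rangle$, expanding bilinearly, and applying this bound to each indicator pairing (integrated over the layer parameters), I expect to obtain
\begin{equation*}
\left| \langle f, \mathcal{A} f \rangle - \tfrac{r}{|V|} |\langle f, \mathbf{1}\rangle|^2 \right| \le 8 \eps r |V|.
\end{equation*}
The constant $8$ arises from the symmetry of $\mathcal{A}$, which forces the cross terms between the real and imaginary parts of $f$ to cancel, leaving two real quadratic forms that each contribute four indicator pairings with bounded error. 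Since $f \perp \mathbf{1}$ the main term vanishes, and $\langle f, \mathcal{A} f\rangle = \lambda \|f\|_2^2$; taking $f$ to be a character of the abelian group $H$ (so $|f(x)| = 1$ pointwise and $\|f\|_2^2 = |V|$) then yields $|\lambda| \le 8 \eps r$.

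The main obstacle is handling non-abelian $H$, where matrix coefficients of higher-dimensional irreducibles need not be pointwise bounded by $1$ and $\|f\|_2^2$ may be substantially smaller than $|V|$. In that case I would work with the full isotypic component of a fixed irreducible $\rho$, averaging over an orthonormal basis of matrix coefficients and invoking Schur orthogonality to restore the needed $L^2$-mass while preserving the eigenvalue relation. For the applications to $\mathbf{P}(R,d)$ in this paper, $H = (R,+)$ is abelian and the simpler character-based argument suffices.
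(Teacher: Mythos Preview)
The paper does not prove this proposition; it is quoted directly from \cite[Theorem 1.4]{cz} and used as a black box. Your proposal is correct and in fact reconstructs the Conlon--Zhao argument: the point of their paper is precisely that the Cayley structure furnishes an eigenvector with $\|f\|_\infty \le 1$ and $\|f\|_2^2 = |V|$ (a nontrivial character in the abelian case), after which the layer-cake decomposition into at most eight indicator pairings converts $\eps$-uniformity into the eigenvalue bound with the stated constant. Your accounting for the constant $8$ is accurate, and your remark that the abelian case suffices for the graphs $\bfP(R,d)$ treated here is also correct.
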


The eigenvalues of Cayley graphs (such as $\bfP(R,d)$) can be computed more readily than for arbitrary graphs.

\begin{lemma} \label{lem: eigenvalues of Cayley graph}
    Let $\Gamma$ be a finite abelian group.
    Let $S \subseteq \Gamma$ be a symmetric set not containing the identity element.
    If $G = (V,E)$ is the Cayley graph of $\Gamma$ generated by $S$ (i.e., $V = \Gamma$ and $\{x,y\} \in E$ if and only if $x-y \in S$), then the eigenvalues of the adjacency matrix of $G$ are
    \begin{equation*}
        \sum_{s \in S} \chi(s)
    \end{equation*}
    for $\chi \in \hat{\Gamma}$.
\end{lemma}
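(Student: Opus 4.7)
The plan is to exhibit an explicit diagonalization of the adjacency matrix $A$ of $G$ using the characters of $\Gamma$ as eigenvectors. Index the rows and columns of $A$ by the elements of $\Gamma$, so that $A_{x,y} = 1$ if $x - y \in S$ and $0$ otherwise. For each character $\chi \in \hat{\Gamma}$, I would introduce the vector $v_\chi \in \C^{\Gamma}$ defined by $v_\chi(x) = \chi(x)$, and then simply compute $Av_\chi$.

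The key calculation is as follows. For any $x \in \Gamma$,
\begin{equation*}
    (A v_\chi)(x) = \sum_{y \in \Gamma} A_{x,y} \chi(y) = \sum_{s \in S} \chi(x - s) = \chi(x) \sum_{s \in S} \chi(-s) = \chi(x) \sum_{s \in S} \chi(s),
\end{equation*}
where the last equality uses that $S$ is symmetric (so the map $s \mapsto -s$ is a bijection of $S$). This shows that $v_\chi$ is an eigenvector of $A$ with eigenvalue $\lambda_\chi = \sum_{s \in S} \chi(s)$.

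To conclude, I would invoke the orthogonality relations for characters of finite abelian groups: the family $\{v_\chi : \chi \in \hat{\Gamma}\}$ is an orthogonal basis of $\C^{\Gamma}$, and in particular consists of $|\Gamma|$ linearly independent vectors. Thus the $|\Gamma|$ values $\lambda_\chi$ (counted with multiplicity, over $\chi \in \hat{\Gamma}$) are precisely the eigenvalues of $A$. Since the entire argument is a direct diagonalization via the Fourier basis, there is no real obstacle; the only subtlety is remembering to use the symmetry of $S$ so that the resulting eigenvalues take the stated form $\sum_{s \in S} \chi(s)$ rather than $\sum_{s \in S} \chi(-s)$ (which, of course, are equal under symmetry of $S$).
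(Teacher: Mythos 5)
Your proof is correct and takes essentially the same approach as the paper: checking directly that each character is an eigenvector of the adjacency matrix with eigenvalue $\sum_{s \in S} \chi(s)$. You additionally spell out the completeness step (orthogonality of characters giving a full eigenbasis), which the paper leaves implicit.
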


\begin{proof}
    This result is well-known, but we include a short proof for completeness.
    Let $A$ be the adjacency matrix of $G$.
    We check directly that the characters $\chi \in \hat{\Gamma}$, viewed as vectors in $\C^{\Gamma}$, are eigenvectors of $A$ with the desired eigenvalues:
    \begin{equation*}
        (A\chi)(x) = \sum_{y \in \Gamma} \ind_S(y-x) \chi(y) = \sum_{s \in S} \chi(x+s) = \left( \sum_{s \in S} \chi(s) \right) \cdot \chi(x).
    \end{equation*}
\end{proof}

We are now set to prove item (2) in Theorem \ref{thm: quasirandom}.

\begin{proposition}
    Let $R$ be a finite principal ideal ring, and let $d \ge 2$ with $\textup{char}(R) \nmid d$.
	Let $k$ be the derivational degree of the monomial $x^d$ over the ring $R$.
	Let $\eps > 0$.
	If $\bfP(R,d)$ is $\eps$-uniform and $\p \le R$ is a prime ideal, then either $\{\pm x^d : x \in R/\p\} = R/\p$ or $[R:\p] \ge \frac{d-1}{64d^2\eps^2}$.
\end{proposition}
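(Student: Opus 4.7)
The plan is to exhibit, under the hypothesis $\{\pm x^d : x \in R/\p\} \ne R/\p$, a nontrivial character $\chi$ of $(R,+)$ whose Cayley-graph eigenvalue $\lambda_\chi = \sum_{s \in S}\chi(s)$ on $\bfP(R,d)$, where $S = \{\pm x^d : x \in R\}\setminus\{0\}$, violates the bound $|\lambda_\chi| \le 8\eps\,r$ from Proposition \ref{prop: quasirandom eigenvalues} whenever $q := [R:\p]$ falls below the claimed threshold.

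Since $\p$ is maximal of finite index, $R/\p$ is a finite field $\F_q$, and the assumption $\char(R)\nmid d$ makes $d$ a unit in $R/\p$. The hypothesis $\{\pm x^d\} \ne R/\p$ forces $e := \gcd(d,q-1) \ge 2$, since in $\F_q$ one has $|\{\pm z^d\}| \le 1 + 2(q-1)/e$, and equality with $q$ only arises when $e=1$ (with $-1 \in \{z^d\}$) or $e=2$ (with $-1 \notin \{z^d\}$). First I would locate a good additive character $\psi^*$ of $R/\p$ via a Parseval computation: writing $f_\psi = \sum_{\bar z \in \F_q}\psi(\bar z^d)$ and $N(w) = |\{z \in \F_q : z^d = w\}|$ (so $N(0)=1$ and $N \equiv e$ on the $d$-th powers of $\F_q^\times$), one has $\sum_{\psi \ne \mathrm{triv}}|f_\psi|^2 = q\sum_w N(w)^2 - q^2 = q(q-1)(e-1)$, so some $\psi^*$ satisfies $|f_{\psi^*}| \ge \sqrt{q(e-1)}$.

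Next, I would lift this to $\chi := \psi^* \circ \pi$ on $R$, where $\pi : R \to R/\p$ is the projection. Since each residue has $|\p| = |R|/q$ preimages, $\sum_{z \in R}\chi(z^d) = |\p|\,f_{\psi^*}$. Hensel's lemma (applicable since $d$ is a unit) shows that every unit lift of a $d$-th power in $(R/\p)^\times$ is a $d$-th power in $R^\times$ and that the $d$-th power map on $R^\times$ has kernel of size exactly $e$, while all $d$-th powers whose image under $\pi$ is $0$ contribute at most $O(|\p|)$ terms on which $\chi$ is trivial. Unpacking the fiber counts gives $\lambda_\chi = \tfrac{|\p|}{e}f_{\psi^*} + O(|\p|)$, so $|\lambda_\chi| \gtrsim |\p|\sqrt{q(e-1)}/e$ once $q$ exceeds an absolute constant. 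Using $r \le |R| = q|\p|$, this yields $|\lambda_\chi|/r \gtrsim e^{-1}\sqrt{(e-1)/q}$; if $\bfP(R,d)$ is $\eps$-uniform, Proposition \ref{prop: quasirandom eigenvalues} forces this ratio to be $\le 8\eps$, whence $q \gtrsim (e-1)/(e^2\eps^2)$. The monotonicity $(e-1)/e^2 \ge (d-1)/d^2$ for $2 \le e \le d$ (equivalent to $(d-2)^2 \ge 0$ after simplification) then upgrades this to $q \ge (d-1)/(64 d^2\eps^2)$ after tracking constants.

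The main technical obstacle is passing from the local case, where the Hensel bookkeeping is transparent, to a general finite principal ideal ring. Decomposing $R = R_1 \times \cdots \times R_m$ into local factors via Proposition \ref{prop: good iff principal}, the prime $\p$ identifies with the maximal ideal of a unique $R_{i_0}$, and $\chi$ depends only on that coordinate; however $S = \{\pm x^d\}\setminus\{0\}$ does not factor cleanly across components because the sign must be uniform across all coordinates. Handling this requires isolating the $R_{i_0}$-contribution and verifying that the combinatorial factors $\prod_{j \ne i_0}|T_{R_j}|$ coming from the other components rescale both $\lambda_\chi$ and $r$ by the same amount, so that the ratio estimate from the local computation survives.
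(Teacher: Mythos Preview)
Your approach and the paper's share the same skeleton---pull back a nontrivial additive character from $\F_q = R/\p$ and show the resulting Cayley eigenvalue is too large for $\eps$-uniformity via Proposition~\ref{prop: quasirandom eigenvalues}---but they differ in the Parseval step and in how the $\F_q$-sum is converted into the eigenvalue on $R$. The paper applies Parseval directly to the indicator $\ind_{\pi(S)}$ on $\F_q$ (where $\pi:R\to R/\p$), producing a nontrivial $\chi$ with $\left|\sum_{t\in\pi(S)}\chi(t)\right|^2 \ge q\cdot\tfrac{|\pi(S)|}{q}\bigl(1-\tfrac{|\pi(S)|}{q}\bigr)$; the size of $\pi(S)$ is controlled using that $(\pi(S)\cup\{0\})\setminus\{0\}$ is a proper multiplicative subgroup of $\F_q^\times$. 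The paper then asserts $\left|\sum_{s\in S}(\chi\circ\pi)(s)\right| = \tfrac{|R|}{q}\left|\sum_{t\in\pi(S)}\chi(t)\right|$ and compares to $8\eps|R|\ge 8\eps r$, never invoking Hensel or a CRT decomposition. Your route instead runs Parseval on the multiplicity function $N(w)=|\{z:z^d=w\}|$ and then translates to the eigenvalue via Hensel lifting. The payoff of working with the symmetrized set $\pi(S)$, as the paper does, is exactly that the two signs are handled uniformly.

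There is a genuine gap in your step 3. The formula $\lambda_\chi=\tfrac{|\p|}{e}f_{\psi^*}+O(|\p|)$ tacitly assumes $-1$ is a $d$th power in $R$, so that $S=\{x^d:x\in R\}\setminus\{0\}$. When $-1\notin (R^\times)^d$ (equivalently, by Hensel in the local case, $-1\notin(\F_q^\times)^d$), the set $S$ genuinely contains both $\{x^d\}$ and $\{-x^d\}$, and the eigenvalue is $\tfrac{|\p|}{e}\bigl(f_{\psi^*}+g_{\psi^*}\bigr)+O(|\p|)$ with $g_\psi=\sum_z\psi(-z^d)$. Since $|g_{\psi^*}|$ has the same order as $|f_{\psi^*}|\sim\sqrt{q}$, it cannot be absorbed into $O(|\p|)$, and for the $\psi^*$ you chose the two terms may nearly cancel. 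One must instead run Parseval on $M(w)=N(w)+N(-w)$: a short computation gives $\sum_{\psi\ne 1}|f_\psi+g_\psi|^2=2q(q-1)(e-2)$, and the standing hypothesis $\{\pm x^d\}\ne\F_q$ forces $e\ge 3$ in this subcase, after which the rest of your argument goes through with harmless changes to the constants. Your closing paragraph correctly flags the non-local case as the residual technical obstacle; be aware that the same $\pm$ subtlety (the sign is uniform across CRT factors, while $-1$ may be a $d$th power in some local factors and not others) interacts with that bookkeeping and must be tracked there as well.
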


\begin{proof}
    We will prove the contrapositive.
    Suppose there exists a prime ideal $\p \le R$ such that $[R:\p] < \frac{d-1}{64d^2\eps^2}$ and $\{\pm x^d : x \in R/\p\} \ne R/\p$.
    Let $q = [R:\p]$ so that $R/\p \cong \F_q$.
    The natural quotient map $R \to R/\p$ induces a surjective homomorphism $\pi : R \to \F_q$.
    Let $S = \{x^d, -x^d : x \in R\} \setminus \{0\}$ be the generating set of $\bfP(R,d)$ as a Cayley graph.
    Given an additive character $\chi : \F_q \to S^1$, we have
    \begin{equation*}
        \left| \sum_{s \in S} (\chi \circ \pi)(s) \right| = \frac{|R|}{q} \cdot \left| \sum_{t \in \pi(S)} \chi(t) \right|,
    \end{equation*}
    and $\pi(S) \cup \{0\} = \{x^d, -x^d : x \in \F_q\}$.
    (The set $\pi(S)$ may or may not contain $0$, depending on the interaction between $d$ and the other ideals of $R$.)
    We now use some basic properties of the Fourier transform to get a lower bound on $\left| \sum_{t \in \pi(S)} \chi(t) \right|$ for some choice of $\chi$.

    On the one hand,
    \begin{equation*}
        \norm{L^2(\F_q)}{\ind_{\pi(S)}}^2 = \frac{|\pi(S)|}{q}.
    \end{equation*}
    On the other hand, by Parseval's identity,
    \begin{equation*}
        \norm{L^2(\F_q)}{\ind_{\pi(S)}}^2 = \sum_{\chi \in \hat{\F_q}} \left| \frac{1}{q} \sum_{t \in \pi(S)} \chi(t) \right|^2 = \left( \frac{|\pi(S)|}{q} \right)^2 + \frac{1}{q^2} \sum_{\chi \ne 1} \left| \sum_{t \in \pi(S)} \chi(t) \right|^2.
    \end{equation*}
    Therefore,
    \begin{equation*}
        \frac{1}{q-1} \sum_{\chi \ne 1} \frac{q-1}{q^2} \left| \sum_{t \in \pi(S)} \chi(t) \right|^2 = \frac{|\pi(S)|}{q} - \left( \frac{|\pi(S)|}{q} \right)^2,
    \end{equation*}
    so there exists a nontrivial character $\chi$ such that
    \begin{equation*}
        \left| \sum_{t \in \pi(S)} \chi(t) \right|^2 \ge \frac{q^2}{q-1} \left( \frac{|\pi(S)|}{q} - \left( \frac{|\pi(S)|}{q} \right)^2 \right) \ge q \left( \frac{|\pi(S)|}{q} - \left( \frac{|\pi(S)|}{q} \right)^2 \right).
    \end{equation*}
    Now, by assumption, $\pi(S) \cup \{0\} \ne \F_q$.
    But $\pi(S) \setminus \{0\}$ is a multiplicative subgroup of $\F_q^{\times}$, so $|\pi(S)| - 1$ is a proper divisor of $q-1$.
    In particular, $|\pi(S)| \le \frac{q-1}{2} + 1 = \frac{q+1}{2}$.
    Moreover, the polynomial $x^d-1$ has at most $d$ roots in $\F_q$, so $|\pi(S)| - 1 \ge \frac{q-1}{d}$.
    Thus, for some nontrivial character $\chi$,
    \begin{equation*}
        \left| \sum_{t \in \pi(S)} \chi(t) \right|^2 \ge q \cdot \min_{x \in \left[ \frac{1}{d} + \frac{d-1}{dq}, \frac{1}{2} + \frac{1}{2q} \right]} x(1-x) \ge q \cdot \frac{d-1}{d^2}.
    \end{equation*}
    
    Thus, by Lemma \ref{lem: eigenvalues of Cayley graph}, the adjacency matrix of $\bfP(R,d)$ has an eigenvalue of size at least $\frac{|R|}{\sqrt{q}} \cdot \frac{\sqrt{d-1}}{d} > |R| \cdot \frac{8d\eps}{\sqrt{d-1}} \cdot \frac{\sqrt{d-1}}{d} = 8\eps|R|$.
    By Proposition \ref{prop: quasirandom eigenvalues}, it follows that $\bfP(R,d)$ is not $\eps$-uniform.
\end{proof}

%%%%%%%%%%%%%%%%%%%%%%%%%%%%%%%%%%%%%%%%%%%%%%%%%%%%%%%%
% ---- QUESTIONS ---- %
%%%%%%%%%%%%%%%%%%%%%%%%%%%%%%%%%%%%%%%%%%%%%%%%%%%%%%%%

\section{Open problems} \label{sec: open problems}

Our work offers a new point of view on the relationship in a ring between its algebraic properties, dynamical properties of its polynomial actions, combinatorial configurations found in its large subsets, and quasirandomness of graphs generated by polynomial values.
There are several aspects of the interrelations between algebra, dynamics, and combinatorics that are ripe for further exploration.
In this short section, we propose a few open problems related to the main results of this paper.

%%%%%%%%%%%%%%%%%%%%%%%%%%%%%%%%%%%%%%%%%%%%%%%%%%%%%%%%

\subsection{Necessary and sufficient conditions for the polynomial Szemer\'{e}di theorem over rings of integers}

Recall that Theorem \ref{thm: global field Sarkozy} characterized the family of polynomials satisfying the the Furstenberg--S\'{a}rk\"{o}zy theorem (over the ring of integers of a global field) as the class of polynomials having a root modulo every nontrivial ideal.
We conjecture that a similar characterization is possible for a version of the polynomial Szemer\'{e}di theorem over rings of integers of global fields:

\begin{conjecture} \label{conj: nec, suff pSz}
    Let $K$ be a global field with ring of integers $\mO_K$.
    Suppose $P_1[x], \dots, P_k[x] \in \mO_K[x]$ are nonconstant polynomials.
    Then the following are equivalent:
    \begin{enumerate}[(i)]
        \item for any nonzero ideal $I \subseteq \mO_K$, there exists $n \in \mO_K$ such that $\{P_1(n), \dots, P_k(n)\} \subseteq I$.
        \item for any subset $A \subseteq \mO_K$ with $d^*(A) > 0$, there exist $x, n \in \mO_K$ such that
        \begin{equation*}
            \{x, x+P_1(n), \dots, x+P_k(n)\} \subseteq A.
        \end{equation*}
    \end{enumerate}
\end{conjecture}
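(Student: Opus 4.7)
The plan is to establish the equivalence in two stages, with (ii)$\Rightarrow$(i) being a short coloring argument and (i)$\Rightarrow$(ii) requiring substantial combinatorial machinery. For (ii)$\Rightarrow$(i), I would mirror the proof of (v)$\Rightarrow$(i) in Theorem \ref{thm: global field Sarkozy}: if (i) fails for some nonzero ideal $I \le \mO_K$, take $A = c + I$ for any coset of $I$, which has positive upper Banach density in $\mO_K$ by Proposition \ref{prop: rings of integers}(2); any configuration $\{x, x+P_1(n), \ldots, x+P_k(n)\} \subseteq A$ with $n \ne 0$ would force $P_j(n) \in I$ for every $j$, contradicting the failure of (i).

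The substantive content is the implication (i)$\Rightarrow$(ii), which should follow the Furstenberg dynamical approach. Applying a version of the Furstenberg correspondence principle to $\mO_K$-actions (as used in the proof of Theorem \ref{thm: global field Sarkozy}), the problem reduces to showing that for any ergodic measure-preserving $\mO_K$-system $(X, \mu, (T_n)_{n \in \mO_K})$ and any measurable $A \subseteq X$ with $\mu(A) > 0$,
\begin{equation*}
    \UClim_{n \in \mO_K} \mu\bigl(A \cap T_{P_1(n)}^{-1} A \cap \cdots \cap T_{P_k(n)}^{-1} A\bigr) > 0.
\end{equation*}
In the number field setting this should be accessible by combining the group isomorphism $(\mO_K, +) \cong (\Z^d, +)$ with the Bergelson--Leibman polynomial Szemer\'{e}di theorem for $\Z^d$-actions: given the system and an $\eps > 0$, one extracts from a PET-style characteristic factor analysis an ideal $I$ encoding the ``rational part'' of the spectrum relevant to the $k$-fold average up to error $\eps$; hypothesis (i) then furnishes $r \in \mO_K$ with $P_j(r) \in I$ for all $j$, and choosing $m \in I$ sufficiently divisible yields $P_j(mn+r) \equiv P_j(r) \equiv 0 \pmod{I}$ for every $n$, so the correlations averaged over $n$ remain bounded below by a positive quantity depending on $\mu(A)$ via the classical Bergelson--Leibman argument applied to the shifted polynomial family.

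The main obstacle will be the function field case, where two structural difficulties compound. First, the theory of characteristic factors for multiple polynomial averages in positive characteristic is not fully developed: polynomials of degree $\ge \char(K)$ exhibit anomalous distributional behavior (cf.\ Example \ref{eg: weakly rational}), and one would need a multilinear analog of Theorem \ref{thm: strongly ergodic} controlling the limit $\UClim_{n \in \mO_K} T_{P_1(n)} f_1 \cdots T_{P_k(n)} f_k$ in terms of weakly rational characters, plausibly through a nilpotent-like structure theory adapted to $\mO_K$-actions. Second, the PET-induction step that lowers polynomial complexity via van der Corput differencing is partially obstructed in positive characteristic because a polynomial's derivative can vanish identically when its degree is divisible by $\char(K)$; compensating for this likely requires interleaving Frobenius substitutions with differencing, in the spirit of the techniques used in \cite{bl}. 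A full resolution of the conjecture for function fields appears to require a genuine extension of the polynomial Szemer\'{e}di theorem beyond what is currently available.
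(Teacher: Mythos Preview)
Your assessment is essentially correct, and it matches the paper's own stance: this statement is presented in Section~\ref{sec: open problems} as a \emph{conjecture}, not a theorem, and the paper offers no proof. The paper explicitly notes that the case $\mO_K = \Z$ was settled in \cite{bll} and extended to number fields in \cite{br}, leaving only the global function field case open---exactly as you diagnose.

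One small correction to your (ii)$\Rightarrow$(i) sketch: taking $A$ to be a single coset $c + I$ does not quite work as stated, because you also need to rule out the possibility that the only $n$ with $\{P_1(n),\ldots,P_k(n)\}\subseteq I$ is $n=0$ (or more precisely, that the configuration is genuinely nondegenerate). The cleaner argument, parallel to (v)$\Rightarrow$(i) in Theorem~\ref{thm: global field Sarkozy}, is simply that if (i) fails for $I$ then no translate of $\{0,P_1(n),\ldots,P_k(n)\}$ with all $P_j(n)\ne 0$ can lie in a single coset of $I$; since each coset has positive density, (ii) fails. But this is a minor point.

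Your identification of the obstacles in the function field case---the lack of a characteristic-factor theory for multiple polynomial averages in positive characteristic, and the failure of standard PET induction when degrees exceed the characteristic---is accurate and is precisely why the paper leaves this as an open problem rather than a theorem.
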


We note that the special case when $\mO_K = \Z$ was established in \cite{bll} and later generalized to number fields in \cite{br}.
Hence, the only open case of Conjecture \ref{conj: nec, suff pSz} is when $K$ is a global function field.

%%%%%%%%%%%%%%%%%%%%%%%%%%%%%%%%%%%%%%%%%%%%%%%%%%%%%%%%

\subsection{Irrational equidistribution and an asymmetric Furstenberg--S\'{a}rk\"{o}zy theorem over quotient rings} \label{sec: A,B equidistribution}

In Theorem \ref{thm: equivalences}, we proved an equivalence between several conditions on a polynomial $P$ over a finite principal ideal ring $R$ and showed that each condition holds if the polynomial is the reduction of a polynomial that is good for irrational equidistribution over the ring of integers of a global field.
(The property of being \emph{good for irrational equidistribution} is defined in Definition \ref{defn: irrational equidistribution}.)
Our next question asks if this implication can be reversed:

\begin{question} \label{quest: A,B equidistribution}
    Let $K$ be a global function field\footnote{We restrict our attention to function fields because every nonconstant polynomial over a number field is good for irrational equidistribution; see Proposition \ref{prop: number field irrational equidistribution}.} with ring of integers $\mO_K$, and let $P(x) \in \mO_K[x]$.
    Suppose that for every $\delta > 0$, there exists $N \in \N$ such that if $I \le \mO_K$ is an ideal with $\lpf(I) \ge N$, then for any $A, B \subseteq \mO_K/I$ with $|A|\cdot |B| \ge \delta [\mO_K:I]^2$, there exist $x,y \in \mO_K/I$ such that $x \in A$ and $x + P(y) \in B$.
    Must $P$ be good for irrational equidistribution?
\end{question}

Suppose we are in the special case that $P(x) \in \mO_K[x]$ has coefficients in $\F_p$ (here, $p = \char(K)$).
Then Question \ref{quest: A,B equidistribution} is essentially answered in \cite[Theorem 1.17]{ab_Sarkozy}.
We fill in the small additional details below.

\begin{proposition} \label{prop: A,B equidistribution F_p}
    Let $K$ be a global function field of characteristic $p$.
    Let $P(x) \in \F_p[x]$.
    Then the following are equivalent:
    \begin{enumerate}[(i)]
        \item $P$ is good for irrational equidistribution;
        \item for every $\delta > 0$, there exists $N \in \N$ such that if $I \le K$ is an ideal with $\lpf(I) \ge N$ and $A, B \subseteq \mO_K/I$ with $|A|\cdot|B| \ge \delta [\mO_K:I]^2$, then there exist $x,y \in \mO_K/I$ such that $x \in A$ and $x+P(y) \in B$.
    \end{enumerate}
\end{proposition}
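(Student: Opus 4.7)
The forward implication (i)$\implies$(ii) is immediate from the ``moreover'' clause of Theorem \ref{thm: equivalences}: applied with $\tilde{P} = P \in \mO_K[x]$ itself, it shows that property (iv) of that theorem---which is verbatim our condition (ii)---holds whenever $P$ is good for irrational equidistribution.

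For the converse (ii)$\implies$(i), my plan is to argue by contrapositive, combining Theorem \ref{thm: equivalences} with \cite[Theorem 1.17]{ab_Sarkozy}. First, by the equivalence (iv)$\Leftrightarrow$(iii) of Theorem \ref{thm: equivalences}, hypothesis (ii) furnishes a constant $C > 0$ such that for every ideal $I \le \mO_K$ with $\lpf(I) \ge C$, the subgroup of $(\mO_K/I, +)$ generated by $\{P(x) - P(0) : x \in \mO_K/I\}$ is all of $\mO_K/I$. Specializing to prime ideals $\p$ with $[\mO_K:\p] \ge C$ gives the analogous statement in the finite field $\mO_K/\p$. Since $P$ has coefficients in $\F_p$, its reduction mod $\p$ is literally the same polynomial viewed over $\mO_K/\p$, so we obtain the following finite-field property: for every sufficiently large prime residue field $\mO_K/\p$, the subgroup $\langle P(x) - P(0) : x \in \mO_K/\p \rangle$ is all of $\mO_K/\p$.

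The next step is to invoke \cite[Theorem 1.17]{ab_Sarkozy}, which for polynomials in $\F_p[x]$ characterizes being good for irrational equidistribution precisely in terms of this kind of asymptotic finite-field condition. To apply it, I will use standard density results for prime ideals in global function fields: as $\p$ ranges over primes of $\mO_K$ of large norm, the residue fields $\mO_K/\p$ realize cofinally many (indeed, all sufficiently large) finite extensions of the constant field $\F_{p^f}$ of $K$. Feeding this into the finite-field characterization produces the conclusion that $P$ is good for irrational equidistribution, completing the contrapositive.

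The main subtlety I foresee---the ``small additional detail'' to be filled in---is aligning the class of residue fields produced by prime ideals of $\mO_K$ with the class of finite fields $\F_q$, $q \in p^{\N}$, appearing in \cite[Theorem 1.17]{ab_Sarkozy}. When $K$ has non-prime constant field $\F_{p^f}$, the residue fields $\mO_K/\p$ have cardinalities that are powers of $p^f$ rather than arbitrary powers of $p$; one must verify that the characterization of \cite[Theorem 1.17]{ab_Sarkozy} remains operative along any cofinal sequence of $p$-power sizes, and additionally that the irrational characters on $\mO_K$ used to define ``good for irrational equidistribution'' can be detected by restriction to the subring $\F_p[t] \subseteq \mO_K$ (where \cite[Theorem 1.17]{ab_Sarkozy} is naturally formulated). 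This restriction argument is the main place where the hypothesis $P \in \F_p[x]$---rather than the more general $P \in \mO_K[x]$---is essential.
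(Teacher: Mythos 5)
There is a genuine gap in your argument for (ii)$\implies$(i), located precisely at the step where you invoke \cite[Theorem 1.17]{ab_Sarkozy}. You assert that this theorem ``characterizes being good for irrational equidistribution precisely in terms of'' the finite-field generation condition over the residue fields $\mO_K/\p$. It does not, and it cannot: the Frobenius map $x \mapsto x^{p^s}$ is a bijection on every finite field, so a polynomial $P(x) = Q(x^{p^s})$ has exactly the same value set as $Q$ over every residue field $\mO_K/\p$. Concretely, $P(x) = x^p$ generates all of $\mO_K/\p$ for every prime $\p$, yet it is not good for irrational equidistribution (see Example \ref{eg: irrational equidistribution}). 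What \cite[Theorem 1.17]{ab_Sarkozy} actually yields from the prime-ideal data is only that $P(x) = Q(x^{p^s})$ for some $s \ge 0$ and some $Q \in \F_p[x]$ that is good for irrational equidistribution; the prime-ideal instances of hypothesis (ii) alone can never rule out $s > 0$.

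The missing idea is to use \emph{non-prime} ideals to kill the Frobenius twist. If $s > 0$, then $P(x) - P(0)$ takes values in the set of $p$th powers, and in $\mO_K/\p^2$ the $p$th powers generate a proper subgroup $H_{\p}$ even when $[\mO_K:\p]$ (hence $\lpf(\p^2)$) is arbitrarily large. Taking $A$ and $B$ to be suitable unions of cosets of $H_{\p}$, as in the proof of Proposition \ref{prop: iv,iii}, violates condition (ii) with $\delta = \frac{1}{6}$. This forces $s = 0$ and completes the argument. Relatedly, the ``main subtlety'' you flag---matching residue field cardinalities with the finite fields appearing in \cite[Theorem 1.17]{ab_Sarkozy}---is not where the difficulty lies; the essential point you are missing is that finite fields are blind to Frobenius twists and one must pass to quotients by powers of primes to see them.
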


\begin{proof}
    The implication (i)$\implies$(ii) follows from Theorem \ref{thm: equivalences}.

    Suppose (ii) holds.
    Applying condition (ii) only with prime ideals and using \cite[Theorem 1.17]{ab_Sarkozy}, we can write $P(x) = Q(x^{p^s})$ for some $s \ge 0$ and a polynomial $Q(x) \in \F_p[x]$ that is good for irrational equidistribution.
    Suppose for contradiction that $s \ne 0$.
    Then $P(x) - P(0) = Q(x^{p^s}) - Q(0)$ belongs to the group of $p$th powers for every $x$.
    But for arbitrary $\p$ (in particular, with $[\mO_K : \p]$ arbitrarily large), the subgroup $H_{\p} = \{x^p : x \in \mO_K/\p^2\}$ is a proper subgroup of $\mO_K/\p^2$.
    Arguing as in the proof of Proposition \ref{prop: iv,iii}, we can construct sets $A$ and $B$ as unions of cosets of $H_{\p}$ to establish the failure of (ii) for $\delta = \frac{1}{6}$.
    This contradicsts our assumption, so we must have $s = 0$.
    But then $P(x) = Q(x)$ is good for irrational equidistribution as desired.
\end{proof}

The main input in this argument, \cite[Theorem 1.17]{ab_Sarkozy}, relies on several algebraic lemmas that are heavily dependent on the restriction that $P$ has coefficients in $\F_p$.
It seems plausible that Question \ref{quest: A,B equidistribution} has an affirmative answer in general, but the methods used to handle the special case in Proposition \ref{prop: A,B equidistribution F_p} are presently inadequate for these purposes.

%%%%%%%%%%%%%%%%%%%%%%%%%%%%%%%%%%%%%%%%%%%%%%%%%%%%%%%%

\subsection{Quasirandomness of Paley-type graphs over modular rings}

Theorem \ref{thm: quasirandom iff aTE} gives a sufficient condition for quasirandomness of a sequence of Paley-type graphs and also a necessary condition, but there is a gap between these two conditions.
Consequently, there are families of Paley-type graphs for which we do not know whether or not they are quasirandom.
To formulate a problem in as concrete terms as possible, we stick to families of modular rings.

\begin{problem}
    Let $d \in \N$.
    Give necessary and sufficient conditions on an increasing sequence $(N_n)_{n \in \N}$ of natural numbers so that the sequence of graphs $(\bfP(\Z/N_n\Z,d))_{n \in \N}$ is quasirandom.
\end{problem}

%%%%%%%%%%%%%%%%%%%%%%%%%%%%%%%%%%%%%%%%%%%%%%%%%%%%%%%%
% ---- REFERENCES---- %
%%%%%%%%%%%%%%%%%%%%%%%%%%%%%%%%%%%%%%%%%%%%%%%%%%%%%%%%


\begin{thebibliography}{CGW89}

\bibitem[AB24]{ab-roi}
E. Ackelsberg and V. Bergelson.
\newblock Multiple recurrence and popular differences for polynomial patterns in rings of integers.
\newblock   \emph{Math. Proc. Cambridge Philos. Soc.} \textbf{176} (2024) 239--278.

\bibitem[AB25]{ab_Sarkozy}
E. Ackelsberg and V. Bergelson.
\newblock Furstenberg--S\'{a}rk\"{o}zy theorem and partition regularity of polynomial equations over finite fields.
\newblock   arXiv:2303.00100v3 (2025) 23 pp.

\bibitem[BB96]{bb}
D. Berend and Y. Bilu.
\newblock	Polynomials with roots modulo every integer.
\newblock	\emph{Proc. Amer. Math. Soc.} \textbf{124} (1996) 1663--1671.

\bibitem[B86]{b_density-schur}
V. Bergelson.
\newblock	A density statement generalizing Schur's theorem.
\newblock	\emph{J. Combin. Theory Ser. A} \textbf{43} (1986) 338--343.

\bibitem[B96]{ert-update}
V. Bergelson.
\newblock	Ergodic Ramsey theory---an update.
\newblock	In \emph{Ergodic Theory of $\Z^d$-actions (Warwick, 1993--1994)}, London Math. Soc. Lecture Note Ser. \textbf{228} (Cambridge University Press, 1996) 1--61.

\bibitem[BB23a]{bb1}
V. Bergelson and A. Best.
\newblock	The Furstenberg-S\'{a}rk\"{o}zy theorem and asymptotic total ergodicity phenomena in modular rings.
\newblock	\emph{J. Number Theory} \textbf{243} (2023) 615--645.

\bibitem[BB23b]{bb2}
V. Bergelson and A. Best.
\newblock   On the polynomial Szemer\'{e}di theorem in finite commutative rings.
\newblock   arXiv:2303.17928 (2023) 105 pp.

\bibitem[BF21]{bf}
V. Bergelson and A. Ferr\'{e} Moragues.
\newblock   An ergodic correspondence principle, invariant means and applications.
\newblock   \emph{Israel J. Math.} \textbf{245} (2021) 921--962.

\bibitem[BFM96]{bfm}
V. Bergelson, H. Furstenberg, and R. McCutcheon.
\newblock IP-sets and polynomial recurrence.
\newblock \emph{Ergodic Theory Dynam. Systems}, \textbf{16} (1996) 963--974.

\bibitem[BHK05]{bhk}
V. Bergelson, B. Host, and B. Kra.
\newblock   Multiple recurrence and nilsequences.
\newblock   \emph{Invent. Math.} \textbf{160} (2005) 261--303.

\bibitem[BKS19]{bks}
V. Bergelson, G. Kolesnik, and Y. Son.
\newblock   Uniform distribution of subpolynomial functions along primes and applications.
\newblock   \emph{J. Anal. Math.} \textbf{137} (2019) 135--187.

\bibitem[BL96]{bl_polynomial_sz}
V. Bergelson and A. Leibman.
\newblock	Polynomial extensions of van der Waerden's and Szemer\'{e}di's theorems.
\newblock	\emph{J. Amer. Math. Soc.} \textbf{9} (1996) 725-753.

\bibitem[BL16]{bl}
V. Bergelson and A. Leibman.
\newblock	A Weyl-type equidistribution theorem in finite characteristic.
\newblock	\emph{Adv. Math.} \textbf{289} (2016) 928--950.

\bibitem[BLL08]{bll}
V. Bergelson, A. Leibman, and E. Lesigne.
\newblock   Intersective polynomials and the polynomial Szemer\'{e}di theorem.
\newblock   \emph{Adv. Math.} \textbf{219} (2008) 369--388.

\bibitem[BLM05]{blm}
V. Bergelson, A. Leibman, and R. McCutcheon.
\newblock	Polynomial Szemerédi theorems for countable modules over integral domains and finite fields.
\newblock	\emph{J. Anal. Math.} \textbf{95} (2005) 243--296.

\bibitem[BR16]{br}
V. Bergelson and D. Robertson.
\newblock Polynomial multiple recurrence over rings of integers.
\newblock \emph{Ergodic Theory Dynam. Systems}, \textbf{36} (2016) 1354--1378.

\bibitem[BiFl02]{biniflamini}
G. Bini and F. Flamini.
\newblock   \emph{Finite Commutative Rings and Their Applications}.
\newblock   Kluwer Internat. Ser. Engrg. Comput. Sci. \textbf{680} (Kluwer Academic Publishers, Boston, 2002).

\bibitem[BT81]{bt}
B. Bollob\'{a}s and A. Thomason.
\newblock   Graphs which contain all small graphs.
\newblock   \emph{European J. Combin.} \textbf{2} (1981) 13--15.

\bibitem[C63]{chevalley}
C. Chevalley.
\newblock   \emph{Introduction to the Theory of Algebraic Functions of One Variable}.
\newblock   Math. Surveys, No. VI (American Mathematical Society, Providence, RI, 1963).

\bibitem[CG02]{cg}
F. Chung and R. Graham.
\newblock   Sparse quasi-random graphs.
\newblock   \emph{Combinatorica} \textbf{22} (2002) 217--244.

\bibitem[CGW89]{cgw}
F. G. K. Chung, R. L. Graham, and R. M. Wilson.
\newblock   Quasi-random graphs.
\newblock   \emph{Combinatorica} \textbf{9} (1989) 345--362.

\bibitem[Cl11]{clark_MO}
P. L. Clark.
\newblock   Quotients of number rings.
\newblock   https://mathoverflow.net/q/72265.

\bibitem[Cl15]{clark_book}
P. L. Clark.
\newblock   \emph{Commutative Algebra}.
\newblock   https://plclark.github.io/PeteLClark/Expositions/integral2015.pdf

\bibitem[CZ17]{cz}
D. Conlon and Y. Zhao.
\newblock   Quasirandom Cayley graphs.
\newblock   \emph{Discrete Anal.} (2017), Paper No. 6, 14 pp.

\bibitem[Co11]{conrad_MO}
K. Conrad
\newblock   Quotients of number rings.
\newblock   https://mathoverflow.net/q/72234.

\bibitem[F81]{furstenberg_book}
H. Furstenberg.
\newblock   \emph{Recurrence in Ergodic Theory and Combinatorial Number Theory}.
\newblock   M. B. Porter Lectures (Princeton University Press, 1981) 203 pp.

\bibitem[J96]{janusz}
G. J. Janusz.
\newblock   \emph{Algebraic Number Fields}, Second edition.
\newblock   Grad. Stud. Math. \textbf{7} (American Mathematical Society, Providence, RI, 1996).

\bibitem[J20]{jones}
G. A. Jones.
\newblock   Paley and the Paley graphs.
\newblock   In \emph{Isomorphisms, Symmetry and Computations in Algebraic Graph Theory}, Springer Proc. Math. Stat. \textbf{305} (Springer, Cham, Switzerland, 2020).

\bibitem[KM78]{km}
T. Kamae and M. Mend\`{e}s France.
\newblock	Van der {C}orput's difference theorem.
\newblock	\emph{Israel J. Math.} \textbf{31} (1978) 335--342.

\bibitem[K]{kowalski}
E. Kowalski.
\newblock	Exponential sums over finite fields: elementary methods.
\newblock	https://people.math.ethz.ch/~kowalski/exponential-sums-elementary.pdf

\bibitem[L98]{larick}
Paul G. Larick.
\newblock \emph{Results in Polynomial Recurrence for Actions of Fields}.
\newblock Ph.D. thesis (Ohio State University, 1998).

\bibitem[L59]{lawton}
B. Lawton.
\newblock   A note on well distributed sequences.
\newblock   \emph{Proc. Amer. Math. Soc.} \textbf{10} (1959) 891--893.

\bibitem[LLW23]{llw}
T. H. L\^{e}, Y.-R. Liu, and T. D. Wooley.
\newblock Equidistribution of polynomial sequences in function fields, with applications.
\newblock   \emph{Adv. Math.} \textbf{479} (2025), Paper No. 110424, 44 pp.

\bibitem[LS24]{ls}
A. Li and L. Sauermann.
\newblock	S\'{a}rk\"{o}zy's theorem in various finite field settings.
\newblock	\emph{SIAM J. Discrete Math.} \textbf{38} (2024) 1409--1416.

\bibitem[M20]{milne}
J. S. Milne.
\newblock   \emph{Algebraic Number Theory}, version 3.08 (2020).
\newblock   Available at: https://www.jmilne.org/math/CourseNotes/ANT.pdf.

\bibitem[N13]{nadkarni}
M. G. Nadkarni.
\newblock   \emph{Basic Ergodic Theory}, 3rd edition.
\newblock   Texts and Readings in Mathematics \textbf{6} (Hindustan Book Agency, New Delhi, 2013)

\bibitem[PN04]{pn}
J. Park and M. E. J. Newman.
\newblock   Statistical mechanics of networks.
\newblock   \emph{Phys. Rev. E.} \textbf{70} (2004) 13 pp.

\bibitem[R90]{rudin}
W. Rudin.
\newblock   \emph{Fourier Analysis on Groups}.
\newblock Wiley Classics Lib. (Wiley-Intersci. Publ. John Wiley \& Sons, Inc., 1990).

\bibitem[S78a]{sarkozy}
A. S\'{a}rk\"{o}zy.
\newblock	On difference sets of sequences of integers. {I}.
\newblock	\emph{Acta Math. Acad. Sci. Hungar.} \textbf{31} (1978) 125--149.

\bibitem[S78b]{sarkozy_prime}
A. S\'{a}rk\"{o}zy.
\newblock	On difference sets of sequences of integers. {III}.
\newblock	\emph{Acta Math. Acad. Sci. Hungar.} \textbf{31} (1978) 355--386.

\bibitem[Sp22]{speyer}
D. E. Speyer.
\newblock   Jacobian criterion for algebraic independence over a perfect field in positive characteristics.
\newblock   https://mathoverflow.net/q/418608.

\bibitem[W16]{weyl}
H. Weyl.
\newblock   \"{U}ber die Gleichverteilung von Zahlen mod Eins.
\newblock   \emph{Math. Ann.} \textbf{77} (1916) 313--352.

\end{thebibliography}
\end{document}